\begin{document}

\title{Stable Harmonic Analysis and Stable Transfer}
\author{Matthew Sunohara}
\address{Department of Mathematics\\Johns Hopkins University\\3400 N Charles St\\Baltimore\\MD 21218}

\subjclass[2020]{22E50 (primary); 22E30, 22E35 (secondary)}

\keywords{stable harmonic analysis, stable transfer, Paley--Wiener, Beyond Endoscopy}

\begin{abstract}
    Langlands posed the question of whether a local functorial transfer map of stable tempered characters can be interpolated by the transpose of a linear operator between spaces of stable orbital integrals of test functions. These so-called stable transfer operators are intended to serve as the main local ingredient in Beyond Endoscopy, his proposed strategy for proving the Principle of Functoriality. Working over a local field of characteristic zero and assuming a hypothesis on the local Langlands correspondence for $p$-adic groups, we prove the existence of continuous stable transfer operators between spaces of stable orbital integrals of Harish-Chandra Schwartz functions, test functions, and $K$-finite test functions. This is achieved via stable Paley--Wiener theorems for each of the three types of function spaces. The stable Paley--Wiener theorem for Harish-Chandra Schwartz functions is new and includes the result that stable tempered characters span a weak-$*$ dense subspace of the space of stable tempered distributions, a result previously unknown for $p$-adic groups.
\end{abstract}

\maketitle

\tableofcontents

\section{Introduction} \label{sec:intro}
Beyond Endoscopy is the strategy proposed by Langlands for proving his Principle of Functoriality in general \cite{LanEB} (see also \cite{LanBE, LanNouveau}). The strategy involves developing new refinements of the stable trace formula and comparisons between them using stable transfer operators. These operators, which form the main local ingredient of Beyond Endoscopy and are our focus here, were introduced by Langlands in \cite{LanBE, LanST}. We refer to the articles \cite{ArthurProblemsBE, SakIHES} by Arthur and Sakellaridis for an overview of the strategy of Beyond Endoscopy. In \cite{ArthurProblemsBE}, Arthur explains the formal analogy between the role of stable transfer operators in Beyond Endoscopy and the role of endoscopic transfer operators in the theory of Endoscopy.

Throughout this paper, we fix a local field $F$ of characteristic zero and fix an algebraic closure $\ol{F}$ of $F$. Let $W_F$ be the Weil group of $F$ and let $L_F$ be the complex version of the local Langlands group of $F$, that is, $L_F=W_F$ if $F$ is archimedean and $L_F=\SL_2(\CC)\times W_F$ if $F$ is non-archimedean.

Let $H$ and $G$ be connected reductive groups over $F$ and let $\xi:\Lgp{H}\to\Lgp{G}$ be an injective $L$-homomorphism (taken up to $\dual{G}$-conjugacy). Throughout this paper, we use the Weil form of the $L$-group. We assume that $G$ is quasisplit, so that we have a well-defined pushforward map 
\[
\xi_*:\Phi(H)\longrightarrow\Phi(G)
\]
of $L$-parameters, called functorial transfer. Furthermore, we assume that $\xi$ is tempered in the sense that $\xi|_{W_F}$ is a tempered $L$-parameter of $G$, so that $\xi_*$ restricts to a map $\xi_*:\Phi_\temp(H)\to\Phi_\temp(G)$ of tempered $L$-parameters.

Let $\Pi(H)$ denote the set of equivalence classes of irreducible admissible representations of $H(F)$. A local Langlands correspondence for $H$ is a surjective ``reciprocity'' map
\[
\rec_H:\Pi(H)\longrightarrow\Phi(H)
\]
with finite fibres satisfying various desiderata \cite{Borel, KalTai}. The fibre $\Pi_\phi$ over $\phi\in\Phi(H)$ is called the $L$-packet of $\phi$. A local Langlands correspondence is known and uniquely characterised for real groups \cite{LanReal, AdamsVogan, AdamsKaletha}. A local Langlands correspondence is  known in various forms for $p$-adic groups \cite{HarTay, Henniart, Scholze, ArthurEndo, MokEndo, MoegEndo}.

Beyond Endoscopy is intended to be built upon the theory of Endoscopy and the local Langlands correspondence. Thus, we will assume the existence of a local Langlands correspondence for $p$-adic groups satisfying a subset of the usual desiderata. See \Cref{hypothesis} for the precise form of our hypothesis on the local Langlands correspondence for $p$-adic groups. Part of the local Langlands correspondence is the assignment to each $\phi\in\Phi_\temp(H)$ of a stable tempered character $\Theta_\phi$. The distribution $\Theta_\phi$ is a positive integral linear combination of the tempered characters $\Theta_\pi$ for $\pi\in\Pi_\phi$. The functorial transfer $\xi_*:\Phi_\temp(H)\to\Phi_\temp(G)$ of tempered $L$-parameters thus determines a functorial transfer map $\xi_*:\Theta_\phi\mapsto\Theta_{\xi_*\phi}$ from the set of stable tempered characters of $H$ to the set of stable tempered characters of $G$. Langlands posed the following question \cite[Questions A \& B]{LanST}.

\begin{question}\label{question}
    For each $f\in C_c^\infty(G(F))$, does there exist a function $\Tc_\xi f\in C_c^\infty(H(F))$, called a \emph{stable transfer of $f$}, such that 
    \[
    \langle \Theta_\phi,\Tc_\xi f\rangle=\langle \xi_*\Theta_\phi,f\rangle
    \]
    for all $\phi\in\Phi_\temp(H)$?
\end{question}

In \cite{LanST}, Langlands gave an affirmative answer to \Cref{question} in the case when $G=\SL_2$, $H$ is a maximal torus of $G$, and $\xi$ is a natural $L$-embedding \cite[\S2.1--\S2.4 (p. 182--210)]{LanST}. This case had previously been studied by Gelfand--Graev in \cite{GelfandGraev}, where an explicit formula for the operator $\Tc_\xi$ is given. The Gelfand--Graev formula was also established in \cite[\S10.5]{SakTransferHankel2}. Johnstone gave an affirmative answer and obtained a formula for $\Tc_\xi$ in the case when $G=\SL_\ell$ or $G=\GL_\ell$ with $\ell$ an odd prime, $H$ is an elliptic maximal torus of $G$, and $\xi$ is a natural $L$-embedding \cite{JohnstoneThesis,JohnstoneArticle}. The arguments in these works involve detailed computations with explicit stable character formulas, which are not available in general. The existence of stable transfers of test functions has also been explored in \cite{JohnstoneLuo,thomas2020towards}. Sakellaridis has studied stable transfer for relative functoriality, and in certain low rank cases has proved that they exist and obtained explicit formulas for them \cite{SakBEI,SakBEII,SakFuncViaTrace,SakTransferHankel1,SakTransferHankel2}.

The existence of stable transfers is useful in global applications of the stable trace formula that lie outside the theory of Endoscopy but do not belong to the program of Beyond Endoscopy. In \cite{DalalGG} stable transfers are used to prove statistics of automorphic representations for unramified unitary groups, with applications to the Sato--Tate Conjecture, the Sarnak--Xue Conjecture, and the cohomology of locally symmetric spaces. Extensions of this work to other unitary groups and other classical groups require a better understanding of the properties of stable transfer. We expect that stable transfer may have other applications outside of Beyond Endoscopy, both in global and local harmonic analysis.

In order to give a more natural formulation of \Cref{question}, we recall the basic notions of stable harmonic analysis. For brevity, we write $C_c^\infty(H)=C_c^\infty(H(F))$ and we write $\Cc(H)=\Cc(H(F))$ for the space of Harish-Chandra Schwartz functions, both equipped with their usual locally convex topologies, which makes the inclusion $C_c^\infty(H)\to \Cc(H)$ continuous with dense image. We recall that a distribution on $H(F)$, i.e. an element of the strong dual $C_c^\infty(H)'$, is tempered if it extends continuously to an element of $\Cc(H)'$. Since stable tempered characters are tempered distributions, it is thus natural to consider the analogue of \Cref{question} where $C_c^\infty$ is replaced by $\Cc$.

Let $f\in \Cc(H)$. For a regular semisimple stable conjugacy class $\delta\in\Delta_\rs(H)$ in $H(F)$, we denote the normalised stable orbital integral of $f$ at $\delta$ by $f^H(\delta)=|D^H(\delta)|^{1/2}O_\delta(f)$, where $D^H(\delta)$ is the Weyl discriminant. We denote the resulting quotient space of stable orbital integrals by 
\[
\Sc(H)=\{f^H:\Delta_\rs(H)\to\CC : f\in\Cc(H)\}.
\]
A tempered distribution $u\in\Cc(H)'$ is said to be stable if it descends to an element of $\Sc(H)'$.

Stable tempered characters are stable tempered distributions. For $\phi\in\Phi_\temp(H)$, we write $\wh{f^H}(\phi)=\Theta_\phi(f)$. The resulting function $\wh{f^H}$ is the stable Fourier transform of $f$. In the literature, $\wh{f^H}(\phi)$ is often denoted by $f^H(\phi)$, and thus $f^H$ is used to denote both the stable orbital integrals and the stable Fourier transform of $f$. We will also follow this practice below. We denote the resulting quotient space of stable Fourier transforms by 
\[
\wh{\Sc}(H)=\{\wh{f^H}:\Phi_\temp(H)\to\CC : f\in\Cc(H)\}.
\]
The stable Fourier transform of $f$ only depends on its stable orbital integrals $f^H(\delta)$ for $\delta\in\Delta_\rs(H)$ (in fact, even for $\delta\in\Delta_\sr(H)$), so the map $\Cc(H)\to\wh{\Sc}(H), f\mapsto\wh{f^H}$ descends to a surjective continuous linear operator
\[
\Fc^\st:\Sc(H)\longrightarrow\wh{\Sc}(H).
\]
By replacing the Harish-Chandra Schwartz space $\Cc(H)$ with the space of test functions $C_c^\infty(H)$, we obtain spaces $\Sc_c(H)$ and $\wh{\Sc_c}(H)$, which are the test function analogues of $\Sc(H)$ and $\wh{\Sc}(H)$. The stable Fourier transform restricts to a continuous surjective linear operator $\Fc^\st:\Sc_c(H)\to\wh{\Sc_c}(H)$. For a maximal compact subgroup $K$ of $H(F)$, we have analogous spaces $\Sc_f(H)$ and $\wh{\Sc_f}(H)$ defined in terms of the space $C_c^\infty(H,K)$ of $K$-finite test functions, and the Fourier transform restricts further to a continuous surjective linear operator $\Fc^\st:\Sc_f(H)\to\wh{\Sc_f}(H)$.

Stable spectral density for test functions tells us that the stable tempered characters of $H$ span a weak-$*$ dense subspace of the space of stable distributions on $H(F)$. Equivalently, for $f\in C_c^\infty(H)$, the function $f^H$ is determined by $\wh{f^H}$, that is, we have a continuous linear isomorphism
\[
\Fc^\st:\Sc_c(H)\longrightarrow\wh{\Sc_c}(H).
\]
It follows that an equivalent formulation of \Cref{question} is whether there exists a linear operator 
\[
\Tc_\xi:\Sc_c(G)\longrightarrow\Sc_c(H)
\]
such that
\[
\Theta_\phi\circ\Tc_\xi=\Theta_{\xi_*\phi}
\]
for all $\phi\in\Phi_\temp(H)$.

Our main result is as follows.

\begin{theorem} \label{mainthm}
    If $F$ is non-archimedean, we assume \Cref{hypothesis} for $H$ and $G$. There exists a continuous linear operator
    \[
    \Tc_\xi:\Sc(G)\longrightarrow\Sc(H)
    \]
    whose transpose
    \[
    \Tc_\xi':\Sc(H)'\longrightarrow\Sc(G)'
    \]
    satisfies
    \[
    \Tc_\xi'\Theta_\phi=\Theta_{\xi_*\phi}
    \]
    for all $\phi\in\Phi_\temp(H)$. Moreover, $\Tc_\xi$ restricts to continuous linear operators $\Sc_c(G)\to\Sc_c(H)$ and $\Sc_f(G)\to\Sc_f(H)$.
\end{theorem}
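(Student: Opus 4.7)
The strategy is to pass to the spectral side using the stable Fourier transform and build $\Tc_\xi$ from a pullback operator on $L$-parameters. Concretely, assuming stable Paley--Wiener theorems for each of $\Sc$, $\Sc_c$ and $\Sc_f$, the images $\wh{\Sc}(G)$, $\wh{\Sc_c}(G)$, $\wh{\Sc_f}(G)$ admit intrinsic descriptions as function spaces on $\Phi_\temp(G)$ defined by explicit regularity, growth, and (for test functions) support conditions. The tempered functorial transfer $\xi_*:\Phi_\temp(H)\to\Phi_\temp(G)$ then yields a pullback operator $\xi^*:\wh{\Sc}(G)\to\wh{\Sc}(H)$ sending $\wh{f^G}$ to $\wh{f^G}\circ\xi_*$, and analogously at the $\Sc_c$ and $\Sc_f$ levels. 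Provided $\Fc^\st$ is a continuous linear isomorphism for $H$ at each of the three levels, one defines $\Tc_\xi$ as the composition of the inverse of $\Fc^\st$ for $H$, the pullback $\xi^*$, and $\Fc^\st$ for $G$; the identity $\Tc_\xi'\Theta_\phi=\Theta_{\xi_*\phi}$ follows immediately by pairing, since
\[
\langle\Theta_\phi,\Tc_\xi f\rangle=\wh{f^G}(\xi_*\phi)=\langle\Theta_{\xi_*\phi},f\rangle.
\]

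The work then divides into three pieces. First, I would establish stable Paley--Wiener theorems for $\Sc(H)$, $\Sc_c(H)$ and $\Sc_f(H)$: for $\Sc_c$ and $\Sc_f$ these refine the non-stable Paley--Wiener theorems of Clozel--Delorme and Bernstein--Deligne--Kazhdan by taking stable combinations within each $L$-packet, using \Cref{hypothesis} in the $p$-adic case to parametrise these packets. For $\Cc$, the genuinely new case in the $p$-adic setting, I would refine Harish-Chandra's Plancherel isomorphism by restricting to $L$-packet stable combinations and tracking their parameter-theoretic dependence; the crucial content is the density statement that stable tempered characters span a weak-$*$ dense subspace of the stable tempered distributions, which is equivalent to the injectivity of $\Fc^\st:\Sc(H)\to\wh{\Sc}(H)$ and, together with tautological surjectivity, promotes $\Fc^\st$ to a topological isomorphism. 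Second, I would verify that pullback along $\xi_*$ carries the concrete Paley--Wiener class for $G$ continuously into that for $H$; the algebraic nature of $\xi$ on $L$-parameters, together with the standing temperedness assumption, ensures that regularity, growth, and support conditions transfer appropriately. Third, I would transport the pullback back to the orbital integral sides via the Fourier isomorphisms and verify compatibility across the chain $\Sc_f\subset\Sc_c\subset\Sc$, so that the single operator $\Tc_\xi$ on $\Sc(G)$ restricts as claimed.

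The principal obstacle is the stable Paley--Wiener theorem for $\Cc$ in the $p$-adic case, and specifically the density of stable tempered characters in the stable tempered dual: this requires a delicate interplay between $L$-packet structure (supplied by \Cref{hypothesis}) and Harish-Chandra's Plancherel decomposition, and must exclude the possibility of a nonzero stable tempered distribution that annihilates every $\Theta_\phi$ with $\phi\in\Phi_\temp(H)$. The remaining steps, while technical, are expected to follow from standard growth estimates once the Paley--Wiener descriptions are in place, the continuity of $\xi^*$ reducing ultimately to the polynomial nature of $\xi_*$ on Langlands parameters and to the fact that $\xi_*$ preserves temperedness.
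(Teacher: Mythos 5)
Your high-level strategy coincides with the paper's: establish stable Paley--Wiener theorems at the three levels, construct $\xi^*$ by pullback along $\xi_*$, and define $\Tc_\xi$ by conjugating $\xi^*$ by the two stable Fourier isomorphisms. You also correctly single out the stable Paley--Wiener theorem for $\Cc$ in the $p$-adic case (equivalently, stable spectral density for Schwartz functions) as a genuinely new ingredient; the paper proves it by establishing the test-function case (following Arthur), decomposing the elliptic Paley--Wiener space into stable and unstable pieces via pseudocoefficients, and passing to $\Cc$ by a closedness-and-density argument.

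However, there is a gap in your second step that you pass over too quickly. You assert that regularity, growth, and \emph{support} conditions on the spectral side transfer under $\xi_*$ because of ``the algebraic nature of $\xi$ on $L$-parameters.'' This is precisely where the real work lies, and it is not a routine estimate. The spaces $\Phi_\temp(H)$ and $\Phi_\temp(G)$ each have infinitely many connected components (in the $p$-adic case, countably many compact tori; in the archimedean case, countably many Euclidean spaces indexed by discrete-series data in each Levi). For $\xi^*$ to map $PW^\st(G)$ into $PW^\st(H)$ (and, in the non-archimedean case, to even map $\Ss^\st(G)$ into $\Ss^\st(H)$), a function supported on finitely many components of $\Phi_\temp(G)$ must pull back to one supported on finitely many components of $\Phi_\temp(H)$. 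This amounts to the statement that $\pi_0(\xi_*):\pi_0(\Phi_\temp(H))\to\pi_0(\Phi_\temp(G))$ has finite fibres — a nontrivial finiteness theorem that the paper proves (Theorem \ref{thm:finiteness}) by restricting $L$-parameters to the inertial subgroup $L_F^1$, reducing to a question about conjugacy classes of compact-group representations in $\Lgp{G}$, and invoking Vinberg's theorem on finiteness of $\Hc^n//\Hc \to \Gc^n//\Gc$ for reductive embeddings $\Hc\hookrightarrow\Gc$. Nothing in your sketch supplies this, and ``polynomial behaviour of $\xi_*$'' does not give it: the obstruction lives in the discrete (component) direction, not in the continuous unramified twisting variable. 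A priori, an injective tempered $\xi$ could fold infinitely many discrete-series components of $\Phi_\temp(H)$ onto a single Bernstein component of $\Phi_\temp(G)$, and your argument does not rule this out.

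A secondary point: in the archimedean case the needed input is an estimate $\|\xi_*\phi\|\gg\|\phi\|$ on infinitesimal characters, which is what controls the polynomial decay across components. This does follow from injectivity of $\Lie(\xi)$ on the dual torus, as you anticipate, but even there the restriction to $PW_f$ still requires finiteness of fibres of $\pi_0(\xi_*)$, so the finiteness theorem cannot be sidestepped in either signature.
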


This provides an affirmative answer to \Cref{question} in general (conditional on \Cref{hypothesis} in the $p$-adic case). We call the operator $\Tc_\xi$ the stable transfer operator attached to $\xi$. 

Stable transfer should not be confused with endoscopic transfer, which maps from the space $\Ic(G)$ of invariant orbital integrals of Harish-Chandra Schwartz functions to the space $\Sc(H)$, where $H$ is an endoscopic group of $G$. Stable transfer has also been called functorial transfer \cite{ArthurProblemsReal} and stable-stable transfer \cite{thomas2020towards} in order to avoid confusion with endoscopic transfer.

With a view towards applying stable transfer to the stable trace formula, an important problem remains of understanding how the stable orbital integrals $(\Tc_\xi f^G)(\delta_H)$ for $\delta_H\in\Delta_\rs(H)$ relate to the stable orbital integrals $f^G(\delta_G)$ for $\delta_G\in\Delta_\rs(G)$. Spectrally, the relation between $\Tc_\xi f^G$ and $f^G$ is very simple. The stable Fourier transform of $\Tc_\xi f^G$ is the pullback of the stable Fourier transform of $f^G$ along the functorial transfer map $\xi_*$:
\[
\Fc^\st(\Tc_\xi f^G)=\Fc^\st(f^G)\circ\xi_*.
\]
Comparing this with the Fourier-restriction formula for the classical Radon transform suggests that stable transfer operators are the analogues in stable harmonic analysis of Radon transforms in classical harmonic analysis.

There is a natural Schwartz space $\Ss^\st(H)$ on $\Phi_\temp(H)$, and similarly there are natural Paley--Wiener spaces $PW^\st(H)$ and $PW_f^\st(H)$ on $\Phi_\temp(H)$. These will be defined in \Cref{sec:stablePW}. We prove the following result, which contains stable Paley--Wiener theorems for Harish-Chandra Schwartz functions, test functions, and $K$-finite test functions.

\begin{theorem} \label{stablePW}
    If $F$ is non-archimedean, we assume \Cref{hypothesis} for $H$. The stable Fourier transform is an isomorphism of topological vector spaces
    \[
    \Fc^\st:\Sc(H)\longrightarrow\Ss^\st(H)
    \]
    and restricts to isomorphisms of topological vector spaces $\Sc_c(H)\to PW^\st(H)$ and $\Sc_f(H)\longrightarrow PW_f^\st(H)$.
\end{theorem}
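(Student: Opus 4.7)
The plan is to deduce \Cref{stablePW} from the classical (non-stable) Paley--Wiener theorems for invariant orbital integrals, together with a stable spectral density statement for each of the three function spaces. The classical Plancherel isomorphisms of Harish-Chandra (archimedean) and Waldspurger ($p$-adic) identify the invariant orbital integral space $\Ic(H)$ with a Schwartz space on $\Pi_\temp(H)$, while the Paley--Wiener theorems of Arthur and of Bernstein--Deligne--Kazhdan do the corresponding job for $\Ic_c(H)$ and $\Ic_f(H)$. Pushforward from regular semisimple conjugacy classes to stable conjugacy classes yields continuous surjections $\Ic(H)\to\Sc(H)$, $\Ic_c(H)\to\Sc_c(H)$, and $\Ic_f(H)\to\Sc_f(H)$, which intertwine on the spectral side with the map $\rec_H:\Pi_\temp(H)\to\Phi_\temp(H)$.

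First I would set up a commutative diagram relating $\Ic(H)$ and $\Sc(H)$ with their respective Fourier images. Since each $\Theta_\phi$ is a positive integer linear combination of the $\Theta_\pi$ for $\pi\in\Pi_\phi$, the image of $\Sc(H)$ in the classical Plancherel space may be described as the subspace of functions on $\Pi_\temp(H)$ that arise, after normalisation by packet weights, as pullbacks of functions along $\rec_H$. One then checks that this subspace coincides with the space $\Ss^\st(H)$ defined in \Cref{sec:stablePW}, and an averaging argument upgrades this to surjectivity of $\Fc^\st:\Sc(H)\to\Ss^\st(H)$: given $F\in\Ss^\st(H)$, invert the classical Plancherel isomorphism to produce some $g\in\Cc(H)$ and project onto stable orbital integrals. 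The same strategy, with the classical Paley--Wiener isomorphisms in place of the Plancherel isomorphism, yields surjectivity of $\Sc_c(H)\to PW^\st(H)$ and $\Sc_f(H)\to PW^\st_f(H)$; bicontinuity of the inverses then follows from the open mapping theorem once the target spaces have been equipped with appropriate Fr\'echet or LF-space structures, which is a routine exercise inherited from the classical analogues.

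The main obstacle is injectivity, i.e.\ the statement that if $f\in\Cc(H)$ satisfies $\Theta_\phi(f)=0$ for every $\phi\in\Phi_\temp(H)$ then $f^H=0$ on $\Delta_\rs(H)$. For archimedean $F$ this follows from the theory of stable characters for real groups due to Shelstad and Adams--Vogan. In the $p$-adic case this is precisely the stable spectral density statement flagged in the abstract as previously unknown, and must be established here under \Cref{hypothesis}. The natural route is to first prove density of stable tempered characters in the space of stable tempered distributions at the level of test functions, using the explicit description of $PW^\st(H)$ together with the $L$-packet structure supplied by \Cref{hypothesis}, and then to extend to Harish-Chandra Schwartz functions by a continuity argument exploiting the density of $C_c^\infty(H)$ in $\Cc(H)$ together with control on the continuous extensions of stable tempered distributions from $\Sc_c(H)'$ to $\Sc(H)'$. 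The $p$-adic Schwartz-space density is thus the heart of the proof; everything else reduces to a comparatively formal comparison with the classical Plancherel and Paley--Wiener theorems.
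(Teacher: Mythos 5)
Your high-level architecture matches the paper: use the invariant Paley--Wiener theorems, split the spectral side into stable and unstable pieces, reduce injectivity for Harish--Chandra Schwartz functions to the test-function case by a density argument. But the surjectivity step is not the "comparatively formal" step you claim; in fact it is where most of the technical work in the paper lives. When you say ``invert the classical Plancherel isomorphism to produce $g\in\Cc(H)$ and project,'' you are implicitly asserting that a function $\psi\in\Ss^\st(H)$ on $\Phi_\temp(H)$ can be lifted to a function on $T_\temp(H)$ lying in the target $\wh{\Ic}(H)$ of the invariant Paley--Wiener isomorphism. This requires showing that $\wh{\Fs}_\el(L)=\wh{\Fs}_\el^\st(L)\oplus\wh{\Fs}_\el^\unst(L)$ as \emph{topological} vector spaces, Levi by Levi. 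The vector-space decomposition $D_\el=D_\el^\st\oplus D_\el^\unst$ is cheap (orthogonality under the elliptic inner product), but to see that the associated change of basis and its inverse are continuous one must control the blocks of the change-of-basis matrix uniformly across infinitesimal characters. In the archimedean case this needs a uniform bound on $L$-packet sizes (the paper's \Cref{bounded}, which draws on the Adams--Vogan description of discrete series $L$-packets) plus boundedness of elliptic norms $\|\tau\|_\el$ from the orthogonality relations. None of this is ``routine''; it is load-bearing, and you also need an explicit construction of preimages (the paper uses $f^\phi=f\cdot(\phi\circ H_G)$ against pseudocoefficients, which is what makes the cuspidal-case inverse Fourier transform land in $\Cc$ or $C_c^\infty$ as required).

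There is also a conceptual slip about what needs proving. Stable spectral density for \emph{test functions} on $p$-adic groups is Arthur's theorem (Theorem \ref{StableDensity}) and is simply cited; it was not unknown. What the abstract flags as new is the density statement for \emph{Harish--Chandra Schwartz} functions. Your proposal to ``first prove density at the level of test functions, using the explicit description of $PW^\st(H)$'' conflates the two cases and is moreover circular: the characterisation of $PW^\st(H)$ as the image of $\Fc^\st$ on $\Sc_c(H)$ is a restatement of the surjectivity you are in the middle of establishing, and the injectivity you are trying to deduce from it. The correct logic (and the paper's) is: quote Arthur's density for $C_c^\infty$, use it together with $\overline{PW_\el^{\unst}(L)}=\Ss_\el^{\unst}(L)$ and the closedness of $\Ic\Cc^\unst(L)$ to get density for $\Cc$ (Theorem \ref{StableSpectralDensity}). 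A minor but relevant imprecision: the invariant Paley--Wiener theorems identify $\Ic(H)$ with a Schwartz-type space on $T_\temp(H)$, not on $\Pi_\temp(H)$; the further passage from $T_\temp$ to $\Phi_\temp\sqcup(\text{unstable basis})$ via the elliptic inner product is precisely the step you are glossing over.
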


The stable Paley--Wiener theorem for Harish-Chandra Schwartz functions is new. In the case where $H$ is quasisplit, Arthur proved the stable Paley--Wiener theorem for test functions on $p$-adic groups in \cite{ArthurRelations}, while Moeglin--Waldspurger proved the stable Paley--Wiener theorems for test functions and $K$-finite test functions on real groups in \cite[Ch. IV]{MWI}. (See the paragraph below \Cref{thm:stablePW} for further discussion on these results and their relation to this work.) The injectivity of the stable Fourier transform on $\Sc(H)$ is equivalent to stable spectral density for Harish-Chandra Schwartz functions. This result is new for $p$-adic groups; for real groups, it was established by Shelstad \cite[Lemma 5.3]{She79}.

Granting \Cref{stablePW}, to prove \Cref{mainthm} it then suffices to prove that pullback along $\xi_*$ gives a well-defined continuous linear operator
\[
\xi^*:\Ss^\st(G)\longrightarrow\Sc^\st(H)
\]
which restricts to continuous linear operators $PW^\st(G)\to PW^\st(H)$ and $PW_f^\st(G)\to PW_f^\st(H)$. Indeed, we may then simply define $\Tc_\xi$ to make the following diagram commute
\[
\begin{tikzcd}
    \Sc(G) \arrow[r,"\Tc_\xi"] \arrow[d,"\Fc^\st"] & \Sc(H) \arrow[d,"\Fc^\st"] \\
    \Ss^\st(G) \arrow[r,"\xi^*"] & \Ss^\st(H)
\end{tikzcd}
\]
and then $\Tc_\xi$ has the properties claimed. The proof that pullback along $\xi_*$ gives the well defined maps $\xi^*$ makes use of the fact the $\xi_*$ is $\pi_0$-finite, that is, the fibres of the map
\[
\pi_0(\xi_*):\pi_0(\Phi(H))\longrightarrow\pi_0(\Phi(G))
\]
are finite. We prove this as \Cref{thm:finiteness}.

In \Cref{sec:notation}, we set up the basic notation and conventions for the remainder of the paper. In \Cref{sec:invariant}, we recall the results from invariant harmonic analysis that we will use: the theory of $R$-groups, Arthur's virtual tempered representations, and invariant Paley--Wiener theorems. In \Cref{sec:spaces}, we define abstract Schwartz and Paley--Wiener spaces in a form suitable for invariant and stable Paley--Wiener theorems. In \Cref{sec:stable}, we recall the fundamentals of stable harmonic analysis, including our hypothesis on the local Langlands correspondence for $p$-adic groups (\Cref{hypothesis}), and prove the stable Paley--Wiener theorems contained in \Cref{stablePW}. In \Cref{sec:transfer}, we prove \Cref{mainthm} and give some simple examples of stable transfer operators.

\bigskip
\noindent\textbf{Acknowledgments.}
The author is grateful to James Arthur, William Casselman, Hannah Constantin, Clifton Cunningham, Rahul Dalal, Melissa Emory, Malors Espinosa, Mathilde Gerbelli-Gauthier, Jayce Getz, Florian Herzig, Daniel Johnstone, Stephen Kudla, Paul Mezo, Patrice Moisan-Roy, Chung Pang Mok, Yiannis Sakellaridis, Jacob Tsimerman, and Bin Xu for valuable discussions and comments on this work.

We acknowledge the support of the Natural Sciences and Engineering Research Council of Canada (NSERC). (Nous remercions le Conseil de recherches en sciences naturelles et en g\'enie du Canada (CRSNG) de son soutien.)

\section{Notation and conventions} \label{sec:notation}

Recall that we have fixed a local field $F$ of characteristic zero and an algebraic closure $\ol{F}$ of $F$. If $F$ is non-archimedean, we let $q_F$ denote the cardinality of the residue field of $F$. Let $|\cdot|_F$ denote the canonical absolute value of $F$, which is defined for all $a\in F$ by $\dd{(ax)}=|a|_F\dd{x}$, for any Haar measure $\dd{x}$ on $F$. We also denote its canonical extension to an absolute value on $\ol{F}$ by $|\cdot|_F$.

Let $G$ be a connected reductive group over $F$. We will say that $G$ is a real group if $F$ is archimedean and a $p$-adic group if $F$ is a $p$-adic field. We will use $X^*(G)$ (resp. $X_*(G)$) to denote the group of algebraic characters $G\to\GG_m$ (resp. cocharacters $\GG_m\to G$) over $F$. Thus, the group of algebraic characters (resp. cocharacters) of the base change $G_{\ol{F}}$ will be denoted by $X^*(G_{\ol{F}})$ (resp. $X_*(G_{\ol{F}})$).

We have the real vector spaces
\[
\ak_G=\Hom_\ZZ(X^*(G),\RR)
\]
and
\[
\ak_G^*= X^*(G)\otimes_\ZZ\RR.
\]
Note that we have a natural perfect pairing between $\ak_G$ and $\ak_G^*$, realising $\ak_G^*$ as the dual space of $\ak_G$. Let $A_G$ be the split component of the centre of $G$. The restriction gives an injective homomorphism $X^*(G)\to X^*(A_G)$ with finite cokernel. By functoriality, we obtain isomorphisms $\ak_{A_G}\xrightarrow{\sim}\ak_G$ and $\ak_G^*\xrightarrow{\sim}\ak_{A_G}^*$. In this way we identify $\ak_{A_G}=\ak_G$ and $\ak_{A_G}^*=\ak_G^*$.

We have the Harish-Chandra logarithm homomorphism $H_G:G(F)\to\ak_G$ defined by 
\[
\langle H_G(x),\chi\rangle=\log|\chi(x)|_F
\]
for all $x\in G(F)$ and $\chi\in X^*(G)$. Let
\[
G(F)^1=\ker H_G=\bigcap_{\chi\in X^*(G)}\ker|\chi|_F.
\]
Note that $H_{A_G}$ is the restriction of $H_G$, and therefore $A_G(F)^1=A_G(F)\cap G(F)^1$. Let 
\[
\ak_{G,F}=H_G(G(F))=G(F)/G(F)^1
\]
and 
\[
\wt{\ak}_{G,F}=\ak_{A_G,F}=H_G(A_G(F))=A_G(F)/A_G(F)^1.
\]
We define
\[
\dual{\ak_{G,F}}=\Hom_\ZZ(\ak_{G,F},2\pi i\ZZ)
\]
and similarly
\[
\dual{\wt{\ak}_{G,F}}=\Hom_\ZZ(\wt{\ak}_{G,F},2\pi i\ZZ).
\]
We have the inclusions
\[
\wt{\ak}_{G,F}\subseteq\ak_{G,F}\subseteq\ak_G.
\]
If $F$ is archimedean, then $\wt{\ak}_{G,F}=\ak_{G,F}=\ak_G$ and $\dual{\wt{\ak}_{G,F}}=\dual{\ak_{G,F}}=0$. If $F$ is non-archimedean, then $\ak_{G,F}$ and $\wt{\ak}_{G,F}$ are (full) lattices in $\ak_G$, and $\dual{\ak_{G,F}}\subseteq\dual{\wt{\ak}_{G,F}}$ are lattices in $i\ak_{G}^*$.

We denote the group of unramified characters of $G(F)$ by 
\[
X^\nr(G)=\Hom(G(F)/G(F)^1,\CC^\times)
\]
and the subgroup of unitary unramified characters by 
\[
X^\nr(G)^1=\Hom(G(F)/G(F)^1,\CC^1).
\]
We have a surjective homomorphism 
\[
\ak_{G,\CC}^*\longrightarrow X^\nr(G)
\]
defined by $\lambda\mapsto |\cdot|_G^\lambda$, where $|g|_G^\lambda=e^{\langle \lambda, H_G(g) \rangle}$. If $\lambda=\theta\otimes s\in \ak_{G,\CC}^*$, then $|g|_G^\lambda=|\theta(g)|^s$. The surjection $\ak_{G,\CC}^*\to X^\nr(G)$ descends to an isomorphism $\ak_{G,\CC}^*/\ak_{G,F}^\vee\to X^\nr(G)$. Thus, if $F$ is archimedean, then the map $\ak_{G,\CC}^*\to X^\nr(G)$ is an isomorphism and $X^\nr(G)$ is a complex vector space. If $F$ is non-archimedean, then $X^\nr(G)$ is a complex torus and the homomorphism $\ak_{G,\CC}^*\to X^\nr(G)$ factors through the complex torus $\ak_{G,\CC}^*/\frac{2\pi i}{\log q_F}X^*(G)$ with finite kernel $\ak_{G,F}^\vee/\frac{2\pi i}{\log q_F}X^*(G)$.

\subsection{Parabolic and Levi subgroups}
We recall the following standard notation.
\begin{itemize}
    \item $\Pc(M)=\Pc^G(M)$ is the set of parabolic subgroups of $G$ with $M$ as a Levi factor; and
    \item $\Lc(M)=\Lc^G(M)$ is the set of Levi subgroups of $G$ containing $M$.
\end{itemize}
If $P$ is a parabolic subgroup of $G$, we denote the unipotent radical of $P$ by $N_P$. We refer to a pair $(P,M)$ consisting of a Levi subgroup $M$ of $G$ and a parabolic subgroup $P\in\Pc(M)$ as a parabolic pair of $G$.

Let $M\subseteq G$ be a Levi subgroup of $G$. We have $A_G\subseteq A_M$. The restriction homomorphism $X^*(G)\to X^*(M)$ is injective, so gives rise to a linear injection 
\[
\ak_G^*=X^*(G)\otimes_\ZZ\RR\longrightarrow\ak_M^*=X^*(M)\otimes_\ZZ\RR
\]
and a dual linear surjection
\[
\ak_M=\Hom_\ZZ(X^*(M),\RR)\longrightarrow \ak_G=\Hom_\ZZ(X^*(G),\RR).
\]
The restriction homomorphism $X^*(A_M)\to X^*(A_G)$ is surjective, so gives rise to a linear surjection
\[
\ak_M^*=X^*(A_M)\otimes_\ZZ\RR\longrightarrow\ak_G^*=X^*(A_G)\otimes_\ZZ\RR
\]
and a dual linear injection
\[
\ak_G=\Hom_\ZZ(X^*(A_G),\RR)\longrightarrow\ak_M=\Hom_\ZZ(X^*(A_M),\RR).
\]
Let $\ak_M^G=\ker(\ak_M\to\ak_G)$. The homomorphism $\ak_G\to\ak_M$ is a section of $\ak_M\to\ak_G$. Thus, we have a split short exact sequence
\[
\begin{tikzcd}
    0 \arrow[r] & \ak_M^G \arrow[r] & \ak_M \arrow[r] & \ak_G \arrow[r] \arrow[l, bend right=33] & 0
\end{tikzcd}
\]
and $\ak_M=\ak_M^G\oplus\ak_G$. We also have the dual exact sequence
\[
\begin{tikzcd}
    0 \arrow[r] & \ak_G^* \arrow[r] & \ak_M^* \arrow[r] \arrow[l, bend right=33] & (\ak_M^G)^* \arrow[r] & 0
\end{tikzcd}
\]
and $\ak_M^*=(\ak_M^G)^*\oplus\ak_G^*$.

We fix a minimal parabolic pair $(P_0,M_0)$ of $G$. We will often use a subscript or superscript ``0'' instead of ``$M_0$'' to indicate dependence on $M_0$. For example, we write $N_0=N_{P_0}$ and $A_0=A_{M_0}$. With respect to $M_0$, a parabolic subgroup $P$ of $G$ is called standard (resp. semistandard) if it contains $P_0$ (resp. $M_0$). A Levi subgroup of $G$ is said to be semistandard if it contains $M_0$. If $P$ is a semistandard parabolic, then it has a unique semistandard Levi factor $M_P$.

Let $(P,M)$ be a parabolic pair. We denote the set of simple roots of $(P,A_M)$ (or equivalently $(N_P,A_M)$) by $\Delta(P,A_M)$, which is a basis of $(\ak_M^G)^*$. There is an associated set of simple coroots $\Delta^\vee(P,A_M)$, which is a basis of $\ak_M^G$, and there is a bijection $\Delta(P,A_M)\to\Delta^\vee(P,A_M),\alpha\mapsto\alpha^\vee$. For $(P,M)$ semistandard, we write $\Delta_P=\Delta(P,A_M)$ and $\Delta_P^\vee=\Delta^\vee(P,A_M)$. (Cf. \cite[\S1.3]{MWLocal}.) We define the open cone
\[
(\ak_M^*)^{P,>0}=\{\lambda\in\ak_M^* : \langle\lambda,\alpha^\vee\rangle>0,\,\forall\alpha\in\Delta(P,A_M)\},
\]
which plays a role in the Langlands classification. We denote its closure by $(\ak_M^*)^{P,\geq0}$. Similarly, we have the open cone
\[
(\ak_M)^{P,>0}=\{H\in\ak_M : \langle\alpha,H\rangle>0,\,\forall\alpha\in\Delta(P,A_M)\}
\]
and its closure $(\ak_M)^{P,\geq0}$.

\subsection{Maximal compact subgroups}
A maximal compact subgroup $K$ of $G(F)$ is said to be in good position (or admissible) relative to a Levi subgroup $M$ of $G$ if the following holds.
\begin{itemize}
    \item When $F$ is archimedean, the Lie algebra of $K$ and $A_M(F)$ are orthogonal with respect to the Killing form of $G$.
    \item When $F$ is non-archimedean, $K$ is the stabiliser of a special vertex in the apartment attached to a maximal split torus of $M$.
\end{itemize}
We recall from \cite[\S1]{ArtTFInvForm} that if $K$ is in good position relative to $M$, then 
\begin{enumerate}
    \item $G(F)=P(F)K$ for any $P\in\Pc(M)$;
    \item any coset in $G(F)/M(F)$ which normalises $M$ has a representative in $K$;
    \item $P(F)\cap K=(M_P(F)\cap K)(N_P(F)\cap K)$ for any parabolic subgroup $P$ of $G$ containing $M$; and
    \item if $L\in\Lc(M)$, then $K_L\defeq K\cap L(F)$ is a maximal compact subgroup of $L(F)$ that is in good position relative to the Levi subgroup $M$ of $L$.
\end{enumerate}
If $K$ is in good position relative to $M$, then $K$ is in good position relative to every Levi subgroup containing $M$.

We fix a maximal compact subgroup $K$ of $G(F)$ that is in good position relative to our fixed minimal Levi subgroup $M_0$ of $G$. In particular, the Iwasawa decomposition $G(F)=P_0(F)K$ holds.

Let $P$ be a semistandard parabolic subgroup of $G$. We can extend $H_{M_P}:M_P(F)\to\ak_{M_P}$ to a homomorphism $H_P:P(F)\to\ak_{M_P}$ by composing with $P(F)\to M_P(F)$. We have the decompositions $G(F)=P(F)K=M_P(F)N_P(F)K$. For each $x\in G(F)$ we choose elements $p_P(x)\in P(F)$ and $k_P(x)\in K$ such that $x=p_P(x)k_P(x)$, and let $m_P(x)\in M_P(F)$ and $n_P(x)\in N_P(F)$ be the unique elements such that $p_P(x)=m_P(x)n_P(x)$. We extend $H_{P}$ to a function $H_P:G(F)\to\ak_{M_P}$ by $H_P(x)=H_P(p_P(x))=H_{M_P}(m_P(x))$.

Let $\Delta_0=\Delta_{P_0}$, $\ak_0^{\geq0}=(\ak_0)^{P_0,\geq0}$, and define $M_0(F)^{\geq0}$ to be the set of $m\in M_0(F)$ such that $H_0(m)\in\ak_0^{\geq0}$. Then $G(F)=KM_0(F)^{\geq0}K$. (See \cite[\S1.1]{MWLocal}.)

\subsection{Weyl groups}
For $T$ a torus of $G$, we make use of the following Weyl groups:
\begin{itemize}
    \item the absolute Weyl group $W(G,T)\defeq N_G(T)/C_G(T)$;
    
    \item the relative Weyl group $W_F(G,T)\defeq N_{G(F)}(T)/C_{G(F)}(T)$; and
    
    \item the stable Weyl group $W(G,T)(F)\defeq(N_G(T)/C_G(T))(F)$.
\end{itemize}
We have 
\[
W_F(G,T)\subseteq W(G,T)(F)\subseteq W(G,T).
\]
By Galois descent, we can express the stable Weyl group as
\[
W(G,T)(F)=\{g\in N_G(T) \mid g^{-1}\sigma(g)\in C_G(T),\,\forall\sigma\in\Gamma_F\}/C_G(T),
\]
and therefore $W(G,T)(F)$ consists of all $w\in W(G,T)$ such that 
\[
\Int(w):C_G(T)\to C_G(T)
\]
is defined over $F$.

If $T$ is a maximal torus of $G$, then $C_G(T)=T$ and $W_F(G,T)$ is equal to $W(G(F),T(F))\defeq N_{G(F)}(T(F))/C_{G(F)}(T(F))$. For a Levi subgroup $M$ of $G$, we write $W^G(M)$ for the relative Weyl group $W_F(G,A_M)$. That is,
\[
W^G(M)=N_{G(F)}(A_M)/C_{G(F)}(A_M)=N_{G(F)}(M)/M(F).
\]
The Weyl group $W^G(M_0)$ is the relative Weyl group of $G$, and we abbreviate it by $W_0^G$. We have
\[
W^G(M)=\{\wt{w}\in N_{G(F)}(A_0) : \wt{w}\cdot M=M\}/M(F)
\]
and we have a canonical isomorphism
\[
W^G(M)=\{w\in W_0^G : w\cdot M=M\}/W_0^M.
\]

Fix a $W_0^G$-invariant inner product $\langle\cdot,\cdot\rangle$ on $\ak_0$. This restricts to a $W^G(M)$-invariant inner product on $\ak_M$ for each semistandard Levi subgroup $M$ of $G$. Transporting these inner products by conjugation, we obtain a unique $W^G(M)$-invariant inner product $\langle\cdot,\cdot\rangle$ on $\ak_M$ for each Levi subgroup $M$ of $G$. We denote the associated norm by $\|\cdot\|$. The decomposition $\ak_M=\ak_M^G\oplus\ak_G$ is orthogonal with respect to the inner product on $\ak_M$.

\subsection{Group norms}
We recall the notion of norms on affine varieties over local fields from \cite[\S18]{KottwitzHarmonic}, which are used to capture polynomial growth and decay. Let $X$ be a set. An abstract norm on $X$ is a function $\|\cdot\|:X\to\RR_{\geq1}$. Let $\|\cdot\|_1,\|\cdot\|_2$ be abstract norms on $X$. We write $\|\cdot\|_1\preceq\|\cdot\|_2$ if there exists $M>0$ such that $\|\cdot\|_1\ll\|\cdot\|_2^M$. The abstract norms $\|\cdot\|_1,\|\cdot\|_2$ are said to be equivalent and we write $\|\cdot\|_1\approx\|\cdot\|_2$ if $\|\cdot\|_1\preceq\|\cdot\|_2$ and $\|\cdot\|_2\preceq\|\cdot\|_1$.

Let $X$ be an affine scheme of finite type over $F$. There is a canonical equivalence class of abstract norms on $X(F)$, defined as follows. For any set of generators $f_1,\dots,f_m$ of the $F$-algebra $\Oc_X(X)$, of regular functions on $X$, we have an abstract norm $\|\cdot\|$ on $X(F)$ defined by
\[
\|x\|\defeq\sup\{1,|f_1(x)|_F,\dots,|f_m(x)|_F\}.
\]
The equivalence class of $\|\cdot\|$ does not depend on the choice of $f_1,\dots,f_m$. We call any abstract norm in the equivalence class of $\|\cdot\|$ a norm on $X(F)$. For the definition of norms on $X(F)$ when $X$ is a non-affine scheme of finite type over $F$, see \cite[\S18.5]{KottwitzHarmonic}.

Let $G$ be a linear algebraic group over $F$. By \cite[Proposition 18.1 (7)]{KottwitzHarmonic}, if $\Omega$ is a bounded (relatively compact) subspace of $G(F)$, for every norm $\|\cdot\|$ on $G(F)$ there exist $C,M>0$ such that $\|\omega_1g\omega_2\|\leq C\|g\|^M$ for all $\omega_1,\omega_2\in\Omega$ and $g\in G(F)$.

We may construct a norm on $G(F)$ as follows. Let $\iota:G\to\GL(V)$ be a faithful algebraic representation of $G$ on a finite-dimensional vector space $V$ over $F$. Choose a vector space norm $\|\cdot\|$ on $V$ that is compatible with the canonical absolute value $|\cdot|_F$ on $F$. For example, if $e_1,\dots,e_n$ is a basis of $V$ and $v=\sum_{i=1}^na_ie_i$, we can take $\|v\|=\max_{i=1,\dots,n}\{|a_i|_F\}$. We equip $\End(V)$ with the corresponding operator norm, which we also denote by $\|\cdot\|$. Define $\|g\|=\max(\|\iota(g)\|,\|\iota(g^{-1})\|)$ for all $g\in G(F)$. Then $\|\cdot\|:G(F)\to\RR_{\geq1}$ is a norm on $G(F)$ that satisfies the following properties:
\begin{enumerate}
    \item $\|\cdot\|$ is continuous;
    \item $\|g_1g_2\|\leq\|g_1\|\|g_2\|$ for all $g_1,g_2\in G(F)$;
    \item $\|g^{-1}\|=\|g\|\geq1$ for all $g\in G(F)$;
    \item for all $R\geq 0$, the subspace $B_R=\{g\in G(F) : \|g\|\leq R\}$ is compact.
\end{enumerate}
Since $\|\cdot\|$ is a norm on $G(F)$, changing the choice of $\iota$ or of the vector space norm on $V$ results in an equivalent norm on $G(F)$. We call the norms $\|\cdot\|$ on $G(F)$ that are obtained from the above construction of group norms.

If $\Omega_1,\Omega_2$ are bounded subsets of $G(F)$, then $\|\omega_1x\omega_2\|\asymp\|x\|$ for $x\in G(F)$ and $\omega_i\in\Omega_i$. This follows from 
\[
\|x\|=\|y_1^{-1}y_1xy_2y_2^{-1}\|\leq\|y_1^{-1}\|\|y_1 xy_2\|\|y_2^{-1}\|
\]
and $\|y_1xy_2\|\leq\|y_1\|\|x\|\|y_2\|$ for all $x,y_1,y_2\in G(F)$.

Now suppose that $G$ is our connected reductive group. We fix a group norm $\|\cdot\|=\|\cdot\|_G$ on $G(F)$ and define the non-negative function $\sigma=\sigma_G\defeq\log\|\cdot\|$. For a closed central subgroup $\Zc$ of $G(F)$, we define $\sigma^{\Zc}=\sigma_G^\Zc:G(F)\to\RR_{\geq0}$ by $\sigma^{\Zc}(g)=\inf_{z\in \Zc}\sigma(zg)$. We will use this to define growth properties modulo $\Zc$. If $\Zc$ is of the form $\Zc=Z(F)$ for a subgroup $Z\subseteq Z_G$, we will write $\sigma^Z=\sigma_G^Z$ instead of $\sigma^{\Zc}=\sigma_G^\Zc$ to simplify the notation.

Recall that we have fixed a $W_0^G$-invariant inner product on $\ak_0$. There exist constants $C_1,C_2>0$ such that
\[
C_1(1+\|H_0(m_0)\|)\leq 1+\sigma(m_0)\leq C_2(1+\|H_0(m_0)\|)
\]
for all $m_0\in M_0(F)$. That is, $1+\sigma(m_0)\asymp 1+\|H_0(m_0)\|$ for $m_0\in M_0(F)$.

\subsection{Universal enveloping algebras}
Let $\gk$ be a complex reductive Lie algebra. We denote the centre of the universal enveloping algebra $\Uk(\gk)$ of $\gk$ by $\Zk(\gk)$. We recall that 
\[
\Zk(\gk)=\Uk(\gk)^{\ad(\gk)}=\Uk(\gk)^{e^{\ad(\gk)}}=\Uk(\gk)^{\Int(\gk)},
\]
where $\Int(\gk)$ is the group of inner automorphisms of $\gk$, that is, the connected subgroup of $\Aut(\gk)$ with Lie algebra $\ad(\gk)$. 

For each Levi subalgebra $\mk$ of $\gk$ there is a canonical Harish-Chandra homomorphism $\xi_{\mk}^\gk:\Zk(\gk)\to\Zk(\mk)$. (See Definition 6.3 in \cite{VoganUnitary}.) The construction of $\xi_\mk^\gk$ uses a choice of a parabolic subalgebra $\pk$ with Levi factor $\mk$, but $\xi_\mk^\gk$ does not depend on this choice. The Harish-Chandra homomorphism is injective and has image $\Zk(\mk)^{W(\gk,\mk)}$. The isomorphism $\xi_\mk^\gk:\Zk(\gk)\to\Zk(\mk)^{W(\gk,\mk)}$ is often called a Harish-Chandra isomorphism. The Harish-Chandra homomorphisms are functorial in the sense that if $\lk\subseteq\mk$ are two Levi subalgebras of $\gk$, then $\xi_\lk^\gk=\xi_\lk^\mk\circ\xi_\mk^\gk$. When $\mk=\tk$ is a maximal toral subalgebra, then $\Zk(\tk)=\Uk(\tk)=\Sym(\tk)$ and the Harish-Chandra isomorphism $\xi_\tk^\gk:\Zk(\gk)\to\Sym(\tk)^{W(\gk,\tk)}$ is the classical Harish-Chandra isomorphism most commonly found in the literature. If $\tk$ is contained in $\mk$, then it follows from functoriality of the Harish-Chandra homomorphisms that the following diagram commutes
\[
\begin{tikzcd}
    \Zk(\gk) \arrow[r, "\xi_\mk^\gk"] \arrow[d,swap,"\xi_{\tk}^\gk"] & \Zk(\mk) \arrow[d, "\xi_\tk^\mk"] \\
    \Sym(\tk)^{W(\gk,\tk)} \arrow[r,hook] & \Sym(\tk)^{W(\mk,\tk)}
\end{tikzcd}
\]
That is, if we identify $\Zk(\gk)=\Sym(\tk)^{W(\gk,\tk)}$ and $\Zk(\mk)=\Sym(\tk)^{W(\mk,\tk)}$ using the Harish-Chandra isomorphisms, then the Harish-Chandra homomorphism $\xi_\mk^\gk:\Zk(\gk)\to\Zk(\mk)$ is simply the inclusion $\Sym(\tk)^{W(\gk,\tk)}\hookrightarrow\Sym(\tk)^{W(\mk,\tk)}$.

If $G$ is a real Lie group, then $\Uk(\gk_\CC)^{\Ad(G)}\subseteq\Zk(\gk_\CC)$. If $G$ belongs to the Harish-Chandra class, in particular, if $G$ is the group of $\RR$-points of a connected reductive group over $\RR$, then $\Ad(G)\subseteq\Int(\gk_\CC)$ and it follows that $\Zk(\gk_\CC)=\Uk(\gk_\CC)^{\Ad(G)}$. 

We will need a ``norm'' on the set $\Hom_{\CC\alg}(\Zk(\gk_\CC),\CC)$ of infinitesimal characters  of $G$. Choose a maximal torus $T$ of $G_\CC$. By the Harish-Chandra isomorphism, we may identify 
\[
\Hom_{\CC\alg}(\Zk(\gk_\CC),\CC)=\tk^*/W(G_\CC,T).
\]
We fix a $W(G_\CC,T)$-invariant inner product on $\tk^*$. Then, for $\mu\in \tk^*/W(G_\CC,T)$ the resulting norm $\|\mu\|=\|\mu\|_G$ is well-defined. Different choices of inner products or maximal tori result in a function $\|\cdot\|'$ on the set of infinitesimal characters of $G$ with $\|\cdot\|'\asymp\|\cdot\|$. Since we will only use $\|\cdot\|$ in estimates, definitions made using $\|\cdot\|$ will not depend on the choices made to define it. Note that if $M$ is a Levi subgroup of $G$, then $\|\cdot\|_M\asymp\|\cdot\|_G$.

\subsection{Measures}
We normalise the Haar measure $\dd^+x$ on $F$ so that $\vol(\Oc_F)=1$ if $F$ is non-archimedean, $\vol(|x|_F\leq 1)=2$ if $F\cong\RR$, and $\vol(|x|_F\leq 1)=\pi$ if $F\cong\CC$. We use the Haar measure $\dd^\times x$ on $F^\times$ defined by $\dd^\times x=|x|_F^{-1}\dd^+x$ if $F$ is archimedean, and $\dd^\times x=(1-q_F)^{-1}|x|_F^{-1}\dd^+x$ if $F$ is non-archimedean, so that $\vol(\Oc_F^\times)=1$.

Let $T$ be any torus over $F$. We define canonical normalisations of the Haar measures on $A_T(F)$, $\ak_T$, $A_T(F)^1$, and $T(F)$ as follows. Let $l$ be the split rank of $T$ and choose an isomorphism $A_T\cong\GG_m^l$, that is, a $\ZZ$-basis of $X^*(A_T)$. This determines isomorphisms $A_T(F)\cong(F^\times)^l$ and $\ak_T\cong\RR^l$. Transport the product Haar measure on $(F^\times)^l$ to $A_T(F)$ and the usual Haar measure on $\RR^l$ to $\ak_T$. The resulting Haar measures on $A_T(F)$ and $\ak_T$ do not depend on the choice of isomorphism $A_T\cong\GG_m^l$. We use the counting measure for lattices in $\ak_T$ and $\ak_T^*$. If $F$ is archimedean, then $\wt{\ak}_{T,F}=\ak_T$. If $F$ is non-archimedean, then $\wt{\ak}_{T,F}$ is a lattice in $\ak_T$; in fact, under our chosen isomorphism $\ak_T\cong\RR^l$, we have $\wt{\ak}_{T,F}\cong(\log q_F\ZZ)^l$ and thus $\vol(\ak_T/\wt{\ak}_{T,F})=(\log q_F)^l$. In any case, we have fixed a Haar measure on $\wt{\ak}_{T,F}$. We normalise the Haar measure of the maximal compact subgroup $A_T(F)^1$ of $A_T(F)$ so that with respect to the short exact sequence
\[
1\longrightarrow A_T(F)^1\longrightarrow A_T(F)\longrightarrow\wt{\ak}_{T,F}\longrightarrow 0
\]
it is compatible with the normalisations of the Haar measures on $A_T(F)$ and $\ak_T$, 
If $F\cong\RR$, then $A_T(F)^1\cong\{\pm1\}^l$ and the Haar measure on $A_T(F)^1$ is the counting measure. If $F\cong\CC$, then $A_T(F)^1\cong (S^1)^l$ and the Haar measure on $A_T(F)^1$ is normalised so that $\vol(A_T(F)^1)=(2\pi)^l$. If $F$ is non-archimedean, then $A_T(F)^1\cong(\Oc_F^\times)^l$ and the Haar measure on $A_T(F)^1$ is normalised so that $\vol(A_T(F)^1)=1$, and thus it coincides with the restriction of our Haar measure on $A_T(F)$. We normalise the Haar measure on $T(F)$ so that $\vol(T(F)/A_T(F))=1$.

Let $M$ be a Levi subgroup of $G$. Recall that we have fixed an inner product on $\ak_M$. We identify $i\ak_M^*$ with the Pontryagin dual of $\ak_M$ via $\lambda\mapsto e^{\langle\lambda,\cdot\rangle}$ and equip $i\ak_M^*$ with the Haar measure dual to the fixed Haar measure on $\ak_M$.

Let $P$ be a semistandard parabolic of $G$. The group $N_P(F)$ has a canonical Haar measure, which we now describe, following \cite{ArthurLocalTF}. Let $\delta_P$ be the modular function of $P$. We have 
\[
\delta_P(mn)=e^{\langle H_M(m),2\rho_P \rangle},
\]
for all $m\in M_P(F)$ and $n\in N_P(F)$, where $\rho_P$ is half the sum of the roots (counted with multiplicity) of $(P,A_{M_P})$. Let $\ol{P}$ be the parabolic subgroup opposite to $P$ with respect to $M_P$. For any Haar measure $\nu$ on $N_P(F)$, the integral
\[
\gamma(P,\nu)=\int_{N_P(F)}e^{\langle H_{\ol{P}}(n),2\rho_{\ol{P}} \rangle} \dd{\nu(n)} >0
\]
converges and we have $\gamma(P,c\nu)=c\gamma(P,\nu)$ for all $c>0$. Therefore $\gamma(P,\nu)^{-1}\nu$ is a canonical Haar measure on $N_P(F)$. From now on, we will use the canonical Haar measure on $N_P(F)$.

Let $M$ be a semistandard Levi subgroup of $G$. The Haar measures on $M(F)$ and $G(F)$ can be normalised so that for every parabolic subgroup $P$ of $G$ with Levi factor $M$, we have
\[
\int_{G(F)}f(g)\dd{g}=\int_{N_P(F)}\int_{M(F)}\int_{N_{\ol{P}}(F)}f(nm\ol{n})\delta_P(m)^{-1}\dd{\ol{n}}\dd{m}\dd{n}.
\]
Following \cite{ArthurLocalTF}, we say that the Haar measures $\dd{x}$ and $\dd{m}$ are compatible in this case. Let $L\subseteq M$ be a semistandard Levi subgroup of $G$. Then $L$ is a semistandard Levi subgroup of $M$. Let $\dd{g}$, $\dd{m}$, and $\dd{l}$ be Haar measures on $G(F)$, $M(F)$, and $L(F)$ respectively. If any two of the pairs $(\dd{g},\dd{m})$, $(\dd{m},\dd{l})$, and $(\dd{g},\dd{l})$ are compatible, then so is the third pair. We say that a collection of Haar measures $\{\mu_M\}_{M\in\Lc^G(M_0)}$ on the semistandard Levi subgroups $M(F)$ of $G(F)$ (including $G(F)$ itself) is compatible if each Haar measure $\mu_M$ is compatible with the Haar measure $\mu_G$ on $G(F)$, or equivalently $\dd{m}$ is compatible with $\dd{l}$ whenever $L\subseteq M$. If $\{\mu_M\}_{M\in\Lc^G(M_0)}$ and $\{\mu_{M}'\}_{M\in\Lc^G(M_0)}$ are two collections of compatible Haar measures on the semistandard Levi subgroups of $G(F)$, there exists $c>0$ such that $\mu_M'=c\mu_M$ for all $M\in\Lc^G(M_0)$. We choose a collection of compatible Haar measures $\dd{m}$ on the semistandard Levi subgroups $M(F)$ of $G(F)$. If the minimal Levi $M_0$ is a torus (necessarily maximal in $G$), then its Haar measure is already fixed and we must choose the Haar measures of the other semistandard Levis compatibly.

For a semistandard parabolic subgroup $P$ of $G$, we normalise the left Haar measure $\dd_lp$ on $P(F)$ so that
\[
\int_{P(F)}f(p)\dd_l{p}=\int_{M_P(F)}\int_{N_P(F)}f(mn)\dd{n}\dd{m}.
\]
The corresponding right Haar measure $\dd_rp$ on $P(F)$ is
\begin{align*}
\int_{P(F)}f(p)\dd_r{p}&=\int_{M_P(F)}\delta_P(m)^{1/2}\int_{N_P(F)}f(mn)\dd{n}\dd{m} \\
&=\int_{M_P(F)}\int_{N_P(F)}f(nm)\dd{n}\dd{m}.
\end{align*}

For each semistandard Levi subgroup $M$ of $G$, we normalise the Haar measure on the maximal compact subgroup $K_M\subseteq M(F)$ so that $\vol(K_M)=1$. In particular, $\vol(K)=1$. We have the integral formulas
\[
\int_Kf(k)\dd{k}=\int_{N_{\ol{P}}}f(k_P(\ol{n}))e^{2\rho_P(H_P(\ol{n}))}\dd{\ol{n}}
\]
and
\[
\int_{G(F)}f(g)\dd{g}=\int_K\int_{P(F)}f(pk)\dd{n}\dd_lp,
\]
as in \cite{ArthurLocalTF}. By conjugation, the Haar measures on semistandard Levi subgroups, parabolic subgroups, and their unipotent radicals can be transported canonically to arbitrary Levi subgroups, parabolic subgroups, and their unipotent radicals.

\subsection{Spaces of functions and distributions}

Let $\Zc$ denote a closed central subgroup of $G(F)$ equipped with a choice of Haar measure and let $\zeta:\Zc\to\CC^\times$ be a character of $\Zc$. The pair $(\Zc,\zeta)$ is called a central character datum or simply a central datum of $G$. We say that a central datum $(\Zc,\zeta)$ is unitary if $\zeta$ is unitary. The trivial central datum is the one with $\Zc=1$. Later, we will only use unitary central data and only consider the cases $\Zc=1$ or $\Zc=A_G(F)$. We may omit $\Zc$ and $\zeta$ from notation if the central datum $(\Zc,\zeta)$ is trivial in the sense that $\Zc=1$. For a function $f:G(F)\to\CC$ and $z\in\Zc$, we define $zf:G(F)\to\CC$ by $zf(g)=f(z^{-1}g)$.

We write $C_c(G,\zeta)$ for the space of continuous functions on $G(F)$ that are $\zeta^{-1}$-equivariant (i.e. $f(zg)=\zeta(z)^{-1}f(g)$, or equivalently $zf=\zeta(z)f$) and whose support is compact modulo $\Zc$. The space $C_c(G,\zeta)$ has a natural topology. For each closed subspace $B$ of $G(F)$ that is $\Zc$-stable and compact modulo $\Zc$, we define $C_B(G,\zeta)$ to be the subspace of functions in $C_c(G,\zeta)$ whose support is contained in $B$. It is a Banach space with respect to the sup norm. We have $C_c(G,\zeta)=\bigcup_BC_B(G,\zeta)$ and give $C_c(G,\zeta)$ the inductive limit topology in the category of locally convex spaces, making it a strict LF-space. The space of $\zeta$-equivariant Radon measures on $G(F)$ is $C_c(G,\zeta)'$, the continuous dual with the (standard) strong dual topology.

Similarly, we denote by $C_c^\infty(G,\zeta)$ the space of all functions $f:G(F)\to\CC$ that are smooth (i.e. locally constant if $F$ is non-archimedean), $\zeta^{-1}$-equivariant, and whose support is compact modulo $\Zc$. The space $C_c^\infty(G,\zeta)$ also has a natural topology. Suppose that $F$ is archimedean. Let $B$ be a closed subspace of $G(F)$ that is $\Zc$-stable and compact modulo $\Zc$. Define $C_B^\infty(G,\zeta)$ to be the subspace of functions in $C_c^\infty(G,\zeta)$ whose support is contained in $B$. We give $C_B^\infty(G,\zeta)$ the Fr\'echet space topology defined by the family of seminorms
\[
\|f\|_{D}\defeq\sup_{g\in G(F)}|Df(g)|
\]
for $D$ a $\Zc$-invariant differential operator on $D$. We have $C_c^\infty(G,\zeta)=\bigcup_{B}C_B^\infty(G,\zeta)$ and give $C_c^\infty(G,\zeta)$ the inductive limit topology in the category of locally convex spaces, making it a strict LF-space. Now, suppose that $F$ is non-archimedean. Then $C_c^\infty(G,\zeta)$ is naturally a countable increasing union of finite-dimensional subspaces, and we equip it with the locally convex limit topology, making it a strict LF-space. The topology on $C_c^\infty(G,\zeta)$ is simply the finest locally convex topology. The space of $\zeta$-equivariant distributions on $G(F)$ is $C_c^\infty(G,\zeta)'$. The inclusion $C_c^\infty(G,\zeta)\to C_c(G,\zeta)$ is continuous with dense image, and thus $\zeta$-equivariant Radon measures may be identified with $\zeta$-equivariant distributions.

We will also make use of the subspace $C_c^\infty(G,\zeta,K)$ of left and right $K$-finite functions in $C_c^\infty(G,\zeta)$. If $F$ is non-archimedean, then $C_c^\infty(G,\zeta,K)=C_c^\infty(G,\zeta)$. For $F$ archimedean, the space $C_c^\infty(G,\zeta,K)$ is naturally an inductive limit \cite{ArtIntResI}. Indeed, let $\Gamma\subseteq\wh{K}$ be a finite set of irreducible representations of $K$. We define $C_c^\infty(G,\zeta,K)_\Gamma$ to be the subspace of functions in $C_c^\infty(G,\zeta,K)$ that transform on each side under $K$ according to a finite direct sum of representations in $\Gamma$. For each compact subspace $B$ of $G$, we define $C_B^\infty(G,\zeta,K)_\Gamma=C_B^\infty(G,\zeta)\cap C_c^\infty(G,\zeta,K)_\Gamma$, which is a closed subspace of $C_B^\infty(G,\zeta)$ and thus a Fr\'echet space. We have $C_c^\infty(G,\zeta,K)_\Gamma=\colim_B C_B^\infty(G,\zeta,K)_\Gamma$ and $C_c^\infty(G,\zeta,K)=\colim_\Gamma C_c^\infty(G,\zeta,K)_\Gamma$, and we give each the locally convex inductive limit topology, which makes them strict LF-spaces. The inclusion $C_c^\infty(G,\zeta,K)\to C_c^\infty(G,\zeta)$ is continuous.

The most natural space for harmonic analysis is the Harish-Chandra Schwartz space. Recall that we have fixed a minimal parabolic $P_0$ of $G$, a Levi factor $M_0$ of $P_0$, and maximal compact subgroup $K$ of $G(F)$ in good position relative to $M_0$. Thus, the Iwasawa decomposition $G(F)=P_0(F)K$ holds. Let $e:G(F)\to\RR_{>0}$ be the unique function satisfying $e(K)=1$ and $e(p_0g)=\delta_{P_0}^{1/2}(p_0)e(g)$ for all $p_0\in P_0(F)$ and $g\in G(F)$. That is, $e$ is the unique smooth vector of the parabolically induced representation $I_{M_0,P_0}^G(1_{M_0})$ satisfying $e(K)=1$. Note that $e$ is $Z_G(F)$-invariant and right $K$-invariant. Although $e$ depends on the choice of $P_0$ and $K$, it does not depend on the choice of $M_0$. We define $\Xi=\Xi^G:G(F)\to\RR_{>0}$ by
\[
\Xi(g)=\int_Ke(kg)\dd{k}
\]
for all $g\in G(F)$. Note that $\Xi$ is $Z_G(F)$-invariant and bi-$K$-invariant. Recall that for $x\in G(F)$, we choose elements $p_0(x)=p_{P_0}(x)\in P_0(F)$ and $k_0(x)=k_{P_0}(x)\in K$ such that $x=p_0(x)k_0(x)$. We have $\Xi(g)=\int_K\delta_{P_0}^{1/2}(p_0(kg))\dd{k}$. Although $\Xi$ depends on the choices of $P_0$ and $K$, different choices result in a Xi-function $\Xi'$ with $\Xi\asymp\Xi'$. Since we will only use $\Xi$ in estimates, the choices made in the definition of $\Xi$ will have no impact on definitions made in terms of it.

A continuous function $f:G(F)\to\CC$ is said to be rapidly decreasing modulo $\Zc$ if $|f|$ is $\Zc$-invariant and it satisfies one of the following equivalent conditions \cite[\S5]{vigneras}.
\begin{enumerate}
    \item For all $N\in\ZZ_{>0}$, we have
    \[
    \|f\|_N\defeq\sup_{g\in G(F)}|f(g)|(1+\sigma^{\Zc}(g))^N\Xi(g)^{-1}<\infty.
    \]
    \item For all $N\in\ZZ_{>0}$, we have $f\in L^2(G(F)/\Zc,(1+\sigma^{\Zc}(g))^{N}\dd{g})$.
\end{enumerate}

We now define the Harish-Chandra Schwartz space $\Cc(G,\zeta)$ for a unitary central datum $(\Zc,\zeta)$ of $G(F)$.

Suppose that $F$ is non-archimedean. The Harish-Chandra Schwartz space $\Cc(G,\zeta)$ is the space of uniformly smooth complex-valued functions on $G(F)$ that are $\zeta^{-1}$-equivariant and rapidly decreasing modulo $\Zc$ (cf. \cite[\S4.4]{Silberger} and \cite[\S5]{vigneras}). Let $K_0$ be a compact open subgroup of $G(F)$ that is sufficiently small so that $\zeta(\Zc\cap K_0)=1$. For each compact open subgroup $K$ of $K_0$, let $\Cc_K(G,\zeta)$ denote the subspace of $\Cc(G,\zeta)$ that are left and right $K$-invariant. Then $\Cc(G,\zeta)=\bigcup_{K}\Cc_K(G,\zeta)$. The spaces $\Cc_K(G,\zeta)$ are Fr\'echet spaces when given the topology determined by the seminorms $\|\cdot\|_N$. The space $\Cc(G,\zeta)$ is given the inductive limit topology in the category of locally convex spaces and is an LF-space.

Suppose that $F$ is archimedean. The Harish-Chandra Schwartz space $\Cc(G,\zeta)$ is defined to be the space of all smooth functions $f$ on $G(F)$ such that all derivatives $ufv$ for $u,v\in\Uk(\gk_\CC)$ are rapidly decreasing modulo $\Zc$. Given $u,v\in\Uk(\gk_\CC)$ and $N\in\ZZ_{>0}$, for each $f\in C^\infty(G(F))$, we define
\[
\|f\|_{u,v,N}=\|ufv\|_N=\sup_{g\in G(F)}|ufv(g)|(1+\sigma^Z(g))^N\Xi(g)^{-1}.
\]
Then $\Cc(G,\zeta)$ is a Fr\'echet space when given the topology determined by the seminorms $\|\cdot\|_{u,v,N}$. There are other Schwartz spaces of $G(F)$ in the literature, but since we will only use the Harish-Chandra Schwartz space we may refer to its elements as Schwartz functions. We will write $\Cc_{c}(G,\zeta)=C_c^\infty(G,\zeta)$ and we will write $\Cc_{(c)}(G,\zeta)$ as a shorthand for $\Cc(G,\zeta)$ (resp. $\Cc_{c}(G,\zeta)$) so that we can make statements about both spaces at the same time.

The other characterisation above of a function being rapidly decreasing modulo $\Zc$ leads to a different way of defining the topology on the Schwartz space $\Cc(G,\zeta)$. Denote the $L^2$-norm on $L^2(G(F)/\Zc, (1+\sigma^{\Zc}(g))^N\dd{g})$ by $\|\cdot\|_{2,N}$. Replacing $\|\cdot\|_N$ with $\|\cdot\|_{2,N}$ in the definition of the topology on $\Cc(G,\zeta)$ given above yields in the same topology. 

We define the space of tempered $\zeta$-equivariant distributions to be $\Cc(G,\zeta)'$. Since the inclusion $C_c^\infty(G,\zeta)\to\Cc(G,\zeta)$ is continuous with dense image, we have a continuous injection $\Cc(G,\zeta)'\to C_c^\infty(G,\zeta)'$. Thus, we may identify tempered distributions with distributions, and a distribution $\Theta\in C_c^\infty(G,\zeta)'$ is tempered if and only if it extends to a continuous linear functional $\Theta:\Cc(G,\zeta)\to\CC$.

For $f:G(F)\to\CC$, we define the function $f^\zeta:G(F)\to\CC$ by
\[
f^\zeta(g)=\int_{\Zc}\zeta(z)f(zg)\dd{z},
\]
provided this integral converges absolutely. This defines open continuous surjections $C_c^\infty(G)\to C_c^\infty(G,\zeta)$, $C_c(G)\to C_c(G,\zeta)$, and $\Cc(G)\to\Cc(G,\zeta)$. Given a $\zeta$-equivariant distribution $D\in C_c^\infty(G,\zeta)'$ on $G(F)$, we may thus define a distribution $D^\zeta\in C_c^\infty(G)'$ by pullback: $D^\zeta(f)=D(f^\zeta)$. The distribution $D^\zeta$ is a Radon measure (resp. tempered distribution) on $G(F)$ if $D$ is. Moreover, the distribution $D^\zeta$ is $\zeta$-equivariant in the sense that $D^\zeta(zf)=\zeta(z)D^\zeta(f)$ for all $z\in\Zc$ and $f\in C_c^\infty(G)$.  Conversely, a distribution $\wt{D}\in C_c^\infty(G)'$ that is $\zeta$-equivariant descends to a distribution $D\in C_c^\infty(G,\zeta)'$, that is, a $\zeta$-equivariant distribution. In this way, one can identify $\zeta$-equivariant distributions on $G(F)$ with distributions on $G(F)$ that are $\zeta$-equivariant, and likewise for Radon measures and tempered distributions.

\section{Invariant Harmonic Analysis} \label{sec:invariant}

\subsection{Invariant distributions}
Let $G_\sr\subseteq G_\rs$ denote the open subvarieties of $G$ consisting of strongly regular (semisimple) elements and regular semisimple elements, respectively. Let
\[
\Gamma_\sr(G)\subseteq\Gamma_{\rs}(G)\subseteq\Gamma(G)
\]
denote the spaces of conjugacy classes of elements in $G_\sr(F)$, $G_\rs(F)$, and $G(F)$, respectively. Then $\Gamma_\sr(G)$ and $\Gamma_{\rs}(G)$ are open dense locally compact Hausdorff subspaces of $\Gamma(G)$, and $\Gamma_\sr(G)$ is naturally an $F$-analytic manifold. 

\subsubsection{Orbital integrals.}
Fix a central datum $(\Zc,\zeta)$ of $G$ and let $f\in C_c(G,\zeta)$. Let $\gamma\in G_\rs(F)$ and let $G_\gamma=C_G(\gamma)^\circ$, a maximal torus of $G$. The orbital integral of $f$ at $\gamma$ is defined by
\[
O_\gamma(f)=\int_{G_\gamma(F)\under G(F)}f(g^{-1}\gamma g)\dd{g}.
\]
The resulting linear functional $O_\gamma:C_c(G,\zeta)\to\CC$ is a $\zeta$-equivariant Radon measure on $G(F)$, it only depends on the conjugacy class $\gamma\in\Gamma_\rs(G)$, and the map $\gamma\mapsto O_\gamma$ on $\Gamma_\rs(G)$ is $\zeta^{-1}$-equivariant. Moreover, the $\zeta$-equivariant distribution $O_\gamma$ is tempered.

We write $D^G$ for the Weyl discriminant of $G$. For $\gamma\in G_\rs(F)$, if we write $T=G_\gamma$ we have
\[
D^G(\gamma)=\det(1-\Ad(\gamma)|_{\gk/\tk}).
\]
We recall the Weyl integration formula
\[
\int_{G(F)}f(g)\dd{g}=\sum_{\{T\}}|W_F(G,T)|^{-1}\int_{T(F)}|D^G(t)|O_t(f)\dd{t},
\]
where $f$ is an integrable function on $G(F)$ and $\{T\}$ runs over the set of $G(F)$-conjugacy classes of maximal tori of $G$. The Weyl integration formula makes it natural to define a Radon measure $\dd{\gamma}$ on $\Gamma_\rs(G)$ by
\[
\int_{\Gamma_\rs(G)}\alpha(\gamma)\dd{\gamma}=\sum_{\{T\}}|W_F(G,T)|^{-1}\int_{T(F)}\alpha(t)\dd{t}
\]
for all $\alpha\in C_c(\Gamma_\rs(G))$. The Weyl integration formula then becomes
\[
\int_{G(F)}f(g)\dd{g}=\int_{\Gamma_\rs(G)}|D^G(\gamma)|O_\gamma(f)\dd{\gamma}.
\]
Note that $\Gamma_{\sr}(G)$ has full measure in $\Gamma_{\rs}(G)$. 

For $f\in\Cc(G,\zeta)$ and $\gamma\in\Gamma_\rs(G)$, we write
\[
f_G(\gamma)=|D^G(\gamma)|^{1/2}O_\gamma(f)
\]
for the normalised orbital integral of $f$ on $\gamma$. The function $\gamma\mapsto O_\gamma(f)$ is continuous on $\Gamma_{\rs}(G)$ and smooth on $\Gamma_\sr(G)$. Thus, the same is true of the function $f_G$. Moreover, $f_G$ is locally bounded. We remark that since $\Gamma_{\sr}(G)$ is an open dense subset of full measure in $\Gamma_\rs(G)$, many definitions can be made either using $\Gamma_{\sr}(G)$ or $\Gamma_\rs(G)$.

We define the spaces of orbital integrals $\Ic(G,\zeta)$ and $\Ic_c(G,\zeta)$ by
\begin{align*}
\Ic_{(c)}(G,\zeta)&=\{f_G:\Gamma_\rs(G)\to\CC : f\in\Cc_{(c)}(G,\zeta)\} \\
&=\Cc_{(c)}(G,\zeta)/\Ann_{\Cc_{(c)}(G,\zeta)}(\{O_\gamma : \gamma\in\Gamma_\rs(G)\})
\end{align*}
with their natural quotient topologies. We define the space $\Ic_{c}(G,\zeta,K)$ of orbital integrals of elements of $C_c^\infty(G,\zeta,K)$ in an analogous way. This space does not depend on the choice of $K$. For non-archimedean $F$, this is because $C_c^\infty(G,\zeta,K)=C_c^\infty(G,\zeta)$. For archimedean $F$, it follows from the fact that all maximal compact subgroups of $G(F)$ are $G(F)$-conjugate. Thus, we will write $\Ic_{f}(G,\zeta)=\Ic_{c}(G,\zeta,K)$.

We recall that a maximal torus is said to be elliptic if $T/A_G$ is anisotropic, or equivalently if $A_T=A_G$. A semisimple element $\gamma\in G(F)$ is said to be elliptic if $\gamma\in T(F)$ for some elliptic maximal torus $T$ of $G$. We have open subvarieties $G_{\sr,\el}\subseteq G_{\rs,\el}\subseteq G$ and we write $\Gamma_{\sr,\el}(G)\subseteq\Gamma_{\rs,\el}(G)$ for the conjugacy classes of elements in $G_{\sr,\el}(F)$ and $G_{\rs,\el}(F)$, respectively. A function $f\in\Cc(G,\zeta)$ is said to be cuspidal if $f_G(\gamma)=0$ for all $\gamma\in\Gamma_\rs(G)\setminus\Gamma_{\rs,\el}(G)$. We denote the subspace of cuspidal functions in $\Cc_{(c)}(G,\zeta)$ by $\Cc_{(c),\cusp}(G,\zeta)$, and we denote its image in $\Ic_{(c)}(G,\zeta)$ by $\Ic_{(c),\cusp}(G,\zeta)$. We denote the subspace of cuspidal functions in $C_c^\infty(G,\zeta,K)$ by $C_{c,\cusp}^\infty(G,\zeta,K)$, and its image in $\Ic_f(G,\zeta)$ by $\Ic_{f,\cusp}(G,\zeta)$.

\subsubsection{Invariance.}
We refer to elements of $\Ic_c(G,\zeta)'$ (resp. $\Ic(G,\zeta)'$) as invariant ($\zeta$-equivariant) distributions (resp. invariant tempered ($\zeta$-equivariant) distributions). Note that we have a continuous linear injection $\Ic_c(G,\zeta)\to\Ic(G,\zeta)$ with dense image. Its transpose is a continuous linear injection $\Ic(G,\zeta)'\to\Ic_c(G,\zeta)'$, which enables us to identify each invariant tempered distribution with an invariant distribution.

We may identify $\Ic_c(G,\zeta)'$ with a subspace of $\Cc_{(c)}(G,\zeta)'$ via the transpose of the quotient map $\Cc_{(c)}(G,\zeta)\to\Ic_{(c)}(G,\zeta)$. As vector spaces, we have 
\begin{align*}
    \Ic_{(c)}(G,\zeta)'&=\Ann_{\Cc_{(c)}(G,\zeta)'}(\Ann_{\Cc_{(c)}(G,\zeta)}(\{O_\gamma : \gamma\in\Gamma_\rs(G)\})) \\
    &=\cl_{\Cc_{(c)}(G,\zeta)',\textrm{weak-}*}(\{O_\gamma : \gamma\in\Gamma_\rs(G)\}).
\end{align*}
That is, a distribution in $\Cc_{(c)}(G,\zeta)'$ belongs to $\Ic_{(c)}(G,\zeta)'$ if and only if it lies in the weak-$*$ closure in $\Cc_{(c)}(G,\zeta)'$ of the linear span of the set of regular semisimple orbital integrals of $G$. A locally integrable function $\Theta$ on $G(F)$ that is continuous on $G_\sr(F)$ defines an invariant distribution if and only if $\Theta$ is conjugation invariant on $G_\sr(F)$.

One can also define the notion of conjugation invariant distributions. For $y\in G(F)$ and $f\in\Cc_{(c)}(G,\zeta)$, we define $\pre{y}{f}\in\Cc_{(c)}(G,\zeta)$ by $\pre{y}{f}(x)=f(y^{-1}xy)$. This defines a left action of $G(F)$ on $\Cc_{(c)}(G,\zeta)$. The left action of $G(F)$ on $\Cc_{(c)}(G,\zeta)$ in turn leads to a right action of $G(F)$ on the associated space of distributions: for all $y\in G(F)$ and $u\in\Cc_{(c)}(G,\zeta)'$, we define $u^y\in\Cc_{(c)}(G,\zeta)'$ by $u^y(f)=u(\pre{y}{f})$.

Let $\Cc_{(c)}(G,\zeta)_G$ denote the quotient of $\Cc_{(c)}(G,\zeta)$ by the smallest closed subspace of $\Cc_{(c)}(G,\zeta)$ containing all functions of the form $\pre{y}{f}-f$ for $y\in G(F)$ and $f\in\Cc_{(c)}(G,\zeta)$. We define $(\Cc_c(G,\zeta)_G)'$ (resp. $(\Cc(G,\zeta)_G)'$) to be the space of conjugation invariant distributions (resp. tempered distributions). We have a natural injection $(\Cc_{(c)}(G,\zeta)_G)'\to\Cc_{(c)}(G,\zeta)'$, and its image is the subspace of distributions in $\Cc_{(c)}(G,\zeta)'$ such that $u^y=u$. 

Every orbital integral is conjugation invariant. Thus, if $f\in\Cc_{(c)}(G,\zeta)$ is annihilated by all conjugation invariant functions, then it is annihilated by all orbital integrals. It follows that the quotient $\Cc_{(c)}(G,\zeta)\to\Ic_{(c)}(G,\zeta)$ descends to a quotient $\Cc_{(c)}(G,\zeta)_G\to\Ic_{(c)}(G,\zeta)$. It is known that for $f\in\Cc_c(G)$, if $f$ is annihilated by all regular semisimple orbital integrals, then $f$ is annihilated by all conjugation invariant distributions. This was proved for real groups by Bouaziz \cite[Theorem 3.2.1]{Bouaziz} and for $p$-adic groups by Harish-Chandra \cite[Theorem 10]{HCAdmissible}. Thus, $\Cc_c(G)_G=\Ic_c(G)$, and conjugation invariant distributions are the same as invariant distributions. 

\subsubsection{Representations and characters.}
Let $\Pi(G)$ denote the set of equivalence classes of irreducible admissible representations of $G(F)$. We denote the central character of $\pi\in\Pi(G)$ by $\zeta_\pi$. Let $(\Zc,\zeta)$ be a central datum of $G(F)$. We denote by $\Pi(G,\zeta)$ the set of equivalence classes of irreducible admissible representations $\pi$ of $G(F)$ with $\Zc$-character $\zeta_\pi|_{\Zc}=\zeta$. We have the subsets
\[
\Pi_u(G,\zeta)\supseteq\Pi_\temp(G,\zeta)\supseteq\Pi_2(G,\zeta)
\]
of equivalence classes of irreducible unitary, tempered, and (relatively) square-integrable (or discrete series) representations, respectively. These are only non-empty if $\zeta$ is unitary. We denote the space of virtual representations of $G(F)$ with $\Zc$-character $\zeta$ by $D_\spec(G,\zeta)=\CC\Pi(G,\zeta)$, and we denote the subspace of virtual tempered representations by $D_\temp(G,\zeta)=\CC\Pi_\temp(G,\zeta)$.

Let $\pi$ be a finite-length admissible representation of $G(F)$ with $\Zc$-character $\zeta$. For $f\in C_c^\infty(G,\zeta)$, one can form the operator
\[
\pi(f)=\int_{G(F)/\Zc}f(g)\pi(g)\dd{g},
\]
which has a well-defined trace. The resulting $\zeta$-equivariant distribution $\Theta_\pi=\tr\pi:C_c^\infty(G,\zeta)\to\CC$ is the character of $\pi$. If $\pi$ is tempered (and $\zeta$ is unitary), then $\pi(f)$ is well-defined for $f\in\Cc(G,\zeta)$ and $\Theta_\pi$ is a tempered $\zeta$-equivariant distribution.

We identify a virtual representation $\pi=\sum_{i=1}^mc_i\pi_i\in D_\spec(G,\zeta)$ with its (virtual) character $\Theta_\pi=\sum_{i=1}^mc_i\Theta_{\pi_i}$. Thus, the space $D_\spec(G,\zeta)$ is identified with a subspace of the space of $\zeta$-equivariant distributions. Harish-Chandra's regularity theorem tells us that $\Theta_\pi$ is a smooth---even analytic if $F$ is archimedean---conjugation invariant $\zeta$-equivariant function on $G_\rs(F)$; that $\Theta_\pi\in L_\loc^1(G(F))$; and that the normalised character $|D^G(x)|^{1/2}\Theta_\pi(x)$ is locally bounded on $G(F)$.

A simple consequence of the Harish-Chandra regularity theorem and the Weyl integration formula is the following. Let $\pi\in\Pi(G,\zeta)$ (resp. $\pi\in\Pi_\temp(G,\zeta)$). If $f\in\Cc_c(G,\zeta)$ (resp. $f\in\Cc(G,\zeta)$), with $f_G(\gamma)=0$ for all $\gamma\in\Gamma_\sr(G)$, then $f_G(\pi)=0$. Therefore $\Theta_\pi\in\Ic_c(G,\zeta)'$ (resp. $\Theta_\pi\in\Ic(G,\zeta)'$). Consequently, we have $D_\spec(G,\zeta)\subseteq\Ic_c(G,\zeta)'$ and $D_\temp(G,\zeta)\subseteq\Ic(G,\zeta)'$.

\subsubsection{Parabolic descent.}
Let $(P,M)$ be a semi-standard parabolic pair. For $f\in\Cc_{(c)}(G,\zeta)$ (assuming $\zeta$ is unitary in the case of Schwartz functions), one defines $f^{(P)}:M(F)\to\CC$ by
\[
f^{(P)}(m)=\delta_P(m)^{1/2}\int_{N_P(F)}\int_{K}f(k^{-1}mnk)\dd{n}\dd{k}.
\]
Then $f^{(P)}\in\Cc_{(c)}(M,\zeta)$. Note that $f^{(P)}$ depends on the choice of $K$. This results in a continuous linear operator $\Cc_{(c)}(G,\zeta)\to\Cc_{(c)}(M,\zeta)$ called parabolic descent. If $\sigma$ is a finite-length admissible representation of $M(F)$ with $\Zc$-character $\zeta$, then
\[
\langle\Theta_\sigma, f^{(P)} \rangle=\langle \Theta_{I_{M,P}^G\sigma},f \rangle
\]
for all $f\in C_c^\infty(G,\zeta)$. If $\sigma$ is tempered, then this holds for all $f\in\Cc(G,\zeta)$. The parabolic descent map $f\mapsto f^{(P)}$ descends to a continuous map 
\begin{align*}
    \Ic_{(c)}(G,\zeta)&\longrightarrow\Ic_{(c)}(M,\zeta)^{W^G(M)} \\
    f_G&\longmapsto f_M
\end{align*}
which we also call parabolic descent. Let $\Gamma_{G\dash\rs}(M)$ denote the set of $G$-regular semisimple conjugacy classes in $M(F)$. For all $\gamma\in\Gamma_{G\dash\rs}(M)$ we have $f_M(\gamma)=f_G(\gamma)$. Consequently, although $f^{(P)}$ depends on the choice of $P$ and $K$, the function $f_M$ does not. For $f\in\Cc(G,\zeta)$, we have that $f$ is cuspidal if and only if $f_M=0$ for all $M\in\Lc^G(M_0)$ with $M\neq G$.

The transpose of parabolic descent $\Ic_{(c)}(G,\zeta)\to\Ic_{(c)}(M,\zeta)^{W^G(M)}$ is a continuous linear map $I_M^G:\Ic_{(c)}(M,\zeta)'/W^G(M)\to\Ic_{(c)}(G,\zeta)'$, which we call parabolic induction since it extends the parabolic induction of characters. If $f\in\Cc_\cusp(G,\zeta)$, then every invariant distribution that is parabolically induced from a proper Levi of $G$ annihilates $f$.

\subsubsection{The invariant Fourier transform.}
Let $(\Zc,\zeta)$ be a unitary central datum of $G$. Let $f\in\Cc(G,\zeta)$. The operator-valued Fourier transform of $f$ is the section of $\coprod_{\pi\in\Pi_\temp(G,\zeta)}\End(V_\pi)\to\Pi_\temp(G,\zeta)$ defined by $\pi\mapsto \pi(f)$. It is the subject of harmonic analysis on $G(F)$, upon which invariant harmonic analysis is built. Invariant harmonic analysis is the study of the invariant (or scalar-valued) Fourier transform of $f$, which is the function $f_G:\Pi_\temp(G,\zeta)\to\CC$ defined by $f_G(\pi)=\Theta_\pi(f)$.

We have the space of invariant Fourier transforms
\begin{align*}
    \wh{\Ic_{(c)}}(G,\zeta)&=\{f_G : f\in\Cc_{(c)}(G,\zeta)\} \\
    &=\Cc_{(c)}(G,\zeta)/\Ann_{\Cc_{(c)}(G,\zeta)}(\{\Theta_\pi : \pi\in\Pi_\temp(G,\zeta)\})
\end{align*}
with its natural quotient topology. The space $\wh{\Ic_{c,f}}(G,\zeta)=\wh{\Ic_c}(G,\zeta,K)$ of invariant Fourier transforms of elements of $C_c^\infty(G,\zeta,K)$ is defined in an analogous way, and $\wh{\Ic_{c,f}}(G,\zeta)$ does not depend on the choice of $K$. If $F$ is non-archimedean, we have $\wh{\Ic_{c,f}}(G,\zeta)=\wh{\Ic_c}(G,\zeta)$.

The invariant Fourier transform 
\[
\Ic_{(c)}(G,\zeta)\longrightarrow\wh{\Ic_{(c)}}(G,\zeta)
\]
is a surjective linear map by definition of $\wh{\Ic}_{(c)}(G,\zeta)$. Since tempered characters are invariant, it descends to a continuous surjective linear map
\[
\Fc:\Ic_{(c)}(G,\zeta)\longrightarrow\wh{\Ic_{(c)}}(G,\zeta).
\]
This restricts to a continuous surjective linear map
\[
\Fc:\Ic_{c,f}(G,\zeta)\longrightarrow\wh{\Ic_{c,f}}(G,\zeta).
\]

Let $f\in\Cc(G,\zeta)$. The invariant Fourier transform $f_G:\Pi_\temp(G,\zeta)\to\CC$ of $f$ has a unique extension to a $\CC$-linear form $f_G:D_\temp(G,\zeta)\to\CC$ on the space of virtual tempered representations $D_\temp(G,\zeta)$. The invariant Fourier transform is just the representation of this linear form $f_G$ with respect to the basis $\Pi_\temp(G,\zeta)$ of $D_\temp(G,\zeta)$. In \cite{ArthurElliptic}, Arthur defines a set of virtual tempered representations $\{\pi_{\tau}\}_{\tau\in T_\temp(G,\zeta)}$ parametrised by a set $T_\temp(G,\zeta)$ of $W_0^G$-orbits of equivalence classes of certain triples. These virtual representations arise naturally in the theory of the $R$-group. There is a natural action of $\CC^1$ on $T_\temp(G,\zeta)$, and the assignment $\tau\mapsto\pi_{\tau}$ is equivariant with respect to the action. That is, $\pi_{z\tau}=z\pi_{\tau}$ for all $z\in\CC^1$ and $\tau\in T_\temp(G,\zeta)$. A choice of representative $\tau$ for each $\CC^1$-orbit $\CC^1\tau$ in $T_\temp(G,\zeta)/\CC^1$ gives a basis $\{\pi_{\tau}\}_{\CC^1\tau\in T_\temp(G,\zeta)/\CC^1}$ of $\CC\Pi_\temp(G)$. For some purposes, the virtual tempered representations $\{\pi_{\tau}\}_{\tau\in\wt{T}_\temp(G,\zeta)}$ are more natural than $\Pi_\temp(G,\zeta)$ and can be regarded as the spectral objects that play the role dual to conjugacy classes in invariant harmonic analysis (cf. \cite{ArthurFourier}). It is useful to view the Fourier transform of $f$ as the $\CC^1$-equivariant function $f_G:T_\temp(G,\zeta)\to\CC$ defined by $f_G(\tau)=f_G(\pi_{\tau})$. This representation of the Fourier transform allows for more natural formulations of Paley--Wiener theorems. In the following subsections, we recall the theory of the $R$-group, Arthur's virtual tempered representations, and the invariant Paley--Wiener theorems that we will stabilise in the next section.

\subsection{$R$-groups}
Let $M\in\Lc^G(M_0)$ and let $\sigma$ be a Hilbert space representation of $M(F)$. One can form the normalised parabolically induced representation Hilbert space representation $I_P(\sigma)=I_P^G(\sigma)=I_{M,P}^G(\sigma)$, defined to be the space of equivalence classes of measurable functions $f:G(F)\to V_\sigma$ such that $f(pg)=\sigma(p)\delta_P(p)^{1/2}f(g)$ for all $p\in P(F)$ and $g\in G(F)$ and
\[
\int_K\|f(k)\|^2\dd{k}<\infty.
\]
By restricting the functions in $I_P^G(\sigma)$ to $K$, we obtain the so-called compact model of $I_P(\sigma)$, which is used in the construction of the standard intertwining operators.

Let $\sigma\in\Pi_2(M)$. Then $I_P(\sigma)$ decomposes as a finite direct sum of irreducible tempered representations of $G(F)$ and the theory of the $R$-group introduced by Knapp and Stein gives a parametrisation of the irreducible summands of $I_P(\sigma)$. We will follow the presentation in \cite[\S1.9--1.11]{MWLocal} with some minor modifications.

For any $P,Q\in\Pc(M)$, we have a normalised standard intertwining operator 
\[
R_{Q|P}(\sigma):I_P(\sigma)\longrightarrow I_Q(\sigma),
\]
which is defined by meromorphically continuing the standard intertwining operator defined by a certain integral, and then renormalising it. The renormalisation involves a choice of a scalar normalising factor, which is a meromorphic function 
\begin{align*}
\ak_{M,\CC}^*&\longrightarrow\CC \\
\lambda&\longmapsto r_{Q|P}(\sigma_\lambda)
\end{align*}
They can be chosen to satisfy several conditions, including the conditions (R1)--(R8) of \cite[Theorem 2.1]{ArthurFourier}.

We write $\wt{w}\in N_{G(F)}(M)$ for an element with image $w\in W^G(M)$. We have a unitary intertwining operator 
\[
\wt{w}:I_P(\sigma)\longrightarrow I_{w\cdot P}(\wt{w}\cdot\sigma)
\]
defined by $\phi\mapsto\phi(\wt{w}^{-1}\cdot)$. Let $N_{G(F)}(M)_\sigma=\{\wt{w}\in N_{G(F)}(M) : \wt{w}\cdot\sigma\cong\sigma\}$. Suppose that $\wt{w}\in N_{{G(F)}}(M)_\sigma$. For each unitary intertwining operator $A:\wt{w}\cdot\sigma\to\sigma$, we have a unitary intertwining operator 
\[
I_{w\cdot P}(A):I_{w\cdot P}(\wt{w}\cdot\sigma)\longrightarrow I_{w\cdot P}(\sigma)
\]
since $I_{w\cdot P}$ is a functor.

We define the unitary intertwining operator $R_P(A,\wt{w}):I_{P}(\sigma)\to I_{P}(\sigma)$ to be the composition $R_P(A,\wt{w})=R_{P|w\cdot P}(\sigma)\circ I_{w\cdot P}(A)\circ\wt{w}$:
\[
\begin{tikzcd}
    R_P(A,\wt{w}):I_{P}(\sigma) \arrow{r}{\wt{w}} & I_{w\cdot P}(\wt{w}\cdot\sigma) \arrow{r}{I_{w\cdot P}(A)} & I_{w\cdot P}(\sigma) \arrow{r}{R_{P|w\cdot P}(\sigma)} & I_{P}(\sigma).
\end{tikzcd}
\]

Define $\Nc^G(\sigma)$ to be the set of pairs $(A,\wt{w})$ where $\wt{w}\in N_{G(F)}(M)_{\sigma}$ and $A:\wt{w}\cdot\sigma\to\sigma$ is a unitary intertwining operator. The set $\Nc^G(\sigma)$ is naturally a group with multiplication defined by $(A_1,\wt{w_1})(A_2,\wt{w_2})=(A_1A_2,\wt{w_1}\wt{w_2})$. The map $M(F)\to\Nc^G(\sigma), m\mapsto (\sigma(m),m)$ is an injective homomorphism with normal image, and we denote the resulting quotient group by $\Wc^G(\sigma)=\Nc^G(\sigma)/M(F)$. Note that for each $(A,\wt{w})\in\Nc^G(\sigma)$, the intertwining operator $I_{w\cdot P}(A)\circ \wt{w}:I_P(\sigma)\to I_{w\cdot P}(\sigma)$, and thus also $R_P(A,\wt{w})$, only depends on the image of $(A,\wt{w})$ in $\Wc^G(\sigma)$. The map $R_P$ from $\Wc^G(\sigma)$ to the group of unitary intertwining operators on $I_P(\sigma)$ is a homomorphism.

For each $z\in\CC^1$, we have that $(z,1)$ lies in the centre of $\Nc^G(\sigma)$. Note that the maps $\CC^1\to\Nc^G(\sigma)$ and $\CC^1\to\Wc^G(\sigma)$ are injective homomorphisms. Let 
\begin{align*}
W^G(\sigma)&=\{w\in W^G(M) : w\cdot\sigma\cong\sigma\} \\
&\cong\{w\in W_0^G : w\cdot M=M, w\cdot\sigma\cong\sigma\}/W_0^M.
\end{align*}
We have a canonical surjective homomorphism $\Wc^G(\sigma)\to W^G(\sigma)$ with kernel $\CC^1$. That is, we have the central extension
\[
1\longrightarrow\CC^1\longrightarrow\Wc^G(\sigma)\longrightarrow W^G(\sigma)\longrightarrow1.
\]
Consequently, $\Wc^G(\sigma)$ is naturally a compact group. Moreover, $R_P$ is a (continuous) unitary representation of $\Wc^G(\sigma)$ on $I_P(V_\sigma)$ through which $\CC^1$ acts by multiplication.

The definition of $R_P$ depends in a simple manner on the choice of normalising factors and the parabolic subgroup $P\in\Pc^G(M)$. Consider a different choice of normalising factors and the resulting representation $\ul{R}_P$ of $\Wc^G(\sigma)$. There exists a unitary character $\chi$ of $W^G(\sigma)$ such that for all $(A,\wt{w})\in\Wc^G(\sigma)$ we have $\ul{R}_P(A,\wt{w})=\chi(A,\wt{w})R_P(A,\wt{w})$. We can define an automorphism $\alpha_\chi$ of $\Wc^G(\sigma)$ by $\alpha_\chi(A,\wt{w})=(\chi(A,\wt{w})A,\wt{w})$. Then $\ul{R}_P=R_P\circ\alpha_\chi$. If $P'\in\Pc^G(M)$ is another choice of parabolic, then $R_{P|P'}(\sigma)\circ R_{P'}(A,\wt{w})=R_P(A,\wt{w})\circ R_{P|P'}(\sigma)$ for all $w\in\Wc^G(\sigma)$, so $R_{P'}$ is unitarily equivalent to $R_P$. Consequently, the kernel $W^G(\sigma)_0$ of $R_P$ is independent of the choice of normalising factors and parabolic $P\in\Pc^G(M)$. 

The subgroup $W^G(\sigma)_0$ of $\Wc^G(\sigma)$ injects into $W^G(\sigma)$, and we also denote its image by $W^G(\sigma)_0$. We define $\Rc^G(\sigma)=\Wc^G(\sigma)/W^G(\sigma)_0$. The Knapp--Stein $R$-group of $\sigma$ is defined to be $R^G(\sigma)=W^G(\sigma)/W^G(\sigma)_0$. We thus have a central extension
\[
1\longrightarrow\CC^1\longrightarrow\Rc^G(\sigma)\longrightarrow R^G(\sigma)\longrightarrow1.
\]
Since $\CC^1$ acts by multiplication through $R_P$, the representation $R_P$ of $\Wc^G(\sigma)$ descends to a representation $R_P$ of $\Rc^G(\sigma)$ through which $\CC^1$ acts by multiplication.

We denote by $\Pi(\Rc^G(\sigma),\id_{\CC^1})$ the set of irreducible representations of $\Rc^G(\sigma)$ through which $\CC^1$ acts by multiplication. Let $\Rc_P$ denote the representation of $\Rc^G(\sigma)\times G(F)$ on $I_P(V_\sigma)$ defined by $\Rc_P(\wt{r},x)=R_P(\wt{r})I_P(\sigma,x)$.

Harish-Chandra's commuting algebra theorem (\cite[Theorem 38.1]{HCHAIII} for real groups and \cite[Theorem 5.5.3.2]{Silberger} for $p$-adic groups) states that the operators $\{R_P(\wt{r}) : \wt{r}\in\Rc^G(\sigma)\}$ span $\End(I_P(\sigma))$. The dimension theorem (due to Knapp--Stein for real groups \cite[Theorem 2]{KnappSteinSingularIV}, \cite[Theorem 13.4]{KnappSteinInterII} and Silberger for $p$-adic groups \cite{SilbergerDimension}) states that the dimension of $\End(I_P(\sigma))$ is the cardinality of $\Rc^G(\sigma)/\CC^1=R^G(\sigma)$, and thus
\[
\End(I_P(\sigma))=\bigoplus_{\CC^1\wt{r}\in\Rc^G(\sigma)/\CC^1}\CC R_P(\wt{r}).
\]
As a corollary, one obtains the main theorem of the theory of $R$-groups, namely that that there is a bijection $\rho\mapsto\pi_\rho$ between $\Pi(\Rc^G(\sigma),\id_{\CC^1})$ and the set $\Pi_\sigma(G)$ of irreducible summands of $I_P^G(\sigma)$ characterised by the decomposition
\[
\Rc\cong\bigoplus_{\rho\in\Pi(\Rc^G(\sigma),\id_{\CC^1})}\rho\boxtimes\pi_\rho.
\]

If $P'\in\Pc^G(M)$, then $R_{P|P'}(\sigma)$ intertwines the representation $\Rc_{P'}$ with $\Rc_P$, so the bijection $\rho\mapsto\pi_\rho$ is independent of the choice of $P\in\Pc^G(M)$. Consider a different choice of normalising factors and the resulting representation $\ul{R}_P$, which is of the form $\ul{R}_P=\chi R_P$ for a unitary character $\chi$ of $W^G(\sigma)$ as above. The resulting representation $\ul{\Rc}_P\defeq\ul{R}_P I_P(\sigma)=\chi\Rc_P$ of $\Rc^G(\sigma)\times G(F)$ decomposes as
\[
\bigoplus_{\rho\in\Pi(\Rc^G(\sigma),\id_{\CC^1})}\chi\rho\boxtimes\pi_\rho,
\]
and therefore determines the bijection $\rho\mapsto\pi_{\chi^{-1}\rho}$.

The group $\Nc^G(\sigma)$ and the related objects depend on the representation $\sigma$ itself, not just its isomorphism class. This point is often glossed over in the literature. However, suppose that $T:\sigma\to\sigma'$ is an isomorphism of unitary representations. Then we obtain an isomorphism
\begin{align*}
T:\Nc^G(\sigma)&\longrightarrow\Nc^G(\sigma') \\
(A,\wt{w})&\longmapsto(T\circ A\circ T^{-1},\wt{w})
\end{align*}
that induces isomorphisms $\Wc^G(\sigma)\to\Wc^G(\sigma')$, $W^G(\sigma)_0\to W^G(\sigma')_0$, $\Rc^G(\sigma)\to\Rc^G(\sigma')$, and $R^G(\sigma)\to R^G(\sigma')$, which we also denote by $T$. Moreover, we have the following commutative diagram
\[
\begin{tikzcd}
    \Rc^G(\sigma) \arrow{r}{T} \arrow{d}{R_P} & \Rc^G(\sigma') \arrow{d}{R_P} \\
    \Aut(I_P(V_\sigma)) \arrow{r}{I_P(T)} & \Aut(I_P(V_{\sigma'}))
\end{tikzcd}
\]

\subsection{Arthur's virtual tempered representations} 
Consider the set of triples $\tau=(M,\sigma,\wt{r})$, where $M\in\Lc^G(M_0)$, $\sigma$ is a discrete series representation of $M(F)$, and $\wt{r}\in\Rc^G(\sigma)$. We define two triples $(M,\sigma,\wt{r})$ and $(M,\sigma',\wt{r}')$ to be equivalent if there exists a unitary isomorphism $T:\sigma\to\sigma'$ such that $T(\wt{r})=\wt{r}'$.

Let $g\in N_{G(F)}(M_0)$ and consider $g\cdot M\in \Lc^G(M_0)$ and $g\cdot\sigma$. We have an isomorphism
\begin{align*}
    \Nc^G(\sigma)&\longrightarrow\Nc^G(g\cdot\sigma) \\
    (A,\wt{w})&\longmapsto g\cdot(A,\wt{w})=(A,g\wt{w}g^{-1})
\end{align*}
and this determines isomorphisms $\Wc^G(\sigma)\to\Wc^G(g\cdot\sigma)$, $W^G(\sigma)\to W^G(g\cdot\sigma)$, $\Rc^G(\sigma)\to\Rc^G(g\cdot\sigma)$, and $R^G(\sigma)\to R^G(g\cdot\sigma)$. Thus, the group $N_{G(F)}(M_0)$ acts on the set of triples $\tau=(M,\sigma,\wt{r})$ by $g\cdot\tau=(g\cdot M,g\cdot\sigma,g\cdot\wt{r})$. This descends to an action on the set of equivalence classes of triples, and $M_0(F)$ acts trivially. Thus, we obtain an action of $W_0^G$ on the set of equivalence classes of triples.

Let $\tau=(M,\sigma,\wt{r})$ be a triple and recall the representation $\Rc$ from the previous subsection, with its decomposition
\[
\Rc\cong\bigoplus_{\rho\in\Pi(\Rc^G(\sigma),\id_{\CC^1})}\rho\boxtimes\pi_\rho.
\]
Define the virtual tempered representation
\[
\pi_{\tau}=\sum_{\rho\in\Pi(\Rc^G(\sigma),\id_{\CC^1})}\tr(\rho(\wt{r}))\pi_\rho.
\]
The invariant tempered distribution character of $\pi_{\tau}$ is given by
\[
\Theta_{\tau}(f)=\tr(R_P(\wt{r})I_P(f))=\sum_{\rho\in\Pi(\Rc^G(\sigma),\id_{\CC^1})}\tr(\rho(\wt{r}))\Theta_{\pi_\rho}(f), \quad f\in\Cc(G).
\]

The irreducible tempered representations $\pi_\rho$ all have the central character $\zeta_\sigma$. We define $\zeta_\tau=\zeta_\sigma$ and call it the central character of $\tau$. In particular, $\tau$ has a well-defined $\Zc$-character, namely the restriction $\zeta_\tau|_\Zc$, and $\pi_\tau\in D_\temp(G,\zeta_\tau|_{\Zc})$.

In the archimedean case, the representation $I_P(\sigma)$ has an infinitesimal character obtained from $\mu_\sigma$ in the manner described in our review of parabolic induction. Thus, the irreducible tempered representations $\pi_\rho$ all have the same infinitesimal character, which we call the infinitesimal character of $\tau$ and denote by $\mu_\tau$.

The virtual tempered representation $\pi_{\tau}$ only depends on the $W_0^G$-orbit of the equivalence class of $\tau$. Moreover, $\pi_{\tau}$ does not depend on the choice of parabolic $P\in\Pc(M)$, but it does depend on the choice of normalising factors. A different choice of normalising factors results in the virtual tempered representation $\chi(\wt{r})\pi_{\tau}$ for some unitary character $\chi$ of $W^G(\sigma)$. Thus, $\pi_{\tau}$ is uniquely determined up to multiplication by an element of $\CC^1$.

The group $\CC^1$ acts on the set of triples by $z(M,\sigma,z\wt{r})=(M,\sigma,z\wt{r})$, and this descends to an action on equivalence classes that commutes with the action of $W_0^G$. The map $\tau\mapsto\pi_{\tau}$ is equivariant with respect to the action of $\CC^1$ action: $\pi_{z\tau}=z\pi_\tau$. 

Consider an equivalence class of triples $\tau$. It can happen that there exists a non-trivial $z\in\CC^1$ such that $z\tau=w\cdot\tau$ for some $w\in W_0^G$. When this happens, we have $z\pi_{\tau}=\pi_{z\tau}=\pi_{\tau}$, and thus $\pi_{\tau}=0$. One says that $\tau$ is essential if this does not happen. The set of essential equivalence classes of triples is stable under the actions of $W_0^G$ and $\CC^1$. We define $\wt{T}_\temp(G,\zeta)$ to be the set of essential equivalence classes of triples $(M,\sigma,\wt{r})$ with $\Zc$-character $\zeta$, and we define $T_\temp(G,\zeta)=\wt{T}_\temp(G,\zeta)/W_0^G$. When $\Zc$ is trivial, we omit $\zeta$ from the notation. Arthur proved the following (cf. \cite[Proposition 2.9]{MWLocal}).

\begin{proposition}
    For each $\tau\in T_\temp(G,\zeta)$, we have $\pi_\tau\neq0$, and the space of virtual tempered representations of $G(F)$ is 
    \[
    D_\temp(G,\zeta)=\bigoplus_{\CC^1\tau\in T_\temp(G,\zeta)/\CC^1}\CC\pi_\tau.
    \]
\end{proposition}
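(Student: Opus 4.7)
The proof has two parts: first, that the $\pi_\tau$'s span $D_\temp(G,\zeta)$, and second, that $\pi_\tau\neq 0$ for each essential $\tau$ and that the $\pi_\tau$'s for distinct $\CC^1$-orbits in $T_\temp(G,\zeta)$ are linearly independent.

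For the spanning statement, I would combine Harish-Chandra's classification of tempered representations with Fourier inversion on the compact group $\Rc^G(\sigma)$. Every $\pi\in\Pi_\temp(G,\zeta)$ is a direct summand of some $I_P(\sigma)$ with $M\in\Lc^G(M_0)$ and $\sigma$ a discrete series of $M(F)$ having the prescribed $\Zc$-character; by the $R$-group theorem of Knapp--Stein, Silberger, and Harish-Chandra recalled in the previous subsection, $\pi=\pi_\rho$ for some $\rho\in\Pi(\Rc^G(\sigma),\id_{\CC^1})$. Since $R^G(\sigma)=\Rc^G(\sigma)/\CC^1$ is finite, Schur orthogonality on $\Rc^G(\sigma)$ inverts the defining formula $\pi_{(M,\sigma,\wt{r})}=\sum_{\rho'}\tr\rho'(\wt{r})\,\pi_{\rho'}$ to give
\[
\pi_\rho \;=\; \frac{\dim\rho}{|R^G(\sigma)|}\sum_{\wt{r}} \overline{\tr\rho(\wt{r})}\;\pi_{(M,\sigma,\wt{r})},
\]
with $\wt{r}$ running over any set of lifts to $\Rc^G(\sigma)$ of the elements of $R^G(\sigma)$. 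This expresses each $\pi\in\Pi_\temp(G,\zeta)$ as a finite $\CC$-linear combination of $\pi_\tau$'s.

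For the non-vanishing and linear independence, I would first reduce to a single cuspidal block by using that the sets $\Pi_\sigma(G)$ for discrete data pairs $(M,\sigma)$ lying in distinct $W_0^G$-orbits are disjoint; this splits $D_\temp(G,\zeta)$ as a direct sum indexed by such orbits, and all $\pi_\tau$ of a given cuspidal type land in the corresponding summand. Within such a block, the formula $\pi_\tau=\sum_\rho\tr\rho(\wt{r})\pi_\rho$ shows that $\pi_\tau=0$ iff $\tr\rho(\wt{r})=0$ for every $\rho\in\Pi(\Rc^G(\sigma),\id_{\CC^1})$. By Peter--Weyl applied to the compact group $\Rc^G(\sigma)$, the irreducible characters $\{\tr\rho\}_{\rho}$ form an orthonormal basis for the Hilbert space of square-integrable conjugation-invariant functions $f$ on $\Rc^G(\sigma)$ with $f(z\wt{r})=zf(\wt{r})$ for $z\in\CC^1$. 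Such an $f$ is forced to vanish at $\wt{r}$ precisely when the orbit of $\wt{r}$ under the combined action of $\CC^1$-multiplication and conjugation by elements of $\Rc^G(\sigma)$ or of the stabiliser of $(M,\sigma)$ in $W_0^G$ admits a non-trivial scalar relation $\wt{s}\wt{r}\wt{s}^{-1}=z\wt{r}$ with $z\neq 1$: indeed, then $f(\wt{r})=f(\wt{s}\wt{r}\wt{s}^{-1})=f(z\wt{r})=zf(\wt{r})$ forces $f(\wt{r})=0$, while otherwise an explicit section can be defined by $z\mapsto z$ on the orbit and extended by zero. Unwinding the $W_0^G$-action on triples at fixed $(M,\sigma)$ via the canonical isomorphism $\Rc^G(w\sigma)\to\Rc^G(\sigma)$ attached to any intertwiner $T\colon w\sigma\to\sigma$ (canonical because $T_*(A,\wt{w})=(TAT^{-1},\wt{w})$ is invariant under $T\mapsto\lambda T$ for $\lambda\in\CC^1$), this obstruction matches the failure of essentiality of $\tau$, giving $\pi_\tau\neq 0$ for essential $\tau$. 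Linear independence across distinct essential $\CC^1$-orbits in a single block then follows because the assignment $f\mapsto\sum_{\wt{r}}f(\wt{r})\pi_{(M,\sigma,\wt{r})}$ from the above Hilbert space of sections to $\CC\Pi_\sigma(G)$ is injective, as is visible by pairing with the basis $\{\pi_\rho\}$ via the spanning formula of Part 1.

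The main obstacle will be the translation of the essentiality condition on the triple $(M,\sigma,\wt{r})$---which is phrased globally in terms of the $W_0^G$-action on equivalence classes of triples---into an internal condition on $\wt{r}\in\Rc^G(\sigma)$ modulo the natural action of the stabiliser of $(M,\sigma)$ in $W_0^G$ (transported to $\Rc^G(\sigma)$ via the isomorphisms $T_*$). The delicate point is verifying that this transported action is well-defined independently of the choice of $g\in N_{G(F)}(M_0)$ representing $w\in W_0^G$ and of $T\colon w\sigma\to\sigma$, and that essentiality of $\tau$ corresponds exactly to the absence of a $z\in\CC^1\setminus\{1\}$ in the stabiliser of $\wt{r}$ under this action.
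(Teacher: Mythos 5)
The paper does not prove this proposition itself; it attributes it to Arthur and refers to \cite[Proposition 2.9]{MWLocal} for the details. Your strategy is essentially the standard one used there: classify $\Pi_\temp(G,\zeta)$ via Harish-Chandra induction and $R$-groups, then do Fourier analysis on the compact group $\Rc^G(\sigma)$ (equivalently, character theory on the finite group $R^G(\sigma)$ with a twist), and finally reduce the essentiality criterion to an internal vanishing criterion for class functions with central character $\id_{\CC^1}$. One genuine error in your spanning step: the Schur inversion should read
\[
\pi_\rho \;=\; \frac{1}{|R^G(\sigma)|}\sum_{\wt{r}}\overline{\tr\rho(\wt{r})}\,\pi_{(M,\sigma,\wt{r})},
\]
without the factor $\dim\rho$. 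You are extracting the $\rho$-coefficient of a class-function expansion via the first orthogonality relation, not projecting onto an isotypic component of the group algebra; the $\dim\rho/|G|$ normalisation belongs to the latter.

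The \emph{main obstacle} you flag is indeed the crux, and it is worth noting how it resolves so that the gap is genuinely closed rather than merely acknowledged. If $g\in N_{G(F)}(M_0)$ represents $w\in W_0^G$ with $wM=M$ and $w\sigma\cong\sigma$, and $T\colon g\cdot\sigma\to\sigma$ is a unitary intertwiner, then the composite $T_*\circ (g\cdot)\colon \Rc^G(\sigma)\to\Rc^G(\sigma)$ on the level of $\Nc^G(\sigma)$ is precisely $(A,\wt{w})\mapsto (TAT^{-1}, g\wt{w}g^{-1})=(T,g)(A,\wt{w})(T,g)^{-1}$, i.e.\ conjugation by the image of $(T,g)\in\Nc^G(\sigma)$ in $\Rc^G(\sigma)$. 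Since $T$ is unique up to $\CC^1$, and $\CC^1$ is central, this inner automorphism is independent of $T$; and changing $g$ by $M_0(F)$ modifies $(T,g)$ by an element landing in the $M(F)$-coset that is quotiented out, so it is also independent of $g$. Moreover if $w\in W^G(\sigma)_0$ the image of $(T,g)$ in $\Rc^G(\sigma)$ lies in $\CC^1$, so the action of the stabiliser of $(M,\sigma)$ in $W_0^G$ on $\Rc^G(\sigma)$ factors through $R^G(\sigma)$ and is realised by genuine $\Rc^G(\sigma)$-conjugation. Thus the external failure of essentiality, $z\tau=w\cdot\tau$ with $z\neq 1$, is equivalent to the internal relation $\wt{s}\wt{r}\wt{s}^{-1}=z\wt{r}$ for some $\wt{s}\in\Rc^G(\sigma)$, which is exactly your Peter--Weyl vanishing criterion. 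With that identification made explicit, your argument is a sound reconstruction of the proof the paper cites, and the remaining linear-independence step follows by the dimension count on $V=$ class functions of central character $\id_{\CC^1}$ you implicitly invoke.
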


In particular, the $\CC^1$-equivariant map $T_\temp(G,\zeta)\to D_\temp(G,\zeta), \tau\mapsto\pi_\tau$ is injective. We will identify $\tau$ with $\pi_\tau$, and thus also $\Theta_\tau$.

\subsubsection{Elliptic virtual tempered representations.}
Suppose that $L\in\Lc^G(M_0)$. We can relate $T_\temp(L,\zeta)$ and $T_\temp(G,\zeta)$ for $L\in\Lc^G(M_0)$ in the following way. If $M\in\Lc^L(M_0)$ and $\sigma$ is a discrete series representation of $M(F)$, then there is a natural injective homomorphism $\Rc^L(\sigma)\to\Rc^G(\sigma)$. This gives rise to an embedding
\[
\iota_L^G:\wt{T}_\temp(L,\zeta)\longrightarrow\wt{T}_\temp(G,\zeta)
\]
that is equivariant with respect to the action of $\CC^1$ and the action of $W_0^L\subseteq W_0^G$.
This map descends to a $\CC^1$-equivariant map
\[
\iota_L^G:T_\temp(L,\zeta)\longrightarrow T_\temp(G,\zeta).
\]
The group $W^G(L)$ acts on $T_\temp(L,\zeta)$ and this map is the quotient for this action. This map is compatible with parabolic induction in the following sense. If $\tau\in T_\temp(L,\zeta)$, then 
\[
I_L^G(\pi_\tau)=\pi_{\iota_L^G(\tau)}.
\]

We say that $\tau\in \wt{T}_\temp(G,\zeta)$ is elliptic if it does not lie in the image of $\iota_L^G:\wt{T}_\temp(L,\zeta)\to\wt{T}_\temp(G,\zeta)$ for any $L\neq G$. Let $\wt{T}_\el(G,\zeta)$ denote the set of elliptic elements in $\wt{T}_\temp(G,\zeta)$ and let $T_\el(G,\zeta)=\wt{T}_\el(G,\zeta)/W_0^G$. We can also define elliptic triples as follows. Let $\tau=(M,\sigma,\wt{r})\in\wt{T}_\temp(G,\zeta)$ and let $r\in R^G(\sigma)$ be the image of $\wt{r}$ in $R^G(\sigma)$. We define $W_\reg^G(\sigma)$ to be the set of $w\in W^G(\sigma)$ such that the space $\ak_M^w$ of $w$-invariants in $\ak_M$ is equal to $\ak_G$. Then $\tau$ is elliptic if and only if $W^G(\sigma)_0=1$ (in which case $R^G(\sigma)=W^G(\sigma)$, so $r\in W^G(\sigma)$) and $r\in W_\reg^G(\sigma)$ (i.e. $\ak_M^r=\ak_G$). We have an injection $\Pi_2(G,\zeta)\to T_\el(G,\zeta)$ defined by $\pi\mapsto(G,\pi,1)$, and we identify $\Pi_2(G,\zeta)$ with its image in $T_\el(G,\zeta)$.

We have
\[
\wt{T}_\temp(G,\zeta)=\coprod_{L\in\Lc^G(M_0)}\wt{T}_\el(L,\zeta)
\]
and
\[
T_\temp(G,\zeta)=\Bigg(\coprod_{L\in\Lc^G(M_0)}T_\el(L,\zeta)\Bigg)^{W_0^G}=\coprod_{L\in\Lc^G(M_0)/W_0^G}T_\el(L,\zeta)/W^G(L).
\]
We define $D_\el(G,\zeta)$ to be the subspace of $D_\temp(G,\zeta)$ generated by $T_\el(G,\zeta)$, that is,
\[
D_\el(G,\zeta)=\bigoplus_{\CC^1\tau\in T_\el(G,\zeta)/\CC^1}\CC\tau.
\]

\subsubsection{The space of Arthur's virtual tempered representations.}
For $\lambda\in i\ak_G^*$, the map
\begin{align*}
    \Nc^G(\sigma)&\longrightarrow\Nc^G(\sigma_\lambda) \\
    (A,n)&\longmapsto (Ae^{\langle\lambda,H_G(n)\rangle}, n)
\end{align*}
is an isomorphism. It descends to an isomorphism $\Rc^G(\sigma)\to\Rc^G(\sigma_\lambda),\wt{r}\mapsto\wt{r}_\lambda$ compatible with $R(\sigma)=R(\sigma_\lambda)$. We obtain an action of $i\ak_G^*$ on the set of triples $\tau=(M,\sigma,\wt{r})$ defined by 
\[
(\lambda,\tau)\longmapsto \tau_\lambda=(M,\sigma_\lambda,\wt{r}_\lambda),
\]
for all $\lambda\in i\ak_G^*$. The set $\wt{T}_\temp(G)$ of equivalence classes of triples and $\wt{T}_\el(G)$ are stable under the action of $i\ak_G^*$. Moreover, for $\tau\in T_\temp(G)$ and $\lambda\in i\ak_G^*$, we have $\pi_{\tau_\lambda}=(\pi_\tau)_\lambda$.

We have encountered three actions on $\wt{T}_\el(G)$, namely the actions of $i\ak_{G}^*$, $W_0^G$, and $\CC^1$. Let us summarise these here. If $\tau=[(M,\sigma,(A,n))]\in\wt{T}_\el(G)$, $\lambda\in i\ak_{G}^*$, $w\in W_0^G$, and $z\in\CC^1$, then
\begin{align*}
    \tau_\lambda&=[(M,|\cdot|_M^\lambda\sigma, (|n|_G^\lambda A,n))] \\
    w\cdot\tau&=[(wMw^{-1},\sigma\circ\Int(w^{-1}),(A,wnw^{-1}))] \\
    z\cdot\tau&=[(M,\sigma,(zA,n))].
\end{align*}
Any two of the above three actions commute with each other. We thus have commuting actions of $i\ak_{G}^*$ and $\CC^1$ on $T_\el(G)$. The action of $\ak_{G,\CC}^*$ on $\Pi(G)$ extends to a linear action on $D_\spec(G)$, and $i\ak_G^*$ preserves $D_\temp(G)$ and $D_\el(G)$. The injection $T_\el(G)\to D_\el(G)$ is equivariant with respect to the actions of $i\ak_G^*$ and $\CC^1$.

We write $T_\el(G)_\CC$ for the contracted product
\[
T_\el(G)_\CC=T_\el(G)\times^{i\ak_G^*}\ak_{G,\CC}^*,
\]
the quotient of $T_\el(G)\times\ak_{G,\CC}^*$ by the relation $(\tau_{\lambda_0},\lambda)\sim(\tau,\lambda+\lambda_0)$, where $\lambda_0\in i\ak_G^*$. The injection $T_\el(G)\to D_\el(G)$ extends to an injection $T_\el(G)_\CC\to D_\spec(G)$ that is equivariant with respect to the actions of $\ak_{G,\CC}^*$ and $\CC^1$.

Let $\tau\in T_\el(L)_\CC$. We denote the isotropy subgroup of $\tau$ in $\ak_{L,\CC}^*$ by $\ak_{L,\tau}^\vee$. We have
\[
\ak_{L,F}^\vee\subseteq \ak_{L,\tau}^\vee\subseteq \wt{\ak}_{L,F}^\vee\,.
\]
Thus, if $F$ is archimedean we have $\ak_{L,\tau}^\vee=0$, and if $F$ is non-archimedean we have that $\ak_{L,\tau}^\vee$ is a full lattice in $i\ak_L^*$. The set $T_\el(L)$ is naturally a smooth manifold with uncountably many connected components $i\ak_{L}^*\cdot\tau=i\ak_{L}^*/\ak_{L,\tau}^\vee$, which are Euclidean spaces if $F$ is archimedean and compact tori if $F$ is non-archimedean. The manifold $T_\el(L)$ has uncountably many components because of the action of $\CC^1$. Moreover, $T_\el(L)_\CC$ is the complexification of $T_\el(L)$ with connected components $\ak_{L,\CC}^*\cdot\tau=\ak_{L,\CC}^*/\ak_{L,\tau}^\vee$. Since
\[
T_\temp(G)=\coprod_{L\in\Lc^G(M_0)/W_0^G}T_\el(L)/W^G(L),
\]
we have that $T_\temp(G)$ is naturally a topological space. We say that a function on $T_\temp(G)$ is smooth if its pullback to each $T_\el(L)$ is smooth. 

\subsubsection{The elliptic inner product.}
We recall the elliptic inner product from \cite[7.3]{MWLocal}. Consider the restriction of our Radon measure $\dd{\gamma}$ on $\Gamma_{\rs}(G)$ to the open subspace $\Gamma_{\rs,\el}(G)$. This Radon measure and our Haar measure on $A_G(F)$ determine a Radon measure on $\Gamma_{\rs,\el}(G)/A_G(F)$. Identifying $\Gamma_{\rs,\el}(G)/A_G(F)=\Gamma_{\rs,\el}(G/A_G)$, this Radon measure is the restriction of the Radon measure on $\Gamma_{\rs}(G/A_G)$ determined by our choices of measures. 

For each $\gamma\in\Gamma_{\rs,\el}(G)$, define $m(\gamma)=\vol(G_\gamma(F)/A_G(F))$. We have a canonical Radon measure $\dd_\el{\gamma}$ on $\Gamma_{\rs,\el}(G)/A_G(F)$ defined as follows. For $\alpha\in C_c(\Gamma_{\rs,\el}(G)/A_G(F))$, we define
\begin{align*}
&\int_{\Gamma_{\rs,\el}(G)/A_G(F)}\alpha(\gamma)\dd_\el{\gamma} \\
&=\int_{\Gamma_{\rs,\el}(G)/A_G(F)}\alpha(\gamma)m(\gamma)^{-1}\dd{\gamma} \\
&=\sum_{\{T\}}|W_F(G,T)|^{-1}\vol(T(F)/A_G(F))^{-1}\int_{T(F)/A_G(F)}\alpha(t)\dd{t},
\end{align*}
where $\{T\}$ runs over the conjugacy classes of elliptic maximal tori of $G$. This integral does not depend on any choice of measure.

Let $\zeta$ be a unitary character of $A_G(F)$. For $\zeta$-equivariant functions $\alpha,\alpha':\Gamma_{\rs,\el}(G)\to\CC$, we define
\[
\langle \alpha,\alpha'\rangle_\el=\int_{\Gamma_{\rs,\el}(G)/A_G(F)} \alpha(\gamma)\ol{\alpha'(\gamma)} \dd_\el{\gamma},
\]
provided the integral converges absolutely. For $\pi,\pi'\in D_\el(G,\zeta)$, we define
\[
\langle \pi,\pi'\rangle_\el = \langle|D^G|^{1/2}\Theta_\pi,|D^G|^{1/2}\Theta_{\pi'}\rangle_\el.
\]
For $\tau=(M,\sigma,\wt{r})\in T_\el(G)$, one defines $\iota(\tau)=\big|\det(1-\wt{r})|_{\ak_{M}^G}\big|^{-1}$. We have the following orthogonality relations due to Arthur \cite[Theorem 7.3]{MWLocal}.

\begin{theorem}\label{OrthogonalityRelations}
    Let $\tau,\tau'\in T_\el(G,\zeta)$. We have 
    \[
    \langle\tau,\tau'\rangle_\el=|W^G(\tau)|\iota(\tau)^{-1}\delta_{\tau,\tau'}.
    \]
\end{theorem}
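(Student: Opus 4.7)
The plan is to unravel the integral defining $\langle\tau,\tau'\rangle_\el$ into a sum over conjugacy classes of elliptic maximal tori, insert an explicit twisted character formula for $\Theta_\tau$, and reduce to Harish-Chandra's orthogonality relations for discrete series characters on Levi subgroups. Fix representatives $\tau=(M,\sigma,\wt{r})$ and $\tau'=(M',\sigma',\wt{r}')$ in $\wt{T}_\el(G,\zeta)$ and write
\[
\langle\tau,\tau'\rangle_\el=\sum_{\{T\}}\frac{1}{|W_F(G,T)|\,\vol(T(F)/A_G(F))}\int_{T(F)/A_G(F)}|D^G(t)|\,\Theta_\tau(t)\overline{\Theta_{\tau'}(t)}\,\dd{t},
\]
with $\{T\}$ running over conjugacy classes of elliptic maximal tori of $G$.

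First I would establish a character formula on elliptic tori. Since $\Theta_\tau(f)=\tr(R_P(\wt{r})I_P(f))$, the character formula for parabolically induced representations (due to van Dijk / Harish-Chandra), twisted by the normalised intertwining operator $R_P(\wt{r})$, expresses $|D^G(t)|^{1/2}\Theta_\tau(t)$ on a regular elliptic $t$ in $T(F)\subseteq M(F)$ as a sum of terms of the form $c_w\,|D^M(w^{-1}tw)|^{1/2}\Theta_\sigma(w^{-1}tw)$ over appropriate double cosets of Weyl group elements $w$, where each coefficient $c_w$ is a trace read off from $\wt{r}$. The ellipticity condition $\ak_M^{\wt{r}}=\ak_G$ is precisely what forces this formula to vanish on elliptic tori $T$ that are not $G$-conjugate into $M$, so only Levis $G$-conjugate to $M$ can contribute.

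Next I would substitute these formulas for $\Theta_\tau$ and $\overline{\Theta_{\tau'}}$ into the integral and, for each elliptic $T\subseteq M$, change variables to transform the integral over $T(F)/A_G(F)$ into one over $T(F)/A_M(F)$, picking up the volume factor $\vol(A_M(F)/A_G(F))$. Harish-Chandra's orthogonality relations for discrete series characters of $M(F)$ then collapse the inner integral, forcing $M'$ to be $G$-conjugate to $M$ and $\sigma'\cong w\cdot\sigma$ for some $w\in W^G(M)$, and producing a delta factor in the discrete parameters. Reassembling the surviving combinatorial factors from the Weyl group double cosets and the $R$-group data contributes $|W^G(\tau)|$, while the volume $\vol(A_M(F)/A_G(F))$ is absorbed into the determinant $\big|\det(1-\wt{r})|_{\ak_M^G}\big|=\iota(\tau)^{-1}$ via the linear isomorphism $1-\wt{r}\colon\ak_M^G\to\ak_M^G$, which is invertible precisely because $\tau$ is elliptic.

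The main obstacle is the measure-theoretic and group-theoretic bookkeeping: several Weyl and $R$-groups ($W_F(G,T)$, $W_F(M,T)$, $W^G(M)$, $W^G(\sigma)$, $R^G(\sigma)$, and the stabiliser $W^G(\tau)$) and measure normalisations on $T(F)$, $A_M(F)$, $A_G(F)$ all appear, and one must check that they assemble into exactly $|W^G(\tau)|\iota(\tau)^{-1}\delta_{\tau,\tau'}$ with no stray constants. Ellipticity enters decisively in three places: it makes $1-\wt{r}$ invertible on $\ak_M^G$ (producing the Jacobian $\iota(\tau)^{-1}$), it restricts the supporting tori to those $G$-conjugate into $M$ (eliminating cross-terms with intermediate Levis), and it guarantees the convergence of the integrals on $T(F)/A_G(F)$ by suppressing non-compact directions along $A_M(F)/A_G(F)$.
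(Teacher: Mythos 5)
The paper does not prove this theorem; it cites it as Arthur's orthogonality relations for elliptic tempered virtual characters, via \cite[Theorem 7.3]{MWLocal}, and both Arthur's original proof and the one in Moeglin--Waldspurger derive the formula from the \emph{local trace formula}. Your proposal is not a variant of that argument but a direct unraveling of the Weyl integral, and it has a genuine gap at its central step.

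You propose to express $|D^G(t)|^{1/2}\Theta_\tau(t)$, for $t$ a $G$-regular \emph{elliptic} element, via a van Dijk--type descent to $\Theta_\sigma$ evaluated at Weyl conjugates $w^{-1}tw\in M(F)$. That requires $t$ to lie in a maximal torus $T$ that is $G$-conjugate into $M$. But for $T$ elliptic in $G$ (so $A_T=A_G$) and $T\subseteq M$, we have $A_M\subseteq T$ split and central, hence $A_M\subseteq A_T=A_G\subseteq A_M$, forcing $M=G$: there are \emph{no} elliptic maximal tori of $G$ inside a proper Levi $M$. By your own reasoning that only Levis $G$-conjugate to $M$ contribute, your computation would make the elliptic inner product vanish identically for every $\tau$ with $M\neq G$, contradicting $\|\tau\|_\el>0$. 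The twisted character $\tr\bigl(R_P(\wt{r})I_P(\sigma,\cdot)\bigr)$ is precisely an object that survives on the elliptic set \emph{without} an elliptic inducing torus; the operator $R_P(\wt{r})$ is nonlocal on $I_P(V_\sigma)$, so there is no van Dijk descent of the form $\sum_w c_w\,\Theta_\sigma(w^{-1}tw)$, and the restriction of $\Theta_\tau$ to $\Gamma_{\rs,\el}(G)$ is obtained by Arthur as a \emph{consequence} of the simple local trace formula (together with the pseudocoefficient identity $f[\tau]_G(\gamma)=m(\gamma)^{-1}|D^G(\gamma)|^{1/2}\ol{\Theta_\tau(\gamma)}$), not from Harish-Chandra's discrete series orthogonality on $M$. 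Incidentally, for $T$ elliptic in $G$ the quotient $T(F)/A_G(F)$ is already compact, so the convergence concern you raise does not arise, and the change of variables to ``$T(F)/A_M(F)$'' with volume factor $\vol(A_M(F)/A_G(F))$ is ill-posed because $A_M(F)\not\subseteq T(F)$ for proper $M$.
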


Thus, $\langle\cdot,\cdot\rangle_\el$ is an inner product on $D_\el(G,\zeta)$ and the decomposition
\[
D_\el(G,\zeta)=\bigoplus_{\CC^1\tau\in T_\el(G,\zeta)/\CC^1}\CC\tau
\]
is an orthogonal direct sum. It follows from the above theorem that $\{\|\tau\|_\el:\tau\in T_\el(G)\}$ is bounded. Note that when $\tau=\pi\in\Pi_2(G)$, that is, $\tau=(G,\pi,1)$, we have $\|\pi\|_\el=1$.

We have $D_\el(G)=\bigoplus_\zeta D_\el(G,\zeta)$. Thus, we obtain an inner product $\langle\cdot,\cdot\rangle_\el$ on $D_\el(G)$ as the orthogonal direct sum of the inner products on the summands. This inner product is called the elliptic inner product.

\subsection{Paley--Wiener and Schwartz spaces} \label{sec:spaces}

In this subsection we define abstract Paley--Wiener spaces and Schwartz spaces of the sort needed for the invariant and stable harmonic analysis. See \cite[Ch. IV]{MWI} for a similar presentation of abstract Paley--Wiener spaces suitable for invariant and stable harmonic analysis on a real group.

Let $V$ be a Euclidean space and let $\Lambda=\{\Lambda_e\}_{e\in E}$ be a countable family of one of the following types:
\begin{enumerate}
    \item archimedean type: for each $e\in E$, $\Lambda_e=iV^*$ equipped with a non-negative real number $\|e\|$.
    \item non-archimedean type: for each $e\in E$, $\Lambda_e=iV^*/\Gamma_e^\vee$, where $\Gamma_e$ is a lattice in $V$ and $\Gamma_e^\vee=\Hom(\Gamma_e,2\pi i\ZZ)$. Thus, $\Lambda_e$ is a compact torus of dimension $\dim V$.
\end{enumerate}
The terminology comes from the types of spaces that appear in Paley--Wiener theorems for $G$ when $F$ is archimedean or non-archimedean. We will also use $\Lambda$ to denote the disjoint union $\Lambda=\coprod_{e\in E}\Lambda_e$. The function spaces we define will be spaces of certain smooth functions on $\Lambda$. 

Note that for each $e\in E$, the space $\Lambda_e$ has a natural complexification $\Lambda_{e,\CC}$, which is $V_{\CC}^*$ in the archimedean case and $V_{\CC}^*/\Gamma_e^\vee$ in the non-archimedean case. Thus, the space $\Lambda$ has a natural complexification $\Lambda_\CC=\coprod_{e\in E}\Lambda_{e,\CC}$. For each $e\in E$, we extend the inner product on $V$ to a Hermitian inner product on $V_{\CC}$. Note that a function $\varphi:\Lambda\to\CC$ can be identified with a family of functions $\{\varphi_e\}_{e\in E}$.

We define the Paley--Wiener space $PW(\Lambda)$ on $\Lambda$ to be the vector space of smooth functions $\varphi:\Lambda\to\CC$ such that the following hold.
\begin{enumerate}
    \item In the non-archimedean case we require that $\varphi$ is supported on finitely many connected components $\Lambda_e$.
    \item $\varphi$ extends to an entire function on $\Lambda_\CC$.
    \item $\varphi$ satisfies a growth condition:
    \begin{enumerate}
        \item in the archimedean case we require that there exists $r>0$ such that for all $N\in\ZZ_{>0}$ we have
        \[
        \|\varphi\|_{r,N}\defeq\sup_{e\in E, \lambda\in\Lambda_{e,\CC}} |\varphi(\lambda)| (1+\|e\|+\|\lambda\|)^{N}e^{-r\|\Re(\lambda)\|}
        \]
        is finite;
        \item In the non-archimedean case we require that there exists $r>0$ such that
        \[
        \|\varphi\|_r\defeq\sup_{\lambda\in\Lambda_\CC}|\varphi(\lambda)|e^{-r\|\Re(\lambda)\|}
        \]
        is finite.
    \end{enumerate}
\end{enumerate}
We now define a locally convex topology on $PW(\Lambda)$. Suppose first that we are in the archimedean case. 
For each $r>0$, we define $PW^r(\Lambda)$ to be the subspace of all $\varphi\in PW(\Lambda)$ such that $\|\varphi\|_{r,N}<\infty$ for all $N\in\ZZ_{>0}$, and we give $PW^r(\Lambda)$ the topology defined by the family of norms $\|\cdot\|_{r,N}$ with $N\in\ZZ_{>0}$. It is a Fr\'echet space. We give $PW(\Lambda)=\bigcup_r PW^r(\Lambda)$ the inductive limit topology in the category of locally convex spaces, making it a strict LF-space.

In general, we define $PW_f(\Lambda)$ to be the linear subspace of $PW(\Lambda)$ consisting of all $\varphi\in PW(\Lambda)$ that are supported on finitely many components $\Lambda_e$. Note that in the non-archimedean case we have $PW_f(\Lambda)=PW(\Lambda)$. We topologise $PW_f$ as follows. First, for each finite set $E_0\subseteq E$, define $PW_{E_0}(\Lambda)$ to be the subspace of all $\varphi\in PW(\Lambda)$ that are supported on $\coprod_{e\in E_0}\Lambda_e$. For each $r>0$, define $PW_{E_0}^r(\Lambda)$ to be the subspace of all $\varphi\in PW_{E_0}(\Lambda)$ such that $\|\varphi\|_r<\infty$ in the non-archimedean case and $\|\varphi\|_{r,N}<\infty$ for all $N\in\ZZ_{>0}$ in the archimedean case. We give $PW_{E_0}^r(\Lambda)$ the Banach space topology defined by the norm $\|\cdot\|_r$ in the non-archimedean case and the Fr\'echet space topology defined by the family of norms $\|\cdot\|_{r,N}$ in the archimedean case.  We give $PW_{E_0}(\Lambda)=\bigcup_r PW_{E_0}^r(\Lambda)$ the inductive limit topology in the category of locally convex spaces, making it a strict LF-space. Finally, we give $PW_f(\Lambda)=\bigcup_{E_0}PW_{E_0}(\Lambda)$ the inductive limit topology in the category of locally convex spaces, making it also a strict LF-space. In the archimedean case, the topology on $PW_f(\Lambda)$ is at least as fine as the subspace topology inherited from $PW(\Lambda)$, that is, the injection $PW_f(\Lambda)\to PW(\Lambda)$ is continuous.

We have defined the topology on $PW_f(\Lambda)$ in an analogous way to how the topology is defined on $C_c^\infty(G,K)$. A simpler way of describing the topology on $PW_f(\Lambda)$ is as follows. We write $PW_e(\Lambda)=PW_{\{e\}}(\Lambda)$. The subspace $PW_e(\Lambda)$ is the classical Paley--Wiener space $PW(\Lambda_e)$ on $\Lambda_e$. Observe that we have the locally convex direct sum decomposition $PW_{E_0}(\Lambda)=\bigoplus_{e\in E_0}PW_e(\Lambda)$, where $PW_e(\Lambda)=PW_{\{e\}}(\Lambda)$. This can be shown by checking that $PW_{E_0}(\Lambda)$ satisfies the correct universal property. Alternatively, this follows from the fact that $\bigoplus_{e\in E_0}PW_{e}(\Lambda)$ is a Fr\'echet space (even a Banach space in the non-archimedean case) and applying a suitable version of the open mapping theorem to the continuous bijection $\bigoplus_{e\in E_0}PW_{e}(\Lambda)\to PW_{E_0}(\Lambda)$. Choosing an enumeration $e_1,e_2,\dots$ of $E_0$, we have 
\[
PW_f(\Lambda)=\bigcup_{n=1}^\infty PW_{\{e_1,\dots,e_n\}}(\Lambda)=\bigcup_{n=1}^\infty\bigoplus_{i=1}^nPW_{e_i}(\Lambda)
\]
with the inductive limit topology in the category of locally convex spaces, and consequently we have the locally convex direct sum decomposition
\[
PW_f(\Lambda)=\bigoplus_{e\in E}PW_e(\Lambda).
\]

To define the Schwartz space $\Ss(\Lambda)$ on $\Lambda$, in the archimedean case we need to make use of certain constant coefficient differential operators on $\Lambda$. Recall in the archimedean case we have an identification of each $\Lambda_e$ with a fixed vector space $iV^*$. Thus, we have a notion of a differential operator on $\Lambda$ that is the same on almost all (all but finitely many) components $\Lambda_e$. 

We define the Schwartz space $\Ss(\Lambda)$ on $\Lambda$ to be the space of smooth functions $\varphi:\Lambda\to\CC$ such that the following holds.
\begin{enumerate}
    \item In the non-archimedean case we require that $\varphi$ is supported on finitely many connected components $\Lambda_e$.
    \item $\varphi$ satisfies a decay condition:
    \begin{enumerate}
        \item In the archimedean case, we require that for each differential operator $D$ on $\Lambda$ that is the same on almost all components and for each $N\in\ZZ_{>0}$, we have
        \[
        \|\varphi\|_{D,N}\defeq \sup_{e\in E, \lambda\in\Lambda_e}|D\varphi(\lambda)| (1+\|e\|+\|\lambda\|)^N
        \]
        is finite.
        \item In the non-archimedean case, we require that for each differential operator $D$ on $\Lambda$, we have
        \[
        \|\varphi\|_{D}\defeq \sup_{\lambda\in\Lambda}|D\varphi(\lambda)|
        \]
        is finite.
    \end{enumerate}
\end{enumerate}
In the archimedean case, $\Ss(\Lambda)$ is a Fr\'echet space with the topology defined by the family of seminorms $\|\cdot\|_{D,N}$. Note that in the definition of $\Ss(\Lambda)$ in the archimedean case, we could have restricted attention to differential operators $D$ that are the same on all components of $\Lambda$; this is how such Schwartz spaces are often defined in the literature.

For each finite set $E_0\subseteq E$ we define $\Ss_{E_0}(\Lambda)$ to be the subspace of all $\varphi\in \Ss(\Lambda)$ that are supported on $\coprod_{e\in E_0}\Lambda_e$. In the archimedean case, $\Ss_{E_0}(\Lambda)$ is a closed subspace of $\Ss(\Lambda)$ and is thus a Fr\'echet space with respect to the topology defined by the family of seminorms $\|\cdot\|_{D,N}$. In the non-archimedean case, we give $\Ss_{E_0}(\Lambda)$ the topology defined by the family of seminorms $\|\cdot\|_{D}$. It is a Fr\'echet space. We define $\Ss_e(\Lambda)=\Ss_{\{e\}}(\Lambda)$. Note that $\Ss_e(\Lambda)$ is the classical Schwartz space $\Ss(\Lambda_e)$ on $\Lambda_e$.

In the non-archimedean case, we give $\Ss(\Lambda)=\bigcup_{E_0}\Ss_{E_0}(\Lambda)$ the inductive limit topology in the category of locally convex spaces, making it a strict LF-space, and we have $\Ss(\Lambda)=C_c^\infty(\Lambda)$.

We could also define a space $\Ss_f(\Lambda)$ in the archimedean case in a manner similar to how we defined $PW_f(\Lambda)$, although we will not make use of this space. Such Schwartz spaces would presumably be used in an invariant Paley--Wiener theorem for $K$-finite Schwartz functions, but to our knowledge such a theorem has not yet been established.

\subsubsection{The Fourier transform.}
Fix a space $\Lambda$ as in \Cref{sec:spaces}. It is naturally dual to the space $X=\coprod_{e\in E}X_e$, where $X_e=V$ in the archimedean case and $X_e=\Gamma_e$ in the non-archimedean case. 

We will define function spaces $C_c^\infty(X)$ and $\Ss(X)$ and a Fourier transform that gives isomorphisms of topological vector spaces $C_c^\infty(X)\to PW(\Lambda)$ and $\Ss(X)\to\Ss(\Lambda)$. We remark that in the archimedean case, $C_c^\infty(X)$ will not be the space of compactly supported smooth functions on $X$ if $X$ has infinitely many components.

First, we define the spaces $C_c^\infty(X)$ and $\Ss(X)$ in the non-archimedean case. We define $\Ss(X)=\bigoplus_{e\in E}\Ss(\Gamma_e)$, where $\Ss(\Gamma_e)$ is the space of functions $\phi:\Gamma_e\to\CC$ that are rapidly decreasing, i.e. for all $N\in\ZZ_{>0}$ we have
\[
\|\phi\|_N\defeq\sup_{x\in X}|\phi(x)|(1+\|x\|)^N<\infty.
\]
The space $\Ss(\Gamma_e)$ is a Fr\'echet space with respect to the topology defined by the seminorms $\|\cdot\|_N$, and $\Ss(X)$ is a strict LF-space.

We define $C_c^\infty(X)=\bigoplus_{e\in E} C_c^\infty(\Gamma_e)$. Here, $C_c^\infty(\Gamma_e)$ is the space of compactly supported (i.e. finitely supported) functions on $\Gamma_e$ equipped with the finest locally convex topology. Thus, $C_c^\infty(X)$ also has the finest locally convex topology. For $r>0$, we define $C_r^\infty(\Gamma_e)$ to be the space of functions on $\Gamma_e$ with support contained in $\{x\in\Gamma_e : \|x\|\leq r\}$. It is a finite-dimensional space, and we give it the locally convex topology. Then $C_r^\infty(\Gamma_e)$ is a closed subspace of $C_c^\infty(\Gamma_e)$. Moreover, we have an increasing union $C_c^\infty(\Gamma_e)=\bigcup_{r>0}C_r^\infty(\Gamma_e)$ and $C_c^\infty(\Gamma_e)$ is the inductive limit of the $C_r^\infty(\Gamma_e)$ in the category of locally convex spaces. It is a strict LF-space. We define $C_r^\infty(X)=\bigoplus_{e\in E}C_r^\infty(\Gamma_e)$. Again, we have that $C_c^\infty(X)$ is the increasing union $C_c^\infty(X)=\bigcup_{r>0} C_r^\infty(X)$, it is the inductive limit of the $C_r^\infty(X)$ in the category of locally convex spaces, and it is a strict LF-space.

Now, we treat the archimedean case. We may view $V=i(iV^*)^*$, and thus we have a definition of $\Ss(X)$. It remains for us to define $C_c^\infty(X)$ in the archimedean case. For $r>0$, we define $C_{r}^\infty(X)$ to be the space of smooth functions $\phi:X\to\CC$ such that:
\begin{enumerate}
    \item the support of $\phi_e$ is contained in the closed ball of radius $r$ in $X_e$;
    \item for all  $N>0$ and all differential operators $D$ that are the same on almost all components $X_e$, we have
    \[
    \|\phi\|_{D,N}\defeq\sup_{e\in E, x\in X_e}|D\phi_e(x)|(1+\|e\|)^N
    \]
    is finite.
\end{enumerate}
We give $C_{r}^\infty(X)$ the topology defined by the family of seminorms $\|\cdot\|_{D,N}$, which makes $C_r^\infty(X)$ a Fr\'echet space. (Note that in the definition of $C_r^\infty(X)$ and its topology, it suffices to use differential operators that are the same on all components of $X$.) We define $C_c^\infty(X)$ to be the increasing union $C_c^\infty(X)=\bigcup_{r>0}C_r^\infty(X)$ and we give it the inductive limit topology in the category of locally convex spaces, making it a strict LF-space.

Note that $C_c^\infty(X)$ is a dense subspace of $\Ss(X)$ in all cases and the inclusion map $C_c^\infty(X)\to\Ss(X)$ is continuous.

We now fix measures on $X$ and $\Lambda$. We fix Pontryagin dual measures on $V$ and $iV^*$. These determine Pontryagin dual measures on $X_e$ and $\Lambda_e$ in the archimedean case. In the non-archimedean case, we give each $X_e=\Gamma_e$ the counting measure and $\Lambda_e=iV^*/\Gamma_e^\vee$ the Pontryagin dual measure.

For $\phi\in\Ss(X)$, we define its Fourier transform $\wh{\phi}=(\wh{\phi_{e}})_{e\in E}:\Lambda\to\CC$ by taking the Fourier transform on each component $X_e$, that is, $\wh{\phi_{e}}(\lambda)=\int_{X_{e}}\phi_{e}(x)e^{-2\pi\langle x,\lambda \rangle}\dd{x}$.

\begin{lemma}
    The Fourier transform gives isomorphisms of topological vector spaces $\Ss(X)\to\Ss(\Lambda)$, $C_{r/2\pi}^\infty(X)\to PW^r(\Lambda)$, and $C_c^\infty(X)\to PW(\Lambda)$.
\end{lemma}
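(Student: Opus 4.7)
The plan is to reduce the assertion to the classical componentwise Paley--Wiener and Fourier--Schwartz theorems and then reassemble. Since the Fourier transform acts componentwise via $(\phi_e)_{e \in E} \mapsto (\widehat{\phi_e})_{e \in E}$, it suffices to verify two points: (i) for each fixed $e$, the map $\phi_e \mapsto \widehat{\phi_e}$ is a topological isomorphism between the appropriate component spaces, and (ii) the uniformity in $e$ implicit in the definitions is preserved.

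For (i), in the archimedean case each component map is the classical Fourier transform $\Ss(V) \to \Ss(iV^*)$, which restricts to the classical Paley--Wiener isomorphism from smooth functions on $V$ supported in the closed ball of radius $r/2\pi$ onto $PW^r(iV^*)$. In the non-archimedean case each component map is the Fourier series, identifying the Schwartz space of rapidly decreasing functions on the lattice $\Gamma_e$ with the space of smooth functions on the dual compact torus $\Lambda_e$, and restricting to an isomorphism between finitely supported functions on $\Gamma_e$ with support in the ball of radius $r/2\pi$ and Laurent polynomials on $\Lambda_{e,\CC} = V_\CC^*/\Gamma_e^\vee$ of exponential type $r$. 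Both componentwise statements are classical and yield topological isomorphisms.

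For (ii), all the function spaces in play are defined by combining the componentwise data with a uniformity condition in $e$: in the non-archimedean case, support on finitely many components; in the archimedean case, the uniform weighted bound using $(1 + \|e\|)^N$. Since the Fourier transform does not touch the discrete index $e$, the factor $(1 + \|e\|)^N$ is inert under Fourier duality, so this condition transfers directly between the $X$ and $\Lambda$ sides. The global spaces are locally convex direct sums in $e$ or strict inductive limits thereof, and componentwise bicontinuity assembles to bicontinuity globally. The assertion for $C_c^\infty(X) \to PW(\Lambda)$ follows from the assertion for $C_{r/2\pi}^\infty(X) \to PW^r(\Lambda)$ by passing to the inductive limit in $r > 0$.

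The main obstacle is careful bookkeeping of seminorms in the archimedean case. Specifically, one must show that the seminorms $\|\cdot\|_{D,N}$ on $\Ss(\Lambda)$ involving differential operators $D$ that are the same on almost all components correspond, under Fourier duality, to seminorms on $\Ss(X)$ involving polynomial multipliers that are the same on almost all components, with the $(1+\|e\|)^N$ uniformity factor passing through unchanged. An analogous verification is needed to match the Paley--Wiener seminorm $\|\cdot\|_{r,N}$ on $PW^r(\Lambda)$ against the defining seminorms $\|\cdot\|_{D,N}$ of $C_{r/2\pi}^\infty(X)$. These verifications amount to routine Leibniz expansions together with the standard exchange of polynomial multiplication with constant-coefficient differentiation under Fourier transform.
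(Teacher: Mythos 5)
Your approach is essentially the same as the paper's: reduce to the classical one-component Fourier--Schwartz and Paley--Wiener theorems and then handle the uniformity in $e$ via the seminorm estimates. The paper compresses your step (ii) into one line, citing the elementary inequalities $(1+x+y)\leq(1+x)(1+y)\leq(1+x+y)^2$ (which let one split the combined weight $(1+\|e\|+\|\lambda\|)^N$ into a product of a factor in $e$ alone and a factor in $\lambda$ alone) and $(1+x)^N\ll(1+x^2)^N\ll(1+x)^{2N}$; your phrase about the $(1+\|e\|)^N$ factor being ``inert under Fourier duality'' is the same observation after this splitting has been performed, so it would be worth stating that splitting explicitly, but this is a presentational point rather than a gap.
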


This follows easily from the classical version of this lemma when $E$ is a singleton, together with the following basic inequalities: $(1+x+y)\leq(1+x)(1+y)\leq(1+x+y)^2$ for $x,y\geq0$; and for $N\in\ZZ_{>0}$, we have $(1+x)^N\ll(1+x^2)^N\ll(1+x)^{2N}$ for $x\geq0$.

\begin{corollary}\label{PWDenseSchwartz}
    $PW(\Lambda)$ is a dense subspace of $\Ss(\Lambda)$ and the inclusion map 
    \[
    PW(\Lambda)\longrightarrow\Ss(\Lambda)
    \]
    is continuous.
\end{corollary}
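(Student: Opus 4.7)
The plan is to transfer both assertions across the Fourier isomorphisms provided by the preceding lemma, using the corresponding facts about $C_c^\infty(X)$ and $\Ss(X)$ noted just above it. The lemma supplies topological isomorphisms $\Ss(X)\to\Ss(\Lambda)$ and $C_c^\infty(X)\to PW(\Lambda)$ via the Fourier transform, and the paragraph immediately preceding the lemma records both that $C_c^\infty(X)$ is dense in $\Ss(X)$ and that the inclusion $C_c^\infty(X)\to\Ss(X)$ is continuous.

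First I would establish continuity. The inclusion $PW(\Lambda)\to\Ss(\Lambda)$ factors as
\[
PW(\Lambda)\longrightarrow C_c^\infty(X)\longrightarrow\Ss(X)\longrightarrow\Ss(\Lambda),
\]
where the first arrow is the inverse Fourier transform, the middle arrow is the inclusion $\iota$, and the last arrow is the Fourier transform. By the lemma, the outer two arrows are homeomorphisms, and $\iota$ is continuous by the observation recorded above, so the composition is continuous.

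Next I would deduce density. Since the Fourier transform is a homeomorphism $\Ss(X)\to\Ss(\Lambda)$ and restricts to a bijection $C_c^\infty(X)\to PW(\Lambda)$, density of $PW(\Lambda)$ in $\Ss(\Lambda)$ is equivalent to density of $C_c^\infty(X)$ in $\Ss(X)$, which was asserted just before the lemma.

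The only substantive point, then, is the asserted density of $C_c^\infty(X)$ in $\Ss(X)$, which is the main thing to verify (though not really an obstacle). In the non-archimedean case this is immediate from the decompositions $\Ss(X)=\bigoplus_{e\in E}\Ss(\Gamma_e)$ and $C_c^\infty(X)=\bigoplus_{e\in E}C_c^\infty(\Gamma_e)$ together with the classical fact that finitely supported functions are dense in the Schwartz space of a lattice. In the archimedean case, one approximates $\phi\in\Ss(X)$ by $\phi_R(x)\defeq\chi(\|x\|/R)\phi(x)$, where $\chi\in C_c^\infty(\RR_{\geq0})$ equals $1$ near zero; for $R$ large enough the cutoffs $\phi_R$ lie in $C_R^\infty(X)$, and a standard Leibniz estimate shows $\|\phi-\phi_R\|_{D,N}\to 0$ as $R\to\infty$ for every admissible differential operator $D$ and every $N$, with the weights $(1+\|e\|)^N$ factoring out of the seminorms so that uniformity across the components $e\in E$ is automatic.
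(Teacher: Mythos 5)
Your argument is exactly what the paper intends: transport the density and continuity of the inclusion $C_c^\infty(X)\hookrightarrow\Ss(X)$ — which the paper records as a remark just before the lemma — across the Fourier isomorphisms $C_c^\infty(X)\xrightarrow{\sim}PW(\Lambda)$ and $\Ss(X)\xrightarrow{\sim}\Ss(\Lambda)$. The extra verification of density of $C_c^\infty(X)$ in $\Ss(X)$ (componentwise in the non-archimedean case, dilated cutoff in the archimedean case) fills in a step the paper states without proof, and is correct.
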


\subsubsection{Pullback mappings.}
Let $V_1$ and $V_2$ be Euclidean spaces and let $T:iV_1^*\to iV_2^*$ be an injective linear map. Then pullback defines continuous linear maps $T^*:\Ss(iV_2^*)\to\Ss(iV_1^*)$ and $T^*:PW(iV_2^*)\to PW(iV_1^*)$. Similarly, if $\Gamma_1$ and $\Gamma_2$ are lattices in $V_1$ and $V_2$, respectively, and $T:iV_1^*/\Gamma_1^\vee\to iV_2^*/\Gamma_2^\vee$ is a smooth homomorphism, then pullback along $T$ defines continuous linear maps $T^*:\Ss(iV_2^*/\Gamma_2^\vee)\to\Ss(iV_1^*/\Gamma_1^\vee)$ and $T^*:PW(iV_2^*/\Gamma_2^\vee)\to PW(iV_1^*/\Gamma_1^\vee)$.

There is a simple generalisation of this to the Schwartz and Paley--Wiener spaces $\Ss(\Lambda)$, $PW(\Lambda)$, and $PW_f(\Lambda)$ defined above.

\begin{lemma}\label{pullback}
    For $i=1,2$, let $\Lambda_i=\coprod_{e\in E_i}\Lambda_e$ be a space as defined in \Cref{sec:spaces} with $V=V_i$.
    
    Let $T_E:E_1\to E_2$ be a partially defined map and let $T_V:iV_1^*\to iV_2^*$ be a linear map. In the archimedean case, assume that $\|T_E (e_1)\|\gg\|e_1\|$ and that $T_V$ is injective. In the non-archimedean case, assume that $T_E$ has finite fibres and that $T_V(\Gamma_1^\vee)\subseteq\Gamma_2^\vee$ so that $T_V$ descends to a smooth homomorphism $T_V:iV_1^*/\Gamma_1^\vee\to iV_2^*/\Gamma_2^\vee$.
    
    Let $\varphi=(\varphi_{e_2})_{e_2\in E_2}\in \Ss(\Lambda_2)$. Define $T^*\varphi=((T^*\varphi)_{e_1})_{e_1\in E_1}$ by $(T^*\varphi)_{e_1}=0$ if $T_E(e_1)$ is undefined and $(T^*\varphi)_{e_1}=T_V^*\varphi_{T_E(e_1)}$ otherwise. To simplify notation, we write $\varphi_{T_E(e_1)}=0$ if $T_E(e_1)$ is not defined. Then $(T^*\varphi)_{e_1}=T_V^*\varphi_{T_E(e_1)}$ for all $e_1\in E_1$.
    
    For all $\varphi\in \Ss(\Lambda_2)$, we have $T^*\varphi\in\Ss(\Lambda_1)$ and 
    \[
    T^*:\Ss(\Lambda_2)\longrightarrow\Ss(\Lambda_1)
    \]
    is a continuous linear map. Moreover, $T^*$ restricts to a continuous linear map
    \[
    T^*:PW(\Lambda_2)\longrightarrow PW(\Lambda_1).
    \]
    In the archimedean case, if $T_E$ in addition has finite fibres, then $T^*$ restricts further to a continuous linear map
    \[
    T^*:PW_f(\Lambda_2)\longrightarrow PW_f(\Lambda_1).
    \]
\end{lemma}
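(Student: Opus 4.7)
The plan is to reduce to classical Schwartz/Paley--Wiener estimates on a single component and patch using the structure of $\Lambda_1=\coprod_{e_1\in E_1}\Lambda_{e_1}$. First, $T_V$ extends complex-linearly (respectively, descends to a smooth homomorphism of complex tori) to give a map $\Lambda_{e_1,\CC}\to\Lambda_{T_E(e_1),\CC}$ whenever $T_E(e_1)$ is defined, so pullback preserves holomorphicity and smoothness on each component. For a constant-coefficient directional derivative $\partial_{v_1}$ on $\Lambda_{e_1}$ the chain rule gives $\partial_{v_1}(T_V^*\varphi)=T_V^*(\partial_{T_V v_1}\varphi)$; iterating, to each constant-coefficient differential operator $D$ on $\Lambda_1$ which is the same on almost all components there corresponds an operator $D'$ on $\Lambda_2$, also the same on almost all components, with $D\circ T^* = T^*\circ D'$.

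The core estimates come from injectivity of $T_V$ between finite-dimensional real spaces, which furnishes constants $c,C>0$ with
\[
c\|\lambda\|\le\|T_V\lambda\|\le C\|\lambda\|, \qquad c\|\Re\lambda\|\le\|\Re(T_V\lambda)\|\le C\|\Re\lambda\|.
\]
Combined with the archimedean hypothesis $\|T_E(e_1)\|\gg\|e_1\|$, this yields
\[
1+\|e_1\|+\|\lambda\|\;\le\; C_1\bigl(1+\|T_E(e_1)\|+\|T_V\lambda\|\bigr).
\]
Substituting $\mu=T_V\lambda$ and using $D\circ T^* = T^*\circ D'$, I would bound $\|T^*\varphi\|_{D,N}\le C_N\|\varphi\|_{D',N}$ in the archimedean Schwartz case and $\|T^*\varphi\|_{r/C,N}\le C'_N\|\varphi\|_{r,N}$ in the archimedean Paley--Wiener case (the exponential factor using only the trivial upper bound $\|\Re(T_V\lambda)\|\le C\|\Re\lambda\|$). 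The non-archimedean analogues are identical minus the polynomial factors, and the support conditions propagate because $T_E$ has finite fibres: in the non-archimedean setting this is part of the hypothesis, and in the archimedean $PW_f$ case it is the added assumption.

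With these seminorm estimates in hand, continuity of $T^*$ on each defining Fr\'echet (or Banach) subspace is automatic, and the universal property of the strict LF-space topologies on the target spaces then yields the three claimed continuity statements. The main technical point is the archimedean Paley--Wiener estimate, where the polynomial factor demands the lower bound $\|T_V\lambda\|\gtrsim\|\lambda\|$ (using injectivity of $T_V$) while the exponential factor uses only the upper bound $\|\Re(T_V\lambda)\|\lesssim\|\Re\lambda\|$; absent the hypothesis $\|T_E(e_1)\|\gg\|e_1\|$, the polynomial decay on the source could not be uniformly absorbed into that on the target as $e_1$ varies over $E_1$. Everything else is bookkeeping with the defining families of seminorms.
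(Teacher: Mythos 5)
Your approach is essentially the same as the paper's: push $D$ forward along $T_V$ via the chain rule, use injectivity of $T_V$ to get $\|T_V\lambda\|\asymp\|\lambda\|$, and combine with $\|T_E(e_1)\|\gg\|e_1\|$ to absorb the polynomial factors; the non-archimedean and $PW_f$ cases then follow from the finite-fibre hypotheses. The paper's proof abusively reuses the letter $D$ for your $D'$, but the content is identical.

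One small correction in your bookkeeping for the archimedean Paley--Wiener case: pullback sends $PW^r(\Lambda_2)$ into $PW^{rC}(\Lambda_1)$, not $PW^{r/C}(\Lambda_1)$. Since $\|\Re(T_V\lambda)\|\leq C\|\Re\lambda\|$, a function of exponential type $r$ on $\Lambda_{2,\CC}$ pulls back to one of exponential type at most $rC$ on $\Lambda_{1,\CC}$, so the correct estimate is $\|T^*\varphi\|_{rC,N}\ll\|\varphi\|_{r,N'}$. The direction is forced: if $T_V$ were $C$ times an isometry with $C>1$, the composite $\varphi\circ T_V$ grows strictly faster than $\varphi$. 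This slip does not affect the conclusion, since $PW(\Lambda_1)=\bigcup_{r>0}PW^r(\Lambda_1)$ as a strict inductive limit, so continuity of $T^*:PW(\Lambda_2)\to PW(\Lambda_1)$ only needs continuity $PW^r\to PW^{r'}$ for some $r'$ depending on $r$; but the stated operator-norm inequality with $r/C$ would be false as written.
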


\begin{proof}
    The non-archimedean case and the last claim is straightforward. We prove the first claim in the archimedean case, the second being similar. Let $D$ be an invariant differential operator on $iV_1^*$ and let $N\in\ZZ_{>0}$. We will also use $D$ to denote the extension of $D$ along $T_V$ to an invariant differential operator on $iV_2^*$. Let $\varphi\in\Ss(\Lambda_2)$. We have
    \[
    \|T^*\varphi\|_{D,N}=\sup_{\substack{e_1\in E_1 \\
    \lambda\in\Lambda_{e_1}}} |D\varphi_{T_E(e_1)}(T_V\lambda)|(1+\|e_1\|+\|\lambda\|)^N.
    \]
    For all $N'\in\ZZ_{>0}$, we have
    \[
    |D\varphi_{T_E(e_1)}(T_V\lambda)|\leq\|\varphi\|_{D,N'}(1+\|T_E(e_1)\|+\|T_V\lambda\|)^{-N'}.
    \]
    Since $T_V:iV_1^*\to iV_2^*$ is injective, we have that $\lambda\mapsto \|T_V\lambda\|$ is a norm on $iV_1^*$ and thus $\|T_V\lambda\|\asymp\|\lambda\|$. Since $\|T_E(e_1)\|\gg\|e_1\|$, we have
    \[
    (1+\|T_E(e_1)\|+\|T_V\lambda\|)^{-N'}\ll (1+\|e_1\|+\|\lambda\|)^N
    \]
    for $N'$ sufficiently large. Thus,
    \[
    \|T^*\varphi\|_{D,N}\ll\|\varphi\|_{D,N'}
    \]
    for $N'$ sufficiently large. Thus, we have $T^*\varphi\in\Ss(\Lambda_1)$ and $T^*$ defines a continuous linear map $T^*:\Ss(\Lambda_2)\to\Ss(\Lambda_1)$. That $T^*$ restricts to a continuous linear map $T^*:PW(\Lambda_2)\to PW(\Lambda_1)$ follows in a similar way.
\end{proof}

The proof of \Cref{thm:pullback}, from which the existence of stable transfer follows, makes use of this lemma.

\subsection{Invariant Paley--Wiener Theorems}

As explained above, we view the invariant Fourier transform of $f\in\Cc(G)$ as the $\CC^\times$-equivariant function $f_G:T_\temp(G)\to\CC$ defined by $f_G(\tau)=f_G(\pi_\tau)$. We thus view the space of invariant Fourier transforms of elements of $\Ic_{(c)}(G)$ as
\begin{align*}
    \wh{\Ic_{(c)}}(G)&=\{f_G:T_\temp(G)\to\CC : f\in\Cc_{(c)}(G)\} \\
    &=\Cc_{(c)}(G)/\Ann_{\Cc_{(c)}(G)}(T_\temp(G)).
\end{align*}
The invariant Paley--Wiener theorem for $\Ic_{(c)}(G)$ asserts that the invariant Fourier transform $\Fc:\Ic_{(c)}(G)\to\wh{\Ic_{(c)}}(G)$ is an isomorphism of topological vector spaces and gives a characterisation of $\wh{\Ic_{(c)}}(G)$ as a Schwartz (resp. Paley--Wiener) space. In this subsection we review this and other invariant Paley--Wiener theorems that we will stabilise. 

Let $L\in\Lc^G(M_0)$. Recall that if $F$ is archimedean we have defined a ``norm'' on the set of infinitesimal characters of $L$. We define $\|\tau\|=\|\mu_\tau\|$ for all $\tau\in\wt{T}_\el(L)$. For any countable set $E\subseteq T_\el(L)$, we have a Schwartz space $\Ss(\Lambda)$ and a Paley--Wiener space $PW(\Lambda)$ defined as in \Cref{sec:spaces} on the space $\Lambda=\coprod_{\tau\in E}\Lambda_\tau$ with $\Lambda_\tau=i\ak_{L}^*/\ak_{L,\tau}^\vee$.

We define $\Ss_\el(L)$ to be the space of smooth $\CC^\times$-equivariant functions $\varphi:T_\el(L)\to\CC$ such that for some (and hence any) choice of representatives $E_\el(L)\subseteq T_\el(L)$ for the connected components of $T_\el(L)/\CC^\times$, we have 
\[
\varphi\in \Ss\Bigg(\coprod_{\tau\in E_\el(L)}i\ak_{L}^*/\ak_{L,\tau}^\vee\Bigg).
\]
We define $PW_\el(L)$ (resp. $PW_{\el,f}(L)$) in the same way as $\Ss_\el(L)$, except that we replace $\Ss(\cdot)$ by $PW(\cdot)$ (resp. $PW_f(\cdot)$). We define 
\[
\Ss(G)=\Bigg(\bigoplus_{L\in\Lc^G(M_0)}PW_\el(L)\Bigg)^{W_0^G}=\bigoplus_{L\in\Lc^G(M_0)/W_0^G}PW_\el(L)^{W^G(L)}
\] 
and similarly we define $PW(G)$ and $PW_f(G)$. It follows from the decomposition of $T_\temp(G)$ in terms of the $T_\el(L)$, that $\Ss(G)$, $PW(G)$, and $PW_f(G)$ are naturally spaces of smooth $\CC^1$-equivariant functions on $T_\temp(G)$. We have the following invariant Paley--Wiener theorems.

\begin{theorem}
    The invariant Fourier transform is an isomorphism of topological vector spaces
    \[
    \Ic(G)\longrightarrow\Ss(G).
    \]
    It restricts to isomorphism of topological vector spaces
    \[
    \Ic_c(G)\longrightarrow PW(G).
    \]
    This further restricts to an isomorphism of topological vector spaces
    \[
    \Ic_f(G)\longrightarrow PW_f(G)
    \]
    Thus, we have $\wh{\Ic}(G)=\Ss(G)$, $\wh{\Ic_c}(G)=PW(G)$, and $\wh{\Ic_{f}}(G)=PW_f(G)$.
\end{theorem}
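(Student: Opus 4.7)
The plan is to assemble three classical invariant Paley--Wiener theorems into the precise topological form stated here, with the main novelty being the simultaneous packaging across the Schwartz, test, and $K$-finite regimes. My first step is to verify that $\Fc$ lands in the correct abstract function space. Fix $L\in\Lc^G(M_0)$ and $\tau=(M,\sigma,\wt{r})\in T_\el(L)$, and consider $\lambda\mapsto f_G(\tau_\lambda)$ on the component $i\ak_L^*/\ak_{L,\tau}^\vee$. Using the formula $\Theta_{\tau_\lambda}(f)=\tr(R_P(\wt{r}_\lambda)I_P(\sigma_\lambda,f))$ together with the analyticity of the normalised intertwining operators along the unitary axis and the standard bounds on their derivatives (as in \cite{ArthurFourier}), one obtains smoothness in $\lambda$, together with the required Schwartz decay in $\|\lambda\|$ when $f\in\Cc(G)$, and with an entire extension and exponential-type growth bound governed by $\supp(f)$ when $f\in C_c^\infty(G)$. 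The $W^G(L)$-equivariance on components and the compatibility with $\iota_L^G$ then give membership in $\Ss(G)$, $PW(G)$, or $PW_f(G)$. Continuity follows because the defining seminorms on the target spaces pull back to continuous seminorms on the Harish-Chandra Schwartz space, using the standard estimates on $\Xi$ and on matrix coefficients of $I_P(\sigma_\lambda)$.

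My second step is injectivity. If $f\in\Cc(G)$ (resp.\ $C_c^\infty(G)$, $C_c^\infty(G,K)$) satisfies $\Theta_\tau(f)=0$ for all $\tau\in T_\temp(G)$, then $\Theta_\pi(f)=0$ for all $\pi\in\Pi_\temp(G)$ by the decomposition of $D_\temp(G)$ in terms of the $\pi_\tau$. By spectral density (Kazhdan for $p$-adic groups \cite{KazhdanCuspidal} and Harish-Chandra for real groups), the tempered characters span a weak-$*$ dense subspace of $\Ic(G)'$ (resp.\ $\Ic_c(G)'$), so $f_G=0$. This gives injectivity of $\Fc$ on each of the three quotients.

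The third step, which will be the real work, is surjectivity. The test function case $\Ic_c(G)\to PW(G)$ is the invariant Paley--Wiener theorem of Clozel--Delorme for real groups and of Bernstein--Deligne--Kazhdan (with the precise form over the Arthur parameters due to Arthur \cite{ArthurPW}) for $p$-adic groups. The $K$-finite refinement $\Ic_f(G)\to PW_f(G)$ in the archimedean case follows from the same Clozel--Delorme argument by tracking $K$-types, since the inverse construction there takes $\varphi$ of finite $K$-type support to a $K$-finite test function. In the non-archimedean case there is nothing to prove since $C_c^\infty(G,K)=C_c^\infty(G)$ and $PW_f(G)=PW(G)$ (every element is automatically supported on finitely many components). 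The Schwartz case $\Ic(G)\to \Ss(G)$ reduces to Harish-Chandra's Plancherel theorem for real groups and Waldspurger's Plancherel theorem for $p$-adic groups: given $\varphi\in\Ss(G)$, one uses the Plancherel inversion formula to construct $f\in\Cc(G)$ with $\Theta_\tau(f)$ agreeing with $\varphi(\tau)$ on $T_\el(L)$ for each $L$; the Plancherel densities are smooth and of polynomial growth on the tempered dual, so the decay estimates survive. The main obstacle is checking that the topologies match, that is, the inverse maps constructed in these references are continuous with respect to the seminorms defining the target spaces --- this is essentially a bookkeeping exercise on the estimates already implicit in the proofs.

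Finally, continuity of the inverse map is automatic: the spaces $\Ic(G)$, $\Ic_c(G)$, $\Ic_f(G)$ and $\Ss(G)$, $PW(G)$, $PW_f(G)$ are all strict LF-spaces (or direct sums thereof, by the decomposition over $\Lc^G(M_0)/W_0^G$), and a continuous linear bijection between strict LF-spaces is a homeomorphism by the open mapping theorem for such spaces. Thus $\Fc$ is an isomorphism of topological vector spaces in each of the three regimes, and the compatibility of the restrictions is clear from the construction.
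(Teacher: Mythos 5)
Your proposal has the right overall structure, and in fact the paper itself does not give a proof of this theorem but rather records it with references; your sketch is a plausible expansion of those references. That said, some of your attributions are off and one of your reduction claims substantially understates the difficulty.

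On attributions: for the archimedean test function case $\Ic_c(G)\to PW(G)$ in the precise form used here---parametrized by Arthur's virtual tempered representations $\{\pi_\tau\}_{\tau\in T_\temp(G)}$---the underlying ingredient is not Clozel--Delorme directly but the invariant Paley--Wiener theorem of Bouaziz (generalized to the twisted case by Renard), as packaged by Moeglin--Waldspurger in \cite{MWI}. Clozel--Delorme's theorem is in a different parametrization (by $K$-types and matrix coefficients) and does not directly yield the statement in Arthur's form. For the $p$-adic test function case and the $K$-finite archimedean case, the right reference is \cite{ArthurElliptic}, which is where the statement over $T_\temp(G)$ is actually proven, not Bernstein--Deligne--Kazhdan.

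More importantly, your claim that the Schwartz case $\Ic(G)\to\Ss(G)$ ``reduces to'' Harish-Chandra's and Waldspurger's Plancherel theorems and is ``essentially a bookkeeping exercise'' is not accurate. The Plancherel theorem gives an isomorphism between $\Cc(G)$ and a space of \emph{operator-valued} functions on $\Pi_\temp(G)$; showing that the \emph{scalar-valued} (invariant) Fourier transform $f\mapsto(\tau\mapsto\Theta_\tau(f))$ maps $\Ic(G)$ \emph{onto} $\Ss(G)$ requires, given $\varphi$, constructing operator-valued data whose traces against the $R_P(\wt{r})$ recover $\varphi$ on each component $T_\el(L)$, and such a lifting is neither canonical nor immediate. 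One also must verify that the resulting $f$ satisfies all the Harish-Chandra Schwartz estimates, not just that formal decay survives. This is exactly the content of the paper cited in the text as \cite{ArthurPW}, where this map is shown to be an open continuous surjection. Your reduction as written skips the central step, which is precisely what makes the Schwartz case a theorem in its own right rather than a corollary of Plancherel.
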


Injectivity of the first map is equivalent to the assertion that for $f\in\Cc(G)$, if $f_G$ vanishes on $D_\temp(G)$, then $f_G(\gamma)=0$ for all $\gamma\in\Gamma_\rs(G)$. This is called spectral density. Kazhdan proved spectral density follows for $p$-adic groups \cite[Theorem J.(a)]{KazCusp}. Spectral density for both real and $p$-adic groups can also be seen as a corollary of Arthur's Fourier inversion theorem for orbital integrals \cite[Theorem 4.1 with $M=G$]{ArthurFourier}, which we will recall in a moment. The first map in the theorem was proved to be an open continuous surjection by Arthur in \cite{ArthurPW}. For real groups, the second statement in the theorem is proved in a more general twisted form in \cite{MWI} using a twisted invariant Paley--Wiener theorem due Renard, which generalises a theorem due to Bouaziz in the non-twisted case. For $p$-adic groups, the second statement is proved in \cite{ArthurElliptic}. The third statement is only different from the second in the case of real groups, and it is also proved in \cite{ArthurElliptic}. (See also \cite[\S6.2]{MWLocal}.)

To state Arthur's Fourier inversion theorem for orbital integrals, we need a measure on $T_\temp(G)/\CC^\times$. First, we define a measure on $T_\el(L)/\CC^\times$ for each $L\in\Lc^G(M_0)$ by
\[
\int_{T_\el(L)/\CC^\times}\alpha(\tau)\dd{\tau}=\sum_{\tau\in T_\el(L)/(\CC^\times\times i\ak_{L}^*)}\int_{i\ak_{L}^*/\ak_{L,\tau}^\vee}\alpha(\tau_\lambda)\dd{\lambda},
\]
for all $\alpha\in C_c(T_\el(L)/\CC^\times)$. Then, we define a measure on $T_\temp(G)/\CC^\times$ by
\[
\int_{T_\temp(G)/\CC^\times}\alpha(\tau)\dd{\tau}=\sum_{L\in\Lc^G(M_0)/W_0^G}|W^G(L)|^{-1}\int_{T_\el(L)/\CC^\times}\alpha(\tau)\dd{\tau}
\]
for all $\alpha\in C_c(T_\temp(G)/\CC^\times)$. The following is Arthur's Fourier inversion theorem for orbital integrals.

\begin{theorem} \label{thm:inversion}
    There exists a smooth function $I_G(\gamma,\tau)$ on $\Gamma_{\sr}(G)\times T_\temp(G)$, which satisfies $I_G(\gamma,z\tau)=z^{-1}I_G(\gamma,\tau)$ for all $z\in\CC^\times$, and satisfies
    \[
    f_G(\gamma)=\int_{T_\temp(G)/\CC^\times}I_G(\gamma,\tau)f_G(\tau)\dd{\tau}
    \]
    for all $f\in\Cc(G)$ and $\gamma\in\Gamma_{\sr}(G)$.
\end{theorem}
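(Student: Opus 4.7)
The plan is to construct $I_G(\gamma,\tau)$ explicitly in terms of normalised characters, verify the inversion formula first on the elliptic locus using the orthogonality relations, and then extend to all of $\Gamma_\sr(G)$ by parabolic descent.

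For the elliptic case, fix a unitary central datum $(A_G(F),\zeta)$ and first take $f\in\Cc_\cusp(G,\zeta)$. By Harish-Chandra regularity, the normalised characters $|D^G|^{1/2}\Theta_\tau$ for $\tau\in T_\el(G,\zeta)$ are continuous on $\Gamma_{\rs,\el}(G)$ and lie in $L^2(\Gamma_{\sr,\el}(G)/A_G(F))$; by \Cref{OrthogonalityRelations}, after scaling by $|W^G(\tau)|^{-1/2}\iota(\tau)^{1/2}$ they form an orthonormal family for $\langle\cdot,\cdot\rangle_\el$. A short calculation using the Weyl integration formula identifies the elliptic Fourier coefficient of $|D^G|^{1/2}f_G$ against $|D^G|^{1/2}\Theta_\tau$ with a scalar multiple of $f_G(\tau)$, yielding, for $\gamma\in\Gamma_{\sr,\el}(G)$,
\[
f_G(\gamma)=\sum_{\CC^1\tau\in T_\el(G,\zeta)/\CC^1}\iota(\tau)|W^G(\tau)|^{-1}\ol{\Theta_\tau(\gamma)}f_G(\tau).
\]
Integrating over $\zeta\in\wh{A_G(F)}$ converts this sum into an integral over $T_\el(G)/\CC^\times$ against the measure defined in the statement.

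For general $\gamma\in\Gamma_\sr(G)$, choose $L\in\Lc^G(M_0)$ with $A_L=A_{G_\gamma}$, so that $\gamma$ is $L$-elliptic. Parabolic descent gives $f_G(\gamma)=f_L(\gamma)$, and the compatibility $\Theta_{\iota_L^G(\tau)}=I_L^G\Theta_\tau$ of the embedding $\iota_L^G:T_\el(L)\hookrightarrow T_\temp(G)$ with parabolic induction gives $f_L(\tau)=f_G(\iota_L^G(\tau))$ for all $\tau\in T_\el(L)$. Applying the elliptic inversion on $L$ to $f_L$ and pushing forward along $\iota_L^G$ expresses $f_G(\gamma)$ as an integral over $T_\el(L)/\CC^\times$ of the pulled back Fourier transform. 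For non-cuspidal $f$, this procedure is carried out for every $L$, and the decomposition
\[
T_\temp(G)/\CC^\times=\coprod_{L\in\Lc^G(M_0)/W_0^G}(T_\el(L)/\CC^\times)/W^G(L)
\]
together with the factor $|W^G(L)|^{-1}$ built into the measure assembles all contributions into a single spectral integral. We then define $I_G(\gamma,\tau)=\iota(\tau)\ol{\Theta_\tau(\gamma)}$ for $\tau\in T_\el(L)$ with $\gamma$ being $L$-elliptic, extended $W^G(L)$-symmetrically. Smoothness of $I_G(\gamma,\tau)$ in $\gamma\in\Gamma_\sr(G)$ is Harish-Chandra regularity; smoothness in $\tau$ on each connected component $i\ak_L^*/\ak_{L,\tau}^\vee$ of $T_\el(L)$ follows from the analytic dependence of induced characters on the inducing parameter; and the $\CC^\times$-equivariance is inherited from $\Theta_{z\tau}=z\Theta_\tau$.

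The main obstacle will be the absolute convergence of the spectral integrals for $f\in\Cc(G)$ and the legitimacy of interchanging the sum over Levi subgroups with the spectral integrations. This requires polynomial control on $|D^G(\gamma)|^{1/2}\Theta_\tau(\gamma)$ as $\tau$ varies over each component of $T_\el(L)$, available from Harish-Chandra's theory of wave packets together with bounds on normalised intertwining operators, combined with the rapid decay of $f_G(\tau)$ expressed by the invariant Paley--Wiener characterisation of $\wh{\Ic}(G)$ as the Schwartz space $\Ss(G)$. A subsidiary issue is the verification that the kernel $I_G$ assembled in this way is independent of the chosen Levi at each $\gamma$, which follows from uniqueness of parabolic descent and the vanishing of properly induced characters on elliptic elements to the extent required by the telescoping over $\Lc^G(M_0)$.
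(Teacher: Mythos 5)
The paper does not prove this statement; it is Arthur's Fourier inversion theorem, cited as \cite[Theorem 4.1 with $M=G$]{ArthurFourier}. Your sketch correctly reproduces the overall skeleton of Arthur's argument --- start at the elliptic locus of each Levi, invert there, and glue via parabolic descent and compatibility with $\iota_L^G$ --- but the central step is not justified. You leap from the orthogonality of the family $\{|D^G|^{1/2}\Theta_\tau\}_{\tau\in T_\el(G,\zeta)}$ (which is \Cref{OrthogonalityRelations}) to the claim that $m f_G$ restricted to $\Gamma_{\sr,\el}(G)$ equals its ``Fourier expansion'' against that family. Orthogonality gives only linear independence; you additionally need completeness, i.e.\ that $m f_G|_{\Gamma_{\sr,\el}(G)}$ lies in the closed span of the normalised elliptic characters. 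That completeness is precisely the hard analytic content of Arthur's local trace formula and cannot be recovered from \Cref{OrthogonalityRelations} plus ``a short calculation using the Weyl integration formula.'' The paper's own discussion of pseudocoefficients signals this: the identity $f[\tau]_G(\gamma)=m(\gamma)^{-1}|D^G(\gamma)|^{1/2}\ol{\Theta_\tau(\gamma)}$ is attributed to ``the simple form of the local trace formula,'' not to orthogonality, and it is that identity (together with the cuspidal Paley--Wiener theorem identifying $\Ic_{f,\cusp}(G,\zeta)$ with $PW_{\el,f}(G,\zeta)$) that makes the expansion valid.

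Two further issues. First, your kernel is mis-normalised: tracing through the pseudocoefficient expansion $f_G=\sum_\tau\|\tau\|_\el^{-2}f_G(\tau)f[\tau]_G$ produces the factor $m(\gamma)^{-1}|D^G(\gamma)|^{1/2}$ in the kernel, which your expression $I_G(\gamma,\tau)=\iota(\tau)\ol{\Theta_\tau(\gamma)}$ omits (alongside index corrections coming from $\wt{\ak}_{G,F}^\vee/\ak_{G,\tau}^\vee$ when one integrates the $\zeta$-sum to obtain the measure on $T_\el(G)/\CC^\times$). Second, there is a risk of circularity: you lean on the invariant Paley--Wiener isomorphism $\Ic(G)\to\Ss(G)$ to control convergence and to supply density, but the paper itself notes that the injectivity half of that isomorphism (spectral density) ``can also be seen as a corollary of Arthur's Fourier inversion theorem,'' i.e.\ of the very statement you are proving. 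For $p$-adic groups one can break the circle with Kazhdan's independent proof of spectral density, but for real groups you would need to be explicit about which input you are taking as given and where Arthur's surjectivity argument comes from independently of inversion.
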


This is \cite[Theorem 4.1 with $M=G$]{ArthurFourier}. See the discussion before Lemma 6.3 in \cite{ArthurRelations}, where this specialisation of \cite[Theorem 4.1]{ArthurFourier} is discussed. Writing $I_G(\tau,\gamma)$ for the normalised character $I_G(\tau,\gamma)=|D^G(\gamma)|^{1/2}\Theta_\tau(\gamma)$, we have the dual formula
\[
f_G(\tau)=\int_{\Gamma_\rs(G)}I_G(\tau,\gamma)f_G(\gamma)\dd{\gamma}.
\]

Recall that $\Ic_{\cusp}(G)$ (resp. $\Ic_{c,\cusp}(G)$, $\Ic_{f,\cusp}(G)$) denotes the image of the space $\Cc_\cusp(G)$ (resp. $C_{c,\cusp}^\infty(G)$, $C_{c,\cusp}^\infty(G,K)$) in $\Ic(G)$ (resp. $\Ic_c(G)$, $\Ic_c(G, K)$). For $f\in\Cc(G)$, it follows from spectral density that $f\in\Cc_\cusp(G)$ if and only if its invariant Fourier transform is supported on $T_\el(G)$. We obtain the following invariant Paley--Wiener theorems for cuspidal functions as a corollary of the above invariant Paley--Wiener theorem.

\begin{corollary}
    The invariant Fourier transform is an isomorphism of topological vector spaces
    \[
    \Ic_\cusp(G)\longrightarrow \Ss_\el(G).
    \]
    It restricts to isomorphisms of topological vector spaces
    \[
    \Ic_{c,\cusp}(G)\longrightarrow PW_\el(G)
    \]
    and
    \[
    \Ic_{f,\cusp}(G)\longrightarrow PW_{\el,f}(G).
    \]
\end{corollary}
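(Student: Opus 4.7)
The plan is to deduce this corollary directly from the invariant Paley--Wiener theorem stated just above, together with the spectral characterization of cuspidality recalled in the paragraph preceding the corollary: for $f \in \Cc_{(c)}(G)$ (or $f \in C_c^\infty(G,K)$), $f$ is cuspidal if and only if its invariant Fourier transform $f_G : T_\temp(G) \to \CC$ is supported on the subset $T_\el(G)$.

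The starting point is the decomposition
$$T_\temp(G) = \coprod_{L \in \Lc^G(M_0)/W_0^G} T_\el(L)/W^G(L),$$
and the corresponding direct sum decompositions of $\Ss(G)$, $PW(G)$, and $PW_f(G)$ into summands indexed by $L \in \Lc^G(M_0)/W_0^G$. Since $W^G(G)$ is trivial, the summand indexed by $L = G$ is precisely $\Ss_\el(G)$ in the Schwartz case, $PW_\el(G)$ in the test function case, and $PW_{\el,f}(G)$ in the $K$-finite case. In each case this summand consists of exactly those functions in the ambient space that are supported on $T_\el(G) \subseteq T_\temp(G)$, and it is a closed direct summand.

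Under the invariant Paley--Wiener isomorphism $\Fc : \Ic(G) \to \Ss(G)$, the image of $\Ic_\cusp(G)$ therefore consists of the $\varphi \in \Ss(G)$ with $\varphi(\tau) = 0$ for all $\tau \notin T_\el(G)$, which is exactly $\Ss_\el(G)$. Hence $\Fc$ restricts to a continuous linear bijection $\Ic_\cusp(G) \to \Ss_\el(G)$; the same reasoning applied to the restrictions of $\Fc$ to $\Ic_c(G)$ and $\Ic_f(G)$ yields continuous linear bijections $\Ic_{c,\cusp}(G) \to PW_\el(G)$ and $\Ic_{f,\cusp}(G) \to PW_{\el,f}(G)$.

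To upgrade these continuous linear bijections to topological isomorphisms, I would note that $\Ic_\cusp(G)$ is a closed subspace of $\Ic(G)$, being the intersection of the kernels of the continuous parabolic descent maps $f_G \mapsto f_M$ for $M \in \Lc^G(M_0)$ with $M \neq G$, and analogously for $\Ic_{c,\cusp}(G)$ and $\Ic_{f,\cusp}(G)$. A topological isomorphism that carries a closed subspace bijectively onto a closed direct summand restricts to a topological isomorphism between them, which gives the claim. The main conceptual point is the spectral identification in the third paragraph; everything else is a formal consequence of the decompositions already set up in the excerpt, so I do not anticipate a substantive obstacle.
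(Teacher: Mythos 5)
Your argument matches the one the paper intends: the corollary is deduced immediately from the invariant Paley--Wiener theorem together with the spectral characterisation of cuspidality ($f$ is cuspidal iff $f_G$ is supported on $T_\el(G)$), which is stated in the paragraph preceding the corollary, and the fact that $\Ss_\el(G)$, $PW_\el(G)$, and $PW_{\el,f}(G)$ are precisely the direct summands corresponding to $L=G$ in the decompositions of $\Ss(G)$, $PW(G)$, and $PW_f(G)$. Your final paragraph is slightly overbuilt: once you know $\Fc$ is a topological isomorphism and $\Ic_\cusp(G)=\Fc^{-1}(\Ss_\el(G))$, the restriction $\Fc\vert_{\Ic_\cusp(G)}$ is automatically a homeomorphism onto $\Ss_\el(G)$ whenever both carry their subspace topologies; the role of the closed-direct-summand observation is really to ensure that the quotient topology on $\Ic_\cusp(G)$ (coming from $\Cc_\cusp(G)$) agrees with the subspace topology from $\Ic(G)$, which is the one minor point worth making explicit but which your phrasing leaves implicit.
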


\subsubsection{Pseudocoefficients.}
Let $\zeta$ be a unitary character of $A_G(F)$. The finite group $\wt{\ak}_{G,F}^\vee/\ak_{G,F}^\vee$ acts on $T_\el(G,\zeta)$ and the $i\ak_G^*$-orbits in $T_\el(G)$ meet $T_\el(G,\zeta)$ in $(\wt{\ak}_{G,F}^\vee/\ak_{G,F}^\vee)$-orbits. Thus, it is natural to regard $T_\el(G,\zeta)$ as a discrete space. Define $PW_{\el,f}(G,\zeta)$ to be the space of all $\CC^1$-equivariant functions $\varphi:T_\el(G,\zeta)\to\CC$ that are supported on finitely many $\CC^1$-orbits. Let $E_\el(G,\zeta)\subseteq T_\el(G,\zeta)$ be a set of representatives for the countable set $T_\el(G,\zeta)/\CC^1$. Then we have an identification
\[
PW_{\el,f}(G,\zeta)=\bigoplus_{\tau\in E_\el(G,\zeta)}\CC,
\]
defined by mapping $\varphi$ to $(\varphi(\tau))_{\tau\in E_\el(G,\zeta)}$. The direct sum $\bigoplus_{\tau\in E_\el(G,\zeta)}\CC$ can be thought of as the $PW_f$-space on the countable 0-dimensional manifold $E_\el(G,\zeta)$. Recall that we have identified $\Pi_2(G,\zeta)$ with a subset of $T_\el(G,\zeta)$. Thus, we may assume that $\Pi_2(G,\zeta)\subseteq E_\el(G,\zeta)$.

There is a Paley--Wiener theorem for the invariant Fourier transform on the space $\Ic_{f,\cusp}(G,\zeta)$, which we now recall from \cite[\S7.2]{MWLocal}. 

\begin{theorem}
    The invariant Fourier transform gives an isomorphism of topological vector spaces
    \[
    \Ic_{f,\cusp}(G,\zeta)\longrightarrow PW_{\el,f}(G,\zeta).
    \]
\end{theorem}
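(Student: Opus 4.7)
The plan is to reduce this theorem to the preceding corollary $\Ic_{f,\cusp}(G) \cong PW_{\el,f}(G)$ by combining it with the continuous averaging surjection $\tilde f \mapsto \tilde f^\zeta$ from $C_c^\infty(G,K)$ onto $C_c^\infty(G,\zeta,K)$. The technical bridge is the identity $\Theta_\tau(\tilde f^\zeta) = \Theta_\tau(\tilde f)$ for $\tau \in T_\el(G,\zeta)$ and $\tilde f \in C_c^\infty(G,K)$, which follows from the $\zeta$-equivariance of $\Theta_\tau$ together with Weyl integration modulo the centre.

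With that identity in hand, the Fourier transform $\Phi : \Ic_{f,\cusp}(G,\zeta) \to PW_{\el,f}(G,\zeta)$, $f \mapsto (\tau \mapsto \Theta_\tau(f))$, is readily seen to be well-defined and continuous: finite support of $\Phi(f)$ on $T_\el(G,\zeta)/\CC^1$ holds because each $i\ak_G^*$-orbit in $T_\el(G)$ meets $T_\el(G,\zeta)$ in a single orbit of the finite group $\wt{\ak}_{G,F}^\vee/\ak_{G,F}^\vee$, and the $K$-finiteness of $f$ restricts matters to finitely many $i\ak_G^*$-orbits (via bounded infinitesimal character in the archimedean case, bi-$K_0$-invariance for some open compact $K_0$ in the non-archimedean case).

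For surjectivity, given $\varphi \in PW_{\el,f}(G,\zeta)$ supported on finitely many $\CC^1$-orbits $\CC^1 \tau_1, \dots, \CC^1\tau_n$, I would extend it to $\tilde\varphi \in PW_{\el,f}(G)$ by placing, on each connected component $i\ak_G^*/\ak_{G,\tau_j}^\vee$ of $T_\el(G)/\CC^1$ meeting the support, a smooth Paley--Wiener bump that interpolates the prescribed values on the finite set $(\wt{\ak}_{G,F}^\vee/\ak_{G,\tau_j}^\vee)\cdot \tau_j$ and vanishes outside a small neighbourhood (this is where the earlier definition of $PW_{\el,f}$ as a sum of classical Paley--Wiener spaces is convenient). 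The preceding corollary produces a cuspidal lift $\tilde f \in C_{c,\cusp}^\infty(G,K)$ with $\varphi_{\tilde f} = \tilde\varphi$, and then $\tilde f^\zeta$ lies in $C_{c,\cusp}^\infty(G,\zeta,K)$ — cuspidality and $K$-finiteness survive the $\zeta$-averaging because $A_G(F)$ is central and commutes with the $K$-action — and the bridge identity gives $\Phi(\tilde f^\zeta) = \varphi$.

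For injectivity, suppose $\Phi(f) = 0$. Weyl integration modulo the centre, applied using cuspidality and $\zeta$-equivariance, gives
\[
\Theta_\tau(f) = \sum_{\{T\}\text{ elliptic}} |W(G,T)|^{-1} \int_{T(F)/A_G(F)} f_G(t)\,|D(t)|^{1/2}\Theta_\tau(t)\,\dd{t},
\]
so the vanishing hypothesis asserts that the compactly supported $\zeta^{-1}$-equivariant function $f_G$ on $\Gamma_{\rs,\el}(G)/A_G(F)$ is orthogonal in the elliptic inner product to every $|D|^{1/2}\Theta_\tau$ with $\tau \in T_\el(G,\zeta)$. Combined with the orthogonality relations of \Cref{OrthogonalityRelations} and completeness of these characters in the cuspidal $\zeta$-equivariant $K$-finite setting --- itself deduced from the full cuspidal Paley--Wiener isomorphism by decomposing the $A_G(F)$-action (take the $A_G(F)^1$-isotypic part, then Fourier-expand along the quotient $\wt{\ak}_{G,F}$) --- this forces $f_G = 0$. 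The main obstacle I anticipate is this completeness step: because $\wt{\ak}_{G,F}$ is non-compact, a clean $A_G(F)$-isotypic decomposition does not exist and one must carefully verify that the Fourier expansion along $\wt{\ak}_{G,F}$ commutes with the invariant Fourier transform while respecting both $K$-finiteness and the inductive-limit topology on $PW_{\el,f}$.
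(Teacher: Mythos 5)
The statement you are proving is not given a proof in the paper at all: the surrounding text reads ``There is a Paley--Wiener theorem for the invariant Fourier transform on the space $\Ic_{f,\cusp}(G,\zeta)$, which we now recall from \cite[\S7.2]{MWLocal},'' so the author is simply citing Moeglin--Waldspurger's direct proof (via the elliptic orthogonality relations and the local trace formula). Your proposal, by contrast, tries to derive the $\zeta$-equivariant statement from the $\Zc=1$ corollary $\Ic_{f,\cusp}(G)\cong PW_{\el,f}(G)$ stated just above. That is a genuinely different route, and in principle a viable one.

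Your surjectivity argument is sound in outline: extend $\varphi\in PW_{\el,f}(G,\zeta)$ to a $\wt\varphi\in PW_{\el,f}(G)$ by interpolation on each relevant component of $T_\el(G)/\CC^1$ (a finite interpolation problem in each classical Paley--Wiener space, trivial in the archimedean case where the finite group $\wt\ak_{G,F}^\vee/\ak_{G,F}^\vee$ is $\{0\}$), pull back along the corollary, and average by $\cdot^\zeta$; cuspidality and $K$-finiteness survive averaging because $A_G(F)$ is central. That is all correct.

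The injectivity argument has a gap, and you have mislocated the difficulty. You propose to ``take the $A_G(F)^1$-isotypic part, then Fourier-expand along the quotient $\wt\ak_{G,F}$,'' as though $\Ic_{f,\cusp}(G,\zeta)$ were realised inside $\Ic_{f,\cusp}(G)$ by isotypic decomposition. It is not: the relationship goes the other way, via the averaging \emph{surjection} $\Ic_{f,\cusp}(G)\twoheadrightarrow\Ic_{f,\cusp}(G,\zeta)$, and from $\Theta_\tau(\tilde f^\zeta)=0$ on $T_\el(G,\zeta)$ you cannot conclude anything about $\Theta_\tau(\tilde f)$ for an arbitrary preimage $\tilde f$. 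Moreover, the non-compactness of $\wt\ak_{G,F}$ you worry about is a red herring. The mechanism that does close the gap is the one the paper itself develops later for the stable Paley--Wiener theorem: given $f\in C_{c,\cusp}^\infty(G,\zeta,K)$ with $\Theta_\tau(f)=0$ for all $\tau\in T_\el(G,\zeta)$, form $g=f\cdot(\phi\circ H_G)$ with $\phi\in C_c^\infty(\ak_{G,F})$ arbitrary; by \Cref{lem:fcn} and the computation $g^{(P)}(m)=\phi(H_G(m))f^{(P)}(m)$ one has $g\in C_{c,\cusp}^\infty(G,K)$. For any $\tau_0\in T_\el(G,\zeta)$ the fibrewise integrals $F_{\tau_0}(X)=\int_{H_G^{-1}(X+\wt\ak_{G,F})/A_G(F)}f(y)\Theta_{\tau_0}(y)\,\dd y$ for $X\in\ak_{G,F}/\wt\ak_{G,F}$ have Fourier transform on the \emph{finite} group $\ak_{G,F}/\wt\ak_{G,F}$ given by $\lambda\mapsto\Theta_{(\tau_0)_\lambda}(f)$ with $\lambda\in\wt\ak_{G,F}^\vee/\ak_{G,F}^\vee$, and since every $(\tau_0)_\lambda$ again lies in $T_\el(G,\zeta)$, the hypothesis forces $F_{\tau_0}\equiv 0$. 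A direct computation (analogous to \Cref{prop:pseudoIntegral}) then shows $\Theta_\tau(g)=0$ for every $\tau\in T_\el(G)$, so the $\Zc=1$ corollary gives $g_G=(\phi\circ H_G)f_G=0$; varying $\phi$ yields $f_G=0$. In short: the reduction is feasible, but it runs through multiplicative cutoffs and Fourier analysis on the finite group $\ak_{G,F}/\wt\ak_{G,F}$, not through any decomposition of $C_{c,\cusp}^\infty(G,K)$ along $\wt\ak_{G,F}$, which, as you correctly note, does not exist.
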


A consequence is that for each $\tau\in T_\el(G,\zeta)$, there exists a unique function $f[\tau]_G\in\Ic_{f,\cusp}(G,\zeta)$ such that $f[\tau]_G(z\tau)=z\|\tau\|_\el^2$ for $z\in\CC^1$, and $f[\tau]_G(\tau')=0$ for $\tau'\in T_\el(G,\zeta)$ with $\tau'\not\in\CC^1\tau$. The function $f[\tau]_G$ is called the pseudocoefficient of $\tau$ in $\Ic_{f,\cusp}(G,\zeta)$. Note that for all $\tau\in T_\el(G,\zeta)$ and $z\in\CC^1$, we have $f[z\tau]_G=z^{-1}f[\tau]_G$. Moreover, the set $\{f[\tau]_G\}_{\tau\in E_\el(G,\zeta)}$ is a basis of $\Ic_{f,\cusp}(G,\zeta)$. It follows from the simple form of the local trace formula that for $\gamma\in\Gamma_{\rs,\el}(G)$ we have
\[
f[\tau]_G(\gamma)=m(\gamma)^{-1}|D^G(\gamma)|^{1/2}\ol{\Theta_{\tau}(\gamma)}.
\]
Recall that $m(\gamma)=\vol(G_\gamma(F)/A_G(F))$. (See \cite[\S7.2]{MWLocal}.)

Since $\{f[\tau]_G\}_{\tau\in E_\el(G,\zeta)}$ is a basis of $\Ic_{f,\cusp}(G,\zeta)$, there is a unique conjugate-linear isomorphism
\[
D_\el(G,\zeta)\longrightarrow\Ic_{f,\cusp}(G,\zeta)
\]
given on the basis $E_\el(G,\zeta)$ by $\tau\mapsto f[\tau]_G$. By conjugate-linearity, for all $z\in\CC^1$ we have $z\tau\mapsto z^{-1}f[\tau]_G=f[z\tau]_G$. Consequently, the isomorphism does not depend on the choice of representatives $E_\el(G,\zeta)$. For all $\pi\in D_\el(G,\zeta)$ we write $f[\pi]_G\in\Ic_{f,\cusp}(G,\zeta)$ for the corresponding element under the above isomorphism. We call $f[\pi]_G$ the pseudocoefficient of $\pi$ in $\Ic_{f,\cusp}(G,\zeta)$.

For all $f_G\in\Ic_\cusp(G,\zeta)$ and $\pi\in D_\el(G,\zeta)$, we have $\langle mf_G,mf[\pi]_G\rangle_\el = f_G(\pi)$. Indeed, it suffices to check this for $\pi=\tau\in T_\el(G,\zeta)$, and this in turn follows from the formula for $f[\tau]_G$ given above. Note that the map $\pi\mapsto mf[\pi]_G$ is unitary with respect to the elliptic inner product in the sense that for all $\pi_1,\pi_2\in D_\el(G,\zeta)$, we have $\langle \pi_1,\pi_2 \rangle_\el=\langle mf[\pi_1]_G,mf[\pi_2]_G \rangle_\el$. If $B$ is an orthogonal basis of $D_\el(G,\zeta)$ (with respect to the elliptic inner product), then for all $b,b'\in B$, we have
\[
f[b]_G(b')=\langle mf[b]_G, mf[b']_G \rangle_\el = \langle b,b' \rangle_\el=\|b\|_\el^2\delta_b(b').
\]

\section{Stable Harmonic Analysis} \label{sec:stable}

\subsection{Stable distributions}
Stable harmonic analysis is concerned with stable orbital integrals, which are integrals over stable conjugacy classes. We begin by recalling these notions. Let $x,x'\in G(F)$. If there exists $g\in G$ such that $x'=g^{-1}xg$, then $g\sigma(g)^{-1}\in G^{x_s}=C_G(x_s)$, where $x_s$ is the semisimple component of $x$ in its Jordan decomposition. The elements $x,x'\in G(F)$ are said to be stably conjugate if there exists $g\in G$ such that $x'=g^{-1}xg$ and $g\sigma(g)^{-1}\in G_{x_s}=C_G(x_s)^\circ$ for all $\sigma\in\Gamma_F$ (see \cite[\S3]{KotConj}). Stable conjugacy is an equivalence relation on $G(F)$, which is intermediate in strength between $G(F)$-conjugacy and $G$-conjugacy. Each stable conjugacy class (or stable class) is a finite union of $G(F)$-conjugacy classes. Note that for complex groups, stable conjugacy is the same as conjugacy. Two strongly regular elements of $G(F)$ are stably conjugate if and only if they are $G$-conjugate, and thus the stable conjugacy class of a strongly regular element $x\in G(F)$ is simply the intersection $x^{G}\cap G(F)$ of its $G$-conjugacy class with $G(F)$. The Weyl discriminant $D^G$ is stably invariant, that is, constant on stable conjugacy classes. If $T$ is a maximal torus of $G$, then two $G$-regular elements of $T(F)$ are conjugate if and only if they lie in the same orbit of the stable Weyl group $W(G,T)(F)$ of $T$. Let 
\[
\Delta_\sr(G)\subseteq\Delta_\rs(G)\subseteq \Delta(G)
\]
denote the spaces of stable conjugacy classes in $G_\sr(F)$, $G_\rs(F)$, and $G(F)$, respectively. Then $\Delta_\sr(G)$ and $\Delta_{\rs}(G)$ are open dense locally compact Hausdorff subspaces of $\Delta(G)$, and $\Delta_\sr(G)$ is naturally an $F$-analytic manifold.

The stable orbital integral at $\delta\in\Delta_{\rs}(G)$ is the $\zeta$-equivariant Radon measure and tempered distribution on $G(F)$ defined by
\[
SO_\delta=\sum_{\gamma\subseteq\delta}O_\gamma,
\]
where the sum is over all conjugacy classes $\gamma$ in the stable conjugacy class $\delta$. Let $f\in\Cc(G,\zeta)$. The normalised stable orbital integral of $f$ is defined by 
\[
f^G(\delta)=|D^G(\delta)|^{1/2}SO_\delta(f)=\sum_{\gamma\subseteq\delta}f_G(\gamma).
\]
The functions $\delta\mapsto SO_\delta(f)$ and $f^G$ are continuous on $\Delta_\rs(G)$ and smooth on $\Delta_\sr(G)$, and $f^G$ is locally bounded.

Two maximal tori $T,T'$ of $G$ are said to be stably conjugate if there exists $g\in G$ such that $\Int(g):T\to T'$ is defined over $F$. We have a stable version of the Weyl integration formula,
\[
\int_{G(F)}f(x)\dd{x}=\sum_{\{T\}_{\st}}|W(G,T)(F)|^{-1}\int_{T(F)}|D^G(t)|SO_t(f)\dd{t},
\]
where $\{T\}_{\st}$ runs over the set of stable conjugacy classes of maximal tori of $G$. As in the invariant case, one can define a Radon measure $\dd{\delta}$ on $\Delta_\rs(G)$ by
\[
\int_{\Delta_\rs(G)}\alpha(\delta)\dd{\delta}=\sum_{\{T\}_{\st}}|W(G,T)(F)|^{-1}\int_{T(F)}\alpha(t)\dd{t},
\]
for all $\alpha\in C_c(\Delta_\rs(G))$, where the sum runs over a set of representatives of the stable conjugacy classes of maximal tori of $G$. The stable Weyl integration formula then becomes
\[
\int_{G(F)}f(x)\dd{x}=\int_{\Delta_\rs(G)}|D^G(\delta)|SO_\delta(f)\dd{\delta}.
\]
Note that $\Delta_\sr(G)$ has full measure in $\Delta_\rs(G)$. 

We define the subspace of unstable functions in $\Cc_{(c)}(G,\zeta)$ by
\begin{align*}
    \Cc_{(c)}^\unst(G,\zeta)&=\Ann_{\Cc_{(c)}(G,\zeta)}(\{SO_\delta : \delta\in\Delta_\rs(G)\}) \\
    &=\{f\in\Cc_{(c)}(G,\zeta) : f^G(\delta)=0,\,\forall\delta\in\Delta_\rs(G)\}
\end{align*}
and its image in $\Ic_{(c)}(G,\zeta)$,
\[
\Ic_{(c)}^\unst(G,\zeta)=\Ann_{\Ic_{(c)}(G,\zeta)}(\{SO_\delta : \delta\in\Delta_\rs(G)\}).
\]
We define the space of stable orbital integrals 
\[
\Sc_{(c)}(G,\zeta)=\{f^G : f\in\Cc_{(c)}(G,\zeta)\}.
\]
We have
\[
\Sc_{(c)}(G,\zeta)=\Cc_{(c)}(G,\zeta)/\Cc_{(c)}^\unst(G,\zeta)=\Ic_{(c)}(G,\zeta)/\Ic_{(c)}^\unst(G,\zeta)
\]
and we give it the natural quotient topology. We also define the spaces $\Cc_{c}^\unst(G,\zeta,K)$, $\Ic_{f}^\unst(G,\zeta)=\Ic_{c}^\unst(G,\zeta,K)$, and $\Sc_{f}(G,\zeta)=\Sc_{c}(G,\zeta,K)$ in a similar way.

The space of $\zeta$-equivariant stable distributions is defined to be $\Sc_c(G,\zeta)'$ and the space of tempered $\zeta$-equivariant stable distributions is defined to be $\Sc(G,\zeta)'$. Note that we have a continuous linear injection $\Sc_c(G,\zeta)\to\Sc(G,\zeta)$ with dense image. Its transpose is a continuous linear injection $\Sc(G,\zeta)'\to\Sc_c(G,\zeta)'$, which enables us to identify each tempered $\zeta$-equivariant stable distribution with a $\zeta$-equivariant stable distribution.

We may identify $\Sc_{(c)}(G,\zeta)'$ with a subspace of $\Ic_{(c)}(G,\zeta)'$ via the transpose of the quotient map $\Ic_{(c)}(G,\zeta)\to\Sc_{(c)}(G,\zeta)$. Thus, we have a further identification of $\Sc_{(c)}(G,\zeta)'$ as a subspace of $\Cc_{(c)}(G,\zeta)'$. As vector spaces, we have 
\begin{align*}
    \Sc_{(c)}(G,\zeta)'&=\Ann_{\Cc_{(c)}(G,\zeta)'}(\Ann_{\Cc_{(c)}(G,\zeta)}(\{SO_\delta : \delta\in\Delta_\rs(G)\})) \\
    &=\cl_{\Cc_{(c)}(G,\zeta)',\textrm{weak-}*}(\{SO_\delta : \delta\in\Delta_\rs(G)\}).
\end{align*}
That is, an distribution in $\Cc_{(c)}(G,\zeta)'$ belongs to $\Sc_{(c)}(G,\zeta)'$ if and only if it lies in the weak-$*$ closure in $\Cc_{(c)}(G,\zeta)'$ of the linear span of the set of regular semisimple stable orbital integrals of $G$. A locally integrable function $\Theta$ on $G(F)$ that is continuous on $G_\sr(F)$ defines a stable distribution of $G$ if and only if $\Theta$ is stably invariant on $G_\sr(F)$.

We say that a virtual representation is stable if its character, which we identify it with, is stable. Let $D_\spec^\st(G,\zeta)$ (resp. $D_\temp^\st(G,\zeta)$, $D_\el^\st(G,\zeta)$) denote the subspace of stable elements in $D_\spec(G,\zeta)$ (resp. $D_\temp(G,\zeta)$, $D_\el(G,\zeta)$).

The parabolic descent map $\Ic_{(c)}(G,\zeta)\to\Ic_{(c)}(M,\zeta)^{W^G(M)}$ descends to a continuous map 
\[
\Sc_{(c)}(G,\zeta)\longrightarrow\Sc_{(c)}(M,\zeta)^{W^G(M)},
\]
which we write as $f^G\mapsto f^M$ and also call parabolic descent. Thus, the parabolic induction map $I_M^G:\Ic_{(c)}(M,\zeta)'/W^G(M)\to\Ic_{(c)}(G,\zeta)'$ restricts to a continuous map $I_M^G:\Sc_{(c)}(M,\zeta)'/W^G(M)\to\Sc_{(c)}(G,\zeta)'$, namely the transpose of the parabolic descent map 
\[
\Sc_{(c)}(G,\zeta)\longrightarrow\Sc_{(c)}(M,\zeta)^{W^G(M)}.
\]
Consequently, parabolic induction $I_M^G$ maps stable distributions on $M(F)$ to stable distributions on $G(F)$. Let $\Delta_{G\dash\rs}(M)$ denote the set of $G$-regular semisimple stable conjugacy classes in $M(F)$. For all $\delta\in\Delta_{G\dash\rs}(M)$ we have $f^M(\delta)=f^G(\delta)$.

\subsection{$L$-groups and $L$-parameters}
We refer to a homomorphism $\eta:G\to G'$ with a normal image as a normal homomorphism. Let $\Red_{F,\Out}$ be the category whose objects are connected reductive groups $G$ over $F$, and whose morphisms are normal homomorphisms $\eta:G\to H$, taken up to $H_{\ad}(F)$-conjugacy, which we call equivalence. We write $\Psi(G)$ for the based root datum $\Psi(G_{\ol{F}})$ of $G_{\ol{F}}$ equipped with the locally constant homomorphism $\Gamma_F\to\Aut(\Psi(G_{\ol{F}}))$. We write $\dual{\Psi(G)}$ for the based root datum $\dual{\Psi(G_{\ol{F}})}$ dual to $\Psi(G_{\ol{F}})$ equipped with the locally constant homomorphism $\Gamma_F\to\Aut(\dual{\Psi(G_{\ol{F}})})$.

Let $\Red_{\CC,L,\Out}$ be the category whose objects are connected reductive groups $\dual{G}$ over $\CC$ equipped with an $L$-action (a locally constant homomorphism $\Gamma_F\to\Aut(\dual{G})$ that preserves a pinning), and whose morphisms are $\Gamma_F$-equivariant normal homomorphisms $\dual{\eta}:\dual{H}\to\dual{G}$ taken up to $\dual{G}^{\Gamma_F}$-conjugacy, which we call equivalence. We write $\Psi(\dual{G})$ for the based root datum of $\dual{G}$ equipped with the locally constant homomorphism $\Gamma_F\to\Aut(\Psi(\dual{G}))$.

There is an exact contravariant Langlands dual group functor
\[
  \Red_{F,\Out}\longrightarrow\Red_{\CC,L,\Out}
\]
such that we have an identification of based root data $\dual{\Psi(G)}=\Psi(\dual{G})$ equipped with $\Gamma_F$-actions (cf. \cite{Ngo}). This functor is unique up to natural isomorphism. We denote this functor on objects by $G\mapsto\dual{G}$ and on morphisms by $\eta\mapsto\dual{\eta}$. Moreover, the Langlands dual group functor has a section to the full subcategory of $\Red_{F,\Out}$ consisting of quasisplit groups. The Langlands dual group $\dual{G}$ of $G$ comes equipped with an $L$-action $\rho_G:\Gamma_F\to\Aut(\dual{G})$.

We have a canonical $\Gamma_F$-equivariant isomorphism
\[
X^*(G_{\ol{F}})\longrightarrow X_*(Z(\dual{G})^\circ)
\]
and 
\[
X^*(G)\longrightarrow X_*(Z(\dual{G})^{\Gamma_F,\circ}),
\]
which we write as $\theta\mapsto\dual{\theta}$.

The identification $\dual{\Psi(G)}=\Psi(\dual{G})$ gives rise to a canonical $\Gamma_F$-equivariant bijection between the set of conjugacy classes of parabolic (resp. Levi) subgroups of $G_{\ol{F}}$ and those of $\dual{G}$. This restricts to a bijection between the set of $\Gamma_F$-stable conjugacy classes of parabolic (resp. Levi) subgroups of $G_{\ol{F}}$ and those of $\dual{G}$. A parabolic (resp. Levi) subgroup of $\dual{G}$ is said to be $G$-relevant if its conjugacy class corresponds to the conjugacy class of a parabolic (resp. Levi) subgroup of $G_{\ol{F}}$ that is defined over $F$. If a parabolic (resp. Levi) subgroup of $\dual{G}$ is $G$-relevant, then so is every parabolic (resp. Levi) subgroup of $G$ containing it.

As mentioned in the introduction, we use the Weil form of the $L$-group: $\Lgp{G}=\dual{G}\rtimes W_F$. The $L$-groups lie in a natural category of what we refer to as $\lambda$-groups based on the notation in \cite{LanBE}.

Given a surjective homomorphism $\Gc\to W_F$, we will write $\Gc^0=\ker(\Gc\to W_F)$. Note that we have used a superscript ``0'' instead of a superscript ``$\circ$'', which we use to denote identity components. Let $\Gc$ be a second countable locally compact group together with a continuous surjective homomorphism $\Gc\to W_F$. By the open mapping theorem, the homomorphism $\Gc\to W_F$ is open. Suppose that the kernel $\Gc^0=\ker(\Gc\to W_F)$ is the group of $\CC$-points of a connected reductive group $\Gc^0$ over $\CC$ and that for all $g\in\Gc$ the automorphism $\Int(g)|_{\Gc^0}:\Gc^0\to\Gc^0$ is algebraic. The resulting homomorphism $\Gc\to\Aut(\Psi(\Gc^0))$ then factors through $W_F$. Suppose that it further factors through $W_F/W_K=\Gamma_{K/F}$ for some finite Galois subextension $K/F$ of $F_s/F$, and thus extends to a continuous homomorphism $\Gamma_F\to\Aut(\Psi(\Gc^0))$ along $W_F\to\Gamma_F$. We then say that $\Gc\to W_F$, or just $\Gc$, is a $\lambda$-group.

An element $g\in\Gc$ is said to be semisimple if $\Int(g)|_{\Gc^0}$ is a semisimple automorphism of $\Gc^0$, that is, after embedding $\Gc^0$ in a general linear group it can be realised as conjugation by a semisimple element. If $g\in\Gc^0$, then $g$ is semisimple as an element of $\Gc$ if and only if it is semisimple as an element of $\Gc^0$. If $\Gc$ and $\Gc'$ are $\lambda$-groups, an isomorphism of topological groups $\Hc\to\Gc$ over $W_F$ maps semisimple elements to semisimple elements.

Note that an $L$-group $\Lgp{G}=\dual{G}\rtimes W_F$ of a connected reductive group $G$ over $F$ together with the projection $\Lgp{G}\to W_F$ is a $\lambda$-group. The semisimple elements of $\Lgp{G}$ are those of the form $(g,w)$, where $g$ is a semisimple element of $\dual{G}$.

Let $\Hc,\Gc$ be two $\lambda$-groups. A continuous homomorphism $\xi:\Hc\to\Gc$ over $W_F$ is said to be an $L$-homomorphism if its restriction $\xi_0:\Hc^0\to\Gc^0$ is algebraic and semisimple in the sense that it maps semisimple elements to semisimple elements. Two $L$-homomorphisms $\xi,\xi':\Hc\to\Gc$ are said to be equivalent if there exists $g\in\Gc^0$ such that $\xi'=\Int(g)\circ\xi$. We define a morphism of $\lambda$-groups to be an equivalence class of $L$-homomorphisms and denote the category of $\lambda$-groups by $\lambda\Gp$. We denote the full subcategory of $L$-groups by $L\Gp$. We call an $L$-homomorphism $\xi:\Hc\to\Gc$ an $L$-embedding if it is a topological embedding, in which case $\xi(\Hc)$ is a closed subgroup of $\Gc$ and $\xi_0:\Hc^0\to\Gc^0$ is a closed embedding of algebraic groups. We will often switch between thinking of $\xi$ as an equivalence class and a single $L$-homomorphism.

For a normal homomorphism $\eta:G\to H$, we have an equivalence class of $L$-homomorphisms $\Lgp{\eta}=\dual{\eta}\rtimes\id_{W_F}:\Lgp{H}\to\Lgp{G}$ and this gives an exact contravariant functor $\Red_{F,\Out}\to\lambda\Gp$. Its essential image is $L\Gp$, and a $\lambda$-group $\Gc$ lies in $L\Gp$ if and only if there is a splitting $c:W_F\to\Gc$ (i.e. continuous homomorphic section of $\Gc\to W_F$) such that the action $\rho_c:W_F\to\Aut(\Gc^0)$ defined by $\rho_c(w)=\Int(c(w))|_{\Gc^0}$ preserves a pinning, in which case $\rho_c$ factors through $\Gamma_{K/F}$ if $W_F\to\Aut(\Psi(\Gc^0))$ does.

We recall that a parabolic subgroup of $\Lgp{G}$ is defined to be a closed subgroup $\Pc$ that maps onto $W_F$ such that $\Pc^0$ is a parabolic subgroup of $\dual{G}$. Similarly, a Levi subgroup of $\Lgp{G}$ is defined to be a closed subgroup $\Mcal$ that maps onto $W_F$ such that $\Mcal^0$ is a Levi subgroup of $\dual{G}$. We say that $\Pc$ (resp. $\Mcal$) is $G$ relevant if $\Pc^0$ (resp. $\Mcal^0$) is $G$-relevant.

For each Levi subgroup of $G$, we have a canonical equivalence class of $\Gamma_F$-equivariant embeddings $\iota_M^G:\dual{M}\to\dual{G}$, and these extend to give a canonical equivalence class of tempered $L$-embeddings $\iota_M^G:\Lgp{M}\to\Lgp{G}$. The $\dual{G}$-conjugacy classes of groups $\iota_M^G(\dual{M})$ (resp. $\iota_M^G(\Lgp{M})$) are precisely the $\dual{G}$-conjugacy classes of $G$-relevant Levi subgroups of $\dual{G}$ (resp. $\Lgp{G}$). If $L\subseteq M$ are Levi subgroups of $G$, then $\iota_M^G\circ\iota_L^M=\iota_L^G$.

Let $\Hc$ be a subgroup of $\Lgp{G}$ that maps onto $W_F$. By \cite[Proposition 3.6]{Borel} the Levi subgroups $\Mcal_\Hc$ of $\Lgp{G}$ that contain $\Hc$ minimally are all conjugate by $C_{\dual{G}}(\Hc)$. We say that $\Hc$ is $G$-relevant if $\Hc$ is only contained in $G$-relevant Levi subgroups of $\Lgp{G}$, or equivalently if $\Mcal_\Hc$ is $G$-relevant. We say that $\Hc$ is elliptic if it is not contained in a proper Levi subgroup of $\Lgp{G}$. If $\Hc$ is a $\lambda$-group, an $L$-homomorphism $\xi:\Hc\to\Lgp{G}$ is said to be $G$-relevant (resp. elliptic) if its image is $G$-relevant (resp. elliptic), and we write $\Mcal_\xi=\Mcal_{\xi(\Hc)}$.

Every $L$-homomorphism $\xi:\Lgp{H}\to\Lgp{G}$ can be written in the form $\xi(h,w)=(\xi_0(h)a_\xi(w),w)$, where $\xi_0:\dual{H}\to\dual{G}$ is a morphism of algebraic groups and $a_\xi\in Z_c^1(W_F,\dual{G})$. The cohomology class $a_\xi\in H_c^1(W_F,\dual{G})$ only depends on the equivalence class of $\xi$. One says that $\xi$ is tempered (or bounded, or of unitary type) if the image of $a_\xi$ in $\dual{G}$ is bounded, that is, has compact closure. We will use the notation $a_\xi$ and $\xi_0$ without comment.

Recall that $L_F$ denotes the local Langlands group of $F$, which defined by 
\[
L_F=\begin{cases}
    \Lgp{1}=W_F & \textrm{if $F$ is archimedean}, \\
    \Lgp{\PGL_2}=\SL_2(\CC)\times W_F & \textrm{if $F$ is non-archimedean}.
\end{cases}
\]
We write the homomorphism $L_F\to W_F$ as $l\mapsto w(l)$. For an $L$-homomorphism $\phi:L_F\to\Lgp{G}$, we write $\phi(l)=(a_\phi(l),w(l))$.

The set $\Phi(G)$ of $L$-parameters of $G$ is defined to be the set of equivalence classes of $G$-relevant $L$-homomorphisms $\phi:L_F\to\Lgp{G}$. We have the subset $\Phi_\temp(G)\subseteq\Phi(G)$ of tempered $L$-parameters of $G$ the further subset $\Phi_2(G)\subseteq\Phi_\temp(G)$ of discrete $L$-parameters of $G$, which are those that are tempered and elliptic. 

Let $\xi:\Lgp{H}\to\Lgp{G}$ be an $L$-homomorphism. For each $\phi\in\Phi(H)$ we obtain an equivalence class of $L$-homomorphisms $\xi_*\phi=\xi\circ\phi:L_F\to\Lgp{G}$, which only depends on the equivalence class of $\xi$. We thus obtain a partially defined map $\xi_*:\Phi(H)\to\Phi(G)$, whose domain is the set $\phi\in\Phi(H)$ such that $\xi_*\phi$ is $G$-relevant. If $G$ is quasisplit or $\xi$ is an isomorphism, then $\xi_*$ is defined on all of $\Phi(H)$. If $\xi=\Lgp{\eta}$ for a normal homomorphism $\eta:G\to H$ with abelian kernel and cokernel, then $\xi_*$ is also defined on all of $\Phi(H)$.

\subsubsection{Central and cocentral characters and the Langlands pairing.}
In order to formulate various properties of the local Langlands correspondence we need two constructions originally due to Langlands \cite{LanReal}. A more intrinsic approach is given by Kaletha in \cite{KalethaEpipelagic} using the cohomology of crossed modules. The first is the central character map
\[
\zeta:H_c^1(W_F,\dual{G})\longrightarrow\Pi(Z_G(F)).
\]
The classes $H_{c,\bdd}^1(W_F,\dual{G})$ represented by bounded 1-cocycles map into the group $\Pi_u(Z_G(F))$ of unitary central characters. We have a map $\Phi(G)\to H_c^1(W_F,\dual{G})$ defined by $\phi\mapsto a_\phi$, and this restricts to a map $\Phi_\temp(G)\to H_{c,\bdd}^1(W_F,\dual{G})$. Thus, we have a central character map 
\begin{align*}
    \zeta:\Phi_{(\temp)}(G)&\longrightarrow\Pi_{(u)}(Z_G(F)). \\
    \phi&\longmapsto \zeta_\phi
\end{align*}
For a central datum $(\Zc,\zeta)$ of $G(F)$, we can thus define $\Phi(G,\zeta)=\{\phi\in\Phi(G) : \zeta_\phi|_\Zc=\zeta\}$. We define $\Phi_\temp(G,\zeta)=\Phi_\temp(G)\cap\Phi(G,\zeta)$ and $\Phi_2(G,\zeta)=\Phi_2(G)\cap\Phi(G,\zeta)$, which are empty unless $\zeta$ is unitary.

The second construction is the cocentral character homomorphism
\begin{align*}
\chi:H_c^1(W_F,Z(\dual{G}))&\longrightarrow\Hom_c(G(F),\CC^\times) \\
a&\longmapsto \chi_a
\end{align*}
The corresponding pairing $H_c^1(W_F,Z(\dual{G}))\times G(F)\to\CC^\times$ is called the Langlands pairing. (See also \cite[\S5.1]{KS} and Appendix A by Labesse and Lapid in \cite{LapidMao}.) Let $Z(\dual{G})^1$ denote  the maximal compact subgroup of $Z(\dual{G})$. The cocentral character homomorphism restricts to a homomorphism
\[
\chi:H_c^1(W_F,Z(\dual{G})^1)\longrightarrow\Hom_c(G(F),\CC^1),
\]
and thus we have the corresponding pairing $H_c^1(W_F,Z(\dual{G})^1)\times G(F)\to\CC^1$. Following \cite{KalPra}, we write $G(F)^\natural=\im(G_\scd(F)\to G(F))$, where $G_\scd\to G_\der$ is the simply connected cover of the derived group of $G$. The construction in \cite{KalethaEpipelagic} makes it clear that the cocentral character homomorphism has image in the group $\Hom_c(G(F)/G(F)^\natural,\CC^\times)$ of cocentral characters of $G$.

The cocentral character homomorphism and its unitary restriction are isomorphisms if $G$ is quasisplit, in particular if $G$ is a torus. The homomorphism is injective if $F$ is non-archimedean. It is surjective if $G_{\scd}(F)$ is perfect, or equivalently if $F$ is archimedean or $G_{\scd}$ does not contain a simple factor of the form $\Res_{E/F}(\SL_1(D))$ for a finite-dimensional non-commutative division algebra $D$ over a finite separable (this works over positive characteristic) extension $E$ of $F$. (See Appendix A by Labesse and Lapid in \cite{LapidMao}.)

\subsubsection{The local Langlands correspondence for tori.}
Suppose that $G=T$ is a torus. Then we have a bijection $\Phi(T)\cong H_c^1(W_F,\dual{T})$ defined by $\phi\mapsto a_\phi$, and we transport the group structure from $H_c^1(W_F,\dual{T})$ to $\Phi(T)$ so that this bijection becomes a group isomorphism. It restricts to an isomorphism $\Phi_\temp(T)=H_c^1(W_F,(\dual{T})^1)$, where $(\dual{T})^1$ is the maximal compact subgroup of $\dual{T}$. Furthermore, we have $\Pi(T)=\Hom_c(T(F),\CC^\times)$ and $\Pi_\temp(T)=\Hom_c(T(F),\CC^1)$. The cocentral character homomorphism in this case thus gives us an isomorphism $\rec_T:\Phi(T)\to\Pi(T)$, which restricts to an isomorphism $\rec_T:\Phi_\temp(T)\to\Pi_\temp(T)$. It is called the local Langlands correspondence or local reciprocity map for tori and was first constructed in \cite{LanAbelian}. (See \cite[\S9]{Borel} for an overview and \cite{LabesseTori} for a slightly different approach.) 

\subsubsection{Unramified characters.}
Let $W_F^1=\ker(|\cdot|:W_F\to\RR_{>0})$ and consider
\[
H_c^1(W_F/W_F^1,Z(\dual{G})^{\Gamma_F,\circ})=\Hom_c(W_F/W_F^1,Z(\dual{G})^{\Gamma_F,\circ}).
\]
We call the homomorphism
\[
\begin{tikzcd}
    H_c^1(W_F/W_F^1,Z(\dual{G})^{\Gamma_F,\circ}) \arrow[r] & H_c^1(W_F,Z(\dual{G})) \arrow[r,"\chi"] & \Hom_c(G(F),\CC^\times)
\end{tikzcd}
\]
the unramified cocentral character homomorphism. Its image is the group $X^\nr(G)$ of unramified characters of $G$, and we recall why this is the case by relating it to the isomorphism $\ak_{G,\CC}^*/\ak_{G,F}^\vee\to X^\nr(G),\lambda\mapsto|\cdot|_G^\lambda$.

Recall the canonical isomorphism 
\begin{align*}
X^*(G)&\longrightarrow X_*(Z(\dual{G})^{\Gamma_F,\circ}) \\
\theta&\longmapsto\theta^\vee
\end{align*}
which comes from Langlands duality. This induces an isomorphism
\[
\begin{tikzcd}
    \ak_{G,\CC}^*=X^*(G)\otimes_\ZZ\CC \arrow[r,"\sim"] & X_*(Z(\dual{G})^{\Gamma_F,\circ})\otimes_\ZZ\CC=\Lie(Z(\dual{G})^{\Gamma_F,\circ}),
\end{tikzcd}
\]
which we write as $\lambda\mapsto\dual{\lambda}$. Here, we have used that for any complex torus $T$ we have identifications
\[
\begin{tikzcd}
    X_*(T)\otimes_\ZZ\CC^\times \arrow[r,"\sim"] & T \\
    X_*(T)\otimes_\ZZ\CC \arrow[r,"\sim"] \arrow[u,"\id\otimes\exp"] & \Lie(T) \arrow[u,swap,"\exp"]
\end{tikzcd}
\]
For $\lambda\in\ak_{G,\CC}^*$, we define $\|\cdot\|^{\dual{\lambda}}\in H_c^1(W_F/W_F^1,Z(\dual{G})^{\Gamma_F,\circ})$ by
\[
\|w\|^{\dual{\lambda}}=\exp((\log\|w\|)\dual{\lambda})
\]
for all $w\in W_F$. This defines a homomorphism
\[
\ak_{G,\CC}^*\longrightarrow H_c^1(W_F/W_F^1,Z(\dual{G})^{\Gamma_F,\circ}).
\]
For $\lambda\in\ak_{G,\CC}^*$, let $a_\lambda$ be the image of $\|\cdot\|^{\dual{\lambda}}$ in $H_c^1(W_F,Z(\dual{G}))$, yielding a homomorphism
\[
\ak_{G,\CC}^*\longrightarrow H_c^1(W_F,Z(\dual{G})).
\]
We have 
\[
\chi_{a_\lambda}=|\cdot|_G^\lambda.
\]
(See the discussion below 4.7 in \cite{SilZin}.) Thus, the unramified cocentral character homomorphism maps onto $X^\nr(G)$ and the following diagram commutes
\[
\begin{tikzcd}[column sep = small]
    &\ak_{G,\CC}^* \arrow[dl,two heads] \arrow[dr,two heads] \\
    H_c^1(W_F/W_F^1,Z(\dual{G})^{\Gamma_F,\circ}) \arrow[rr,two heads] && X^\nr(G)
\end{tikzcd}
\]

We will now proceed to describe the unramified cocentral character homomorphism and the homomorphism $\lambda\mapsto a_\lambda$ in more detail. In particular, we will see that the kernel of $\lambda\mapsto a_\lambda$ is trivial when $F$ is archimedean, and is equal to $\ak_{G,F}^\vee$ when $F$ is non-archimedean. We will make use of these facts in the following subsection.

Suppose that $F$ is archimedean. We have $W_F/W_F^1=\RR_{>0}$ and thus
\[
H_c^1(W_F/W_F^1,Z(\dual{G})^{\Gamma_F,\circ})=H_c^1(W_F/W_F^1,Z(\dual{G})^{\Gamma_F}).
\]
We may identify this with $\Lie(Z(\dual{G})^{\Gamma_F})=\Lie(Z(\dual{G})^{\Gamma_F,\circ})$. The homomorphism $\lambda\mapsto\|\cdot\|^{\lambda^\vee}$ is then identified with the isomorphism $\ak_{G,\CC}^*\to\Lie(Z(\dual{G})^{\Gamma_F}),\lambda\mapsto\lambda^\vee$. The homomorphism $\ak_{G,\CC}^*\to X^\nr(G)$ is also an isomorphism. Consequently, the unramified cocentral character homomorphism is identified with an isomorphism
\[
\begin{tikzcd}
    \Lie(Z(\dual{G})^{\Gamma_F}) \arrow[r,"\sim"] & X^\nr(G).
\end{tikzcd}
\]
The homomorphism $W_F\to\Gamma_F$ maps $W_F^1$ onto $\Gamma_F$. Therefore $Z(\dual{G})^{\Gamma_F}=Z(\dual{G})^{W_F^1}$. The unramified cocentral character homomorphism is the restriction of the cocentral character homomorphism along the inflation homomorphism
\[
\begin{tikzcd}
    \Lie(Z(\dual{G})^{\Gamma_F})=H_c^1(W_F/W_F^1,Z(\dual{G})^{W_F^1}) \arrow[r,hook] & H_c^1(W_F,Z(\dual{G})).
\end{tikzcd}
\]
In summary, we have the following commutative diagram.
\[
\begin{tikzcd}[column sep = small]
& \ak_{G,\CC}^* \arrow[dl,"\sim"{sloped,auto}] \arrow[dr,"\sim"{sloped,auto}] & \\
\Lie(Z(\dual{G})^{\Gamma_F}) \arrow[rr,"\sim"] \arrow[d,hook] && X^\nr(G) \arrow[d,hook] \\
H_c^1(W_F,Z(\dual{G})) \arrow[rr,"\chi"] && \Hom_c(G(F)/G(F)^\natural,\CC^\times)
\end{tikzcd}
\]
The composition on the left is the homomorphism $\lambda\mapsto a_\lambda$, which is thus injective.

Suppose that $F$ is non-archimedean. We have $W_F^1=I_F$ and $W_F/W_F^1=\langle\Fr\rangle$ and the norm $\|\cdot\|:W_F/W_F^1\to q_F^\ZZ$ is defined by $\|\Fr\|=q_F$. We may identify
\[
H_c^1(W_F/W_F^1,Z(\dual{G})^{\Gamma_F,\circ})=Z(\dual{G})^{\Gamma_F,\circ}.
\]
The homomorphism $\lambda\mapsto\|\cdot\|^{\lambda^\vee}$ is then identified with the surjective homomorphism $\ak_{G,\CC}^*\twoheadrightarrow Z(\dual{G})^{\Gamma_F,\circ},\lambda\mapsto q_F^{\lambda^\vee}$, which descends to an isomorphism 
\[
\begin{tikzcd}
    \ak_{G,\CC}^*/\frac{2\pi i}{\log q_F}X^*(G) \arrow[r,"\sim"] & Z(\dual{G})^{\Gamma_F,\circ}.
\end{tikzcd}
\]
The homomorphism $\ak_{G,\CC}^*\to X^\nr(G)$ has kernel $\ak_{G,F}^\vee$, and the kernel of the unramified cocentral character homomorphism
\[
\begin{tikzcd}
    Z(\dual{G})^{\Gamma_F,\circ}=H_c^1(W_F/I_F,Z(\dual{G})^{\Gamma_F,\circ}) \arrow[r,two heads] & X^\nr(G)
\end{tikzcd}
\]
is the finite subgroup of $Z(\dual{G})^{\Gamma_F,\circ}$ corresponding to the subgroup 
\[
\ak_{G,F}^\vee/\frac{2\pi i}{\log q_F}X^*(G)
\]
of $\ak_{G,\CC}^*/\frac{2\pi i}{\log q_F}X^*(G)$. 

As in \cite[\S3.3]{Haines}, we recall how the unramified cocentral character homomorphism can be described using Kottwitz homomorphism for $G$, which was introduced in \cite{KotIsoII}. (See \cite[Ch. 11]{KalPra} for a detailed exposition.) The Kottwitz homomorphism for $G$ is a surjective homomorphism
\[
\begin{tikzcd}
    \kappa_G:G(F) \arrow[r,two heads] & (X^*(Z(\dual{G}))_{I_F})^{\Fr}=X^*((Z(\dual{G})^{I_F})_\Fr).
\end{tikzcd}
\]
Its kernel $G(F)_1=\ker\kappa_G$ is an open subgroup of $G(F)^1$, and $G(F)_1$ is the subgroup of $G(F)$ generated by its parahoric subgroups. A (continuous) character of $G(F)$ is said to be weakly unramified if it is trivial on $G(F)_1$. We denote the group of weakly unramified characters of $G(F)$ by 
\[
X^\wnr(G)=\Hom(G(F)/G(F)_1,\CC^\times).
\]
Since Kottwitz homomorphism is functorial and trivial on simply connected groups \cite[Proof of Prop. 11.5.4]{KalPra}, it follows that $G(F)^\natural\subseteq G(F)_1$. Therefore weakly unramified characters are cocentral. By definition we have an isomorphism
\[
\begin{tikzcd}
    \kappa_G:G(F)/G(F)_1 \arrow[r,"\sim"] & X^*((Z(\dual{G})^{I_F})_\Fr).
\end{tikzcd}
\]
Consequently, we obtain an isomorphism defined by
\[
\begin{tikzcd}
   (Z(\dual{G})^{I_F})_\Fr \arrow[r,"\sim"] & \Hom(X^*((Z(\dual{G})^{I_F})_\Fr),\CC^\times) \arrow[r,"\kappa_G^*"] & X^\wnr(G).
\end{tikzcd}
\]
As explained in \cite[\S3.3.1]{Haines}, it follows from \cite[Prop. 4.5.2]{KalethaEpipelagic} that this isomorphism is the restriction of the cocentral character homomorphism $a\mapsto\chi_a$ along the inflation homomorphism 
\[
\begin{tikzcd}
    (Z(\dual{G})^{I_F})_\Fr=H_c^1(W_F/I_F,Z(\dual{G})^{I_F}) \arrow[r,hook] & H_c^1(W_F,Z(\dual{G})).
\end{tikzcd}
\]
If we further restrict the cocentral character homomorphism to 
\[
(Z(\dual{G})^{I_F})_{\Fr}^\circ\defeq((Z(\dual{G})^{I_F})_{\Fr})^\circ,
\]
then we obtain an isomorphism
\[
\begin{tikzcd}
    (Z(\dual{G})^{I_F})_{\Fr}^\circ \arrow[r,"\sim"] \arrow[r,"\sim"] & X^\nr(G).
\end{tikzcd}
\]
This follows since the Kottwitz homomorphism descends to an isomorphism
\[
\begin{tikzcd}
    \kappa_G:G(F)/G(F)^1 \arrow[r,"\sim"] & X^*((Z(\dual{G})^{I_F})_{\Fr})/{\Tor}=X^*((Z(\dual{G})^{I_F})_{\Fr}^\circ),
\end{tikzcd}
\]
where $\Tor$ denotes the torsion subgroup, and therefore $\chi$ restricts to the isomorphism
\[
\begin{tikzcd}
    (Z(\dual{G})^{I_F})_{\Fr}^\circ \arrow[r,"\sim"] & \Hom(X^*((Z(\dual{G})^{I_F})_{\Fr}^\circ),\CC^\times) \arrow[r,"\kappa_G^*"] & X^\nr(G).
\end{tikzcd}
\]
The unramified cocentral character homomorphism factors as the composition of the surjective homomorphism
\[
\begin{tikzcd}
    Z(\dual{G})^{\Gamma_F,\circ} \arrow[r,two heads] & (Z(\dual{G})^{I_F})_{\Fr}^\circ
\end{tikzcd}
\]
with the restriction of the cocentral character homomorphism to $(Z(\dual{G})^{I_F})_{\Fr}^\circ$. Surjectivity of the above homomorphism can be seen as a consequence of the following commutative diagram, which summarises the above discussion.
\[
\begin{tikzcd}
    Z(\dual{G})^{\Gamma_F,\circ} \arrow[d,two heads] \arrow[dr,two heads] & \ak_{G,\CC}^* \arrow[d,two heads] \arrow[l,two heads] \\
    (Z(\dual{G})^{I_F})_\Fr^\circ \arrow[d,hook] \arrow[r,"\sim"] & X^\nr(G) \arrow[d,hook] \\
    (Z(\dual{G})^{I_F})_\Fr \arrow[d,hook] \arrow[r,"\sim"] & X^\wnr(G) \arrow[d,hook] \\
    H_c^1(W_F,Z(\dual{G})) \arrow[r,"\chi"] & \Hom_c(G(F)/G(F)^\natural,\CC^\times) \\
\end{tikzcd}
\]
The homomorphism $\ak_{G,\CC}^*\to H_c^1(W_F,Z(\dual{G}))$ in this diagram is the the homomorphism $\lambda\mapsto a_\lambda$. It follows that its image is $(Z(\dual{G})^{I_F})_\Fr^\circ$ and its kernel is $\ak_{G,F}^\vee$.

\subsubsection{Twists.}
Let $a\in H_c^1(W_F,Z(\dual{G}))$ and $\phi\in\Phi(G)$. Recall that we write $\phi(l)=(a_\phi(l),w(l))$. We define $a\cdot\phi\in\Phi(G)$ by $(a\cdot\phi)(l)=(a(w(l))a_\phi(l),w(l))$, and this gives a well-defined action of the group $H_c^1(W_F,Z(\dual{G}))$ on the pointed set $\Phi(G)$. The action of $H_c^1(W_F,Z(\dual{G})^1)$ preserves $\Phi_2(G)$ and $\Phi_\temp(G)$.

Pulling back along the homomorphism $\ak_{G,\CC}^*/\ak_{G,F}^\vee\to H_c^1(W_F,Z(\dual{G})), \lambda\mapsto a_\lambda$, we obtain an action of $\ak_{G,\CC}^*/\ak_{G,F}^\vee$ on $\Phi(G)$, and the action of $i\ak_{G}^*/\ak_{G,F}^\vee$ preserves $\Phi_2(G)$ and $\Phi_\temp(G)$. We define
\[
\Phi_\temp(G)_\CC=\ak_{G,\CC}^*\cdot\Phi_\temp(G) \quad,\quad \Phi_2(G)_\CC=\ak_{G,\CC}^*\cdot\Phi_2(G).
\]
The set $\Phi_2(G)_\CC$ is precisely the set of elements of $\Phi(G)$ that are elliptic but not necessarily tempered. We call the elements of $\Phi_2(G)_\CC$ essentially discrete $L$-parameters and the elements of $\Phi_\temp(G)_\CC$ essentially tempered $L$-parameters. We caution the reader that sometimes in the literature $\Phi_2(G)_\CC$ and $\Phi_2(G)$ are denoted by $\Phi_{2}(G)$ and $\Phi_{2,\temp}(G)$, respectively.

\subsubsection{Infinitesimal characters and $L$-parameters of real groups.}

Assume that $G$ is a real group. Let $\phi\in\Phi(G)$. As explained in \cite{LanReal}, one can attach an infinitesimal character $\mu_\phi\in\Hom_{\CC\alg}(\Zk(\gk_\CC),\CC)$ of $G$ to $\phi$ as follows. Choose a Borel pair $(B,T)$ of $G_{\CC}$ and a $\Gamma_\RR$-stable Borel pair $(\Bc^0,\Tc^0)$ of $\dual{G}$. We choose a representative of $\phi$ such that $\phi(\CC^\times)\subseteq\Tc^0$. There exists $\mu,\nu\in X_*(\Tc^0)\otimes_\ZZ\CC=\Lie(\Tc^0)$ with $\mu-\nu\in X_*(\Tc^0)$ such that $\phi(z)=z^{\mu}\ol{z}^\nu$ for all $z\in\CC^\times$. The $W(\dual{G},\Tc^0)$-orbit of $\mu$ does not depend on the choice of representative of $\phi$. The Borel pairs and the identification $\Psi(G)^\vee=\Psi(\dual{G})$ give us an isomorphism $X_*(\Tc^0)\cong X^*(T)$ and a compatible isomorphism $W(\dual{G},\Tc^0)\cong W(G_\CC,T)$. Using the identification $\tk^*=\Lie(T)^*$ with $X^*(T)\otimes_\ZZ\CC$, we obtain an isomorphism $\Lie(\Tc^0)/W(\dual{G},\Tc^0)\cong\tk^*/W(G_\CC,T)$. Thus, we obtain an element $\mu_\phi\in\tk^*/W(G_\CC,T)$. Recall that the Harish-Chandra isomorphism $\Zk(\gk_\CC)\cong\Sym(\tk)^{W(G_\CC,T)}$ gives an isomorphism $\Hom_{\CC\alg}(\Zk(\gk_\CC),\CC)\cong\tk^*/W(G_\CC,T)$. Thus, we have an infinitesimal character $\mu_\phi$ of $G$. It does not depend on any of the choices made.

\subsubsection{Classification of $L$-parameters.}
For each $M\in\Lc^G(M_0)$, recall that we have a canonical equivalence class of tempered $L$-homomorphisms $\iota_M^G:\Lgp{M}\to\Lgp{G}$. This gives rise to a pushforward map $\iota_M^G=(\iota_M^G)_*$ on $L$-parameters preserving temperedness. The images of $\Phi_2(M)_\CC$ in $\Phi(G)$ and $\Phi_2(M)$ in $\Phi_\temp(G)$ are both quotients for the natural actions of $W^G(M)$. We have a decomposition
\[
\Phi(G)=\coprod_{M\in\Lc^G(M_0)/W_0^G}\Phi_2(M)_\CC/W^G(M)
\]
and this restricts to a decomposition
\[
\Phi_\temp(G)=\coprod_{M\in\Lc^G(M_0)/W_0^G}\Phi_2(M)/W^G(M).
\]
This latter decomposition is an analogue for tempered $L$-parameters of the Harish-Chandra classification of irreducible tempered representations. The first decomposition is analogous to the classification of irreducible admissible representations obtained by combining the Langlands classification with the Harish-Chandra classification.

Implicit in \cite{LanReal} is a classification of $\Phi(G)$ that is analogous to the Langlands classification of $\Pi(G)$. This was elaborated on and given an explicit formulation and detailed proof in \cite{SilZin}. We define a Langlands datum for $L$-parameters to be a triple $((P,M),\phi,\lambda)$, where $(P,M)$ is a parabolic pair, $\phi\in\Phi_\temp(M)$, and $\lambda\in(\ak_M^*)^{P,>0}$. To each Langlands datum $((P,M),\phi,\lambda)$, we may assign an $L$-parameter $\iota_M^G(\phi_\lambda)\in\Phi(G)$ and the Langlands classification for $L$-parameters asserts that this gives a bijection between the set of $G(F)$-conjugacy classes of Langlands data and $\Phi(G)$. See \cite{SilZin} for more details and a description of the inverse of this bijection.

\subsection{The local Langlands correspondence}

The local Langlands correspondence for $G$, which is still hypothetical for $p$-adic groups in general, is a natural surjective ``local reciprocity'' map
\[
\rec_G:\Pi(G)\longrightarrow\Phi(G)
\]
with finite fibres. The fibre of $\rec_G$ above an $L$-parameter $\phi\in\Phi(G)$ is called the $L$-packet of $\phi$ and denoted by $\Pi_\phi$. The local reciprocity map $\rec_G$ is natural in the sense that it satisfies several desiderata, which say that $\rec_G$ is compatible with various structures (cf. \cite{Borel, KalTai}).

\begin{desiderata}\label{des} \
\begin{enumerate}
    \item\label{des:1} Compatibility with central characters: for each $\pi\in\Pi(G)$, we have $\zeta_\pi=\zeta_{\phi_\pi}$.
    \item\label{des:2} Compatibility with cocentral twists: for each representation $\pi\in\Pi(G)$ and cohomology class $a\in H^1(W_F,Z(\dual{G}))$, we have $\phi_{\pi\otimes\chi_a}=a\cdot\phi_\pi$. In particular, we have compatibility with unramified twists: for each $\pi\in\Pi(G)$ and $\lambda\in\ak_{G,\CC}^*$, we have $\phi_{\pi_\lambda}=(\phi_\pi)_\lambda$.
    \item\label{des:3} Compatibility with temperedness and discreteness: for $\pi\in\Pi(G)$, we have $\pi\in\Pi_\temp(G)$ (resp. $\pi\in\Pi_2(G)$) if and only if $\phi_\pi\in\Phi_\temp(G)$ (resp. $\phi_\pi\in\Phi_2(G)$).
    \item\label{des:4} Naturality: if $\eta:G\to H$ is a normal homomorphism with abelian kernel and cokernel and if $\pi\in\Pi(H)$, then the irreducible constituents of the restriction $\pi\circ\eta$, which is a direct sum of finitely many irreducible admissible representations of $G(F)$, all belong to $\Pi_{\Lgp{\eta}\circ\phi}$. In particular, if $\eta:G\to H$ is an isomorphism, then the following diagram commutes:
    \[
    \begin{tikzcd}
        \Pi(H) \arrow[r, "\Pi(\eta)"] \arrow[d, "\rec_H", swap] & \Pi(G) \arrow[d, "\rec_G"] \\
        \Phi(H) \arrow[r, "\Phi(\eta)"] & \Phi(G)
    \end{tikzcd}
    \]
    where $\Pi(\eta)=\eta^*$ and $\Phi(\eta)=(\Lgp{\eta})_*$.
    \item\label{des:5} Compatibility with Weil restriction: If $K/F$ is a finite subextension of $\ol{F}/F$, $G'$ is a connected reductive group over $K$, and $G=\Res_{K/F}G'$, then we have a commutative diagram
    \[
    \begin{tikzcd}
        \Pi(G') \arrow[r, "\sim"] \arrow[d,"\rec_{G'}"] & \Pi(G) \arrow[d, "\rec_G"] \\
        \Phi(G') \arrow[r, "\sim"] & \Phi(G)
    \end{tikzcd}
    \]
    where the upper horizontal arrow is the isomorphism coming from the identification $G(F)=G'(K)$ and the lower horizontal arrow is a canonical bijection coming from Shapiro's lemma. See \cite[\S4, \S5, \S8.4]{Borel} for more details.
    \item\label{des:6} Compatibility with the Langlands classification of $\Pi(G)$: if $((P,M),\phi,\lambda)$ is a Langlands triple, then $\Pi_{\iota_M^G(\phi)}$ is the set of Langlands quotients $J((P,M),\sigma,\lambda)\in\Pi(G)$, for $\sigma\in\Pi_\phi$.
    \item\label{des:7} Compatibility with the Harish-Chandra classification of $\Pi_\temp(G)$: if $M$ is a Levi subgroup of $G$ and $\phi\in\Phi_2(M)$, then $\Pi_{\iota_M^G(\phi)}$ is the set of irreducible subrepresentations of the various parabolically induced representations $I_M^G(\sigma)$ for $\sigma\in\Pi_\phi$.
    \item\label{des:8} Stable tempered characters: for each $\phi\in\Phi_\temp(G)$, there exists a stable virtual tempered representation
    \[
    \pi_\phi=\sum_{\pi\in\Pi_\phi}c_{\phi,\pi}\pi
    \]
    with $c_{\phi,\pi}\in\ZZ_{>0}$ for all $\pi\in\Pi_\phi$, such that the following properties hold.
    \begin{enumerate}
        \item\label{des:8a} Compatibility with parabolic induction: If $M$ is a Levi subgroup of $G$ and $\phi\in\Phi_\temp(M)$, then $I_M^G(\pi_\phi)=\pi_{\iota_M^G(\phi)}$.
        \item\label{des:8b} The set $\{\pi_\phi : \phi\in\Phi_\temp(G)\}$ is a basis of the space $D_\temp^\st(G)$ of stable tempered virtual representations of $G$. Consequently, the set $\{\pi_\phi : \phi\in\Phi_2(G)\}$ is a basis of the space $D_\el^\st(G)$.
    \end{enumerate}
    The character $\Theta_\phi=\Theta_{\pi_\phi}$ of $\pi_\phi$ is a stable tempered distribution called the stable tempered character of $\phi$, and called a stable discrete series character if $\phi\in\Phi_2(G)$. Note that the stable tempered virtual representations are linearly independent. We will often identify $\phi=\pi_\phi=\Theta_{\phi}$.
    \item\label{des:9} Compatibility with infinitesimal characters in the case $F=\RR$: for each $\pi\in\Pi(G)$, we have $\mu_\pi=\mu_{\phi_\pi}$.
\end{enumerate}
\end{desiderata}

It follows from \Cref{des:5} that the local Langlands correspondence for groups over arbitrary local fields of characteristic zero is determined by and can be constructed from the local Langlands correspondence for groups over $F=\RR$ or $F=\QQ_p$ for a prime $p$. It follows from the Langlands classification of admissible representations and the parallel Langlands classification of $L$-parameters that a local Langlands correspondence $\rec_G:\Pi(G)\to\Phi(G)$ satisfying \Cref{des} is determined by its restricted tempered local Langlands correspondence $\rec_G:\Pi_\temp(G)\to\Phi_\temp(G)$. Furthermore, it follows from what we have called the Harish-Chandra classification of tempered representations and the parallel Harish-Chandra classification of tempered $L$-parameters that a local Langlands correspondence satisfying \Cref{des} is determined by its restricted discrete local Langlands correspondence $\rec_G:\Pi_2(G)\to\Phi_2(G)$. Conversely, given a discrete local reciprocity map $\rec_G:\Pi_2(G)\to\Phi_2(G)$ satisfying the parts of \Cref{des} that make sense, one can construct a local reciprocity map $\rec_G:\Pi(G)\to\Phi(G)$ satisfying \Cref{des}. (See \cite[\S11.7]{Borel}, and \cite[\S6.1.2]{KalTai} for more on this.)

Note that $\Pi_\phi$ can be recovered from $\Theta_\phi$ since the characters of irreducible admissible representations are linearly dependent. Thus the local Langlands correspondence is determined by the parametrisation $\phi\mapsto\Theta_\phi$ of stable discrete series characters and can be constructed by defining stable discrete series characters in such a way that the associated discrete reciprocity map satisfies \Cref{des}. 

For real groups, and thus for complex groups by restriction of scalars, Langlands constructed the local Langlands correspondence in the above manner. Suppose that $G$ is a connected reductive group over $\RR$ and $\phi\in\Phi_2(G)$. Langlands showed that $G$ has an elliptic maximal torus $T$. Moreover, a choice of a Borel subgroup $B$ of $G_\CC$ containing $T$ determines an $L$-embedding $\Lgp{T}\to\Lgp{G}$ through which $\phi$ factors as a tempered $L$-parameter of $\phi_T\in\Phi_\temp(T)$. The unitary character of $\theta_\phi$ of $T(\RR)$  corresponding to $\phi_T$ under the local Langlands correspondence for tori is dominant with respect to $B$ and its orbit under the absolute Weyl group $W(G,T)$ does not depend on $B$. In Harish-Chandra's papers on discrete series, he defined a stable tempered distribution $\Theta_{\theta_\phi}$ that is uniquely determined by its formula on the set $T(\RR)'$ of regular elements in $T(\RR)$:
\[
\Theta_{\theta_\phi}(t)=(-1)^{q(G)}\sum_{w\in W(G,T)}\frac{\theta_\phi(w^{-1}\cdot t)}{\prod_{\alpha>_B0}(1-\alpha(w^{-1}\cdot t)^{-1})},
\]
where $q(G)=1/2\dim(G(\RR)/K)$ for any maximal compact subgroup $K$ of $G(\RR)$ and $\alpha>_B0$ indicates that $\alpha$ is a positive root with respect to the positive system determined by $B$. Harish-Chandra showed that $\Theta_{\theta_\phi}$ is the sum of $\Theta_\pi$ for all $\pi\in\Pi_2(G)$ with $\zeta_\pi=\theta_\phi|_{Z_G(\RR)}$ and $\mu_\pi=d\theta_\phi$. Note that we have $\zeta_\phi=\theta_\phi|_{Z_G(\RR)}$ and $\mu_\phi=d\theta_\phi$. Langlands constructed the local Langlands correspondence for $G$ by requiring that $\Theta_\phi=\Theta_{\theta_\phi}$. Consequently, we have that $\Pi_\phi$ consists of all $\pi\in\Pi_2(G)$ with infinitesimal character $\mu_\pi=\mu_\phi$ and central character $\zeta_\pi=\zeta_\phi$, and we have $\Theta_\phi=\sum_{\pi\in\Pi_\phi}\Theta_\pi$. In fact, in \cite{AdamsVogan} Adams and Vogan prove that $\Pi_\phi$ consists of all $\pi\in\Pi_2(G)$ with infinitesimal character $\mu_\pi=\mu_\phi$ and what they call ``split radical character'' $\zeta_\pi|_{A_G(\RR)}=\zeta_\phi|_{A_G(\RR)}$. For $\phi\in\Phi_2(G)$, the cardinality of $\Pi_\phi$ is bounded by $|W(G,T)/W_\RR(G,T)|$, where $T$ is an elliptic torus of $G$. For $\phi\in\Phi_\temp(G)$, we also have $\pi_\phi=\sum_{\pi\in\Pi_\phi}\pi$. This follows as in the proof of \cite[Lemma 3.1]{She79} using \cite[Theorem]{KnaComm} and \cite[Theorem 2.9]{SpeVog}. \Cref{des:8b} is a corollary of \cite[Lemma 18.11]{ABV}, which follows from the proof of \cite[Lemma 5.2]{She79}. The $L$-packets for complex groups are singletons, which agrees with the fact that conjugacy and stable conjugacy coincide for complex groups.

For non-archimedean groups, it is expected that stable discrete series characters $\Theta_\phi$ can be constructed directly in terms of $\phi$, as was the case for real groups. This would then give rise to construction of the local Langlands correspondence and a characterisation of it. (See \cite{KalDouble,KalICM} for further discussion.)

We will make use of the following hypothesis on the local Langlands correspondence for non-archimedean groups.

\begin{hypothesis} \label{hypothesis}
    Let $G$ be a connected reductive group over a non-archimedean local field $F$. The group $G$ and its Levi subgroups have local reciprocity maps satisfying the following subset of \Cref{des}: \Cref{des:1}, \Cref{des:2} in the special case of unramified twists, \Cref{des:3}, \Cref{des:4} in the special case when $\eta$ is of the form $\Int(g):M\to g\cdot M$ for some $g\in G(F)$ and $M$ a Levi subgroup of $G$, \Cref{des:6}, \Cref{des:7}, and \Cref{des:8}.
\end{hypothesis}

\Cref{hypothesis} is known for tori. The local reciprocity map is the cocentral character isomorphism given by the Langlands pairing and the stable tempered character of a tempered $L$-parameter can be taken to simply be the corresponding character. 

In the case $G=\GL_n$, \Cref{hypothesis} was proved by Harris and Taylor, Henniart, and Scholze \cite{HarTay, Henniart, Scholze}. The $L$-packets are singletons and the stable tempered characters are simply the irreducible tempered characters.

If $G$ is an inner form of a quasisplit symplectic, odd special orthogonal, unitary, or odd general spin group, then \Cref{hypothesis} is satisfied. This follows from the works of Arthur, Mok, and Moeglin  \cite{ArthurEndo, MokEndo, MoegEndo}, which use the theory of twisted endoscopy and the local Langlands correspondence for $\GL_n$. See \cite{varma}, where this is established carefully.

\textbf{From now on, if $F$ is non-archimedean we assume \Cref{hypothesis} for all connected reductive groups over $F$.}

\subsubsection{Infinitesimal characters for $p$-adic groups.}

Suppose that $F$ is archimedean. An infinitesimal character (also called an infinitesimal parameter) of $G$ is defined to be a $\dual{G}$-conjugacy class of $L$-homomorphisms $\mu:W_F\to\Lgp{G}$. This notion originates in \cite{VoganLLC}. Another useful reference is \cite{CunninghamEtAl}. Every $L$-parameter $\phi:L_F=\SL_2(\CC)\times W_F\to\Lgp{G}$ of $G$ has an associated infinitesimal character $\mu_\phi:W_F\to\Lgp{G}$ defined by $\mu_\phi(w)=\phi(d_w,w)$, where $d_w=\diag(\|w\|^{1/2},\|w\|^{-1/2})$. Each $\phi\in\Phi_\temp(G)$ can be recovered from its infinitesimal character $\mu_\phi$ \cite[\S6.6]{varma}. In general, there are at most finitely many elements of $\Phi(G)$ with a given infinitesimal character \cite[Corollary 4.6]{VoganLLC}. 

For each $\pi\in\Pi(G)$, we define its infinitesimal character $\mu_\pi$ by $\mu_\pi=\mu_{\phi_\pi}$. It follows from the finiteness of $L$-packets that for each infinitesimal character $\mu$ of $G$, there are finitely many $\pi\in\Pi(G)$ with $\mu_\pi=\mu$.

Let $M$ be a Levi subgroup of $G$. If $\mu$ is an infinitesimal character of $M$, then $\iota_M^G\circ\mu$ is an infinitesimal character of $G$. Moreover, if $\mu_1$ and $\mu_2$ are infinitesimal characters of $M$, then $\iota_M^G\circ\mu_1=\iota_M^G\circ\mu_2$ if and only if $\mu_1$ and $\mu_2$ are in the same $W^G(M)$-orbit.

For each $\tau=(M,\sigma,\wt{r})\in T_\temp(G)$, we define its infinitesimal character to be $\mu_\tau=\iota_M^G\circ\mu_\sigma$. Note that $\mu_\tau$ only depends on the image of $\tau$ in $T_\temp(G)/\CC^1$. There are at most finitely many elements in $T_\temp(G)/\CC^1$ with a given infinitesimal character. 

We have an action of $\ak_{G,\CC}^*/\ak_{G,F}^\vee$ on the set of infinitesimal characters by $(\lambda,\mu)\mapsto a_\lambda\cdot\mu$. The assignment of infinitesimal characters to elements of $\Pi(G)$ (resp. $T_\temp(G)$) is equivariant with respect to the actions of $i\ak_G^*$ (resp. $\ak_{G,\CC}^*$).

\subsection{Stable Paley--Wiener Theorems} \label{sec:stablePW}

\subsubsection{The stable Fourier transform.}
For $f\in\Cc(G,\zeta)$, we define its stable Fourier transform to be the function $f^G:\Phi_\temp(G,\zeta)\to\CC$ defined by $f^G(\phi)=\Theta_\phi(f)$.

We define the space of stable Fourier transforms
\[
\wh{\Sc_{(c)}}(G,\zeta)=\{f^G : f\in\Cc_{(c)}(G,\zeta)\}.
\]
We have
\[
\wh{\Sc_{(c)}}(G,\zeta)=\Cc_{(c)}(G,\zeta)/\Ann_{\Cc_{(c)}(G,\zeta)}(\{\Theta_\phi : \phi\in\Phi_\temp(G,\zeta)\})
\]
and we give it the natural quotient topology. The stable Fourier transform is a continuous surjective linear map $\Fc^\st:\Cc_{(c)}(G,\zeta)\to\wh{\Sc_{(c)}}(G,\zeta)$.

A simple consequence of the Harish-Chandra regularity theorem and the stable Weyl integration formula is the following. Let $\phi\in\Phi(G,\zeta)$ (resp. $\phi\in\Phi_\temp(G,\zeta)$). If $f\in\Cc_c^\unst(G,\zeta)$ (resp. $f\in\Cc^\unst(G,\zeta)$), then $f^G(\phi)=0$. Consequently, we have $\Theta_\phi\in\Sc_c(G,\zeta)'$ (resp. $\Theta_\phi\in\Sc(G,\zeta)'$).

It follows that the stable Fourier transform $\Fc^\st:\Cc_{(c)}(G,\zeta)\to\wh{\Sc_{(c)}}(G,\zeta)$ descends to a continuous surjective linear map
\[
\Fc^\st:\Sc_{(c)}(G,\zeta)\longrightarrow\wh{\Sc_{(c)}}(G,\zeta).
\]
We call the property of injectivity of this map  stable spectral density for $\Sc_{(c)}(G,\zeta)$. Stable spectral density for $\Sc_{(c)}(G,\zeta)$ is equivalent to
\[
\Cc_{(c)}^\unst(G,\zeta)=\Ann_{\Cc_{(c)}(G,\zeta)}(\{\Theta_\phi : \phi\in\Phi_\temp(G,\zeta)\}).
\]
It follows from stable spectral density for $\Cc_{(c)}(G,\zeta)$ that a distribution in $\Cc_{(c)}(G,\zeta)'$ is stable if and only if it lies in the weak-$*$ closure of the linear span of $\{\Theta_\phi\}_{\phi\in\Phi_\temp(G,\zeta)}$. 

For archimedean $F$, stable spectral density for $\Cc(G,\zeta)$ was proved by Shelstad in \cite[Lemma 5.3]{She79} for $\Zc=1$ and in \cite[Theorem 4.1]{SheTERGI} for $\Zc=Z(F)$, where $Z$ is a central torus of $G$. For non-archimedean $F$, stable spectral density for $\Cc_c(G,\zeta)$ is an immediate consequence of \cite[Theorem 6.1, Theorem 6.2]{ArthurRelations} when $G$ is quasi-split and $\Zc=Z(F)$ for $Z$ a central induced torus of $G$. The assumption that $G$ is quasisplit can be removed using endoscopic transfer between a non-quasisplit group $G$ and its quasisplit inner form $G^*$, as explained in \cite[\S3.2]{varma}. Arthur's proof uses a global argument involving the simple trace formula, and it is generalised to the twisted setting in \cite[XI.5.2]{MWII} in the case of a trivial central character. The reader may find this a helpful reference since the spectral density result and proof are stated explicitly there.  Thus, we have the following result.

\begin{theorem} \label{StableDensity}
    Let $(\Zc,\zeta)$ be a unitary central datum with $\Zc=Z(F)$ for $Z$ a central induced torus of $G$. Stable spectral density for $C_c^\infty(G,\zeta)$ holds.
\end{theorem}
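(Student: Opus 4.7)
The plan is to establish injectivity of the stable Fourier transform $\Fc^\st: \Sc_c(G,\zeta) \to \wh{\Sc_c}(G,\zeta)$; equivalently, to show that if $f \in C_c^\infty(G,\zeta)$ satisfies $\Theta_\phi(f) = 0$ for all $\phi \in \Phi_\temp(G,\zeta)$, then $f^G(\delta) = 0$ for all $\delta \in \Delta_\rs(G)$. Under the stated hypothesis that $\Zc = Z(F)$ for a central induced torus $Z$, the result is essentially contained in the literature, so I would split into the archimedean and non-archimedean cases, cite the relevant theorems, and check that the ambient conventions agree (in particular the normalisations of orbital integrals and the form of the central character datum).

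For archimedean $F$, I would invoke Shelstad's work. Her Lemma~5.3 in \cite{She79} handles the case of trivial central datum by combining the stable Weyl integration formula with Harish-Chandra's explicit discrete series character formulas on compact Cartans, which underlie Langlands' construction of stable tempered characters for real groups described earlier in the paper. For a nontrivial unitary central datum $(\Zc,\zeta)$ with $\Zc = Z(F)$ and $Z$ a central induced torus, the same line of argument is carried out in \cite[Theorem~4.1]{SheTERGI}; the case $F \cong \CC$ reduces to the case $F \cong \RR$ via restriction of scalars and the compatibility of the local Langlands correspondence with Weil restriction (\Cref{des:5}).

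For non-archimedean $F$ I would first treat the quasisplit case and then descend to general $G$. When $G$ is quasisplit the result is \cite[Theorem~6.1, Theorem~6.2]{ArthurRelations}; Arthur's proof is a global argument in which one realises $G$ as the local factor of a number field group, inserts $f$ at the given place, picks a matching cuspidal test function at an auxiliary place so that the simple form of the trace formula applies, and isolates the contribution of stable tempered characters by varying the remaining global data and invoking linear independence of characters on the spectral side. To remove the quasisplit hypothesis, I would follow \cite[\S3.2]{varma} and use endoscopic transfer between $G$ and its quasisplit inner form $G^*$: given $f \in C_c^\infty(G,\zeta)$ with vanishing regular semisimple stable orbital integrals, one finds $f^* \in C_c^\infty(G^*,\zeta^*)$ with matching stable orbital integrals via Kottwitz--Shelstad transfer, and then the compatibility of $L$-packets and stable tempered characters with inner twists built into \Cref{hypothesis} converts vanishing of $\Theta_{\phi^*}(f^*)$ for all $\phi^* \in \Phi_\temp(G^*,\zeta^*)$ into vanishing of $\Theta_\phi(f)$ for all $G$-relevant $\phi \in \Phi_\temp(G,\zeta)$.

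The main obstacle is Arthur's quasisplit non-archimedean case, which depends on substantial global machinery (the simple trace formula together with the theory of stable orbital integrals and endoscopic transfer for tempered characters); the archimedean case and the reduction from non-quasisplit to quasisplit are comparatively direct. It is worth flagging that the induced-torus hypothesis on $Z$ enters precisely to ensure the existence and good behaviour of the global central character datum in Arthur's argument; this is also why \cite[XI.5.2]{MWII}, which generalises the argument to the twisted setting, is only formulated for trivial central character.
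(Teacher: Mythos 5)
Your proposal takes essentially the same route as the paper: it splits into the archimedean case (quoting Shelstad \cite[Lemma 5.3]{She79} and \cite[Theorem 4.1]{SheTERGI}), the non-archimedean quasisplit case (quoting \cite[Theorem 6.1, Theorem 6.2]{ArthurRelations}), and the reduction of general non-archimedean $G$ to its quasisplit inner form via endoscopic transfer as in \cite[\S3.2]{varma}, which is precisely the citation structure the paper uses in the paragraph preceding \Cref{StableDensity}. The extra glosses you add (on the shape of Arthur's global argument and the role of the induced-torus hypothesis) are consistent with the paper's remarks.
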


In particular, we have stable spectral density for $\Cc_c(G)$ and $C_{c,\cusp}^\infty(G,\zeta,K)$ when $Z=A_G(F)$, and we will use these to prove the stable Paley--Wiener theorems below. Stable spectral density for $\Cc(G)$ in the non-archimedean case does not appear to be in the literature. It is established below (\Cref{StableSpectralDensity}) in the course of the proof of the stable Paley--Wiener theorem for $\Cc(G)$.

\subsubsection{The space of $L$-parameters.}
In order to define Paley--Wiener and Schwartz spaces on $\Phi_\temp(G)$, we must recall how it is naturally a topological space. Recall that we have an action of $\ak_{G,\CC}^*$ on $\Phi(G)$, and that the action of $i\ak_{G}^*$ preserves $\Phi_\temp(G)$ and $\Phi_2(G)$. We denote the isotropy subgroup of $\phi\in\Phi(G,\zeta)$ in $\ak_{G,\CC}^*$ by $\ak_{G,\phi}^\vee$. Just as was the case for $\ak_{G,\tau}^\vee$, we have
\[
\ak_{G,F}^\vee\subseteq\ak_{G,\phi}^\vee\subseteq\wt{\ak}_{G,F}^\vee
\]
Thus, if $F$ is archimedean we have $\ak_{G,\phi}^\vee=0$, and if $F$ is non-archimedean we have that $\ak_{G,\phi}^\vee$ is a full lattice in $i\ak_G^*$. Consequently, $\Phi_2(G)$ is naturally a smooth manifold with countably many components $i\ak_{G}^*\cdot\phi=i\ak_{G}^*/\ak_{G,\phi}^\vee$, which are Euclidean spaces if $F$ is archimedean and compact tori if $F$ is non-archimedean. Moreover, $\Phi_2(G)_\CC$ is the complexification of $\Phi_2(G)$ with connected components $\ak_{G,\CC}^*\cdot\phi=\ak_{G,\CC}^*/\ak_{G,\phi}^\vee$. Since
\[
\Phi_\temp(G)=\coprod_{L\in\Lc^G(M_0)/W_0^G}\Phi_2(L)/W^G(L),
\]
we have that $\Phi_\temp(G)$, and similarly $\Phi(G)$ is naturally a topological space. We say that a function on $\Phi_\temp(G)$ is smooth if it pulls back to a smooth function on each $\Phi_2(L)$.

\subsubsection{The stable Paley--Wiener Theorems.}
We define $\|\phi\|=\|\mu_\phi\|$ in the archimedean case. Note that for any countable set $E\subseteq\Phi_2(L)$, we have Paley--Wiener and Schwartz spaces defined on the space $\Lambda=\bigcup_{\phi\in E}\Lambda_\phi$ with $\Lambda_\phi=i\ak_{L}^*/\ak_{L,\phi}^\vee$.

We define $\Ss_\el^\st(L)$ to be the space of smooth functions $\varphi:\Phi_2(L)$ such that for some (and hence any) set of representatives $B_\el^\st(L)\subseteq\Phi_2(L)$ for the connected components of $\Phi_2(L)$, we have 
\[
\varphi\in \Ss\Bigg(\coprod_{\phi\in B_\el^\st(L)}i\ak_{L}^*/\ak_{L,\phi}^\vee\Bigg).
\]
We define $PW_\el^\st(L)$ (resp. $PW_{\el,f}^\st(L)$) in the same way as $\Ss_\el^\st(L)$, except that we replace $\Ss(\cdot)$ by $PW(\cdot)$ (resp. $PW_f(\cdot)$).

We define
\[
\Ss^\st(G)=\Bigg(\bigoplus_{L\in\Lc^G(M_0)}\Ss_\el^\st(L)\Bigg)^{W_0^G}=\bigoplus_{L\in\Lc^G(M_0)/W_0^G}\Ss_\el^\st(L)^{W^G(L)}
\]
and similarly we define $PW^\st(G)$ and $PW_f^\st(G)$. These are naturally spaces of smooth functions on $\Phi_\temp(G)$. 

\begin{example}
    We describe the above spaces explicitly in the case when $G=\SL_2$ and $F=\RR$. Let $T=\SO_2$, an anisotropic maximal torus of $\SL_2$. We write the elements of $T(\RR)$ as
    \[
    t(\theta)=
    \begin{pmatrix}
        \cos\theta & -\sin\theta \\
        \sin\theta & \cos\theta
    \end{pmatrix}.
    \]
    Let $S$ be the subgroup of diagonal elements of $\SL_2$, a split maximal torus. For $x\in\RR^\times$, we write $s(x)=\diag(x,x^{-1})\in S(\RR)$. The set $\{S,\SL_2\}$ is a set of representatives for the conjugacy classes of Levi subgroups of $G$. The Weyl group $W_0^G=W^G(S)$ is of order two, with the non-trivial element acting by $s(x)\mapsto s(x^{-1})$.
    
    The space $\Phi_2(G)=\{\phi_n\}_{n=1}^\infty$ is countable and discrete, with $\phi_n$ denoting the $L$-parameter whose $L$-packet consists of the discrete series representations $\pi_{\pm n}$, whose characters are given on the regular elements of $T(\RR)$ by the formula
    \[
    \Theta_{\pi_{\pm n}}(t(\theta))=-\frac{\pm e^{\pm in\theta}}{e^{i\theta}-e^{-i\theta}}.
    \]
    The stable discrete series character of $\phi_n$ is
    \[
    \Theta_{\phi_n}(t(\theta))=-\frac{e^{in\theta}-e^{-in\theta}}{e^{i\theta}-e^{-i\theta}}.
    \]
    We identify $\Phi_2(G)=\ZZ_{>0}$ via $\phi_n\mapsto n$. We have
    \[
    \Ss_\el^\st(G)=PW_\el^\st(G)=\Ss(\ZZ_{>0}),
    \]
    the space of rapidly decreasing functions $\varphi:\ZZ_{>0}\to\CC$. Furthermore,
    \[
    PW_{\el,f}^\st(G)=C_c(\ZZ_{>0}),
    \]
    the space of finitely supported functions $\varphi:\ZZ_{>0}\to\CC$.

    By the local Langlands correspondence for $S$, we may identify $\Phi_2(S)$ with $\Pi_u(S)$, the set of unitary characters of $S(\RR)\cong\RR^\times$. The unitary unramified characters of $S(\RR)$ are those of the form $s(x)\mapsto|x|^{i\lambda}$ for $\lambda\in\RR$. An arbitrary unitary character of $S(\RR)$ can be written as
    \[
    \chi_{m,\lambda}(s(x))=\sgn(x)^m|x|^{i\lambda},
    \]
    for unique $m\in\{0,1\}$ and $\lambda\in\RR$. The non-trivial element of $W^G(S)$ acts $\Pi_u(S)$ by $\chi_{m,\lambda}\mapsto\chi_{m,-\lambda}$. We identify $\Phi_2(S)=\{0,1\}\times\RR=\RR\coprod\RR$ via $\chi_{m,\lambda}\mapsto (m,\lambda)$. Then $\Phi_2(S)/W_0^G=\RR/\{\pm1\}\coprod\RR/\{\pm1\}=\RR_{\geq0}\coprod\RR_{\geq0}$. We have
    \[
    \Ss_\el^\st(S)=\Ss(\RR)\oplus\Ss(\RR)
    \]
    and
    \[
    \Ss_\el^\st(S)^{W^G(S)}=\Ss(\RR)^{\{\pm1\}}\oplus\Ss(\RR)^{\{\pm1\}}.
    \]
    Similarly, we have 
    \[
    PW_\el^\st(S)=PW_{\el,f}^\st(S)=PW(\RR)\oplus PW(\RR)
    \]
    and
    \[
    PW_\el^\st(S)^{W^G(S)}=PW_{\el,f}^\st(S)^{W^G(S)}=PW(\RR)^{\{\pm1\}}\oplus PW(\RR)^{\{\pm1\}}.
    \]
    Finally, we have
    \begin{align*}
        \Ss^\st(G)&=\Ss(\ZZ_{>0})\oplus\Ss(\RR)^{\{\pm1\}}\oplus\Ss(\RR)^{\{\pm1\}}, \\
        PW^\st(G)&=\Ss(\ZZ_{>0})\oplus PW(\RR)^{\{\pm1\}}\oplus PW(\RR)^{\{\pm1\}}, \\
        PW_f^\st(G)&=C_c(\ZZ_{>0})\oplus PW(\RR)^{\{\pm1\}}\oplus PW(\RR)^{\{\pm1\}}.
    \end{align*}
\end{example}

The aim of this section is to prove the following result, which contains three stable Paley--Wiener theorems. This was stated as \Cref{stablePW} in \Cref{sec:intro}.

\begin{theorem} \label{thm:stablePW}
    If $F$ is non-archimedean, we assume \Cref{hypothesis} for $G$. The stable Fourier transform is an isomorphism of topological vector spaces
    \[
    \Sc(G)\longrightarrow\Ss^\st(G)
    \]
    and restricts to isomorphisms of topological vector spaces
    \[
    \Sc_c(G)\longrightarrow PW^\st(G)
    \]
    and
    \[
    \Sc_f(G)\longrightarrow PW_f^\st(G).
    \]
\end{theorem}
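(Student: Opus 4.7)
The plan is to deduce the three stable Paley--Wiener theorems from the invariant Paley--Wiener theorems by identifying the image of $\Ic^\unst_{(c),(f)}(G)$ inside the invariant Paley--Wiener or Schwartz space and showing that the induced quotient is the stable Paley--Wiener or Schwartz space. Since $\Sc_{(c),(f)}(G) = \Ic_{(c),(f)}(G)/\Ic^\unst_{(c),(f)}(G)$ as a topological quotient and the invariant Fourier transform already furnishes topological isomorphisms with $\Ss(G)$, $PW(G)$, and $PW_f(G)$, the problem reduces to three assertions about $\Fc^\st$: that its image lies in the appropriate stable space; that the induced map from $\Sc_{(c),(f)}(G)$ is injective (stable spectral density); and that the map is open onto its target.

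First I would reduce to the cuspidal case on each Levi, using the compatible decompositions
\[
\Phi_\temp(G) = \coprod_{L \in \Lc^G(M_0)/W_0^G} \Phi_2(L)/W^G(L), \quad T_\temp(G) = \coprod_{L \in \Lc^G(M_0)/W_0^G} T_\el(L)/W^G(L),
\]
together with the definition of $\Ss^\st(G)$, $PW^\st(G)$, and $PW_f^\st(G)$ as direct sums of $W^G(L)$-invariant elliptic pieces over Levis. Parabolic descent $f^G \mapsto f^L$ is continuous and, by \Cref{des:8a}, dualizes to $\iota_L^G$ on stable tempered characters, so this reduces the theorem to the cuspidal stable Paley--Wiener isomorphisms $\Sc_\cusp(L) \to \Ss^\st_\el(L)$, $\Sc_{c,\cusp}(L) \to PW^\st_\el(L)$, and $\Sc_{f,\cusp}(L) \to PW^\st_{\el,f}(L)$ for each $L$.

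In the cuspidal case, by \Cref{des:8b} the stable discrete series characters $\{\Theta_\phi\}_{\phi \in \Phi_2(L,\zeta)}$ form a basis of $D_\el^\st(L,\zeta) \subseteq D_\el(L,\zeta) = \bigoplus_{\CC^1\tau} \CC\tau$, and the orthogonality relations (\Cref{OrthogonalityRelations}) supply an elliptic-orthogonal decomposition $D_\el(L,\zeta) = D_\el^\st(L,\zeta) \oplus D_\el^{\st,\perp}(L,\zeta)$. Via pseudocoefficients this dualizes to a direct sum decomposition of $\Ic_{f,\cusp}(L,\zeta)$ into a stable-spanned part, naturally identified with $PW_{\el,f}^\st(L,\zeta)$, and an unstable-annihilator complement; combined with stable spectral density for cuspidal test functions (\Cref{StableDensity}), this recovers the known results of Arthur and Moeglin--Waldspurger in the test and $K$-finite cases. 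For Schwartz cuspidal functions the same decomposition should extend continuously, using the growth estimates underlying the invariant Paley--Wiener theorem and the boundedness of $\{\|\tau\|_\el : \tau \in T_\el(L)\}$ to control decay across $\Phi_2(L)$.

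The principal new obstacle is stable spectral density for $\Cc(G)$ in the $p$-adic case (and consequently for cuspidal Schwartz functions on each Levi), which this paper announces as a new result. To establish it, I would use the invariant Paley--Wiener isomorphism to transfer the vanishing $f^G(\phi) = 0$ for all $\phi \in \Phi_\temp(G)$ into a vanishing condition on $f_G$ viewed as a function on $T_\temp(G)$, expand each $\Theta_\phi$ in Arthur's basis $\{\Theta_\tau : \tau \in T_\temp(G)/\CC^1\}$, and then use the elliptic-orthogonal decomposition on each Levi together with parabolic descent to conclude that $f$ itself is unstable. Once algebraic bijectivity is in place, openness of $\Fc^\st$ follows from the open mapping theorem for strict LF-spaces, applied to the quotient structure inherited from the invariant Paley--Wiener isomorphism.
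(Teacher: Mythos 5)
Your overall strategy lines up with the paper's: reduce along the Levi decomposition of $\Phi_\temp(G)$ and $T_\temp(G)$ to the cuspidal case, use the elliptic orthogonality relations to split $D_\el(G,\zeta)$ into $D_\el^\st\oplus D_\el^\unst$, dualize via pseudocoefficients, and then lift from test functions to Schwartz functions. That skeleton is correct, and your observation that the bounded elliptic norms control the change-of-basis decomposition of the $\wh{\Fs}_\el$-spaces is also the right ingredient.

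The gap is concentrated precisely in the piece the paper flags as new: stable spectral density for $\Cc(G)$ in the $p$-adic case. Your proposed argument—"expand each $\Theta_\phi$ in Arthur's basis, use the elliptic-orthogonal decomposition together with parabolic descent to conclude that $f$ is unstable"—is circular. The hypothesis $f^G(\phi)=0$ for all $\phi$ says exactly that $f_G\in\Ss^\unst(G)$, but the elliptic orthogonal decomposition is a statement about $D_\el(G,\zeta)$, not about orbital integrals; passing from "spectral vanishing on the stable subspace" to "vanishing of stable orbital integrals" \emph{is} stable spectral density, so you cannot invoke the decomposition to deduce it. What makes the paper's argument go through is something you do not mention at all: stable spectral density for test functions (\Cref{StableDensity}, already known) plus the fact that $PW^\unst(G)$ is dense in $\Ss^\unst(G)$ (\Cref{PWDenseSchwartz}) and $\Ic\Cc^\unst(G)$ is closed in $\Ic\Cc(G)$, so $\Fc^{-1}(\Ss^\unst(G))\subseteq\Ic\Cc^\unst(G)$ follows by continuity. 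This density-and-closedness argument is the crux; without it nothing you wrote propagates the test-function case to the Schwartz case.

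A second, smaller omission: you assert that the stable pseudocoefficient decomposition of $\Ic_{f,\cusp}(L,\zeta)$ "should extend continuously" to the Schwartz setting, appealing to growth estimates and the boundedness of $\{\|\tau\|_\el\}$. But what actually has to be shown is that each finitely supported element of $\Ss_\el^\st(L)$ is the invariant Fourier transform of an explicitly constructed \emph{stable} Schwartz orbital integral, and then that this closes up. The paper does this (\Cref{lem:fcn} and \Cref{prop:cuspidalInverseFourier}) by multiplying a $\zeta$-equivariant pseudocoefficient in $C_{c,\cusp}^\infty(L,\zeta,K)$ by a rapidly decreasing function $\phi\circ H_L$ in the $A_L$-directions, producing a Schwartz function whose orbital integrals are constant on stable classes. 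This construction, together with the finite Fourier analysis on $\ak_{L,F}/\wt{\ak}_{L,F}$ that normalizes the resulting transforms, is what makes the "stable part of the decomposition is realized by stable orbital integrals" step precise; it is not implied by boundedness estimates alone.

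So: correct architecture, but the proof as written would not establish the new half of the theorem. You need (i) the explicit $f\cdot(\phi\circ H_L)$ construction to produce preimages in $\Ic\Fs_\cusp^\st$ for the stable summand, and (ii) the density/closedness argument from $PW^\unst\subseteq\Ss^\unst$ dense and $\Ic\Cc^\unst$ closed to extend stable spectral density from test functions to Schwartz functions.
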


Arthur proved the stable Paley--Wiener theorem for test functions on a quasisplit $p$-adic group as a consequence of his more general result \cite[Theorem 6.1]{ArthurRelations} on collective endoscopic transfer (see the discussion below the statement of Theorem 6.2 in \cite{ArthurRelations}). Moeglin--Waldspurger proved the stable Paley--Wiener theorem for test functions and $K$-finite test functions on a quasisplit real group (see the theorem in \cite[Ch. IV, \S4.2.3]{MWI} and the corollary in \cite[Ch. IV, \S4.2.9]{MWI}). In fact, they work in the more general setting of twisted spaces (also called twisted groups). What is new in our theorem is the stable Paley--Wiener theorem for Schwartz functions, i.e. the first isomorphism in our theorem, and the treatment of groups that are not quasisplit. Generalising Arthur's proof to Schwartz functions would require generalising several results that we do not need. Instead, we follow the strategy in the proof by Moeglin--Waldspurger, which is simplified because we work with groups (not general twisted groups) and make use of stable spectral density for test functions. This necessitates revisiting the cases of test functions and $K$-finite test functions.

Before turning to the proofs of the stable Paley--Wiener theorems, we remark that there is a stable Fourier inversion theorem for stable orbital integrals. Define a measure on $\Phi_2(L)$ for each $L\in\Lc^G(M_0)$ by
\[
\int_{\Phi_2(L)}\alpha(\phi)\dd{\phi}=\sum_{\phi\in\Phi_2(L)/i\ak_L^*}\int_{i\ak_L^*/\ak_{L,\phi}^\vee}\alpha(\phi_\lambda)\dd{\lambda},
\]
for all $\alpha\in C_c(\Phi_2(L))$. Then, define a measure on $\Phi_\temp(G)$ by
\[
\int_{\Phi_\temp(G)}\alpha(\phi)\dd{\phi}=\sum_{L\in\Lc^G(M_0)/W_0^G}|W^G(L)|^{-1}\int_{\Phi_2(L)}\alpha(\phi)\dd{\phi}
\]
for all $\alpha\in C_c(\Phi_\temp(G))$. As in the proof of \cite[Lemma 6.3]{ArthurRelations}, one can use \Cref{thm:inversion} to prove the following.

\begin{proposition} \label{thm:stableinversion}
    There exists a smooth function $S^G(\delta,\phi)$ on $\Delta_\sr(G)\times\Phi_\temp(G)$ such that
    \[
    f^G(\delta)=\int_{\Phi_\temp(G)}S^G(\delta,\phi)f^G(\phi)\dd{\phi}
    \]
    for all $f\in\Cc(G)$ and $\delta\in\Delta_\sr(G)$.
\end{proposition}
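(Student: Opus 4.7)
The plan is to adapt Arthur's proof of the analogous statement for test functions in \cite[Lemma 6.3]{ArthurRelations} to the Schwartz setting, starting from his Fourier inversion formula for invariant orbital integrals (\Cref{thm:inversion}). The first step is to sum Arthur's identity
\[
f_G(\gamma)=\int_{T_\temp(G)/\CC^\times}I_G(\gamma,\tau)f_G(\tau)\dd{\tau}
\]
over the finitely many conjugacy classes $\gamma$ contained in a given stable class $\delta\in\Delta_\sr(G)$. This produces the preliminary identity
\[
f^G(\delta)=\int_{T_\temp(G)/\CC^\times}K(\delta,\tau)f_G(\tau)\dd{\tau},
\]
where $K(\delta,\tau)=\sum_{\gamma\subseteq\delta}I_G(\gamma,\tau)$ is smooth on $\Delta_\sr(G)\times T_\temp(G)$.

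The second step is to convert the right-hand side into an integral over $\Phi_\temp(G)$ against $f^G(\phi)=\Theta_\phi(f)$. For each Levi $L\in\Lc^G(M_0)$, the stable tempered characters $\{\Theta_\phi\}_{\phi\in\Phi_\temp(L)}$ span the stable part $D_\temp^\st(L)$ of $D_\temp(L)$, and the associated virtual representation $\pi_\phi$ for $\phi\in\Phi_2(L)$ decomposes as a positive integer combination of discrete series representations in $\Pi_\phi\subseteq\Pi_2(L)$. Using the parallel decompositions
\[
T_\temp(G)=\coprod_{L\in\Lc^G(M_0)/W_0^G}T_\el(L)/W^G(L), \quad \Phi_\temp(G)=\coprod_{L\in\Lc^G(M_0)/W_0^G}\Phi_2(L)/W^G(L),
\]
together with the compatibility of the measures on $T_\el(L)/\CC^\times$ and $\Phi_2(L)$ given by their definitions, I would express $f_G(\tau)$ piecewise as a linear combination of values $f^G(\phi)$. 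This defines $S^G(\delta,\phi)$ from $K(\delta,\tau)$ via inversion of the change-of-basis matrix between the two parametrizations, and smoothness of $S^G$ follows from smoothness of $K$ together with the locally constant nature of the change-of-basis data.

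The main obstacle is handling the contribution of elements $\tau=(M,\sigma,\wt{r})\in T_\el(L)$ with non-trivial $\wt{r}$, whose virtual characters involve parabolically induced representations. Their contributions must be identified with those of the induced stable parameters $\iota_M^L\phi\in\Phi_\temp(L)$, using the compatibility of stable characters with parabolic induction. Once this bookkeeping is in place, the stability of the left-hand side $f^G(\delta)$ forces the unstable contributions to cancel, yielding the desired formula with a smooth kernel on $\Delta_\sr(G)\times\Phi_\temp(G)$.
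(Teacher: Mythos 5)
Your first step (summing Arthur's inversion formula over the finitely many conjugacy classes in a stable class) is correct and is exactly what Arthur does in \cite[Lemma 6.3]{ArthurRelations}, which the paper cites as the template. However, there is a genuine gap in your second step, and your conclusion is internally inconsistent.

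The issue is your claim that you can ``express $f_G(\tau)$ piecewise as a linear combination of values $f^G(\phi)$'' via ``inversion of the change-of-basis matrix between the two parametrizations.'' This is not a change of basis: the $\phi\in\Phi_\temp(L)$ span only the stable subspace $D_\temp^\st(L)\subsetneq D_\temp(L)$, while the $\tau\in T_\temp(L)$ span all of $D_\temp(L)$. Knowing the values $f^G(\phi)$ for all $\phi$ determines the restriction of $f_G$ to $D_\temp^\st(G)$, but for a generic unstable $\tau$ there is no way to recover $f_G(\tau)$ from $\{f^G(\phi)\}$. So the linear map you wish to invert is not invertible, and the kernel $S^G(\delta,\phi)$ cannot be constructed from $K(\delta,\tau)$ in the way you describe. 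Notice also that this claim conflicts with your closing remark that ``the stability of the left-hand side $f^G(\delta)$ forces the unstable contributions to cancel'' --- if you had genuinely expressed $f_G(\tau)$ in terms of $f^G(\phi)$, there would be no unstable contributions to cancel.

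The correct mechanism, which the paper relies on (and which Arthur's Lemma 6.3 carries out for test functions), is precisely to use the \emph{non}-invertibility to your advantage: expand $f_G(\tau)$ in the full orthogonal basis $B_\el = B_\el^\st\cup B_\el^\unst$ (not just the stable part), rewrite the integral from step one as a stable piece plus an unstable piece, and then observe that the unstable piece must vanish because $SO_\delta$ is a stable distribution and therefore annihilates $\Cc^\unst(G)$ --- this is exactly stable spectral density (\Cref{StableSpectralDensity}), established earlier in the section. What remains is an integral over the stable components, parametrised by $\Phi_\temp(G)$, against $f^G(\phi)$, and smoothness of the resulting kernel $S^G$ follows from smoothness of $K$ and the local finiteness of the stable/unstable change-of-basis matrices on $D_\el(L,\zeta,\mu)$. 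Separately, your identification of ``parabolically induced representations'' as the main obstacle is a red herring: the elliptic $\tau=(M,\sigma,\wt{r})\in T_\el(L)$ with $M\subsetneq L$ are indeed non-trivial combinations of induced representations, but this is orthogonal to the stable/unstable distinction, which is the actual bookkeeping that must be done.
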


Writing $S^G(\phi,\delta)$ for the normalised stable tempered character $S^G(\phi,\delta)=|D^G(\delta)|^{1/2}\Theta_\phi(\delta)$, we have the dual formula
\[
f^G(\phi)=\int_{\Delta_\rs(G)}S^G(\phi,\delta)f^G(\delta)\dd{\delta}.
\]

\subsection{Proofs of the stable Paley--Wiener theorems}

\subsubsection{Stable and unstable elliptic tempered characters.}

Recall that the stable tempered characters are linearly independent and that we identify tempered $L$-parameters $\phi\in\Phi_\temp(G,\zeta)$ with their associated virtual tempered representations $\pi_\phi=\sum_{\pi\in\Pi_\phi}c_\phi(\pi)\pi$ and with their stable tempered distribution characters $\Theta_\phi=\sum_{\pi\in\Pi_\phi}c_\phi(\pi)\Theta_\pi$.

By \Cref{hypothesis}, we have $D_\temp^\st(G,\zeta)=\CC\Phi_\temp(G,\zeta)$ and $D_\el^\st(G,\zeta)=\CC\Phi_2(G,\zeta)$. We define the space $D_\el^\unst(G,\zeta)$ to be the subspace of $D_\el(G,\zeta)$ consisting of all elements orthogonal to $D_\el^\st(G,\zeta)$ with respect to the elliptic inner product.

\begin{lemma}
    We have $D_\el(G,\zeta)=D_\el^\st(G,\zeta)\oplus D_\el^\unst(G,\zeta)$. 
\end{lemma}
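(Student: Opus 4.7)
The plan is to reduce the assertion to a routine $L$-packet-by-$L$-packet decomposition that exploits the positive definiteness of the elliptic inner product on $D_\el(G,\zeta)$. This positive definiteness is immediate from \Cref{OrthogonalityRelations}: the basis $\{\tau\}_{[\tau]\in T_\el(G,\zeta)/\CC^1}$ is orthogonal with positive squared norms $|W^G(\tau)|\iota(\tau)^{-1}$. In particular $D_\el^\st(G,\zeta)\cap D_\el^\unst(G,\zeta)=0$ is automatic, and it remains only to exhibit an explicit algebraic complement to $D_\el^\st(G,\zeta)$ in $D_\el(G,\zeta)$ that lies inside $D_\el^\unst(G,\zeta)$.

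By \Cref{des:3}, \Cref{des:7}, and \Cref{des:8} (or by \Cref{hypothesis} in the non-archimedean case), for every $\phi\in\Phi_2(G,\zeta)$ the virtual representation $\pi_\phi$ is a positive integral combination of members of $\Pi_\phi\subseteq\Pi_2(G,\zeta)$. Since distinct $L$-packets $\Pi_\phi$ are disjoint subsets of $\Pi_2(G,\zeta)\subseteq T_\el(G,\zeta)$, Arthur's orthogonality relations yield the orthogonal decomposition
\[
\CC\Pi_2(G,\zeta)=\bigoplus_{\phi\in\Phi_2(G,\zeta)}\CC\Pi_\phi
\]
into finite-dimensional blocks, inside which $D_\el^\st(G,\zeta)=\bigoplus_\phi\CC\pi_\phi$ with $\CC\pi_\phi\subseteq\CC\Pi_\phi$.

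Within each finite-dimensional block $\CC\Pi_\phi$, positive definiteness produces an orthogonal complement $U_\phi$ of the line $\CC\pi_\phi$, so that $\CC\Pi_\phi=\CC\pi_\phi\oplus U_\phi$. Let $W$ denote the span of those $\tau\in T_\el(G,\zeta)$ whose class in $T_\el(G,\zeta)/\CC^1$ does not lie in the image of $\Pi_2(G,\zeta)$; by \Cref{OrthogonalityRelations} again, $W$ is orthogonal to $\CC\Pi_2(G,\zeta)$, and therefore to $D_\el^\st(G,\zeta)$ in particular. Assembling these pieces gives the algebraic decomposition
\[
D_\el(G,\zeta)=D_\el^\st(G,\zeta)\oplus\Bigg(\bigoplus_\phi U_\phi\Bigg)\oplus W,
\]
with both the second and the third summand contained in $D_\el^\unst(G,\zeta)$. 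A short argument using $D_\el^\st\cap D_\el^\unst=0$ then identifies $\bigl(\bigoplus_\phi U_\phi\bigr)\oplus W$ with all of $D_\el^\unst(G,\zeta)$, yielding the desired direct sum.

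The only mild obstacle is that $D_\el(G,\zeta)$ is typically infinite-dimensional, so one cannot invoke a generic ``a positive definite form admits an orthogonal complement to any subspace'' principle. This is handled entirely by the block decomposition above, which reduces every orthogonality check to the finite-dimensional pieces $\CC\Pi_\phi$, so no topological considerations enter.
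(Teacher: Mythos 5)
Your proof is correct and takes essentially the same route as the paper: decompose $\CC\Pi_2(G,\zeta)$ into the finite-dimensional blocks $\CC\Pi_\phi$, take the orthogonal complement of $\CC\pi_\phi$ inside each block, and combine with the span $W$ of the non-$\Pi_2$ part of $T_\el(G,\zeta)$, which is automatically orthogonal to $D_\el^{\st}(G,\zeta)$. The only cosmetic difference is that you carry out a final step identifying $\bigl(\bigoplus_\phi U_\phi\bigr)\oplus W$ with $D_\el^{\unst}(G,\zeta)$ using $D_\el^{\st}\cap D_\el^{\unst}=0$, whereas the paper packages this via the general remark preceding the proof that an orthogonal complement, when it exists, equals the full annihilator.
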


Let us set some notation and terminology. Let $V$ be an inner product space (not necessarily complete) and let $W$ be a subspace of $V$. We say that $W$ has an orthogonal complement in $V$ if there exists a subspace $X$ of $V$ such that $X$ is orthogonal to $W$ and $V=W\oplus X$, in which case $X$ is
\[
W^\perp=\Ann_V(W)=\{v\in V : \langle W,v\rangle=0\},
\]
the largest subspace of $V$ orthogonal to $W$. If $V$ is complete and $W$ is closed (in particular, if $V$ is finite-dimensional), then $W$ has an orthogonal complement in $V$.

\begin{proof}
    We have $D_\el^\st(G,\zeta)=\CC\Phi_2(G,\zeta)=\bigoplus_{\phi\in\Phi_2(G,\zeta)}\CC\phi$. For each $\phi\in\Phi_2(G,\zeta)$, the space $\CC\phi$ has an orthogonal complement in the finite-dimensional space $\CC\Pi_\phi$. Since 
    \[
    \CC\Pi_2(G,\zeta)=\bigoplus_{\phi\in\Phi_2(G,\zeta)}\CC\Pi_\phi,
    \]
    it follows that $D_\el^\st(G,\zeta)$ has an orthogonal complement in $\CC\Pi_2(G,\zeta)$, namely 
    \[
    \Ann_{\CC\Pi_2(G,\zeta)}(D_\el^\st(G,\zeta))=\bigoplus_{\phi\in\Phi_2(G,\zeta)}\Ann_{\CC\Pi_\phi}(\CC\phi).
    \]
    Since 
    \[
    D_\el(G,\zeta)=\CC\Pi_2(G,\zeta)\oplus\CC(T_\el(G,\zeta)\setminus\CC^1\Pi_2(G,\zeta))
    \]
    it follows that
    \[
    D_\el^\unst(G,\zeta)=\Ann_{\CC\Pi_2(G,\zeta)}(D_\el^\st(G,\zeta))\oplus\CC(T_\el(G,\zeta)\setminus\CC^1\Pi_2(G,\zeta))
    \]
    and that $D_\el(G,\zeta)=D_\el^\st(G,\zeta)\oplus D_\el^\unst(G,\zeta)$.
\end{proof}

\subsubsection{Stable pseudocoefficients.}
Let $\zeta$ be a unitary character of $A_G(F)$. Recall the Paley--Wiener space $PW_{\el,f}(G,\zeta)$. Fix a set of representatives $E_\el(G,\zeta)\subseteq T_\el(G,\zeta)$ for $T_\el(G,\zeta)/\CC^1$ such that $\Pi_2(G,\zeta)\subseteq E_\el(G,\zeta)$. Then $E_\el(G,\zeta)$ is a basis of $D_\el(G,\zeta)$. As explained above, we have an identification $PW_{\el,f}(G,\zeta)=\bigoplus_{\tau\in E_{\el}(G,\zeta)}\CC$ defined by $\varphi\mapsto(\varphi(\tau))_{\tau\in E_\el(G,\zeta)}$.

Let $B_\el^\st(G,\zeta)=\Phi_2(G,\zeta)$, which is an orthogonal basis of $D_\el^\st(G,\zeta)$ (with respect to the elliptic inner product). Let $B_\el^\unst(G,\zeta)$ be an orthogonal basis of $D_\el^\unst(
G,\zeta)$ and define $B_\el(G,\zeta)=B_\el^\st(G,\zeta)\cup B_\el^\unst(G,\zeta)$. Then $B_\el(G,\zeta)$ is an orthogonal basis of $D_\el(G,\zeta)$. For $?=\st,\unst$, we define
\[
PW_{\el,f}^?(G,\zeta)=\bigoplus_{b\in B_\el^?(G,\zeta)}\CC.
\]

We identify an element of $PW_{\el,f}(G,\zeta)$ (resp. $PW_{\el,f}^?(G,\zeta)$) with the linear functional on $D_\el(G,\zeta)$ (resp. $D_\el^?(G,\zeta)$) it defines by linear extension. Thus, for each $\varphi\in PW_{\el,f}(G,\zeta)$ the value $\varphi(b)$ is defined for all $b\in B_\el(G,\zeta)$. Explicitly, for each $b\in B_\el(G,\zeta)$, let us write $b=\sum_{\tau\in E_\el(G,\zeta)}c_{b,\tau}\tau$ for $c_{b,\tau}\in\CC$. Then $\varphi(b)=\sum_{\tau\in E_\el(G,\zeta)}c_{b,\tau}\tau$. We have an isomorphism
\[
\begin{tikzcd}
    PW_{\el,f}(G,\zeta) \arrow[r,"\sim"] & PW_{\el,f}^\st(G,\zeta)\oplus PW_{\el,f}^\unst(G,\zeta)
\end{tikzcd}
\]
defined by 
\[
(\varphi(\tau))_{\tau\in E_\el(G,\zeta)}\mapsto (\varphi(b))_{b\in B_\el(G,\zeta)}=((\varphi(b))_{b\in B_\el^\st(G,\zeta)},(\varphi(b))_{b\in B_\el^\unst(G,\zeta)}),
\]
that is, by change-of-basis. We identify $PW_{\el,f}(G,\zeta)=PW_{\el,f}^\st(G,\zeta)\oplus PW_{\el,f}^\unst(G,\zeta)$ via the above isomorphism. Note that $PW_{\el,f}^\unst(G,\zeta)$ is the subspace of functions in $PW_{\el,f}(G,\zeta)$ that vanish on $D_\el^\st(G,\zeta)$ and thus does not depend on the choice of $B_\el^\unst(G,\zeta)$.

Let $\Ic_{f,\cusp}^\st(G,\zeta)$ be a closed subspace of $\Ic_{f,\cusp}(G,\zeta)$ consisting of all functions that are constant on regular semisimple stable classes. Let $\Sc_{f,\cusp}(G,\zeta)$ denote the image of $\Ic_{f,\cusp}(G,\zeta)$ in $\Sc_{f}(G,\zeta)$ and let $\Ic_{f,\cusp}^\unst(G,\zeta)$ denote the kernel of the quotient map
\[
\Ic_{f,\cusp}(G,\zeta)\longrightarrow\Sc_{f,\cusp}(G,\zeta).
\]
Note that $\Ic_{f,\cusp}^\st(G,\zeta)\cap\Ic_{f,\cusp}^\unst(G,\zeta)=0$.

\begin{proposition}\label{lem:pseudo}
    The invariant Fourier transform
    \[
    \begin{tikzcd}
        \Ic_{f,\cusp}(G,\zeta)\arrow[r,"\sim"] & PW_{\el,f}(G,\zeta)
    \end{tikzcd}
    \]
    restricts to isomorphisms
    \[
    \begin{tikzcd}
        \Ic_{f,\cusp}^{?}(G,\zeta)\arrow[r,"\sim"] & PW_{\el,f}^?(G,\zeta)
    \end{tikzcd}
    \]
    Consequently, we have
    \[
    \Ic_{f,\cusp}(G,\zeta)=\Ic_{f,\cusp}^{\st}(G,\zeta)\oplus\Ic_{f,\cusp}^{\unst}(G,\zeta).
    \]
\end{proposition}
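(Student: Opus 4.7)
The plan is to combine the invariant Paley--Wiener isomorphism $\Ic_{f,\cusp}(G,\zeta) \xrightarrow{\sim} PW_{\el,f}(G,\zeta)$ with an explicit supply of stable pseudocoefficients and the stable spectral density theorem (\Cref{StableDensity}), using the latter as a bridge between the algebraic orthogonal decomposition $D_\el = D_\el^\st \oplus D_\el^\unst$ and the geometric stable/unstable decomposition of $\Ic_{f,\cusp}$.

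First, for each $\phi \in \Phi_2(G,\zeta)$ I would form $f[\pi_\phi]_G \in \Ic_{f,\cusp}(G,\zeta)$ via the conjugate-linear extension of $\pi \mapsto f[\pi]_G$ from $D_\el(G,\zeta)$. The explicit formula $f[\pi]_G(\gamma)=m(\gamma)^{-1}|D^G(\gamma)|^{1/2}\overline{\Theta_\pi(\gamma)}$, applied to $\pi = \pi_\phi$, combined with the stability of $\Theta_\phi$ and the stable invariance of $m$ and $|D^G|^{1/2}$, shows that $f[\pi_\phi]_G$ is constant on regular semisimple stable classes, so $f[\pi_\phi]_G \in \Ic_{f,\cusp}^\st(G,\zeta)$. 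Using the identity $\langle mf_G, mf[\pi]_G\rangle_\el = f_G(\pi)$ together with the fact that $B_\el^\st(G,\zeta) \cup B_\el^\unst(G,\zeta)$ is an orthogonal basis of $D_\el(G,\zeta)$, I compute that $\widehat{f[\pi_\phi]_G}$ vanishes on $B_\el^\unst(G,\zeta)$ and on $\Phi_2(G,\zeta) \setminus \{\phi\}$, and takes the value $\|\pi_\phi\|_\el^2$ at $\phi$. Hence these invariant Fourier transforms form a basis of $PW_{\el,f}^\st(G,\zeta)$, all lying in $\widehat{\Ic_{f,\cusp}^\st(G,\zeta)}$.

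Next, for an arbitrary $f \in \Ic_{f,\cusp}(G,\zeta)$, decompose $\widehat{f_G} = \varphi^\st + \varphi^\unst$ under $PW_{\el,f}(G,\zeta) = PW_{\el,f}^\st \oplus PW_{\el,f}^\unst$. By the previous step and linearity, there exists $g \in \Ic_{f,\cusp}^\st(G,\zeta)$ with $\widehat{g_G} = \varphi^\st$. The difference $h = f - g$ is cuspidal with $\widehat{h_G} = \varphi^\unst$, hence $h_G(\phi) = 0$ for every $\phi \in \Phi_2(G,\zeta)$. Since $h$ is cuspidal and by \Cref{des:8a} every stable tempered character $\Theta_\phi$ with $\phi \in \Phi_\temp(G,\zeta) \setminus \Phi_2(G,\zeta)$ is parabolically induced from a proper Levi, such $\Theta_\phi$ also annihilate $h$. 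Therefore $h^G(\phi) = 0$ for every $\phi \in \Phi_\temp(G,\zeta)$, and stable spectral density for $C_c^\infty(G, A_G(F), \zeta)$ forces $h^G = 0$ on $\Delta_\rs(G)$, i.e.\ $h \in \Ic_{f,\cusp}^\unst(G,\zeta)$. Combined with $\Ic_{f,\cusp}^\st \cap \Ic_{f,\cusp}^\unst = 0$, this yields the direct sum $\Ic_{f,\cusp}(G,\zeta) = \Ic_{f,\cusp}^\st(G,\zeta) \oplus \Ic_{f,\cusp}^\unst(G,\zeta)$; moreover it gives $\widehat{\Ic_{f,\cusp}^\unst(G,\zeta)} \supseteq PW_{\el,f}^\unst(G,\zeta)$ (apply the argument with $\varphi^\st = 0$) and by the first step $\widehat{\Ic_{f,\cusp}^\st(G,\zeta)} \supseteq PW_{\el,f}^\st(G,\zeta)$. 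Since the total map is a bijection and both target spaces sum to $PW_{\el,f}(G,\zeta)$, both containments are equalities, yielding the desired restricted isomorphisms. Continuity follows because each direct sum on either side is a topological direct sum into closed subspaces determined by coordinate projections.

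The main obstacle is reconciling the two definitions of ``stable'' in play: the purely algebraic one on the spectral side (orthogonality with respect to $\langle\cdot,\cdot\rangle_\el$) and the geometric one on the geometric side (constancy of $f_G$ on stable classes). The resolution is a two-way bridge: the character formula for $f[\pi]_G$ turns stability of $\Theta_\phi$ into geometric stability of $f[\pi_\phi]_G$, and stable spectral density for cuspidal test functions reverses this, recovering geometric unstability from the vanishing of the stable Fourier transform.
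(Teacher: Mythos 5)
Your proposal is correct and follows essentially the same route as the paper. Both arguments hinge on the two same ingredients: the explicit formula $f[\pi_\phi]_G(\gamma)=m(\gamma)^{-1}|D^G(\gamma)|^{1/2}\overline{\Theta_\phi(\gamma)}$ together with the stable invariance of $D^G$, $\Theta_\phi$, and $m(\gamma)$ (to place $f[\pi_\phi]_G$ in $\Ic_{f,\cusp}^\st(G,\zeta)$), and stable spectral density for cuspidal test functions (to place the ``unstable'' remainder in $\Ic_{f,\cusp}^\unst(G,\zeta)$). The only organizational difference is that the paper checks the containment basis-vector by basis-vector, observing directly that $\delta_b$ pulls back to $\|b\|_\el^{-2}f[b]_G$ and applying spectral density to $f[b]_G$ when $b\in B_\el^\unst(G,\zeta)$, whereas you first exhibit a basis of $\Ic_{f,\cusp}^\st(G,\zeta)$ mapping onto $PW_{\el,f}^\st(G,\zeta)$ and then argue by subtraction; these amount to the same computation.
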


\begin{proof}
    It suffices to prove that the subspace of $\Ic_{f,\cusp}(G,\zeta)$ corresponding to the summand $PW_{\el,f}^?(G,\zeta)$ lies in $\Ic_{f,\cusp}^?(G,\zeta)$. 
    
    Let $b\in B_\el^?(G,\zeta)$. The element $\delta_b=(\delta_{b,b'})_{b'\in B_\el(G,\zeta)}\in PW_{\el,f}^?(G,\zeta)$ has its inverse invariant Fourier transform equal to $\|b\|_\el^{-2}f[b]_G\in\Ic_{f,\cusp}(G,\zeta)$. Thus, it suffices to prove that the pseudocoefficient $f[b]_G$ lies in $\Ic_{f,\cusp}^?(G,\zeta)$
    
    If $b\in B_\el^\unst(G,\zeta)$, then $f[b]_G(\phi)=0$ for all $\phi\in\Phi_2(G,\zeta)=B_\el^\st(G,\zeta)$, and thus $f[b]_G\in\Ic_{f,\cusp}^\unst(G,\zeta)$ by stable spectral density for $C_{c,\cusp}^{\infty}(G,\zeta,K)$.
    
    Suppose that $b\in B_\el^\st(G,\zeta)=\Phi_2(G,\zeta)$ and let us write $b=\phi$ and $f[b]_G=f[\phi]_G$. Let $\gamma\in\Gamma_{\rs}(G)$. If $\gamma$ is non-elliptic, then $f[\phi]_G(\gamma)=0$. If $\gamma$ is elliptic, then
    \[
    f[\phi]_G(\gamma)=m(\gamma)^{-1}|D^G(\gamma)|^{1/2}\ol{\Theta_\phi(\gamma)}.
    \]
    We have that $D^G$ and $\Theta_\phi$ are constant on regular semisimple stable classes. Thus, it suffices to show that $m(\gamma)$ is constant on regular semisimple stable classes. By definition, we have $m(\gamma)=\vol(G_\gamma(F)/A_G(F))$. Suppose that $\gamma'$ is stably conjugate to $\gamma$. Then there exists $g\in G$ such that $\Int(g):G_\gamma\to G_{\gamma'}$ is defined over $F$. The Haar measures on $G_\gamma(F)$ and $G_{\gamma'}(F)$ are normalised so that they correspond under $\Int(g)$, and thus $m(\gamma)=m(\gamma')$.
\end{proof}

We obtain the following stable Paley--Wiener theorem as a corollary

\begin{corollary}
    The stable Fourier transform gives an isomorphism
    \[
    \begin{tikzcd}
        \Sc_{f,\cusp}(G,\zeta) \arrow[r,"\sim"] & PW_{\el,f}^\st(G,\zeta).
    \end{tikzcd}
    \]
\end{corollary}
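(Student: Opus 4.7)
The plan is to deduce this from \Cref{lem:pseudo} essentially formally, using the decomposition $\Ic_{f,\cusp}(G,\zeta)=\Ic_{f,\cusp}^\st(G,\zeta)\oplus\Ic_{f,\cusp}^\unst(G,\zeta)$ that we have just established. First I would observe that by definition $\Sc_{f,\cusp}(G,\zeta)=\Ic_{f,\cusp}(G,\zeta)/\Ic_{f,\cusp}^\unst(G,\zeta)$, so the quotient map restricts to a continuous linear bijection
\[
q:\Ic_{f,\cusp}^\st(G,\zeta)\longrightarrow\Sc_{f,\cusp}(G,\zeta).
\]
Since $\Ic_{f,\cusp}^\st(G,\zeta)$ is a closed subspace of $\Ic_{f,\cusp}(G,\zeta)$ which, via the invariant Fourier transform, corresponds to the direct summand $PW_{\el,f}^\st(G,\zeta)$ of the finite-dimensional-at-each-central-character space $PW_{\el,f}(G,\zeta)$, the topological structure is well-behaved and $q$ is an isomorphism of topological vector spaces (the inverse being induced by the natural splitting given by the decomposition).

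Next, I would combine $q^{-1}$ with the isomorphism $\Ic_{f,\cusp}^\st(G,\zeta)\to PW_{\el,f}^\st(G,\zeta)$ from \Cref{lem:pseudo} to obtain an isomorphism
\[
\Sc_{f,\cusp}(G,\zeta)\longrightarrow PW_{\el,f}^\st(G,\zeta).
\]
The remaining task is to identify this composite with the stable Fourier transform. For $f\in C_{c,\cusp}^\infty(G,\zeta,K)$, the stable Fourier transform sends $f^G$ to the function $\phi\mapsto\Theta_\phi(f)$ on $\Phi_2(G,\zeta)=B_\el^\st(G,\zeta)$ (vanishing outside since $f$ is cuspidal, by spectral density applied fibrewise). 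On the other hand, the isomorphism from \Cref{lem:pseudo} sends $f_G$ to the restriction of the invariant Fourier transform to $B_\el^\st(G,\zeta)$, that is, to $b\mapsto f_G(b)=\Theta_b(f)$; evaluating at $b=\phi\in\Phi_2(G,\zeta)$ gives precisely $\Theta_\phi(f)$ by linearity, since $\Theta_\phi$ is identified with its virtual representation.

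There is no real obstacle here beyond bookkeeping: the content is contained entirely in \Cref{lem:pseudo}, and everything else is the observation that ``quotienting by the unstable summand'' and ``restricting to the stable basis'' are the same operation in this finite-dimensional setting once one has the orthogonal decomposition. The only point that requires a moment of care is the identification of the composite with the stable Fourier transform (as opposed to some other map), which follows from the compatibility of pseudocoefficients with the elliptic inner product: for $\phi\in B_\el^\st(G,\zeta)$ and $f_G\in\Ic_{f,\cusp}^\st(G,\zeta)$ we have $f_G(\phi)=\langle mf_G,mf[\phi]_G\rangle_\el$, and this is the pairing that the proof of \Cref{lem:pseudo} uses to diagonalise the invariant Fourier transform on $\Ic_{f,\cusp}^\st(G,\zeta)$.
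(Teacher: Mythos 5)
Your argument is correct and is essentially the paper's argument, presented in the dual order: the paper notes that $\Fc^\st$ equals $\Fc$ followed by projection onto $PW_{\el,f}^\st(G,\zeta)$, observes from \Cref{lem:pseudo} that this composite is surjective with kernel exactly $\Ic_{f,\cusp}^\unst(G,\zeta)$, and passes to the quotient; you instead invoke the direct-sum decomposition from \Cref{lem:pseudo} to produce the inverse map and then verify it really is the stable Fourier transform. Both reductions land on the same computation, and both rest entirely on \Cref{lem:pseudo}, so there is no substantive difference.
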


\begin{proof}
    The composition of the invariant Fourier transform 
    \[
    \Fc:\Ic_{f,\cusp}(G,\zeta)\longrightarrow PW_{\el,f}(G,\zeta)
    \]
    with the natural projection $PW_{\el,f}(G,\zeta)\to PW_{\el,f}^\st(G,\zeta)$ is the stable Fourier transform. It is surjective and its kernel is $\Ic_{f,\cusp}^{\unst}(G,\zeta)$ by \Cref{lem:pseudo}, so it descends to an isomorphism $\Sc_{f,\cusp}(G,\zeta)\to PW_{\el,f}^\st(G,\zeta)$.
\end{proof}

\subsubsection{The cuspidal case.} \label{sec:cuspidalcase}

Let $\zeta$ be a unitary character of $A_G(F)$, and let $\mu$ be an infinitesimal character of $G$. We denote by $T_\el(G,\zeta,\mu)$ the set of $\tau\in T_\el(G,\zeta)$ with $\mu_\tau=\mu$. Define $D_\el(G,\zeta,\mu)=\CC T_\el(G,\zeta,\mu)$. 

\begin{lemma} \label{bounded}
    The quotient $T_\el(G,\zeta,\mu)/\CC^1$ is finite. If $F$ is archimedean, then its cardinality is bounded independently of $(\zeta,\mu)$.
\end{lemma}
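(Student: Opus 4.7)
The plan is to stratify $T_\el(G,\zeta,\mu)$ by the Levi subgroup appearing in a triple representative. Using the identification
\[
T_\temp(G,\zeta) = \coprod_{L\in\Lc^G(M_0)/W_0^G} T_\el(L,\zeta)/W^G(L),
\]
and the fact that $\mu_\tau = \iota_M^G\circ\mu_\sigma$ depends only on the $\CC^1$-orbit of $\tau=(M,\sigma,\wt{r})$ in $T_\temp(G,\zeta)/\CC^1$, I reduce to bounding, for each $M\in\Lc^G(M_0)/W_0^G$, the number of $\CC^1$-orbits of equivalence classes of elliptic triples $(M,\sigma,\wt{r})$ with $\zeta_\sigma|_\Zc = \zeta$ and $\iota_M^G\circ\mu_\sigma = \mu$. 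Since $\Lc^G(M_0)/W_0^G$ is finite and depends only on $G$, it suffices to bound each such stratum.

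Fix $M\in\Lc^G(M_0)$. Because $\iota_M^G\circ\mu_1 = \iota_M^G\circ\mu_2$ exactly when $\mu_1,\mu_2$ are $W^G(M)$-conjugate, the constraint $\iota_M^G\circ\mu_\sigma = \mu$ forces $\mu_\sigma$ to lie in a single $W^G(M)$-orbit, which has at most $|W^G(M)|\le|W_0^G|$ elements. For a fixed admissible $\mu_\sigma$, I then need to bound the number of $\sigma\in\Pi_2(M)$ with that infinitesimal character and with $\zeta_\sigma|_\Zc=\zeta$. In the non-archimedean case this follows from \Cref{hypothesis} applied to $M$, together with the finiteness of $L$-packets and the recalled fact that only finitely many $\phi\in\Phi(M)$ have a given infinitesimal character. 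In the real case, Harish-Chandra's theory of discrete series shows that fixing $\mu_\sigma$ and the central character on the identity component $Z_M(\RR)^\circ$ determines $\sigma$ up to a choice in a single finite $L$-packet whose cardinality is bounded in terms of $M$ alone; moreover $\mu_\sigma$ itself determines the central character on $Z_M(\RR)^\circ$, so the remaining ambiguity in a full $\zeta_\sigma$ compatible with $\zeta_\sigma|_\Zc = \zeta$ lies in the finite group $Z_M(\RR)/Z_M(\RR)^\circ$. Finally, for each such $\sigma$ the $\CC^1$-orbits of triples $(M,\sigma,\wt{r})$ are parametrised by $\Rc^G(\sigma)/\CC^1 = R^G(\sigma)$, a finite group whose order divides $|W^G(M)|$.

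Combining these bounds gives the finiteness of $T_\el(G,\zeta,\mu)/\CC^1$. In the archimedean case every factor in the resulting bound depends only on $G$ and its semistandard Levi subgroups and not on $(\zeta,\mu)$, yielding the uniform bound claimed in the lemma. The main obstacle is the middle step: one must extract a uniform finiteness statement for the set of discrete series of each Levi $M$ with prescribed infinitesimal character and $\Zc$-central character. In the $p$-adic setting this requires passing between $\Pi_2(M)$ and $\Phi_2(M)$ via \Cref{hypothesis} before invoking the Vogan-type finiteness result on fibres of $\phi\mapsto\mu_\phi$; in the real case it requires Harish-Chandra's classification together with a careful account of the part of $\zeta_\sigma$ not already determined by $\mu_\sigma$.
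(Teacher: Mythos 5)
Your argument is valid and reaches the correct conclusion, but in the archimedean case it follows a genuinely different route from the paper's. Where the paper exploits the \emph{ellipticity} of the triple $(M,\sigma,\wt{r})$: an element $r\in W_\reg^G(\sigma)$ must have $\ak_M^r=\ak_G$, forcing the parameter $\lambda\in i\ak_M^*$ controlling $\zeta_\sigma|_{A_M(\RR)^\circ}$ to lie in $i\ak_G^*$ and hence be pinned down by $\zeta$; after that, the finite group $A_M(\RR)/A_M(\RR)^\circ$ and the Adams--Vogan characterisation of discrete series packets by infinitesimal character and split radical character give the uniform bound. You instead ignore ellipticity entirely and use the general fact that the infinitesimal character of an admissible representation determines the differential of its central character, hence $\zeta_\sigma|_{Z_M(\RR)^\circ}$; the residual ambiguity in $\zeta_\sigma$ then lives in the finite group $Z_M(\RR)/Z_M(\RR)^\circ$, and Harish-Chandra's discrete series classification (fixing $\mu_\sigma$ and the full central character yields at most $|W(M_\CC,T)/W_\RR(M,T)|$ discrete series) gives the uniform bound. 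Your approach is in some ways more elementary, resting only on standard $(\gk,K)$-module facts rather than on the ellipticity structure of Arthur's triples or on the Adams--Vogan split-radical-character refinement, and it bounds all of $T_\temp(G,\zeta,\mu)/\CC^1$ rather than just the elliptic subset; the paper's is tighter in that it isolates exactly which constraints come from ellipticity.

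One piece of your write-up needs repair, though the argument survives. Your sentence ``fixing $\mu_\sigma$ and the central character on the identity component $Z_M(\RR)^\circ$ determines $\sigma$ up to a choice in a single finite $L$-packet'' is false as stated: $\zeta_\sigma|_{Z_M(\RR)^\circ}$ is already determined by $\mu_\sigma$ (as you yourself note in the next clause), so this imposes no additional constraint, and the set of $\sigma$ with a given infinitesimal character is generally a union of several $L$-packets differing by characters of the disconnected part of the centre. What you actually need, and what your following clause supplies, is that the \emph{full} central character $\zeta_\sigma$ determines $\sigma$ up to a bounded finite set, and that the ambiguity in $\zeta_\sigma$ beyond what $\mu_\sigma$ fixes lies in the finite group $Z_M(\RR)/Z_M(\RR)^\circ$, whose order depends only on $M$. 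Stated that way, the argument is correct and uniform.
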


\begin{proof}
    If $F$ is non-archimedean, there are finitely many elements of $T_\temp(G)/\CC^1$ with a given infinitesimal character. Suppose that $F$ is archimedean. We may assume that $F=\RR$. Since $L^G(M_0)/W_0^G$ is finite, it suffices to show that for each $M\in\Lc^G(M_0)$, the number of possibilities for $\sigma\in\Pi_2(M)$ such that $(M,\sigma,\wt{r})\in T_\el(G,\zeta,\mu)$ is bounded independently of $(\zeta,\mu)$. There are at most $|W(G,T)/W(M,T)|$ possibilities for $\mu_\sigma$ since it maps to $\mu$. Consider the restriction $\zeta_\sigma|_{A_M(\RR)^\circ}$ of the central character of $\sigma$ to $A_M(\RR)^\circ$. Since $H_M:A_M(\RR)^\circ\to\ak_M$ is an isomorphism, we have $\zeta_\sigma|_{A_M(\RR)^\circ}=e^{\langle\lambda,H_M(\cdot)\rangle}$ for some $\lambda\in i\ak_M^*$. Every element of $W_\reg^G(\sigma)$ fixes $\zeta_\sigma|_{A_M(\RR)^\circ}$. Since $W_\reg^G(\sigma)$ is the subset of $w\in W_\reg^G(\sigma)$ such that $\ak_M^w=\ak_G$, we have $\lambda\in i\ak_G^*$. At the same time, $\zeta|_{A_G(\RR)^\circ}=\zeta_\sigma|_{A_G(\RR)^\circ}=e^{\langle\lambda,H_G(\cdot)\rangle}$. It follows that $\lambda$ and thus $\zeta_\sigma|_{A_M(\RR)^\circ}$ is determined by $\zeta$. There are $|A_M(\RR)/A_M(\RR)^\circ|$ ways that $\zeta_\sigma|_{A_M(\RR)}$ can extend $\zeta_\sigma|_{A_M(\RR)^\circ}$, so there are $|A_M(\RR)/A_M(\RR)^\circ|$ possibilities for $\zeta_\sigma|_{A_M(\RR)}$. As remarked above, in \cite{AdamsVogan} Adams and Vogan prove that there are finitely many discrete series representations with a fixed infinitesimal character and split radical character and these representations form a discrete series $L$-packet. (The group $A_M(\RR)$ is the split radical of $M(\RR)$.) Since the cardinality of discrete series $L$-packets of $M$ is bounded, the lemma follows.
\end{proof}

Fix a set of representatives $\Xs^G$ for the set of $i\ak_G^*$-orbits of unitary characters of $A_G(F)$. We denote the $i\ak_G^*$-orbit of a pair $(\zeta,\mu)$ by $[\zeta,\mu]$. Fix a set of representatives $\Ps^G$ for the set of orbits $[\zeta,\mu]$, such that for each $(\zeta,\mu)\in\Ps^G$ we have $\zeta\in\Xs^G$. 

Let $T_\el(G,[\zeta,\mu])$ be the subspace of all $\tau\in T_\el(G)$ such that $(\zeta_\tau|_{A_G(F)},\mu_\tau)$ belongs to $[\zeta,\mu]$. Let $\ak_{G,\mu}^\vee$ denote the isotropy subgroup of $\mu$ in $\ak_{G,\CC}^*$. We have 
\[
\ak_{G,F}^\vee\subseteq\ak_{G,\mu}^\vee\subseteq\wt{\ak}_{G,F}^\vee.
\]
Each connected component of $T_\el(G,[\zeta,\mu])/\CC^1$ meets $T_\el(G,\zeta,\mu)/\CC^1$, and two elements of $T_\el(G,\zeta,\mu)/\CC^1$ lie in the same connected component of $T_\el(G,[\zeta,\mu])/\CC^1$ if and only if they are in the same orbit under the finite group $\ak_{G,\mu}^\vee/\ak_{G,F}^\vee$. Let $\ul{E}_\el(G,\zeta,\mu)\subseteq T_\el(G,\zeta,\mu)$ be a set of representatives for the connected components of $T_\el(G,[\zeta,\mu])/\CC^1$, or equivalently a set of representatives for the $\ak_{G,\mu}^\vee$-orbits in $T_\el(G,\zeta,\mu)/\CC^1$. Then the set of translates $\ak_{G,\mu}^\vee\cdot\ul{E}_\el(G,\zeta,\mu)\subseteq T_\el(G,\zeta,\mu)$ contains an orthogonal basis $E_\el(G,\zeta,\mu)$ of $D_\el(G,\zeta,\mu)$. 

We define $\Phi_2(G,\zeta,\mu)$ to be the set of $\phi\in\Phi_2(G,\zeta)$ with $\mu_\phi=\mu$, and we define $\Phi_2(G,[\zeta,\mu])$ to be the set of all $\phi\in\Phi_2(G)$ with $(\zeta_\phi|_{A_G(F)},\mu_\phi)\in[\zeta,\mu]$. Then $D_\el^\st(G,\zeta,\mu)=\CC\Phi_2(G,\zeta,\mu)$. Let $D_\el^\unst(G,\zeta,\mu)$ be the orthogonal complement of $D_\el^\st(G,\zeta,\mu)$ in $D_\el(G,\zeta,\mu)$ with respect to the elliptic inner product. (Recall that $D_\el(G,\zeta,\mu)$ is finite-dimensional.) Let $\ul{B}_\el^\st(G,\zeta,\mu)\subseteq\Phi_2(G,\zeta,\mu)$ be a set of representatives for the $i\ak_G^*$-orbits in $\Phi_2(G,[\zeta,\mu])$, or equivalently a set of representatives for the $\ak_{G,\mu}^\vee$-orbits in $\Phi_2(G,\zeta,\mu)$. Then $\ak_{G,\mu}^\vee\cdot\ul{B}_\el^\st(G,\zeta,\mu)=\Phi_2(G,\zeta,\mu)$. We write $B_\el^\st(G,\zeta,\mu)=\Phi_2(G,\zeta,\mu)$. The unitary representation of the finite group $\ak_{G,\mu}^\vee/\ak_{G,F}^\vee$ on $D_\el(G)$ stabilises $D_\el^\unst(G,\zeta,\mu)$. Let $B_\el^\unst(G,\zeta,\mu)=\ul{B}_\el^\unst(G,\zeta,\mu)$ be an orthonormal eigenbasis of $D_\el^\unst(G,\zeta,\mu)$ for this representation. Define 
\[
\ul{B}_\el(G,\zeta,\mu)=\ul{B}_\el^\st(G,\zeta,\mu)\cup\ul{B}_\el^\unst(G,\zeta,\mu)
\]
and
\[
B_\el(G,\zeta,\mu)=B_\el^\st(G,\zeta,\mu)\cup B_\el^\unst(G,\zeta,\mu).
\]
Define $\ul{E}_\el(G)=\coprod_{(\zeta,\mu)\in\Ps^G}\ul{E}_\el(G,\zeta,\mu)$, and define $E_\el(G)$, $\ul{B}_\el^?(G)$, and $B_\el^?(G)$ similarly. Let $\ul{B}_\el(G)=\ul{B}_\el^\st(G)\cup\ul{B}_\el^\unst(G)$ and $B_\el(G)=B_\el^\st(G)\cup B_\el^\unst(G)$. 

In order to be able to treat the case of test functions and Schwartz functions at the same time, we write $\Fs=\Cc_{(c)}$, $\Ic\Fs=\Ic_{(c)}$, and $\Sc\Fs=\Sc_{(c)}$. We also write $\wh{\Fs}=\Ss$ (resp. $\wh{\Fs}=PW$) if $\Fs=\Cc$ (resp. $\Fs=\Cc_c$).

By definition, we have identifications
\[
\wh{\Fs}_\el(G)=\wh{\Fs}\Bigg(\coprod_{\tau\in\ul{E}_\el(G)}i\ak_G^*\cdot\tau\Bigg)
\quad,\quad
\wh{\Fs}_\el^\st(G)=\wh{\Fs}\Bigg(\coprod_{b\in\ul{B}_\el^\st(G)}i\ak_G^*\cdot b\Bigg).
\]
Let $\wh{\Fs}_\el(G,[\zeta,\mu])$ (resp. $\wh{\Fs}_\el^\st(G,[\zeta,\mu])$) be the subspace of $\wh{\Fs}_\el(G)$ (resp. $\wh{\Fs}_\el^\st(G)$) consisting of functions supported on $T_\el(G,[\zeta,\mu])$ (resp. $\Phi_2(G,[\zeta,\mu])$). The above identifications restrict to identifications 
\begin{align*}
    \wh{\Fs}_\el(G,[\zeta,\mu])&=\wh{\Fs}\Bigg(\coprod_{\tau\in\ul{E}_\el(G,\zeta,\mu)}i\ak_G^*\cdot\tau\Bigg), \\
    \wh{\Fs}_\el^\st(G,[\zeta,\mu])&=\wh{\Fs}\Bigg(\coprod_{b\in\ul{B}_\el^\st(G,\zeta,\mu)}i\ak_G^*\cdot b\Bigg).
\end{align*}
We define 
\begin{align*}
    \wh{\Fs}_\el^\unst(G)&=\wh{\Fs}\Bigg(\coprod_{b\in\ul{B}_\el^\unst(G)}i\ak_G^*\cdot b\Bigg), \\
    \wh{\Fs}_\el^\unst(G,[\zeta,\mu])&=\wh{\Fs}\Bigg(\coprod_{b\in\ul{B}_\el^\unst(G,\zeta,\mu)}i\ak_G^*\cdot b\Bigg).
\end{align*}
We have two orthogonal bases $E_\el(G)$ and $B_\el(G)$ of $\bigoplus_{(\zeta,\mu)\in\Ps^G}D_\el(G,\zeta,\mu)$. We define change-of-basis matrices
\[
b=\sum_{\tau\in E_\el(G)}c_{b,\tau}\tau \quad,\quad \tau=\sum_{b\in B_\el(G)}c_{\tau,b}b.
\]
It follows from the decompositions 
\[
E_\el(G)=\coprod_{(\zeta,\mu)\in\Ps^G}E_\el(G,\zeta,\mu)
\quad,\quad
B_\el(G)=\coprod_{(\zeta,\mu)\in\Ps^G}B_\el(G,\zeta,\mu)
\]
into orthogonal bases of $D_\el(G,\zeta,\mu)$ that these change-of-basis matrices are block-diagonal. Let $\varphi=(\varphi_{\tau})_{\tau\in\ul{E}_\el(G)}\in\wh{\Fs}_\el(G)$. We identify $\varphi$ with its linear extension to $D_\el(G)$. For $\tau\in E_\el(G)$ and $b\in B_\el(G)$, we write $\varphi_\tau(\lambda)=\varphi(\tau_\lambda)$ and $\varphi_b(\lambda)=\varphi(b_\lambda)$. Each element of $\tau\in E_\el(G)$ (resp. $b\in B_\el(G)$) is a translate of an element of $\ul{\tau}\in\ul{E}_\el(G)$ (resp. $\ul{b}\in\ul{B}_\el(G)$) by an element of $\lambda_0\in\wt{\ak}_{G,F}^\vee$, and thus $\varphi_\tau(\lambda)=\varphi_{\ul{\tau}}(\lambda+\lambda_0)$ (resp. $\varphi_b(\lambda)=\varphi_{\ul{b}}(\lambda+\lambda_0)$). For $b\in\ul{B}_\el(G)$ and $\lambda\in i\ak_G^*$, we have $b_\lambda=\sum_{\tau\in E_\el(G)}c_{b,\tau}\tau_\lambda$, and therefore
\[
\varphi_{b}(\lambda)=\sum_{\tau\in E_\el(G)}c_{b,\tau}\varphi_\tau(\lambda)=\sum_{\ul{\tau},\lambda_0}c_{b,\ul{\tau}_{\lambda_0}}\varphi_{\ul{\tau}}(\lambda+\lambda_0).
\]
Consider the linear map $\varphi\mapsto(\varphi_{b})_{b\in\ul{B}_\el(G)}=(\varphi^\st,\varphi^\unst)$, where $\varphi^?=(\varphi_{b})_{b\in\ul{B}_\el^?(G)}$. For each $(\zeta,\mu)\in\Ps^G$, this linear map restricts to an isomorphism of topological vector spaces 
\[
\wh{\Fs}_\el(G,\zeta,\mu)\longrightarrow\wh{\Fs}_\el^\st(G,\zeta,\mu)\oplus\wh{\Fs}_\el^\unst(G,\zeta,\mu).
\]
with inverse
\[
\varphi_\tau(\lambda)=\sum_{b\in B_\el(G)}c_{\tau,b}\varphi_b(\lambda)=\sum_{\ul{b},\lambda_0}c_{\tau,\ul{b}_{\lambda_0}}\varphi_{\ul{b}}(\lambda+\lambda_0).
\]
This follows easily since each of the spaces $\wh{\Fs}_\el(G,\zeta,\mu)$, $\wh{\Fs}_\el^\st(G,\zeta,\mu)$, and $\wh{\Fs}_\el^\unst(G,\zeta,\mu)$ is a finite direct sums of classical $\wh{\Fs}$-spaces. 

\begin{lemma}
    We have an isomorphism of topological vector spaces 
    \begin{align*}
    \wh{\Fs}_{\el}(G)&\longrightarrow\wh{\Fs}_{\el}^\st(G)\oplus\wh{\Fs}_{\el}^\unst(G) \\
    \varphi&\longmapsto(\varphi^\st,\varphi^\unst)
    \end{align*}
\end{lemma}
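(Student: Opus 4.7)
The plan is to deduce the lemma from the finite-dimensional block isomorphisms
\[
\wh{\Fs}_\el(G,[\zeta,\mu])\longrightarrow\wh{\Fs}_\el^\st(G,[\zeta,\mu])\oplus\wh{\Fs}_\el^\unst(G,[\zeta,\mu])
\]
already established in the excerpt for each $(\zeta,\mu)\in\Ps^G$. Since the change of basis between $E_\el(G)=\coprod_{(\zeta,\mu)}E_\el(G,\zeta,\mu)$ and $B_\el(G)=\coprod_{(\zeta,\mu)}B_\el(G,\zeta,\mu)$ is block diagonal in $(\zeta,\mu)$, the proposed map $\varphi\mapsto(\varphi^\st,\varphi^\unst)$ is just the direct sum of these block isomorphisms, and the content of the lemma is that this direct sum is a topological isomorphism at the level of the full Schwartz or Paley--Wiener spaces, in both directions.

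In the non-archimedean case this is immediate: every element of $\wh{\Fs}_\el(G)$ or $\wh{\Fs}_\el^\st(G)\oplus\wh{\Fs}_\el^\unst(G)$ is by definition supported on finitely many components, so the map reduces to a finite direct sum of the finite-dimensional block isomorphisms. The locally convex inductive limit topology identifies both sides with a direct sum indexed by $(\zeta,\mu)\in\Ps^G$ of the block spaces, and the direct sum of continuous isomorphisms is a continuous isomorphism.

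The archimedean case is the substantive one and requires uniform seminorm estimates. Here $\ak_{G,\mu}^\vee=0$, so the change-of-basis on each block simplifies to
\[
\varphi_{\ul{b}}(\lambda)=\sum_{\ul{\tau}\in\ul{E}_\el(G,\zeta,\mu)}c_{\ul{b},\ul{\tau}}\varphi_{\ul{\tau}}(\lambda),
\]
with an analogous expression for the inverse. Three uniformity properties then control the seminorms. First, every element of $\ul{E}_\el(G,\zeta,\mu)$ or $\ul{B}_\el(G,\zeta,\mu)$ arises from an element of $T_\el(G,\zeta,\mu)$, all of which share the infinitesimal character $\mu$, so the index-set norms $\|\ul{\tau}\|$ and $\|\ul{b}\|$ both equal $\|\mu\|$ and the norms governing the growth conditions on source and target agree block by block. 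Second, \Cref{bounded} bounds $|\ul{E}_\el(G,\zeta,\mu)|$, and hence also $|\ul{B}_\el(G,\zeta,\mu)|$, uniformly in $(\zeta,\mu)$. Third, the change-of-basis entries are uniformly bounded: for the stable basis, Langlands' construction recalled in the excerpt gives $\pi_\phi=\sum_{\pi\in\Pi_\phi}\pi$ for real groups, so $c_{\ul{b},\ul{\tau}}\in\{0,1\}$, while $|\Pi_\phi|\leq|W(G,T)/W_\RR(G,T)|$ bounds the transition in the reverse direction; for the unstable basis, $\ul{B}_\el^\unst(G,\zeta,\mu)$ is orthonormal inside a space of bounded dimension, so the orthogonal transition entries are bounded. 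Combining these, for any constant-coefficient differential operator $D$ and any $N\in\ZZ_{>0}$,
\[
|D\varphi_{\ul{b}}(\lambda)|(1+\|\mu\|+\|\lambda\|)^N\ll\sum_{\ul{\tau}\in\ul{E}_\el(G,\zeta,\mu)}|D\varphi_{\ul{\tau}}(\lambda)|(1+\|\mu\|+\|\lambda\|)^N
\]
with the implicit constant and the number of summands uniform in $(\zeta,\mu)$. Taking suprema over $(\zeta,\mu,\lambda)$ yields $\|\varphi^\st\|_{D,N},\|\varphi^\unst\|_{D,N}\ll\|\varphi\|_{D,N}$, and the same argument with the exponential weight $e^{-r\|\Re(\lambda)\|}$ gives the corresponding estimate $\|\cdot\|_{r,N}$ in the Paley--Wiener case. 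The inverse map is handled symmetrically.

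The main obstacle is securing the uniform boundedness of the change-of-basis entries in the archimedean case, and in particular controlling the stable multiplicities and packet cardinalities as $(\zeta,\mu)$ varies. This is delivered by Langlands' explicit construction of stable discrete series characters on real groups together with the bounded cardinality of real discrete series $L$-packets by Weyl-group quotients; in the non-archimedean case no such uniformity is required thanks to the finite-support conditions built into the definitions of $\Ss$ and $PW$.
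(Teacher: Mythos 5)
Your proof is correct and follows essentially the same route as the paper's: both handle the non-archimedean case block by block via the finite locally-convex direct-sum decomposition in $(\zeta,\mu)$, and in the archimedean case both reduce the seminorm estimates to uniform bounds on (i) the number of nonzero change-of-basis entries per column, via \Cref{bounded}, and (ii) their magnitudes, using the elliptic orthogonality relations together with the identity $\pi_\phi=\sum_{\pi\in\Pi_\phi}\pi$ for real groups. The only minor imprecision is where you invoke $|\Pi_\phi|\leq|W(G,T)/W_\RR(G,T)|$: that bound controls $\|\phi\|_\el^2=|\Pi_\phi|$, as the paper notes, whereas the reverse-direction entries $c_{\tau,\phi}=1/|\Pi_\phi|$ are trivially at most $1$ and need no upper bound on $|\Pi_\phi|$.
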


The argument is similar to that in \cite[Ch. IV, \S1.5 and \S2.2]{MWI}.

\begin{proof}
    First consider the case where $F$ is non-archimedean. In this case, we have locally convex direct sum decompositions 
    \[
    \wh{\Fs}_{\el}(G)=\bigoplus_{(\zeta,\mu)\in\Ps^G}\wh{\Fs}_{\el}(G,\zeta,\mu)
    \quad,\quad
    \wh{\Fs}_{\el}^?(G)=\bigoplus_{(\zeta,\mu)\in\Ps^G}\wh{\Fs}_{\el}^?(G,\zeta,\mu).
    \]
    Thus, it suffices to show that the columns of the change-of-basis matrices have finitely many non-zero entries. This is true since they are block-diagonal as remarked above and the dimension of $D_\el(G,\zeta,\mu)$ is finite for each $(\zeta,\mu)\in\Ps^G$.

    Now consider the archimedean case. As in \cite[Ch. IV, \S1.5 and \S2.2]{MWI}, it suffices to show that: (1) the number of nonzero entries in a column of the change-of-basis matrices is bounded; and (2) the entries of the change-of-basis matrices are bounded. Property (1) follows since the dimension of $D_\el(G,\zeta,\mu)$ is bounded independently of $(\zeta,\mu)$ by \Cref{bounded}. For property (2), recall that we have seen that $\{\|\tau\|_\el:\tau\in T_\el(G)\}$ is bounded. Moreover, by construction $\|b\|_\el=1$ for $b\in B_\el^\unst(G)$. Suppose that $b\in B_\el^\st(G)=\Phi_2(G)$ and write $b=\phi$. We have $\phi=\sum_{\pi\in\Pi_\phi}\pi$. Therefore $\|\phi\|_\el^2=\sum_{\pi\in\Pi_\phi}\|\pi\|_\el^2$. As mentioned above, for $\pi\in\Pi_2(G)$ we have $\|\pi\|_\el=1$. (See the discussion after the orthogonality relations \Cref{OrthogonalityRelations}.) Therefore $\|\pi\|_\el=|\Pi_\phi|$, which is bounded.
\end{proof}

We will identify $\wh{\Fs}_{\el}(G)=\wh{\Fs}_{\el}^\st(G)\oplus\wh{\Fs}_{\el}^\unst(G)$ via the above isomorphism. Note that the subspace $\wh{\Fs}_{\el}^\unst(G)$ of $\wh{\Fs}_{\el}(G)$ consists of those functions $\varphi$ whose linear extension to $D_\el(G)$ vanishes on $D_\el^\st(G)$. Thus, it does not depend on any of the choices made above.

Define $\Ic\Fs_\cusp^\st(G)$ to be the closed subspace of $\Ic\Fs_\cusp(G)$ consisting of normalised invariant orbital integrals that are constant on regular semisimple stable classes. Let $\Sc\Fs_\cusp(G)$ denote the image of $\Ic\Fs_\cusp(G)$ in $\Sc\Fs(G)$ and let $\Ic\Fs_\cusp^\unst(G)$ denote the kernel of the quotient map $\Ic\Fs_\cusp(G)\to\Sc\Fs_\cusp(G)$. Note that $\Ic\Fs_\cusp^\st(G)\cap\Ic\Fs_\cusp^\unst(G)=0$.

\begin{proposition}\label{prop:cuspidal}
    The invariant Fourier transform
    \[
    \begin{tikzcd}
        \Ic\Fs_\cusp(G) \arrow[r,"\sim"] & \wh{\Fs}_\el(G)
    \end{tikzcd}
    \]
    restricts to isomorphisms
    \[
    \begin{tikzcd}
        \Ic\Fs_\cusp^?(G) \arrow[r,"\sim"] & \wh{\Fs}_\el^{?}(G).
    \end{tikzcd}
    \]
    Consequently, we have
    \[
    \Ic\Fs_\cusp(G)=\Ic\Fs_\cusp^\st(G)\oplus\Ic\Fs_\cusp^\unst(G).
    \]
\end{proposition}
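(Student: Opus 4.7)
The strategy is to combine the spectral decomposition $\wh{\Fs}_\el(G) = \wh{\Fs}_\el^\st(G) \oplus \wh{\Fs}_\el^\unst(G)$ established in the preceding lemma with the cuspidal case of the invariant Paley--Wiener theorem, $\Fc: \Ic\Fs_\cusp(G) \xrightarrow{\sim} \wh{\Fs}_\el(G)$. Pulling back the decomposition gives a topological direct sum $\Ic\Fs_\cusp(G) = \Fc^{-1}(\wh{\Fs}_\el^\st) \oplus \Fc^{-1}(\wh{\Fs}_\el^\unst)$, and the proof reduces to matching these preimages with $\Ic\Fs_\cusp^\st(G)$ and $\Ic\Fs_\cusp^\unst(G)$ respectively.

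For the unstable identification, one inclusion is immediate: stable tempered characters vanish on unstable functions, so $\Ic\Fs_\cusp^\unst \subseteq \Fc^{-1}(\wh{\Fs}_\el^\unst)$. Conversely, if $\Fc(f)$ vanishes on $\Phi_2(G) \subseteq \ul{B}_\el^\st(G)$, then every $\phi \in \Phi_\temp(G) \setminus \Phi_2(G)$ factors as $\iota_M^G(\phi')$ for a proper Levi $M$; by Desideratum \ref{des:8a}, $\pi_\phi = I_M^G(\pi_{\phi'})$ is properly parabolically induced, so $f_G(\pi_\phi) = 0$ by cuspidality of $f$. Hence $\Fc(f)$ vanishes on all of $\Phi_\temp(G)$, and stable spectral density---\Cref{StableDensity} in the test function case, and \Cref{StableSpectralDensity} (to be established below) in the Schwartz case---forces $f$ into $\Ic\Fs_\cusp^\unst$.

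For the inclusion $\Ic\Fs_\cusp^\st \subseteq \Fc^{-1}(\wh{\Fs}_\el^\st)$, I would use the elliptic inner product: given $f \in \Ic\Fs_\cusp^\st(G, \zeta)$ and $\pi \in \ul{B}_\el^\unst(G)$ with $\Zc$-character $\zeta$, expand $f_G(\pi) = \langle m f_G, m f[\pi]_G\rangle_\el$ as an integral over $\Gamma_{\rs,\el}/A_G(F)$. Since $\pi \in D_\el^\unst$, \Cref{lem:pseudo} gives $f[\pi] \in \Ic_{f,\cusp}^\unst(G, \zeta)$, i.e.\ $\sum_{\gamma \subseteq \delta} f[\pi]_G(\gamma) = 0$ for every regular elliptic stable class $\delta$. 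Using the stable invariance of both $m$ (verified in \Cref{lem:pseudo}) and $f_G$ (by hypothesis), regroup the integral by stable classes; the stably invariant factors come out, and the inner sum $\sum_\gamma f[\pi]_G(\gamma)$ vanishes class by class, so $f_G(\pi) = 0$ and $\Fc(f) \in \wh{\Fs}_\el^\st$.

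The remaining inclusion $\Fc^{-1}(\wh{\Fs}_\el^\st) \subseteq \Ic\Fs_\cusp^\st$ goes by density. By \Cref{lem:pseudo}, for every unitary character $\zeta$ of $A_G(F)$ the Fourier transform identifies $\Ic_{f,\cusp}^\st(G, \zeta)$ with $PW_{\el,f}^\st(G, \zeta)$, and the explicit pseudocoefficient formula $f[\phi]_G(\gamma) = m(\gamma)^{-1}|D^G(\gamma)|^{1/2}\overline{\Theta_\phi(\gamma)}$ for $\phi \in \Phi_2(G, \zeta)$ shows directly that $\Ic_{f,\cusp}^\st(G, \zeta) \subseteq \Ic\Fs_\cusp^\st(G)$. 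The union $\bigcup_\zeta PW_{\el,f}^\st(G, \zeta)$ is dense in $\wh{\Fs}_\el^\st(G)$---a direct analogue of the density statement underlying the non-stable invariant Paley--Wiener theorem. Since $\Ic\Fs_\cusp^\st(G)$ is closed in $\Ic\Fs_\cusp(G)$ and $\Fc$ is a homeomorphism, every $f$ with $\Fc(f) \in \wh{\Fs}_\el^\st$ can be approximated in $\Ic\Fs_\cusp(G)$ by a net in $\bigcup_\zeta \Ic_{f,\cusp}^\st \subseteq \Ic\Fs_\cusp^\st$, and closedness delivers $f \in \Ic\Fs_\cusp^\st$. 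The main obstacle is the Schwartz case, which relies on the forthcoming \Cref{StableSpectralDensity} and on checking that $\bigcup_\zeta PW_{\el,f}^\st(G, \zeta)$ really is dense in $\Ss_\el^\st(G)$ with respect to the correct direct-sum-of-Fr\'echet-inductive-limit topology; this requires care but follows the pattern of the invariant Paley--Wiener case.
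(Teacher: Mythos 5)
Your overall plan---decompose $\wh{\Fs}_\el(G)$ using the preceding lemma, then match the four preimages---agrees in spirit with the paper, and your arguments for the two ``easy'' inclusions ($\Ic\Fs_\cusp^\unst\subseteq\Fc^{-1}(\wh{\Fs}_\el^\unst)$ and, via density of $PW_\el^\unst$ in $\Ss_\el^\unst$, $\Fc^{-1}(\wh{\Fs}_\el^\unst)\subseteq\Ic\Fs_\cusp^\unst$) are sound. But the crucial inclusion $\Fc^{-1}(\wh{\Fs}_\el^\st(G))\subseteq\Ic\Fs_\cusp^\st(G)$ has a genuine gap, and it is exactly the inclusion that carries the content of the proposition.

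Your closing step rests on the assertion that $\Ic_{f,\cusp}^\st(G,\zeta)\subseteq\Ic\Fs_\cusp^\st(G)$ and that $\bigcup_\zeta PW_{\el,f}^\st(G,\zeta)$ is dense in $\wh{\Fs}_\el^\st(G)$. Neither parses: $\Ic_{f,\cusp}^\st(G,\zeta)$ is a space of $\zeta^{-1}$-equivariant orbital integrals living in the $A_G(F)$-central-datum world, so it simply is not a subset of $\Ic\Fs_\cusp^\st(G)$, which carries the trivial central datum. Likewise $PW_{\el,f}^\st(G,\zeta)$ consists of finitely supported functions on the \emph{discrete} set $T_\el(G,\zeta)/\CC^1$, whereas $\wh{\Fs}_\el^\st(G)$ is a Schwartz/Paley--Wiener space over the \emph{continuous} manifold $\Phi_2(G)$ (tori, or Euclidean spaces, indexed by $\ul{B}_\el^\st(G)$); there is no canonical inclusion, let alone a density statement. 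This is precisely the difficulty the paper resolved by building the machinery of \Cref{lem:fcn} and \Cref{prop:cuspidalInverseFourier}: multiply a pseudocoefficient $f[b]\in C_{c,\cusp}^\infty(G,\zeta_b,K)$---a $\zeta_b$-equivariant object concentrated spectrally on a single stable basis element $b$---by a scalar factor $\phi_b\circ H_G$ with $\phi_b\in\Fs(\ak_{G,F})$, to manufacture an actual element of $\Fs(G)$ whose Fourier transform is any prescribed $\varphi\in\wh{\Fs}_\el(G)_b$. The stable invariance of its orbital integrals then comes from the stable invariance of $f[b]_G$ (which you did correctly identify via the $m(\gamma)$-argument in \Cref{lem:pseudo}) and the stable invariance of $H_G$. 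Once you have \Cref{prop:cuspidalInverseFourier}, the approximation you want is by truncating $\varphi$ to finitely many $b\in\ul{B}_\el^\st(G)$ at a time and using closedness of $\Ic\Fs_\cusp^\st(G)$---which is what the paper does. Your route tries to skip this entire apparatus, but it is indispensable.

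Two smaller remarks. First, you separately prove the inclusions $\Ic\Fs_\cusp^?(G)\subseteq\Fc^{-1}(\wh{\Fs}_\el^?(G))$; these are unnecessary---once $\Fc^{-1}(\wh{\Fs}_\el^?(G))\subseteq\Ic\Fs_\cusp^?(G)$ is known for both $?$, and since $\Ic\Fs_\cusp^\st\cap\Ic\Fs_\cusp^\unst=0$, the chain
\[
\Ic\Fs_\cusp(G)=\Fc^{-1}(\wh{\Fs}_\el^\st(G))\oplus\Fc^{-1}(\wh{\Fs}_\el^\unst(G))\subseteq\Ic\Fs_\cusp^\st(G)\oplus\Ic\Fs_\cusp^\unst(G)\subseteq\Ic\Fs_\cusp(G)
\]
forces all inclusions to be equalities, which is the economical argument in the paper. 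Second, your elliptic inner product argument for $\Ic\Fs_\cusp^\st\subseteq\Fc^{-1}(\wh{\Fs}_\el^\st)$, although redundant, also needs care passing from the trivial central datum to the $\zeta$-equivariant world in which the pairing $\langle mf_G,mf[\pi]_G\rangle_\el=f_G(\pi)$ is stated; and your use of \Cref{StableSpectralDensity} for the Schwartz-function unstable direction is a forward reference that the paper avoids by instead proving a cuspidal precursor lemma directly by density of $PW_\el^\unst$ in $\Ss_\el^\unst$.
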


\begin{corollary}
    The stable Fourier transform gives an isomorphism
    \[
    \begin{tikzcd}
        \Sc\Fs_\cusp(G) \arrow[r,"\sim"] & \wh{\Fs}_\el^\st(G).
    \end{tikzcd}
    \]
\end{corollary}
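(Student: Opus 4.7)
The plan is to mimic the proof of the analogous $K$-finite cuspidal corollary and reduce the claim to \Cref{prop:cuspidal}. Under the locally convex decomposition $\wh{\Fs}_\el(G)=\wh{\Fs}_\el^\st(G)\oplus\wh{\Fs}_\el^\unst(G)$ established just above, let $\pi_\st$ denote the projection onto the first summand; it is a continuous open linear surjection. First I would verify that the stable Fourier transform
\[
\Fc^\st:\Ic\Fs_\cusp(G)\longrightarrow\wh{\Fs}_\el^\st(G)
\]
coincides with $\pi_\st\circ\Fc$, where $\Fc$ is the invariant Fourier transform of \Cref{prop:cuspidal}. This is immediate from the definitions: for $f\in\Fs_\cusp(G)$ and $\phi\in\Phi_2(G)=B_\el^\st(G)$, unwinding the change-of-basis between $E_\el(G)$ and $B_\el(G)$ on the stable sub-basis gives
\[
(\pi_\st\Fc(f))(\phi)=\Fc(f)(\phi)=\sum_{\pi\in\Pi_\phi}c_{\phi,\pi}f_G(\pi)=\Theta_\phi(f)=\Fc^\st(f)(\phi),
\]
and both sides are supported on $\Phi_2(G)$ by cuspidality.

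Granting this identification, $\Fc^\st=\pi_\st\circ\Fc$ is the composition of two continuous surjective linear maps and is therefore itself continuous and surjective, and moreover open (since $\Fc$ is an isomorphism of topological vector spaces and $\pi_\st$ is an open projection onto a direct summand). Its kernel equals $\Fc^{-1}(\ker\pi_\st)=\Fc^{-1}(\wh{\Fs}_\el^\unst(G))=\Ic\Fs_\cusp^\unst(G)$ by \Cref{prop:cuspidal}. Since $\Sc\Fs_\cusp(G)$ is by definition the quotient $\Ic\Fs_\cusp(G)/\Ic\Fs_\cusp^\unst(G)$ with the quotient topology, $\Fc^\st$ descends to a continuous linear bijection $\Sc\Fs_\cusp(G)\to\wh{\Fs}_\el^\st(G)$ which inherits openness from $\Fc^\st$, yielding the desired isomorphism of topological vector spaces. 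There is no real obstacle at this stage: all of the substantive analytic content, in particular the $\st/\unst$ splitting of $\Ic\Fs_\cusp(G)$ together with the matching splitting of $\wh{\Fs}_\el(G)$ and the boundedness estimates on the change-of-basis coefficients, was already carried out in the preceding subsection, and the corollary is simply the statement that the stable part of the invariant Paley--Wiener theorem on the cuspidal subspace is the stable Paley--Wiener theorem.
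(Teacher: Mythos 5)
Your proof is correct and takes essentially the same approach as the paper: the paper's explicit proof of the analogous $K$-finite corollary (immediately after \Cref{lem:pseudo}) is precisely this argument — write the stable Fourier transform as the projection onto the stable summand composed with the invariant Fourier transform, identify the kernel as $\Ic\Fs_\cusp^\unst(G)$ via \Cref{prop:cuspidal}, and descend. The extra observations about openness of the projection and the quotient topology on $\Sc\Fs_\cusp(G)$ are accurate and fill in the (routine) topological bookkeeping the paper leaves implicit.
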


We start with the following lemma which we will use to construct functions in $\Fs(G)$.

\begin{lemma}\label{lem:fcn}
    Let $f\in C_{c}^\infty(G,\zeta)$ and let $\phi\in C_c^\infty(\ak_{G,F})$ (resp. $\phi\in\Ss(\ak_{G,F})$). The function $f^\phi\defeq f\cdot(\phi\circ H_G)$ lies in $C_c^\infty(G)$ (resp. $\Cc(G)$).
\end{lemma}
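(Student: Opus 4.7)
The plan is to exploit the decomposition $\supp f \subseteq A_G(F) \cdot K_0$ for some compact $K_0 \subseteq G(F)$, which holds since $f$ has support compact modulo $\Zc = A_G(F)$. Writing $g = zk$ with $z \in A_G(F)$ and $k \in K_0$, three facts drive the argument: (i) $H_G|_{A_G(F)}: A_G(F) \to \wt{\ak}_{G,F}$ is proper, since its kernel $A_G(F)^1$ is compact and its image is either Euclidean (archimedean) or a lattice (non-archimedean); (ii) $\Xi(zk) = \Xi(k) \asymp 1$ and $1 + \sigma(zk) \asymp 1 + \|H_G(z)\| \asymp 1 + \|H_G(g)\|$ uniformly for $k \in K_0$, using $Z_G(F)$-invariance of $\Xi$ and the estimate $1 + \sigma(m_0) \asymp 1 + \|H_0(m_0)\|$ on $M_0(F)$ recalled in the notation section; and (iii) $|f(g)| = |f(k)|$ by unitarity of $\zeta$. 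Smoothness of $f^\phi$ is immediate since $H_G$ is analytic in the archimedean case, and in the non-archimedean case $G(F)^1 = \ker H_G$ is open (its image being discrete), so $\phi \circ H_G$ is locally constant.

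For $\phi \in C_c^\infty(\ak_{G,F})$: if $g \in \supp f^\phi$ then $H_G(z) \in \supp\phi - H_G(K_0)$, a compact subset of $\wt{\ak}_{G,F}$; by properness of $H_G|_{A_G(F)}$, $z$ and hence $g$ lie in a compact subset of $G(F)$. Thus $f^\phi \in C_c^\infty(G)$.

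For $\phi \in \Ss(\ak_{G,F})$ the pointwise bound on $\supp f$ reads
\[
|f^\phi(g)|(1+\sigma(g))^N \Xi(g)^{-1} \ll |\phi(H_G(g))|(1+\|H_G(g)\|)^N,
\]
which is uniformly finite by definition of the Schwartz space. In the non-archimedean case it remains to establish uniform smoothness, which follows since $f$ is bi-invariant under some compact open subgroup $K_1$ while $\phi \circ H_G$ is bi-invariant under the open subgroup $G(F)^1$; hence $f^\phi$ is bi-invariant under the compact open subgroup $K_1 \cap G(F)^1$. In the archimedean case I need to control $\|u f^\phi v\|_N$ for $u,v \in \Uk(\gk_\CC)$. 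Since $H_G$ is a homomorphism into the abelian group $\ak_G$, differentiating $(\phi \circ H_G)(g \exp(tX)) = \phi(H_G(g) + t \cdot dH_G(X))$ at $t=0$ yields $X(\phi \circ H_G) = (\partial_{dH_G(X)}\phi) \circ H_G$, and iterating gives $u(\phi \circ H_G)v = (D_{u,v}\phi) \circ H_G$ for a constant-coefficient differential operator $D_{u,v}$ on $\ak_G$. By Leibniz, $u f^\phi v$ is a finite sum of terms $(u_1 f v_1) \cdot ((D\phi) \circ H_G)$, each satisfying the pointwise estimate above with $\phi$ replaced by the Schwartz function $D\phi$.

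The main technical nuisance is coordinating the norm equivalence $\sigma(z) \asymp \|H_G(z)\|$ with the compact perturbations coming from $H_G(K_0)$, and with the derivative identity for $\phi \circ H_G$; no deep input is required beyond properness of $H_G|_{A_G(F)}$ and the standard growth estimates recalled in the notation section.
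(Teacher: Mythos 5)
Your proof is correct, and for the rapid-decay estimate in the Schwartz case it takes a genuinely different (and, for the lemma as stated, somewhat cleaner) route than the paper. Both proofs handle smoothness, the compact-support case, the non-archimedean uniform smoothness via the vanishing of $H_G$ on compact subgroups, and the archimedean derivative calculus (via $u(\phi\circ H_G)v = (D_{u,v}\phi)\circ H_G$ plus Leibniz) in essentially the same way; the divergence is in how the pointwise decay $|f^\phi(g)|\ll\Xi(g)(1+\sigma(g))^{-N}$ is established. The paper uses the Cartan decomposition $G(F)=KM_0(F)^{\geq 0}K$ and the membership $f\in\Cc(G,\zeta)$ (so $|f|\ll\Xi\,(1+\sigma^{A_G})^{-r}$), then splits $H_0(m_0)=H_G(m_0)+H_0(m_0)^G$ and packages decay in the two directions separately. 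You instead exploit the stronger hypothesis of compact support modulo $A_G(F)$ directly: writing $g=zk$ with $z\in A_G(F)$ and $k$ in a fixed compact $K_0$ kills $\Xi(g)^{-1}|f(g)|$ uniformly and reduces the entire decay burden to $|\phi(H_G(g))|(1+\|H_G(g)\|)^{N}$, which is precisely a Schwartz seminorm of $\phi$. The paper's argument has the virtue that it actually shows the stronger statement $f^\phi\in\Cc(G)$ for any $f\in\Cc(G,\zeta)$ (not just compactly supported mod $\Zc$); yours is tailored to the hypothesis as stated and avoids the Cartan decomposition entirely. One small remark: your assertion that $f\in C_c^\infty(G,\zeta)$ is bi-invariant under some compact open $K_1$ in the non-archimedean case is true but does require a short compactness argument (which the paper spells out using the decomposition $\supp(f)=CA_G(F)$); it is worth making explicit if this were written up fully. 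Also note $K_1\subseteq G(F)^1$ automatically, since $G(F)^1$ contains every compact subgroup, so your intersection $K_1\cap G(F)^1$ is just $K_1$.
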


Note that if $f$ is $K$-finite, then so is $f^\phi$.

\begin{proof}
    Let $f^\phi=f\cdot(\phi\circ H_G)$. Evidently, $f^\phi$ is smooth. Let $C\subseteq G(F)$ be a compact subspace such that $\supp(f)=CA_G(F)$.
    
    First, we suppose that $\phi\in C_c^\infty(\ak_{G,F})$. We have 
    \[
    \supp(f^\phi)\subseteq \supp(\phi\circ H_G)\cap\supp (f)\subseteq H_G^{-1}(\supp(\phi))\cap CA_G(F).
    \]
    Thus, to show that $\supp(f^\phi)$ is compact, it suffices to show that $H_G^{-1}(\supp\phi)\cap CA_G(F)$ is compact. Let $x\in H_G^{-1}(\supp(\phi))\cap CA_G(F)$ and write $x=ca$, where $c\in C$ and $a\in A_G(F)$. Then $H_G(a)=H_G(x)-H_G(c)$, which lies in the compact subspace $\supp(\phi)-H_G(C)$ of $\ak_{G,F}$. Let $\wt{C}=\wt{\ak}_{G,F}\cap(\supp(\phi)-H_G(C))$, a compact subspace of $\wt{\ak}_{G,F}$. Then $a\in H_G|_{A_G(F)}^{-1}(\wt{C})$, and $x\in CH_G|_{A_G(F)}^{-1}(\wt{C})$ Thus, 
    \[
    H_G^{-1}(\supp(\phi))\cap CA_G(F)\subseteq CH_G|_{A_G(F)}^{-1}(\wt{C}).
    \]
    Therefore it suffices to show that $H_G|_{A_G(F)}^{-1}(\wt{C})$ is compact. This follows since $H_G|_{A_G(F)}:A_G(F)\to\wt{\ak}_{G,F}$ is proper, as it is a continuous surjective homomorphism with compact kernel $A_G(F)^1$.
    
    For the rest of the proof, we suppose that $\phi\in\Ss(\ak_{G,F})$. First we show that $f^\phi$ is rapidly decreasing. Let $r>0$. We must show that $|f^\phi(g)|\ll\Xi(g)(1+\sigma(g))^{-r}$. Recall the decomposition $G(F)=KM_0(F)K$, that $\Xi$ is bi-$K$-invariant, and that $1+\sigma(k_1m_0k_2)\asymp1+\sigma(m_0)$ and $1+\sigma(m_0)\asymp 1+\|H_0(m_0)\|$ for $k_1,k_2\in K$ and $m_0\in M_0(F)$. Therefore, it suffices to show $|f^\phi(k_1m_0k_2)|\ll\Xi(m_0)(1+\|H_0(m_0)\|)^{-r}$.
    
    Since $f\in\Cc(G,\zeta)$, we have $|f(g)|\ll\Xi(g)(1+\sigma^{A_G}(g))^{-r}$. We have $1+\sigma^{A_G}(k_1m_0k_2)\asymp 1+\sigma^{A_G}(m_0)$, and therefore $|f(k_1m_0k_2)|\ll\Xi(m_0)(1+\sigma^{A_G}(m_0))^{-r}$. Since $\phi\in\Ss(\ak_{G,F})$, we have $|\phi(X)|\ll(1+\|X\|)^{-r}$. Therefore 
    \[
    |f^\phi(k_1m_0k_2)|\ll\Xi(m_0)(1+\|H_G(m_0)\|)^{-r}(1+\sigma^{A_G}(m_0))^{-r}.
    \]
    Decompose $H_0(m_0)\in\ak_{M_0,F}$ as $H_0(m_0)=H_G(m_0)+H_0(m_0)^G$ according to the decomposition $\ak_{M_0,F}=\ak_{G,F}\oplus\ak_{M_0,F}^G$. Since $1+\sigma(m_0)\asymp 1+\|H_0(m_0)\|$, we have
    \[
    1+\sigma^{A_G}(m_0)\asymp 1+\inf_{a\in A_G(F)}\|H_0(a)+H_0(m_0)\|.
    \]
    Since $\ak_{M_0,F}/\wt{\ak}_{G,F}=\ak_{G,F}/\wt{\ak}_{G,F}\oplus\ak_{M_0,F}^G$ and $\ak_{G,F}/\wt{\ak}_{G,F}$ is finite, we obtain 
    \[
    1+\inf_{a\in A_G(F)}\|H_0(a)+H_0(m_0)\|\asymp1+\|H_0(m_0)^G\|.
    \]
    Therefore 
    \[
    1+\sigma^{A_G(F)}(m_0)\asymp 1+\|H_0(m_0)^G\|.
    \]
    Thus, we have
    \begin{align*}
    |f^\phi(k_1m_0k_2)|/\Xi(m_0)&\ll(1+\|H_G(m_0)\|)^{-r}(1+\|H_0(m_0)^G\|)^{-r} \\
    &\ll (1+\|H_0(m_0)\|)^{-r}
    \end{align*}
    as required.
    
    Suppose that $F$ is non-archimedean. In this case, we must show that there exists a compact open subgroup $K_0$ of $G(F)$ such that $f^\phi$ is bi-$K_0$-invariant. Since $F$ is non-archimedean, $\supp(f)$ is open and has compact open image in $G(F)/A_G(F)$. Thus, we may take $C$ to be compact open. Then there exists a compact open subgroup $K_0$ of $G(F)$ such that $K_0CK_0=C$ and $f$ is constant on the $K_0$ double cosets in $C$. Since $f$ is zero outside of $CA_G(F)$, it suffices to show that $f$ is bi-$K_0$-invariant on $CA_G(F)$. Write $C=\bigcup_{i=1}^MK_0g_iK_0$ and define $c_i=f(K_0g_iK_0)$ for $i=1,\dots,M$. Then $f(k_0g_ik_0'a)=\zeta(a)^{-1}c_i$ for all $k_0,k_0'\in K_0$, $i=0,\dots,M$, and $a\in A_G(F)$. For any $k_0'',k_0'''$ we have
    \[
    f(k_0'k_0g_ik_0'ak_0'')=f(k_0'k_0g_ik_0'k_0'''a)=\zeta(a)^{-1}c_i=f(k_0g_ik_0'a).
    \]
    Therefore $f$ is bi-$K_0$-invariant. Since $H_G$ is zero on all compact subgroups of $G(F)$, we have that $f^\phi$ is bi-$K_0$-invariant.
    
    Now assume that $F$ is archimedean. We must show that for all $X,Y\in\Uk(\gk_\CC)$, the function $L(X)R(Y)f^\phi$ is rapidly decreasing. For $X\in\gk$, we have
    \[
    (R(X)f^\phi)(g)=[D\phi (H_G(g))\cdot (R(X)H_G)(g)\big]f(g)+(R(X)f)^\phi(g),
    \]
    and therefore $R(X)f^\phi$ is a linear combination of functions of the form $f^\phi$. A similar formula hods for $L(X)$. It follows that for all $X,Y\in\Uk(\gk_\CC)$, the function $L(X)R(Y)f^\phi$ is a linear combination of functions of the form $f^\phi$, and is thus rapidly decreasing by what we have already shown.
\end{proof}

We now form the union
\[
\ul{B}_\el^?(G,\zeta)=\bigcup_{(\zeta,\mu)\in\Ps^G} \ul{B}_\el^?(G,\zeta,\mu),
\]
and define $\ul{B}_\el(G,\zeta)=\ul{B}_\el^\st(G,\zeta)\cup \ul{B}_\el^\unst(G,\zeta)$. Let $b\in \ul{B}_\el(G,\zeta)$. Since the elements of $\ul{B}_\el(G)$ are mutually orthogonal, the pseudocoefficient $f[b]_G$ satisfies
\[
\int_{G(F)/A_G(F)}\Theta_{b'}(x)f[b](x)\dd{x}=\|b\|_\el^2\delta_{b}(b'),
\]
for all $b'\in \ul{B}_\el(G,\zeta)$. The domain of integration $G(F)/A_G(F)$ fibres over the finite set $\ak_{G,F}/\wt{\ak}_{G,F}$ with open fibres. We will need the following formula for the integral of $\Theta_{b'}(x)f[b](x)$ on a single fibre in the case when $b\in \ul{B}_\el^\st(G,\zeta)$.

\begin{proposition}\label{prop:pseudoIntegral}
    Let $b\in\ul{B}_\el^\st(G,\zeta)$. For $b'\in \ul{B}_\el(G,\zeta)$ and $X\in\ak_{G,F}/\wt{\ak}_{G,F}$, we have
    \[
    \int_{H_G^{-1}(X+\wt{\ak}_{G,F})/A_G(F)}\Theta_{b'}(x)f[b](x)\dd{x}=\|b\|_\el^2|\ak_{G,b}/\wt{\ak}_{G,F}|^{-1}\delta_b(b')\1_{\ak_{G,b}/\wt{\ak}_{G,F}}(X).
    \]
\end{proposition}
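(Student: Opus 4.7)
The plan is to isolate the single-fibre integral from the global orthogonality identity $\int_{G(F)/A_G(F)}\Theta_{b'}(x)f[b](x)\dd{x}=\|b\|_\el^2\delta_b(b')$ by Fourier inversion on the finite abelian group $\ak_{G,F}/\wt{\ak}_{G,F}$, whose Pontryagin dual is $\wt{\ak}_{G,F}^\vee/\ak_{G,F}^\vee$. For each $\lambda$ in this dual, the unramified character $|\cdot|_G^\lambda$ is trivial on $A_G(F)$ and is the pullback along $H_G$ of the corresponding character of $\ak_{G,F}/\wt{\ak}_{G,F}$. Expanding the characteristic function of $\{X\}$ as a Fourier sum and using $|x|_G^\lambda\Theta_{b'}(x)=\Theta_{b'_\lambda}(x)$, together with the identity $f[b]_G(\pi)=\langle\pi,b\rangle_\el$ applied to $\pi=b'_\lambda\in D_\el(G,\zeta)$, gives
\[
I(X):=\int_{H_G^{-1}(X+\wt{\ak}_{G,F})/A_G(F)}\Theta_{b'}(x)f[b](x)\dd{x}=|\ak_{G,F}/\wt{\ak}_{G,F}|^{-1}\sum_{\lambda\in\wt{\ak}_{G,F}^\vee/\ak_{G,F}^\vee}e^{-\langle\lambda,X\rangle}\langle b'_\lambda,b\rangle_\el.
\]

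Next I would evaluate $\langle b'_\lambda,b\rangle_\el$ using the orthogonal decomposition $D_\el(G,\zeta,\mu)=D_\el^\st(G,\zeta,\mu)\oplus D_\el^\unst(G,\zeta,\mu)$ on each infinitesimal-character slice. Write $b=\phi\in\Phi_2(G,\zeta,\mu)$. If $b'\in\ul{B}_\el^\unst(G,\zeta,\mu')$, then $b'_\lambda$ lies in $D_\el^\unst(G,\zeta,a_\lambda\cdot\mu')$, which is orthogonal to $\phi\in D_\el^\st$ for every $\lambda$; hence $I(X)=0$, consistent with $\delta_b(b')=0$ on the right-hand side. The key point here is that twisting by the cocentral character $|\cdot|_G^\lambda$ preserves the stable--unstable decomposition because stability is preserved under multiplication by a character that is constant on stable classes. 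If instead $b'=\phi'\in\ul{B}_\el^\st(G,\zeta)$, then the orthogonality of $\Phi_2(G,\zeta)$ as a basis of $D_\el^\st(G,\zeta)$ gives $\langle\phi'_\lambda,\phi\rangle_\el=\|\phi\|_\el^2\delta_{\phi'_\lambda,\phi}$. I would then verify the combinatorial claim that, for $\phi,\phi'\in\ul{B}_\el^\st(G,\zeta)$, one has $\phi'_\lambda=\phi$ for some $\lambda\in\wt{\ak}_{G,F}^\vee$ if and only if $\phi'=\phi$ and $\lambda\in\ak_{G,b}^\vee$ modulo $\ak_{G,F}^\vee$. This rests on two facts: distinct $(\zeta,\mu),(\zeta,\mu')\in\Ps^G$ lie in distinct $i\ak_G^*$-orbits (forcing the infinitesimal characters to coincide), and within a fixed $\mu$ the set $\ul{B}_\el^\st(G,\zeta,\mu)$ contains exactly one representative per $\ak_{G,\mu}^\vee$-orbit on $\Phi_2(G,\zeta,\mu)$, while a $\lambda\in\wt{\ak}_{G,F}^\vee$ fixing $\mu$ must lie in $\ak_{G,\mu}^\vee$.

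Combining the above, for $b'=b$ the formula collapses to
\[
I(X)=\|b\|_\el^2|\ak_{G,F}/\wt{\ak}_{G,F}|^{-1}\sum_{\lambda\in\ak_{G,b}^\vee/\ak_{G,F}^\vee}e^{-\langle\lambda,X\rangle}.
\]
A standard character-sum computation on the subgroup $\ak_{G,b}^\vee/\ak_{G,F}^\vee\subseteq\wt{\ak}_{G,F}^\vee/\ak_{G,F}^\vee$ evaluates this sum as $|\ak_{G,b}^\vee/\ak_{G,F}^\vee|\cdot\1_{\ak_{G,b}/\wt{\ak}_{G,F}}(X)$, where $\ak_{G,b}/\wt{\ak}_{G,F}$ is by definition the Pontryagin annihilator of $\ak_{G,b}^\vee/\ak_{G,F}^\vee$ inside $\ak_{G,F}/\wt{\ak}_{G,F}$. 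Applying the duality identity $|\ak_{G,b}^\vee/\ak_{G,F}^\vee|\cdot|\ak_{G,b}/\wt{\ak}_{G,F}|=|\ak_{G,F}/\wt{\ak}_{G,F}|$ then yields exactly the asserted expression. The only real obstacle is the orbit-representative bookkeeping in the middle step; the rest is Fourier analysis on a finite abelian group.
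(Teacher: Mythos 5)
Your proposal is correct and follows essentially the same route as the paper: expand the indicator of the fibre as a Fourier sum over $\wt{\ak}_{G,F}^\vee/\ak_{G,F}^\vee$, recognize each Fourier coefficient as an elliptic pairing $\langle b'_\lambda, b\rangle_\el$ via the pseudocoefficient property, kill the unstable contributions by orthogonality (using that the twist by $\lambda\in\wt{\ak}_{G,F}^\vee$ is unitary and preserves $D_\el^\st(G,\zeta)$, hence its orthogonal complement), pin down via the choice of $\Ps^G$ and $\ul{B}_\el^\st(G,\zeta,\mu)$ that $b'_\lambda=b$ forces $b'=b$ and $\lambda\in\ak_{G,b}^\vee/\ak_{G,F}^\vee$, and finish with the finite-group character sum. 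The only cosmetic difference is that you work directly with Fourier inversion on $\ak_{G,F}/\wt{\ak}_{G,F}$ while the paper computes the forward Fourier transform of the fibre-integral function and then inverts; the bookkeeping lemma you isolate as the middle step is word-for-word the paper's.
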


\begin{proof}
    Let $b'\in B_\el(G,\zeta)$. For $X\in\ak_{G,F}/\wt{\ak}_{G,F}$, define
    \[
    f_{b,b'}(X)=\int_{H_G^{-1}(X+\wt{\ak}_{G,F})/A_G(F)}\Theta_{b'}(x)f[b](x)\dd{x}.
    \]
    We will calculate the Fourier transform of $f_{b,b'}$. The Pontryagin dual of $\ak_{G,F}/\wt{\ak}_{G,F}$ is $\wt{\ak}_{G,F}^\vee/\ak_{G,F}^\vee$. For $\lambda\in \wt{\ak}_{G,F}^\vee/\ak_{G,F}^\vee$ we have
    \begin{align*}
    \wh{f_{b,b'}}(\lambda)&=\sum_{X\in\ak_{G,F}/\wt{\ak}_{G,F}}e^{-\langle\lambda,X\rangle}f_{b,b'}(X) \\
    &=\sum_{X\in\ak_{G,F}/\wt{\ak}_{G,F}}e^{-\langle\lambda,X\rangle}\int_{H_G^{-1}(X+\wt{\ak}_{G,F})/A_G(F)}\Theta_{b'}(x)f[b](x)\dd{x} \\
    &=\sum_{X\in\ak_{G,F}/\wt{\ak}_{G,F}} \int_{H_G^{-1}(X+\wt{\ak}_{G,F})/A_G(F)} e^{-\langle\lambda,H_G(x)\rangle}\Theta_{b'}(x)f[b](x)\dd{x} \\
    &=\sum_{X\in\ak_{G,F}/\wt{\ak}_{G,F}} \int_{H_G^{-1}(X+\wt{\ak}_{G,F})/A_G(F)} \Theta_{b'}(x)f[b_\lambda](x)\dd{x} \\
    &=\int_{G(F)/A_G(F)}\Theta_{b_{-\lambda}'}(x)f[b](x)\dd{x} \\
    &=\langle b, b_{-\lambda}' \rangle_\el
    \end{align*}
    
    If $b'\in \ul{B}_\el^\unst(G,\zeta)$, then $b_{-\lambda}'\in D_\el^\unst(G,\zeta)$ and thus $\langle b, b_{-\lambda}' \rangle_\el=0$. If $b'\in \ul{B}_\el^\st(G,\zeta)\subseteq \Phi_2(G,\zeta)$, then $b_{-\lambda}'\in \Phi_2(G,\zeta)$ and $\langle b, b_{-\lambda}' \rangle_\el=\|b\|_\el^2\delta_{b}(b_{-\lambda}')$. Thus, we have $\wh{f_{b,b'}}(\lambda)=\|b\|_\el^2\delta_b(b_{-\lambda}')$. 
    
    Suppose $b=b_{-\lambda}'$. Then $\mu_b=(-\lambda)\cdot\mu_{b'}$. It follows that $[\zeta,\mu_b]=[\zeta,\mu_{b'}]$. Since $(\zeta,\mu_b),(\zeta,\mu_{b'})\in\Ps^G$, it follows that $\mu_b=\mu_{b'}$. Consequently, $\lambda$ lies in the stabiliser $\ak_{G,b}^\vee$ of $b$ in $\wt{\ak}_{G,F}^\vee$. Moreover, since $b,b'\in\ul{B}_\el^\st(G,\zeta,\mu_{b})$ and $b=b_{-\lambda}'$, we have $b=b'$. It follows that $\delta_b(b_{-\lambda}')=\delta_b(b')\1_{\ak_{G,b}^\vee/\ak_{G,F}^\vee}(\lambda)$.
    
    Thus, we have shown that
    \[
    \wh{f_{b,b'}}=\|b\|_\el^2\delta_{b}(b')\1_{\ak_{G,b}^\vee/\ak_{G,F}^\vee}.
    \]
    When $b'\neq b$, we obtain $f_{b,b'}=0$ as claimed. It remains to be shown that
    \[
    f_{b,b}=\|b\|_\el^2|\ak_{G,b}/\wt{\ak}_{G,F}|^{-1}\1_{\ak_{G,b}/\wt{\ak}_{G,F}}.
    \]
    We have 
    \[
    \wh{f_{b,b}}=\|b\|_\el^2\1_{\ak_{G,b}^\vee/\ak_{G,F}^\vee}.
    \]
    Thus, we must show that the Fourier transform of $\1_{\ak_{G,b}/\wt{\ak}_{G,F}}$ on $\ak_{G,F}/\wt{\ak}_{G,F}$ is $|\ak_{G,b}/\wt{\ak}_{G,F}|\cdot \1_{\ak_{G,b}^\vee/\ak_{G,F}^\vee}$. This follows from the general formula for the Fourier transform of a characteristic function of a subgroup of a finite abelian group. Let $A$ be a finite abelian group and let $B$ be a subgroup of $A$. Then for $\chi\in\wh{B}$, we have
    \begin{align*}
        \wh{\1_B}(\chi)&=\sum_{a\in A}\chi(a)^{-1}\1_B(a) \\
        &=\sum_{b\in B}\chi(b)^{-1} \\
        &=|B|\cdot\1_{A^\perp}(\chi),
    \end{align*}
    where $A^\perp = \{\chi\in\wh{A} : \chi|_B=1\}$. If we take $A=\ak_{G,F}/\wt{\ak}_{G,F}$ and $B=\ak_{G,b}/\wt{\ak}_{G,F}$, then $A^\perp=\ak_{G,b}^\vee/\ak_{G,F}^\vee$, and we obtain the desired result.
\end{proof}

For $b\in \ul{B}_\el(G)$, define $\wh{\Fs}_\el(G)_b$ to be the subspace of all $\varphi=(\varphi_{b'})_{b'\in \ul{B}_\el(G)}$ in $\wh{\Fs}_\el(G)$ with $\varphi_{b'}=0$ for $b'\neq b$.

\begin{proposition} \label{prop:cuspidalInverseFourier}
    Let $b\in \ul{B}_\el^\st(G)$ and let $\varphi=(\varphi_{b'})_{b'\in \ul{B}_\el(G)}\in\wh{\Fs}_\el(G)_b$. Regarding $\varphi_b\in\wh{\Fs}(i\ak_G^*/\ak_{G,b}^\vee)$ as an element of $\wh{\Fs}(i\ak_G^*/\ak_{G,F}^\vee)$, let $\phi_b\in\Fs(\ak_{G,F})$ be its Fourier transform. The inverse invariant Fourier transform of $\varphi$ is
    \[
    \vol(A_G(F)^1)^{-1}\|b\|_\el^{-2}\frac{|\ak_{G,b}/\wt{\ak}_{G,F}|}{|\ak_{G,b}^\vee/\ak_{G,F}^\vee|}\phi_b(H_G(\gamma))f[b]_G(\gamma)
    \]
    and this lies in $\Ic\Fs_\cusp^\st(G)$.
\end{proposition}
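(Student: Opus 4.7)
Define the candidate function
\[
f \defeq c \cdot (\phi_b \circ H_G) \cdot f[b], \qquad c \defeq \vol(A_G(F)^1)^{-1}\|b\|_\el^{-2}\frac{|\ak_{G,b}/\wt{\ak}_{G,F}|}{|\ak_{G,b}^\vee/\ak_{G,F}^\vee|}.
\]
Since $f[b] \in C_c^\infty(G,\zeta,K)$ and $\phi_b \in \Fs(\ak_{G,F})$, \Cref{lem:fcn} gives $f \in \Fs(G)$. The function $\phi_b \circ H_G$ is $G$-conjugation invariant because $H_G$ factors through the abelianisation of $G$, so for regular semisimple $\gamma$ we have $f_G(\gamma) = c \cdot \phi_b(H_G(\gamma)) f[b]_G(\gamma)$, which matches the claimed formula. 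Cuspidality of $f$ follows because $f[b]_G$ is supported on elliptic classes. For stability, $f[b] \in \Ic_{f,\cusp}^\st(G,\zeta)$ by \Cref{lem:pseudo}, so $f[b]_G$ is constant on regular semisimple stable classes; moreover $H_G$ is determined by $\{|\chi|_F\}_{\chi \in X^*(G)}$ and is therefore stably invariant, so $\phi_b(H_G(\cdot))$ is too. Hence $f_G \in \Ic\Fs_\cusp^\st(G)$.

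It remains to compute the invariant Fourier transform. Fix $b' \in \ul{B}_\el(G)$ and $\lambda \in i\ak_G^*$; the goal is $f_G(b'_\lambda) = \delta_b(b')\varphi_b(\lambda)$. Using $\Theta_{b'_\lambda}(x) = e^{\langle\lambda, H_G(x)\rangle}\Theta_{b'}(x)$, disintegrate $\int_{G(F)}$ as $\int_{G(F)/A_G(F)}\int_{A_G(F)}$. The central characters of $f[b]$ and $\Theta_{b'_\lambda}$ cancel, and substituting $Y = H_G(a)$ with kernel $A_G(F)^1$ replaces the $A_G(F)$-integral with $\vol(A_G(F)^1)\int_{\wt{\ak}_{G,F}} \phi_b(H_G(x)+Y) e^{\langle\lambda,Y\rangle}\dd{Y}$. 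This function depends on $x$ only through $H_G(x) \bmod \wt{\ak}_{G,F}$, so one can further decompose the integral over $G(F)/A_G(F)$ as a sum over $X \in \ak_{G,F}/\wt{\ak}_{G,F}$ of integrals over $H_G^{-1}(X + \wt{\ak}_{G,F})/A_G(F)$. Each of these inner integrals is computed by \Cref{prop:pseudoIntegral}, which cuts the sum down to $X$ ranging over $\ak_{G,b}/\wt{\ak}_{G,F}$ and produces the factor $\|b\|_\el^2|\ak_{G,b}/\wt{\ak}_{G,F}|^{-1}\delta_b(b')$. After collapsing the remaining sum-integral, we obtain
\[
f_G(b'_\lambda) = c\cdot\vol(A_G(F)^1)\|b\|_\el^2|\ak_{G,b}/\wt{\ak}_{G,F}|^{-1}\delta_b(b') \int_{\ak_{G,b}} \phi_b(Z) e^{\langle\lambda,Z\rangle}\dd{Z}.
\]

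To finish, interpret the last integral via Fourier inversion. Since $\varphi_b$, viewed on $i\ak_G^*/\ak_{G,F}^\vee$, is invariant under the finite subgroup $\ak_{G,b}^\vee/\ak_{G,F}^\vee$, its Fourier transform $\phi_b$ is supported on the annihilator $\ak_{G,b}$ of $\ak_{G,b}^\vee$ in $\ak_{G,F}$. Pontryagin duality between $i\ak_G^*/\ak_{G,F}^\vee$ and $\ak_{G,F}$, together with the Pontryagin dual normalisation of measures specified in the paper, yields $\int_{\ak_{G,b}} \phi_b(Z)e^{\langle\lambda, Z\rangle}\dd{Z} = |\ak_{G,b}^\vee/\ak_{G,F}^\vee|\varphi_b(\lambda)$. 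Substituting this and the chosen $c$ collapses all constants and gives $f_G(b'_\lambda) = \delta_b(b')\varphi_b(\lambda)$, as required. The main obstacle is the bookkeeping of measures and lattice indices in this last step: one has to verify that with the paper's normalisations (counting measure on the lattices $\ak_{G,F}$, $\wt{\ak}_{G,F}$, $\ak_{G,b}$, and the Pontryagin-dual measure on the compact tori $i\ak_G^*/\ak_{G,F}^\vee$, etc.) the Pontryagin duality factor is exactly $|\ak_{G,b}^\vee/\ak_{G,F}^\vee|$, which forces $c$ to have the precise form stated in the proposition.
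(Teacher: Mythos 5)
The proposal follows the paper's proof step for step: the same candidate function $f=c\cdot(\phi_b\circ H_G)\cdot f[b]$ built via \Cref{lem:fcn}, the same membership argument via \Cref{lem:pseudo} and invariance of $H_G$ under $G(\ol{F})$-conjugation, and the same disintegration reducing the Fourier transform computation to \Cref{prop:pseudoIntegral} followed by a Poisson-summation step on the quotient lattice. Two small repairs are needed before it is airtight.

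First, ``the central characters of $f[b]$ and $\Theta_{b'_\lambda}$ cancel'' is only true when $\zeta_b=\zeta_{b'}$; when they differ, the $A_G(F)$-integrand carries an honest factor $\zeta_{b'}(a)\zeta_b(a)^{-1}$ and the substitution $Y=H_G(a)$ does not remove it. You must first integrate over the compact kernel $A_G(F)^1$ and use
\[
\int_{A_G(F)^1}\zeta_{b'}(a^1)\zeta_b(a^1)^{-1}\dd{a^1}=\vol(A_G(F)^1)\,\delta_{\zeta_b}(\zeta_{b'}),
\]
which is valid because $\zeta_b,\zeta_{b'}\in\Xs^G$ and so agreement on $A_G(F)^1$ forces equality. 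This case split is not cosmetic: \Cref{prop:pseudoIntegral} is only stated with $b'\in\ul{B}_\el(G,\zeta_b)$, so you cannot invoke it until this vanishing is dispatched. Second, the function $\int_{\wt{\ak}_{G,F}}\phi_b(H_G(x)+Y)e^{\langle\lambda,Y\rangle}\dd{Y}$ is \emph{not} constant on the fibres $H_G^{-1}(X+\wt{\ak}_{G,F})/A_G(F)$: it carries the phase $e^{-\langle\lambda,H_G(x)\rangle}$. The fibrewise-constant quantity is $\int_{\wt{\ak}_{G,F}}e^{\langle\lambda,H_G(x)+Y\rangle}\phi_b(H_G(x)+Y)\dd{Y}$, obtained by keeping the $e^{\langle\lambda,H_G(x)\rangle}$ contribution of $\Theta_{b'_\lambda}$ with the inner integral so that what remains outside is $\Theta_{b'}(x)f[b](x)$, exactly the integrand of \Cref{prop:pseudoIntegral}. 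With these two corrections the index-and-volume bookkeeping collapses precisely as you describe, and the argument coincides with the paper's.
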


\begin{proof}
    Let $\zeta=\zeta_b$ be the $A_G(F)$-character of $b$. Choose a function $f[b]\in C_{c,\cusp}^\infty(G,\zeta,K)$ representing the pseudocoefficient $f[b]_G\in\Ic C_{c,\cusp}^{\infty}(G,\zeta,K)$ of $b$. Define $f:G(F)\to\CC$ by 
    \[
    f(x)=\phi_b(H_G(x))f[b](x).
    \]
    By \Cref{lem:fcn}, we have that $f\in\Fs(G)$. We have $f_G(\gamma)=\phi_b(H_G(\gamma))f[b]_G(\gamma)$, which does not depend on the choice of $f[b]$ and lies in $\Ic\Fs_{\cusp}(G)$. By \Cref{lem:pseudo}, we have $f[b]_G\in\Ic C_{c,\cusp}^{\infty,\st}(G,\zeta,K)$. That is, $f[b]_G(\gamma)$ is constant on regular semisimple stable classes. Since $H_G$ is constant on elements of $G(F)$ that lie in the same $G$-conjugacy class, we have that $f_G$ is constant on regular semisimple stable classes, i.e. $f_G\in\Ic\Fs_{\cusp}^\st(G)$.
    
    It remains for us to show that for all $b'\in \ul{B}_\el(G)$ and $\lambda\in i\ak_G^*$ we have
    \[
    f_G(b_\lambda')=\vol(A_G(F)^1)\|b\|_\el^2\frac{|\ak_{G,b}^\vee/\ak_{G,F}^\vee|}{|\ak_{G,b}/\wt{\ak}_{G,F}|}\varphi_b(\lambda)\delta_b(b').
    \]
    Let $\zeta'=\zeta_{b'}$ be the $A_G(F)$-character of $b'$. We have
    \begin{align*}
        f_G(b_\lambda')=&\int_{G(F)}e^{\langle\lambda,H_G(x)\rangle}\Theta_{b'}(x)\phi_b(H_G(x))f[b](x)\dd{x} \\
        =&\int_{G(F)/A_G(F)}\int_{A_G(F)}e^{\langle\lambda,H_G(xa)\rangle}\Theta_{b'}(xa)\phi_b(H_G(xa))f[b](xa)\dd{a}\dd{x} \\
        =&\int_{G(F)/A_G(F)}e^{\langle\lambda,H_G(x)\rangle}\Theta_{b'}(x)f[b](x) \\
        &\int_{A_G(F)}e^{\langle\lambda,H_G(a)\rangle}\zeta'(a)\zeta(a)^{-1}\phi_b(H_G(xa))\dd{a}\dd{x}.
    \end{align*}
    The inner integral in the third equality is equal to
    \begin{align*}
        &\int_{A_G(F)/A_G(F)^1}\int_{A_G(F)^1}e^{\langle\lambda,H_G(aa^1)\rangle}\zeta'(aa^1)\zeta(aa^1)^{-1}\phi_b(H_G(xaa^1))\dd{a^1}\dd{a} \\
        &=\int_{A_G(F)/A_G(F)^1}e^{\langle\lambda,H_G(a)\rangle}\zeta'(a)\zeta(a)^{-1}\phi_b(H_G(xa))\int_{A_G(F)^1}\zeta'(a^1)\zeta(a^1)^{-1}\dd{a^1}\dd{a}.
    \end{align*}
    Now, 
    \[
    \int_{A_G(F)^1}\zeta'(a^1)\zeta(a^1)^{-1}\dd{a^1}=\vol(A_G(F)^1)\delta_{\zeta|_{A_G(F)^1}}(\zeta'|_{A_G(F)^1}),
    \]
    which is equal to 1 if $\zeta,\zeta'$ lie in the same $i\ak_G^*$-orbit, and 0 otherwise. Since $\zeta,\zeta'$ lie in our set $\Xs^G$ of representatives of $i\ak_G^*$-orbits, we obtain 
    \[
    \int_{A_G(F)^1}\zeta'(a^1)\zeta(a^1)^{-1}\dd{a^1}=\vol(A_G(F)^1)\delta_{\zeta}(\zeta')
    \]
    Therefore we have $f_G(b'_\lambda)=0$ if $\zeta'\neq\zeta$. Thus, we may assume that $\zeta'=\zeta$. Then
    \begin{align*}
        &\vol(A_G(F)^1)^{-1}f_G(b_\lambda') \\
        =&\int_{G(F)/A_G(F)}\Theta_{b'}(x)f[b](x) \\
        &\int_{A_G(F)/A_G(F)^1}e^{\langle\lambda,H_G(x)+H_G(a)\rangle}\phi_b(H_G(x)+H_G(a))\dd{a}\dd{x} \\
        =&\int_{G(F)/A_G(F)}\Theta_{b'}(x)f[b](x)\int_{\wt{\ak}_{G,F}}e^{\langle\lambda,H_G(x)+\wt{X}\rangle}\phi_b(H_G(x)+\wt{X})\dd{\wt{X}}\dd{x} \\
        =&\sum_{X\in\ak_{G,F}/\wt{\ak}_{G,F}}\int_{H_G^{-1}(X+\wt{\ak}_{G,F})}\Theta_{b'}(x)f[b](x)\int_{\wt{\ak}_{G,F}}e^{\langle\lambda,H_G(x)+\wt{X}\rangle}\phi_b(H_G(x)+\wt{X})\dd{\wt{X}}\dd{x} \\
        =&\sum_{X\in\ak_{G,F}/\wt{\ak}_{G,F}}\int_{H_G^{-1}(X+\wt{\ak}_{G,F})}\Theta_{b'}(x)f[b](x)\int_{\wt{\ak}_{G,F}}e^{\langle\lambda,X+\wt{X}\rangle}\phi_b(X+\wt{X})\dd{\wt{X}}\dd{x}
    \end{align*}
    Consequently, by \Cref{prop:pseudoIntegral}, we have
    \begin{align*}
    f_G(b_\lambda')=&\vol(A_G(F)^1)\|b\|_\el^2|\ak_{G,b}/\wt{\ak}_{G,F}|^{-1}\delta_b(b') \\
    &\sum_{X\in\ak_{G,b}/\wt{\ak}_{G,F}}\int_{\wt{\ak}_{G,F}}e^{\langle\lambda,X+\wt{X}\rangle}\phi_b(X+\wt{X})\dd{\wt{X}} \\
    &=\vol(A_G(F)^1)\|b\|_\el^2|\ak_{G,b}/\wt{\ak}_{G,F}|^{-1}\delta_b(b')\int_{\ak_{G,b}}e^{\langle\lambda,X\rangle}\phi_b(X)\dd{X}.
    \end{align*}
    For $X\in\ak_{G,F}$, we have
    \begin{align*}
        \phi_b(X)&=\int_{i\ak_G^*/\ak_{G,F}^\vee}e^{-\langle\lambda,X\rangle}\varphi_b(\lambda)\dd{\lambda} \\
        &=\int_{i\ak_G^*/\ak_{G,b}^\vee}\sum_{\wt{\lambda}\in\ak_{G,b}^\vee/\ak_{G,F}^\vee}e^{-\langle\lambda+\wt{\lambda},X\rangle}\varphi_b(\lambda+\wt{\lambda})\dd{\lambda} \\
        &=\int_{i\ak_G^*/\ak_{G,b}^\vee}e^{-\langle\lambda,X\rangle}\varphi_b(\lambda)\sum_{\wt{\lambda}\in\ak_{G,b}^\vee/\ak_{G,F}^\vee}e^{-\langle\wt{\lambda},X\rangle}\dd{\lambda} \\
        &=\int_{i\ak_G^*/\ak_{G,b}^\vee}e^{-\langle\lambda,X\rangle}\varphi_b(\lambda)|\ak_{G,b}^\vee/\ak_{G,F}^\vee|\1_{\ak_{G,b}}(X)\dd{\lambda} \\
        &=|\ak_{G,b}^\vee/\ak_{G,F}^\vee|\1_{\ak_{G,b}}(X)\wh{\varphi_b}(X).
    \end{align*}
    Therefore
    \[
    \int_{\ak_{G,b}}e^{\langle\lambda,X\rangle}\phi_b(X)\dd{X}=|\ak_{G,b}^\vee/\ak_{G,F}^\vee|\int_{\ak_{G,b}}e^{\langle\lambda,X\rangle}\wh{\varphi_b}(X)\dd{X}=|\ak_{G,b}^\vee/\ak_{G,F}^\vee|\varphi_b(\lambda).
    \]
    Thus, we have
    \[
    f_G(b_\lambda')=\vol(A_G(F)^1)\|b\|_\el^2\frac{|\ak_{G,b}^\vee/\ak_{G,F}^\vee|}{|\ak_{G,b}/\wt{\ak}_{G,F}|}\varphi_b(\lambda)\delta_{b}(b')
    \]
    as required.
\end{proof}

Recall that we denote the invariant Fourier transform by $\Fc$. Stable spectral density for $\Fs_\cusp(G)$ is equivalent to the assertion that $\Fc^{-1}(\wh{\Fs}_\el^\unst(G))\subseteq\Ic\Fs_\cusp^\unst(G)$.

\begin{lemma}
    We have $\Fc^{-1}(\Ss_\el^\unst(G))\subseteq\Ic_\cusp^\unst(G)$. That is, stable spectral density holds for $\Cc_\cusp(G)$.
\end{lemma}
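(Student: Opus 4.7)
The plan is to apply the stable Fourier inversion formula \Cref{thm:stableinversion} after showing that $f^G$ vanishes on \emph{all} of $\Phi_\temp(G)$, not just on the elliptic parameters $\Phi_2(G)$.

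Let $f\in\Cc_\cusp(G)$ with $\Fc(f_G)\in\Ss_\el^\unst(G)$. The goal is to deduce $f^G(\delta)=0$ for every $\delta\in\Delta_\rs(G)$. Since $f^G$ is continuous on $\Delta_\rs(G)$ and $\Delta_\sr(G)$ is a dense open subset, it suffices to treat $\delta\in\Delta_\sr(G)$, where \Cref{thm:stableinversion} applies directly.

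I would then check that $f^G(\phi)=0$ for \emph{every} $\phi\in\Phi_\temp(G)$. When $\phi\in\Phi_2(G)$ this is immediate from the hypothesis on $\Fc(f_G)$: by construction $\Ss_\el^\unst(G)$ is precisely the subspace of $\Ss_\el(G)$ whose linear extension to $D_\el(G)$ annihilates $D_\el^\st(G)=\CC\Phi_2(G)$. When $\phi$ is non-elliptic, write $\phi=\iota_M^G(\phi_M)$ for a proper Levi $M$ and $\phi_M\in\Phi_2(M)$. Compatibility of stable tempered characters with parabolic induction (\Cref{des:8a}) gives $\pi_\phi=I_M^G\pi_{\phi_M}$, so that as invariant tempered distributions $\Theta_\phi=I_M^G\Theta_{\phi_M}$. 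Unwinding the transpose of parabolic descent,
$$f^G(\phi)=\Theta_\phi(f)=\Theta_{\phi_M}(f_M).$$
Cuspidality of $f$ means $f_M=0$ in $\Ic(M)$, so $\Theta_{\phi_M}(f_M)=0$ and hence $f^G(\phi)=0$.

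Finally, for every $\delta\in\Delta_\sr(G)$, \Cref{thm:stableinversion} yields
$$f^G(\delta)=\int_{\Phi_\temp(G)}S^G(\delta,\phi)f^G(\phi)\,d\phi=0,$$
completing the proof. There is no real obstacle once \Cref{thm:stableinversion} and the parabolic-induction compatibility \Cref{des:8a} are in hand: stable spectral density for $\Cc_\cusp(G)$ reduces cleanly to vanishing of the stable Fourier transform on elliptic parameters, which is exactly the hypothesis. The only mildly delicate point is the non-elliptic case, but this is handled by parabolic descent together with cuspidality of $f$.
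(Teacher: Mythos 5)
Your argument has a genuine circularity. You appeal to the stable Fourier inversion formula \Cref{thm:stableinversion} as a black box, but that proposition is not independently available at this stage: as the paper notes in its closing remark of \S4.5, the stable Fourier inversion formula for $\Cc(G)$ is deduced, following Arthur's proof of \cite[Lemma 6.3]{ArthurRelations}, \emph{from} the stable Paley--Wiener theorem for $\Sc(G)$ --- which is precisely what the present lemma is a step toward. The content of stable Fourier inversion is exactly that $f^G(\delta)$ depends only on the projection of the invariant Fourier transform $f_G$ onto the ``stable'' spectral part; turning that around, applying it to a cuspidal $f$ with $f_G$ supported in the unstable part simply re-encodes stable spectral density. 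So the step ``apply \Cref{thm:stableinversion}'' is the statement to be proved in disguise.

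The intermediate steps of your argument are fine and worth noting: if $f\in\Cc_\cusp(G)$ with $\Fc(f_G)\in\Ss_\el^\unst(G)$, then indeed $f^G(\phi)=0$ for every $\phi\in\Phi_\temp(G)$. The elliptic case is by definition of $\Ss_\el^\unst(G)$, and in the non-elliptic case $\phi=\iota_M^G(\phi_M)$ your use of \Cref{des:8a} together with parabolic descent and cuspidality ($f_M=0$ for $M\neq G$) correctly gives $\Theta_\phi(f)=\Theta_{\phi_M}(f_M)=0$. What is missing is a non-circular way to pass from vanishing of the stable Fourier transform to vanishing of stable orbital integrals.

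The paper avoids all this with a soft continuity/density argument that does not touch Fourier inversion at all. From Theorem \ref{StableDensity} (stable spectral density for $C_c^\infty(G)$, due to Arthur), one has $\Fc^{-1}(PW_\el^\unst(G))\subseteq\Ic_{c,\cusp}^\unst(G)\subseteq\Ic_{\cusp}^\unst(G)$. By \Cref{PWDenseSchwartz}, $PW_\el^\unst(G)$ is dense in $\Ss_\el^\unst(G)$, and $\Ic_\cusp^\unst(G)$ --- being the intersection of $\Ic_\cusp(G)$ with the common kernel of the continuous functionals $SO_\delta$ --- is closed in $\Ic_\cusp(G)$. Since the inverse invariant Fourier transform $\Fc^{-1}:\Ss_\el(G)\to\Ic_\cusp(G)$ is continuous (invariant Paley--Wiener theorem), the image of the dense subspace lands in a closed subspace, hence so does the image of its closure: $\Fc^{-1}(\Ss_\el^\unst(G))\subseteq\Ic_\cusp^\unst(G)$. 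This is both simpler and logically prior to any Fourier inversion.
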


\begin{proof}
    By stable spectral density for $\Cc_c^\infty(G)$, we have $\Fc^{-1}(PW_\el^\unst(G))\subseteq\Ic_{c,\cusp}^\unst(G)$. Since $PW_\el^\unst(G)$ is dense in $\Ss_\el^\unst(G)$ and $\Ic_\cusp^\unst(G)$ is closed in $\Ic_\cusp(G)$, it follows that we have $\Fc^{-1}(\Ss_\el^\unst(G))\subseteq\Ic_\cusp^\unst(G)$.
\end{proof}

We now prove \Cref{prop:cuspidal}.

\begin{proof}
    Let $\varphi\in\wh{\Fs}_\el^\st(G)$. Write $\ul{B}_\el^\st(G)$ as a countable increasing union of finite sets $\ul{B}_\el^\st(G)=\bigcup_{i=1}^\infty \ul{B}_{\el,i}^\st(G)$. For each $i$, define $\varphi_i=(\varphi_{i,b})_{b\in \ul{B}_{\el,i}(G)}\in\wh{\Fs}_\el^\st(G)$ by $\varphi_{i,b}=\varphi_b$ if $b\in \ul{B}_{\el,i}(G)$ and $\varphi_{i,b}=0$ otherwise. Then $\lim_{i}\varphi_i=\varphi$. By \Cref{prop:cuspidalInverseFourier}, we have $\Fc^{-1}(\varphi_i)\in\Ic\Fs_\cusp^\st(G)$. Since $\Ic\Fs_\cusp^\st(G)$ is closed in $\Ic\Fs_\cusp(G)$, it follows that $\Fc(\varphi)\in\Ic\Fs_\cusp^\st(G)$. Therefore we have $\Fc^{-1}(\wh{\Fs}_\el^\st(G))\subseteq\Ic\Fs_\cusp^\st(G)$. 
    
    Since $\wh{\Fs}_\el(G)=\wh{\Fs}_\el^\st(G)\oplus \wh{\Fs}_\el^\unst(G)$, we have 
    \begin{align*}
    \Ic\Fs_\cusp(G)&=\Fc^{-1}(\wh{\Fs}_\el^\st(G))\oplus \Fc^{-1}(\wh{\Fs}_\el^\unst(G)) \\
    &\subseteq\Ic\Fs_\cusp^\st(G)\oplus\Ic\Fs_\cusp^\unst(G) \\
    &\subseteq\Ic\Fs_\cusp(G).
    \end{align*}
    Therefore $\Fc^{-1}(\wh{\Fs}_\el^?(G))=\Ic\Fs_\cusp^?(G)$ as required.
\end{proof}

\subsubsection{$K$-finite functions.}

If we trace through the above, replacing $\Ic\Fs$ with $\Ic_f$, $\Sc\Fs$ with $\Sc_f$, and $\wh{\Fs}$ with $PW_f$, we obtain a proof of the following.

\begin{proposition}
    The invariant Fourier transform
    \[
    \begin{tikzcd}
        \Ic_{f,\cusp}(G) \arrow[r,"\sim"] & PW_{\el,f}(G)
    \end{tikzcd}
    \]
    restricts to isomorphisms
    \[
    \begin{tikzcd}
        \Ic_{f,\cusp}^?(G) \arrow[r,"\sim"] & PW_{\el,f}^{?}(G).
    \end{tikzcd}
    \]
    Consequently, we have
    \[
    \Ic_{f,\cusp}(G)=\Ic_{f,\cusp}^\st(G)\oplus\Ic_{f,\cusp}^\unst(G).
    \]
\end{proposition}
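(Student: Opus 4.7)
The plan is to adapt the proof of Proposition \ref{prop:cuspidal} and its supporting Proposition \ref{prop:cuspidalInverseFourier} to the $K$-finite setting, following the author's suggestion. Throughout, I replace $\Ic\Fs_\cusp(G)$ and $\wh{\Fs}_\el(G)$ by $\Ic_{f,\cusp}(G)$ and $PW_{\el,f}(G)$, define $\Ic_{f,\cusp}^\st(G)$ as the closed subspace of $\Ic_{f,\cusp}(G)$ of functions constant on regular semisimple stable classes, and define $\Ic_{f,\cusp}^\unst(G)$ as the kernel of the natural map $\Ic_{f,\cusp}(G)\to\Sc_{f,\cusp}(G)$.

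The first step is to set up the analogue of the decomposition $\wh{\Fs}_\el(G)=\wh{\Fs}_\el^\st(G)\oplus\wh{\Fs}_\el^\unst(G)$ for $PW_{\el,f}(G)$. This is in fact simpler than the Schwartz or test function case: the $PW_f$-condition forces support on only finitely many connected components of $\Phi_2(G)$ or $T_\el(G)$, and the change-of-basis matrices between $E_\el(G)$ and $B_\el(G)$ are block-diagonal with blocks indexed by $(\zeta,\mu)\in\Ps^G$, each block finite-dimensional by \Cref{bounded}. Thus the change-of-basis yields an algebraic and topological direct sum decomposition immediately, with no uniform bounds on matrix entries required. I then define $PW_{\el,f}^?(G)$ as the $PW_f$-space supported on the appropriate union of components indexed by $\ul{B}_\el^?(G)$.

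The second step is the $K$-finite analogue of \Cref{prop:cuspidalInverseFourier}. Given $b\in\ul{B}_\el^\st(G)$ and $\varphi\in PW_{\el,f}(G)_b$, I would let $\phi_b\in C_c^\infty(\ak_{G,F})$ be the inverse Fourier transform of $\varphi_b$, choose a representative $f[b]\in C_{c,\cusp}^\infty(G,\zeta_b,K)$ of the pseudocoefficient, and form $f(x)=\phi_b(H_G(x))\,f[b](x)$. By \Cref{lem:fcn}, $f\in C_c^\infty(G)$; moreover, since $H_G$ is a continuous group homomorphism whose kernel contains $K$, the factor $\phi_b\circ H_G$ is bi-$K$-invariant, and hence $f$ inherits $K$-finiteness from $f[b]$, placing it in $C_c^\infty(G,K)$. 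The computation of $f_G(b_\lambda')$ is then word-for-word the same as in \Cref{prop:cuspidalInverseFourier}, giving $\Fc^{-1}(\varphi)\in\Ic_{f,\cusp}^\st(G)$.

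Finally, stable spectral density for $C_{c,\cusp}^\infty(G,\zeta,K)$ follows at once from \Cref{StableDensity} together with the inclusion $C_{c,\cusp}^\infty(G,\zeta,K)\subseteq C_c^\infty(G,\zeta)$, so $\Fc^{-1}(PW_{\el,f}^\unst(G))\subseteq\Ic_{f,\cusp}^\unst(G)$. Any $\varphi\in PW_{\el,f}^\st(G)$ is a finite sum of elements of the form handled in the previous step, so no limiting argument is needed (in contrast to \Cref{prop:cuspidal}). Combining these, the chain
\[
\Ic_{f,\cusp}(G)=\Fc^{-1}(PW_{\el,f}^\st(G))\oplus\Fc^{-1}(PW_{\el,f}^\unst(G))\subseteq\Ic_{f,\cusp}^\st(G)\oplus\Ic_{f,\cusp}^\unst(G)\subseteq\Ic_{f,\cusp}(G)
\]
forces equality throughout, yielding both the restricted isomorphisms and the claimed direct sum decomposition. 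The only real obstacle is the bookkeeping verification that the inverse Fourier transform construction stays within the $K$-finite category, which is immediate from the bi-$K$-invariance of $H_G$.
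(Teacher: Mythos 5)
Your proposal is correct and is precisely the "trace-through" argument the paper indicates (replacing $\Ic\Fs$, $\Sc\Fs$, $\wh{\Fs}$ by $\Ic_f$, $\Sc_f$, $PW_f$), with the supplementary observations — that the $K$-finiteness of $f^\phi$ is already noted after \Cref{lem:fcn}, that the change-of-basis decomposition of $PW_{\el,f}(G)$ needs no uniform bounds since it is block-diagonal with finite blocks, and that no limiting argument is required since $PW_f$-functions are supported on finitely many components — all being exactly the details the paper leaves implicit.
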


\begin{corollary}
    The stable Fourier transform gives an isomorphism
    \[
    \begin{tikzcd}
        \Sc_{f,\cusp}(G) \arrow[r,"\sim"] & PW_{\el,f}^\st(G).
    \end{tikzcd}
    \]
\end{corollary}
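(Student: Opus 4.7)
The plan is to deduce this corollary immediately from the preceding proposition, exactly paralleling the earlier cuspidal corollary $\Sc\Fs_\cusp(G) \xrightarrow{\sim} \wh{\Fs}_\el^\st(G)$.

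First I would observe that the stable Fourier transform on $\Ic_{f,\cusp}(G)$ factors as the composition of the invariant Fourier transform isomorphism $\Fc:\Ic_{f,\cusp}(G) \xrightarrow{\sim} PW_{\el,f}(G)$ with the natural projection $PW_{\el,f}(G) = PW_{\el,f}^\st(G) \oplus PW_{\el,f}^\unst(G) \twoheadrightarrow PW_{\el,f}^\st(G)$. This is because $B_\el^\st(G) = \Phi_2(G)$ by construction, so the coordinates of $\varphi \in PW_{\el,f}(G)$ indexed by $\ul{B}_\el^\st(G)$ are precisely the values of the associated linear functional on $D_\el^\st(G)$, i.e. the values $\Theta_\phi(f)$ for $\phi \in \Phi_2(G)$. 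Thus the projection onto $PW_{\el,f}^\st(G)$ corresponds to recording exactly the stable Fourier transform values.

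Next, the preceding proposition gives the splitting $\Ic_{f,\cusp}(G) = \Ic_{f,\cusp}^\st(G) \oplus \Ic_{f,\cusp}^\unst(G)$, together with the identifications $\Fc(\Ic_{f,\cusp}^?(G)) = PW_{\el,f}^?(G)$ for $? = \st, \unst$. It follows that the composition above is surjective (it is the composition of an isomorphism with a projection onto a direct summand), and its kernel equals $\Fc^{-1}(PW_{\el,f}^\unst(G)) = \Ic_{f,\cusp}^\unst(G)$. By the very definition of $\Ic_{f,\cusp}^\unst(G)$ as the kernel of the quotient map $\Ic_{f,\cusp}(G) \to \Sc_{f,\cusp}(G)$, this is exactly what one needs in order for the composition to descend to a continuous linear bijection $\Sc_{f,\cusp}(G) \to PW_{\el,f}^\st(G)$, and this descent map is manifestly the stable Fourier transform.

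There is no real obstacle here; all the substantive content has already been packaged into the preceding proposition (itself a $K$-finite analogue of \Cref{prop:cuspidal}). The only thing to verify is that the descended map is not merely a continuous algebraic bijection but an isomorphism of topological vector spaces, which is immediate since both $\Sc_{f,\cusp}(G)$ and $PW_{\el,f}^\st(G)$ arise as the image, respectively direct summand, in the decomposition induced by $\Fc$; equivalently, the projection $PW_{\el,f}(G) \to PW_{\el,f}^\st(G)$ and the quotient $\Ic_{f,\cusp}(G) \to \Sc_{f,\cusp}(G)$ are both open and continuous with matching kernels, so the induced map is an isomorphism of topological vector spaces.
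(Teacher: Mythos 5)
Your proof is correct and follows exactly the route the paper intends: the paper omits an explicit argument here, merely citing ``trace through the above,'' but the analogue it has in mind is the proof of the corollary just after \Cref{lem:pseudo}, which is precisely your factorisation of $\Fc^{\st}$ as the invariant Fourier isomorphism followed by projection onto the stable summand, identification of the kernel with $\Ic_{f,\cusp}^{\unst}(G)$ via the preceding proposition, and descent. Your closing remark about topological-vector-space isomorphism is also the right observation — both the projection onto the summand $PW_{\el,f}^{\st}(G)$ and the quotient $\Ic_{f,\cusp}(G)\to\Sc_{f,\cusp}(G)$ are open since they quotient by complemented closed subspaces that correspond under $\Fc$ — and matches the style of argument used in the paper's proof of the main stable Paley--Wiener theorem.
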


\subsubsection{Proof of the main stable Paley--Wiener theorems.}
We now apply the constructions and results of \Cref{sec:cuspidalcase} to each semistandard Levi $L\in\Lc^G(M_0)$. We define
\[
\wh{\Fs}^\unst(G)=\Bigg(\bigoplus_{L\in\Lc^G(M_0)}\wh{\Fs}_\el^\unst(L)\Bigg)^{W_0^G}=\bigoplus_{L\in\Lc^G(M_0)/W_0^G}\wh{\Fs}_\el^\unst(L)^{W^G(L)}.
\]
We have decompositions
\[
\wh{\Fs}(G)=\Bigg(\bigoplus_{L\in\Lc^G(M_0)}\wh{\Fs}_\el(G)\Bigg)^{W_0^G}=\bigoplus_{L\in\Lc^G(M_0)/W_0^G}\wh{\Fs}_\el(L)^{W^G(L)}
\]
and $\wh{\Fs}_\el(L)=\wh{\Fs}_\el^\st(L)\oplus\wh{\Fs}_\el^\unst(L)$ for each $L\in\Lc^G(M_0)$. Thus, we obtain a decomposition $\wh{\Fs}(G)=\wh{\Fs}^\st(G)\oplus\wh{\Fs}^\unst(G)$. By \Cref{PWDenseSchwartz} the inclusion $PW^\unst(G)\to\Ss^\unst(G)$ is continuous with dense image.

We can now deduce the full stable spectral density theorem for $\Cc(G)$ from the stable spectral density theorem for $C_c^\infty(G)$. 

\begin{theorem}[Stable spectral density] \label{StableSpectralDensity}
    Let $f\in\Cc(G)$. If $f^G(\phi)=0$ for all $\phi\in\Phi_\temp(G)$, then $f^G(\delta)=0$ for all $\delta\in\Delta_\rs(G)$. Equivalently, $\Fc^{-1}(\Ss^\unst(G))\subseteq\Ic\Cc^\unst(G)$.
\end{theorem}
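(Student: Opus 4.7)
The plan is to bootstrap from stable spectral density for test functions (\Cref{StableDensity}) using a density and closedness argument, mirroring the proof of the cuspidal case given as the lemma just before \Cref{prop:cuspidal}. The formulation $\Fc^{-1}(\Ss^\unst(G))\subseteq\Ic^\unst(G)$ is convenient because the invariant Paley--Wiener theorem makes $\Fc:\Ic(G)\to\Ss(G)$ an isomorphism of topological vector spaces.

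First I would check the analogous statement for test functions: by \Cref{StableDensity} applied to the trivial central datum, stable spectral density holds for $C_c^\infty(G)$, and translating this through the invariant Paley--Wiener isomorphism $\Fc:\Ic_c(G)\to PW(G)$ yields $\Fc^{-1}(PW^\unst(G))\subseteq \Ic_c^\unst(G)$. The identification here uses that $f_G\in PW^\unst(G)$ means the linear extension of $f_G$ to $D_\temp(G)$ vanishes on $D_\temp^\st(G)$, which by \Cref{hypothesis}\,(\ref{des:8}) (the compatibility of stable tempered characters with parabolic induction, applied via the decomposition $T_\temp(G)=\coprod_{L/W_0^G} T_\el(L)/W^G(L)$) is equivalent to $\Theta_\phi(f)=0$ for all $\phi\in\Phi_\temp(G)$.

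Next I would verify the two topological facts. Density: the decomposition
\[
\Ss^\unst(G)=\bigoplus_{L\in\Lc^G(M_0)/W_0^G}\Ss_\el^\unst(L)^{W^G(L)}\,,\qquad PW^\unst(G)=\bigoplus_{L\in\Lc^G(M_0)/W_0^G}PW_\el^\unst(L)^{W^G(L)}
\]
reduces the density of $PW^\unst(G)$ in $\Ss^\unst(G)$ (and continuity of the inclusion) to \Cref{PWDenseSchwartz} applied to each $\Lambda=\coprod_{b\in \ul{B}_\el^\unst(L)}i\ak_L^*/\ak_{L,b}^\vee$, together with the fact that averaging over the finite group $W^G(L)$ is continuous and preserves density. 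Closedness: $\Ic^\unst(G)$ is the kernel of the continuous quotient $\Ic(G)\to\Sc(G)$, hence closed.

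Finally I would conclude: given $\varphi\in\Ss^\unst(G)$, choose $\varphi_n\in PW^\unst(G)$ with $\varphi_n\to\varphi$ in $\Ss(G)$. Since $\Fc^{-1}:\Ss(G)\to\Ic(G)$ is continuous, $\Fc^{-1}(\varphi_n)\to\Fc^{-1}(\varphi)$ in $\Ic(G)$. Each $\Fc^{-1}(\varphi_n)$ lies in $\Ic_c^\unst(G)\subseteq \Ic^\unst(G)$ by step one, and the closedness of $\Ic^\unst(G)$ forces $\Fc^{-1}(\varphi)\in\Ic^\unst(G)$. There is no real obstacle: the proof is a formal extension by continuity, identical in shape to the cuspidal density argument already in the text, and all heavy lifting is encapsulated in \Cref{StableDensity} and the invariant Paley--Wiener theorem for $\Cc(G)$.
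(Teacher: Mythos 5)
Your proposal is correct and follows essentially the same route as the paper: deduce $\Fc^{-1}(PW^\unst(G))\subseteq\Ic_c^\unst(G)$ from stable spectral density for test functions via the Paley--Wiener isomorphism, then extend by continuity using the density of $PW^\unst(G)$ in $\Ss^\unst(G)$ and the closedness of $\Ic\Cc^\unst(G)$ in $\Ic\Cc(G)$. The paper's own proof (and its cuspidal precursor) is essentially the three-sentence version of your argument, with the density and closedness facts left implicit rather than unpacked as you do.
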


\begin{proof}
    By stable spectral density for $C_c^\infty(G)$, we have $\Fc^{-1}(PW^\unst(G))\subseteq\Ic C_c^{\infty,\unst}(G)$. Since $PW^\unst(G)$ is dense in $\Ss^\unst(G)$ and $\Ic\Cc^\unst(G)$ is closed in $\Ic\Cc(G)$, it follows that we have $\Fc^{-1}(\Ss^\unst(G))\subseteq\Ic\Cc^\unst(G)$.
\end{proof}

Note that $\Fc(\Ic\Fs^\unst(G))\subseteq\wh{\Fs}^\unst(G)$ follows immediately from the definitions. Thus, we have $\Fc^{-1}(\wh{\Fs}^\unst(G))=\Ic\Fs^\unst(G)$. 

We now prove the main stable Paley--Wiener theorems.

\begin{proof}
    The stable Fourier transform $\Fc^\st:\Ic\Fs(G)\to\wh{\Fs}^\st(G)$ is the composition of the invariant Fourier transform $\Fc$ and the projection onto $\wh{\Fs}^\st(G)$. The kernel is $\Ic\Fs^\unst(G)$, so it descends to a continuous bijection $\Fc^\st:\Sc\Fs(G)\to\wh{\Fs}^\st(G)$. Since its inverse is the composition of continuous maps
    \[
    \begin{tikzcd}
        \wh{\Fs}^\st(G) \arrow[r,hook] & \wh{\Fs}(G) \arrow[r,"\Fc^{-1}"] & \Ic\Fs(G) \arrow[r,two heads] & \Sc\Fs(G)
    \end{tikzcd}
    \]
    the stable Fourier transform $\Fc^\st:\Sc\Fs(G)\to\wh{\Fs}^\st(G)$ is a topological isomorphism. 
    
    In the same way, one can prove that the stable Fourier transform restricts to an isomorphism of topological vector spaces $\Fc^\st:\Sc_f(G)\to PW_f^\st(G)$.
\end{proof}

Note that the stable Paley--Wiener theorems for $\Sc_{f,\cusp}(G,\zeta)$ and $\Sc\Fs_{\cusp}(G)$ can be proved in a similar way. The proof given above gives more information, namely that the subspace $\Ic_{f,\cusp}^\st(G,\zeta)$ (resp. $\Ic\Fs_{\cusp}^\st(G)$) corresponds to $\wh{\Fs}_{\el,f}^\st(G,\zeta)$ (resp. $\wh{\Fs}_\el^\st(G)$) under the invariant Fourier transform.

Using the stable Paley--Wiener Theorem for $\Sc(G)$, a Fourier inversion formula for elements of $\Sc(G)$ can be obtained from Arthur's Fourier inversion formula for elements of $\Ic(G)$. The argument is the same as the one used by Arthur to prove \cite[6.3]{ArthurRelations}.

\section{Stable Transfer} \label{sec:transfer}
In this section we prove \Cref{mainthm}. We also describe some examples of stable transfer operators explicitly. We begin by recalling the setup from the introduction. Let $H$ and $G$ be connected reductive groups over our local field $F$, and let $\xi:\Lgp{H}\to\Lgp{G}$ be an $L$-homomorphism (taken up to equivalence). Assume that $G$ is quasisplit so that $G$-relevance is automatic. Then we have a map
\[
\xi_*:\Phi(H)\longrightarrow\Phi(G)
\]
defined by $\xi_*(\phi)=\xi\circ\phi$. This restricts to a map
\[
\xi_*:\Phi_\temp(H)\longrightarrow\Phi_\temp(G)
\]
if and only if $\xi$ is tempered in the sense that $\xi|_{W_F}$ is a tempered $L$-parameter of $G$. Suppose that $\xi$ is injective and tempered. In \cite[Questions A \& B]{LanST}, Langlands asked whether for each $f^G\in\Sc_c(G)$ there exists $f^H\in\Sc_c(H)$ such that
\[
f^H(\phi)=f^G(\xi_*(\phi))
\]
for all $\phi\in\Phi_\temp(H)$. Note that if $f^H$ exists, it is uniquely determined by stable spectral density for test functions. More generally, one can ask whether for each $f^G\in\Sc(G)$ there exists $f^H\in\Sc(H)$ with the above property, which would also be uniquely determined by spectral density. The function $f^H$ is called the stable transfer of $f^G$ along $\xi$, and we denote it by $\Tc_\xi f^G$. It has also been called the functorial transfer of $f^G$ and, to avoid confusion with endoscopic transfer, the stable-stable transfer of $f^G$. We will prove the following.

\begin{theorem}\label{thm:pullback}
    Let $\xi:\Lgp{H}\to\Lgp{G}$ be an equivalence class of injective tempered $L$-homomorphisms, and assume that $G$ is quasisplit. Then pullback along $\xi_*$ gives a well-defined continuous linear map 
    \[
    \xi^*:\Ss^\st(G)\longrightarrow \Ss^\st(H).
    \]
    Furthermore, it restricts to continuous linear maps
    \[
    \xi^*:PW^\st(G)\longrightarrow PW^\st(H)
    \]
    and
    \[
    \xi^*:PW_f^\st(G)\longrightarrow PW_f^\st(H).
    \]
\end{theorem}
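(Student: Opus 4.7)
The plan is to reduce the theorem to the abstract pullback lemma \ref{pullback} via the Harish--Chandra-style decompositions
\[
\Ss^\st(G) = \bigoplus_{L \in \Lc^G(M_0)/W_0^G} \Ss_\el^\st(L)^{W^G(L)}
\]
and its analogues for $PW$, $PW_f$, and $H$. First I would determine how $\xi_*$ respects these decompositions. Given $\phi \in \Phi_2(M_H)$ with $M_H \in \Lc^H(M_{0,H})$, I choose a minimal Levi subgroup of $\Lgp{G}$ containing $\xi(\iota_{M_H}^H(\Lgp{M_H}))$; by $G$-relevance, which is automatic since $G$ is quasisplit, this comes from some $M_G \in \Lc^G(M_{0,G})$, and minimality yields a factorisation $\xi \circ \iota_{M_H}^H \simeq \iota_{M_G}^G \circ \xi_{M_H,M_G}$ with $(\xi_{M_H,M_G})_*\phi \in \Phi_2(M_G)$. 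This defines the combinatorial component-map $T_E$ from (the relevant parts of) $B_\el^\st(M_H)/W^H(M_H)$ to $B_\el^\st(M_G)/W^G(M_G)$.

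Next I would analyse the unramified twist direction. Restriction of $\xi_{M_H,M_G}$ to centres and the identification $\ak_{M,\CC}^* \cong \Lie Z(\dual{M})^{\Gamma_F,\circ}$ produce a linear map $T_V:\ak_{M_H,\CC}^* \to \ak_{M_G,\CC}^*$ intertwining the twist actions, so that $(\xi_{M_H,M_G})_*(\phi_\lambda) = ((\xi_{M_H,M_G})_*\phi)_{T_V\lambda}$. To apply Lemma \ref{pullback} to each pair $(M_H,M_G)$ I must verify its hypotheses: (i) $T_E$ has finite fibres, which is precisely the $\pi_0$-finiteness statement \Cref{thm:finiteness}; (ii) in the archimedean case, $T_V$ is injective on the relevant quotient, and $\|T_E(e)\| \gg \|e\|$. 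Injectivity of $T_V$ modulo $\ak_{M_H,\phi}^\vee$ follows from the fact that any $\lambda \in \ak_{M_H,\CC}^*$ with $T_V\lambda \in \ak_{M_G,(\xi_{M_H,M_G})_*\phi}^\vee$ yields $\phi_\lambda$ with $\xi_*(\phi_\lambda) = \xi_*\phi$, combined with the minimality of $M_G$. The growth bound reduces via the identification $\|\phi\| = \|\mu_\phi\|$ to the comparability $\|\xi \circ \mu_\phi\| \gg \|\mu_\phi\|$, which holds because temperedness of $\xi$ together with $\|\cdot\|_M \asymp \|\cdot\|_G$ under embeddings of Levis control the relative size of infinitesimal characters.

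With Lemma \ref{pullback} applied on each $(M_H, M_G)$ component, I would assemble the resulting continuous linear maps into a map $\xi^*$ on each direct summand, using a finite sum over $W^G(M_G)$ to land in the $W^G(M_G)$-invariants. The $W_0^H$-invariance of the pulled back function is automatic because $W_0^H$ permutes the factorisations $(M_H,\phi_H)$ above a fixed $W_0^G$-orbit in $\Phi_\temp(G)$, so averaging is unnecessary. Finally, the restriction of $\xi^*$ to $PW^\st$ and $PW_f^\st$ follows from the corresponding restrictions in Lemma \ref{pullback}, the $PW_f$ case requiring in addition that $T_E$ have finite fibres (again \Cref{thm:finiteness}). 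The main obstacle is the combined verification of (i) and (ii), of which the more delicate is the $\pi_0$-finiteness \Cref{thm:finiteness}; the injectivity and growth bound hinge on a careful unpacking of temperedness of $\xi$ and the minimality of $M_G$, and these have to be checked uniformly enough across all source components to feed cleanly into the estimates in the proof of Lemma \ref{pullback}.
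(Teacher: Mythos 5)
Your proposal follows the same overall architecture as the paper: decompose $\Ss^\st(G)$ and $\Ss^\st(H)$ along Levi subgroups, factorize $\xi\circ\iota_{M_H}^H$ through a minimal Levi $\Lgp{M_{H,\xi}}$ of $\Lgp{G}$, reduce to a map on each pair of components via Lemma \ref{pullback}, and invoke \Cref{thm:finiteness} for finiteness of fibres of $T_E$. This is exactly the paper's route, and the high-level reductions you make (including the observation that $W_0^H$-invariance is automatic and that the $PW_f$-case also needs \Cref{thm:finiteness}) are all sound.

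Two places where your argument drifts: (1) Your derivation of injectivity of $T_V$ in the archimedean case does not quite work as stated. You argue that $T_V\lambda \in \ak_{M_G,\xi_*\phi}^\vee$ gives $\xi_*(\phi_\lambda)=\xi_*\phi$ and that ``minimality of $M_G$'' then closes the loop — but $\xi_*$ is not in general fibre-wise injective on $L$-parameters, and minimality alone does not give you back $\phi_\lambda=\phi$. The paper avoids this spectral detour entirely: minimality of $M_{H,\xi}$ is used only to ensure that $\xi:\Lgp{M_H}\to\Lgp{M_{H,\xi}}$ is elliptic, so the lemma on elliptic $L$-homomorphisms applies and gives $\xi(A_{\Lgp{M_H}})\subseteq A_{\Lgp{M_{H,\xi}}}$; injectivity of $T_V=\xi_*:i\ak_{M_H}^*\to i\ak_{M_{H,\xi}}^*$ is then simply the observation that $\xi$ is an injective homomorphism of tori. (2) Your growth bound $\|\xi_*\phi\|\gg\|\phi\|$ is not really a consequence of temperedness of $\xi$. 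The paper writes $\mu_{\xi_*\phi}=\Lie(\xi)\mu_\phi+\mu_\xi$, with $\mu_\xi$ a fixed element, and the asymptotic $\|\Lie(\xi)\mu_\phi+\mu_\xi\|\asymp\|\Lie(\xi)\mu_\phi\|\asymp\|\mu_\phi\|$ hinges on $\Lie(\xi)$ being injective (again because $\xi$ is an injective $L$-homomorphism), not on any boundedness property of $\xi$. Neither of these is fatal to your plan since both facts follow readily once one identifies the right mechanism, but as stated the injectivity step has a gap that would need repairing.
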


It follows that stable transfer along $\xi$ can be constructed by taking the stable Fourier transform, pulling back along $\xi_*$, and then taking the inverse stable Fourier transform:
\[
\begin{tikzcd}
    \Sc(G) \arrow[r,"\Tc_\xi"]\arrow[d,"\Fc^\st"] & \Sc(H) \arrow[d,"\Fc^\st"] \\
    \Ss(G) \arrow[r, "\xi^*"] & \Ss(H)
\end{tikzcd}
\]
We obtain \Cref{mainthm} as a corollary, which we restate here for convenience. In particular, we obtain an affirmative answer to \cite[Questions A \& B]{LanST}.

\begin{corollary} \label{existence}
    Let $\xi:\Lgp{H}\to\Lgp{G}$ be an equivalence class of injective tempered $L$-homomorphisms, and assume that $G$ is quasisplit. If $F$ is non-archimedean, we assume \Cref{hypothesis} for $H$ and $G$. There exists a continuous linear operator
    \[
    \Tc_\xi:\Sc(G)\longrightarrow\Sc(H)
    \]
    whose transpose
    \[
    \Tc_\xi':\Sc(H)'\longrightarrow\Sc(G)'
    \]
    satisfies
    \[
    \Tc_\xi'\Theta_\phi=\Theta_{\xi_*\phi}
    \]
    for all $\phi\in\Phi_\temp(H)$. Moreover, $\Tc_\xi$ restricts to continuous linear operators $\Sc_c(G)\to\Sc_c(H)$ and $\Sc_f(G)\to\Sc_f(H)$.
\end{corollary}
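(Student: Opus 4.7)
The plan is to deduce the corollary from the two main structural results already on the table: the stable Paley--Wiener theorems (\Cref{thm:stablePW}) and the pullback theorem (\Cref{thm:pullback}). Concretely, \Cref{thm:stablePW} identifies the spaces of stable orbital integrals with the spectral Schwartz/Paley--Wiener spaces via the stable Fourier transform, and \Cref{thm:pullback} supplies the continuous pullback operator along $\xi_*$ on the spectral side. Composing these is the whole construction.

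First I would set
\[
\Tc_\xi \defeq (\Fc^\st)^{-1} \circ \xi^* \circ \Fc^\st : \Sc(G) \longrightarrow \Sc(H),
\]
where $\Fc^\st$ denotes the stable Fourier transform on the appropriate group. By \Cref{thm:stablePW}, both instances of $\Fc^\st$ are topological isomorphisms $\Sc(G) \to \Ss^\st(G)$ and $\Sc(H) \to \Ss^\st(H)$; by \Cref{thm:pullback}, $\xi^* : \Ss^\st(G) \to \Ss^\st(H)$ is continuous. Hence $\Tc_\xi$ is a well-defined continuous linear operator. The same definition, interpreted via the isomorphisms $\Sc_c(G) \cong PW^\st(G)$ and $\Sc_f(G) \cong PW_f^\st(G)$ and the restrictions of $\xi^*$ asserted in \Cref{thm:pullback}, shows that $\Tc_\xi$ restricts to continuous operators $\Sc_c(G) \to \Sc_c(H)$ and $\Sc_f(G) \to \Sc_f(H)$, settling the topological content.

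For the transpose identity, I would compute directly. Given $\phi \in \Phi_\temp(H)$ and $f^G \in \Sc(G)$, by definition of the transpose
\[
(\Tc_\xi' \Theta_\phi)(f^G) = \Theta_\phi(\Tc_\xi f^G) = (\Fc^\st \Tc_\xi f^G)(\phi).
\]
By construction $\Fc^\st \circ \Tc_\xi = \xi^* \circ \Fc^\st$, so the right-hand side equals $(\Fc^\st f^G)(\xi_* \phi) = \Theta_{\xi_* \phi}(f^G)$, which is precisely $\Tc_\xi' \Theta_\phi = \Theta_{\xi_* \phi}$ as elements of $\Sc(G)'$.

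There is essentially no obstacle within this deduction itself, since all the work has been packaged into \Cref{thm:stablePW} and \Cref{thm:pullback}. The genuine content lies in \Cref{thm:pullback}, whose proof requires controlling pullback along $\xi_*$ in the spectral Schwartz/Paley--Wiener spaces; this is where the $\pi_0$-finiteness of $\xi_*$ (\Cref{thm:finiteness}) feeds into \Cref{pullback} to verify the hypothesis that $T_E$ has the required fibre behaviour, and where temperedness of $\xi$ is used to guarantee that $\xi_*$ preserves the tempered loci. Granting those inputs, the corollary is a formal diagram chase, and the only care needed is bookkeeping of central data and of the three parallel statements ($\Sc$, $\Sc_c$, $\Sc_f$).
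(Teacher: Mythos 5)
Your proposal is correct and follows exactly the same route as the paper: define $\Tc_\xi = (\Fc^\st)^{-1}\circ\xi^*\circ\Fc^\st$ using the stable Paley--Wiener isomorphisms from \Cref{thm:stablePW} and the continuous pullback from \Cref{thm:pullback}, then verify the transpose identity by the formal computation you give. The paper presents this as an immediate corollary via the commutative square relating $\Tc_\xi$ and $\xi^*$, and, as you note, correctly locates the substance in \Cref{thm:pullback} and the $\pi_0$-finiteness result \Cref{thm:finiteness}.
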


\subsection{Reduction to $\pi_0$-finiteness of functorial transfer}
Let $\xi:\Lgp{H}\to\Lgp{G}$ be an equivalence class of injective tempered $L$-homomorphisms. To prove the first claim in \Cref{thm:pullback} and thus that stable transfer along $\xi$ exists, we must show that pullback along $\xi_*:\Phi_\temp(H)\to\Phi_\temp(G)$ gives a well-defined continuous linear map
\[
\xi^*:\Ss^\st(G)\longrightarrow\Ss^\st(H).
\]
Recall that we have decompositions
\[
\Phi_\temp(G)=\coprod_{M_G}\Phi_2(M_G)/W^G(M_G) \quad,\quad \Phi_\temp(H)=\coprod_{M_H}\Phi_2(M_H)/W^G(M_H) 
\]
and corresponding decompositions
\[
\Ss^\st(G)=\bigoplus_{M_G}\Ss_\el^\st(M_G)^{W^G(M_G)} \quad,\quad \Ss^\st(H)=\bigoplus_{M_H}\Ss_\el^\st(M_H)^{W^H(M_H)}.
\]
Thus, it suffices to show that for each $M_H$ and $M_G$, pullback along the partially defined map 
\[
\xi_*^{M_H,M_G}:\Phi_2(M_H)/W^H(M_H)\to\Phi_2(M_G)/W^G(M_G)
\]
and extension by zero gives a well-defined continuous linear map
\[
\xi_{M_H,M_G}^*:\Ss_\el^\st(M_G)^{W^G(M_G)}\longrightarrow\Ss_\el^\st(M_H)^{W^H(M_H)}.
\]
In turn, it suffices to show that pullback along the partially defined map $\xi_*^{M_H,M_G}:\Phi_2(M_H)\to\Phi_2(M_G)$ and extension by zero gives 
a well-defined continuous linear map
\[
\xi_{M_H,M_G}^*:\Ss_\el^\st(M_G)\longrightarrow\Ss_\el^\st(M_H).
\]
We will establish this by showing that we can apply \Cref{pullback}. Moreover, applying \Cref{pullback} also gives the second claim in \Cref{thm:pullback}, thus establishing \Cref{thm:pullback} in full. We will now turn to an examination of the partially defined map $\xi_*^{M_H,M_G}:\Phi_2(M_H)\to\Phi_2(M_G)$ in more detail.

Let $\Gc$ be a $\lambda$-group. The outer action $\Gamma_F\to\Out(\Gc^0)$ determines an action $\Gamma_F\to\Aut(Z(\Gc^0))$. We will write $A_{\Gc}=Z(\Gc^0)^{\Gamma_F,\circ}$ for brevity.

\begin{lemma}
    Let $\xi:\Lgp{H}\to\Lgp{G}$ be an equivalence class of elliptic $L$-homomorphisms. Then $\xi$ restricts to a homomorphism
    \[
    \xi:A_{\Lgp{H}}\longrightarrow A_{\Lgp{G}}.
    \]
\end{lemma}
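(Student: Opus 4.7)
The plan is to show that $T' := \xi_0(A_{\Lgp{H}})$ sits inside $Z(\dual{G})^{\Gamma_F,\circ} = A_{\Lgp{G}}$, which is the content of the lemma. First I observe that $A_{\Lgp{H}}$ is in fact central in all of $\Lgp{H}$, not merely in $\dual{H}$: it lies in $Z(\dual{H})$ by definition, and since it is pointwise $\Gamma_F$-fixed, conjugation by any lift of $w \in W_F$ to $\Lgp{H}$ acts trivially on it. Applying $\xi$, the image $T'$ is a connected subtorus of $\dual{G}$ pointwise centralized by the whole of $\xi(\Lgp{H})$; in other words, $\xi(\Lgp{H}) \subseteq C_{\Lgp{G}}(T')$.

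Next I would exhibit $\Mcal := C_{\Lgp{G}}(T')$ as a Levi subgroup of $\Lgp{G}$ in the paper's sense. It is closed as a centralizer, and its intersection with $\dual{G}$ is $C_{\dual{G}}(T')$, a connected Levi subgroup of $\dual{G}$ by the standard structure theory (centralizers of tori in connected reductive groups are connected Levi subgroups). Since $\xi(\Lgp{H}) \subseteq \Mcal$ and $\xi(\Lgp{H})$ surjects onto $W_F$, so does $\Mcal$; thus $\Mcal^0 = \Mcal \cap \dual{G} = C_{\dual{G}}(T')$ satisfies the definition. Ellipticity of $\xi$ means $\xi(\Lgp{H})$ is not contained in any proper Levi of $\Lgp{G}$, so $\Mcal = \Lgp{G}$, i.e.\ $C_{\dual{G}}(T') = \dual{G}$, which gives $T' \subseteq Z(\dual{G})$.

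Finally, the $\Gamma_F$-invariance follows for free. For $h \in A_{\Lgp{H}}$ and any lift $w_H \in \Lgp{H}$ of $w \in W_F$, centrality of $A_{\Lgp{H}}$ in $\Lgp{H}$ gives $\xi(w_H)\,\xi(h)\,\xi(w_H)^{-1} = \xi(h)$. Writing $\xi(w_H) = a_\xi(w)\cdot w$ in $\Lgp{G} = \dual{G} \rtimes W_F$, and using that $\xi(h) \in Z(\dual{G})$ so the inner factor $\Int(a_\xi(w))$ acts trivially on it, the identity reduces to $\sigma_w(\xi(h)) = \xi(h)$, where $\sigma_w$ is the built-in $W_F$-action on $\dual{G}$. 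Hence $T' \subseteq Z(\dual{G})^{\Gamma_F}$, and connectedness of $T'$ upgrades this to $T' \subseteq Z(\dual{G})^{\Gamma_F,\circ} = A_{\Lgp{G}}$. The conclusion is invariant under replacing $\xi$ by an equivalent $L$-homomorphism since $A_{\Lgp{G}}$ is pointwise fixed by $\dual{G}$-conjugation. The main technical obstacle is confirming that $C_{\Lgp{G}}(T')$ fits the paper's precise definition of a Levi subgroup of $\Lgp{G}$ (in particular that $\Mcal^0$ is a \emph{connected} Levi of $\dual{G}$), which ultimately rests on the classical fact about centralizers of tori in connected reductive groups.
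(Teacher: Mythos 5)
Your proof is correct and follows essentially the same route as the paper: both set $\Mcal = C_{\Lgp{G}}(\xi(A_{\Lgp{H}}))$, show it is a Levi subgroup of $\Lgp{G}$ containing $\xi(\Lgp{H})$, invoke ellipticity to conclude $\Mcal=\Lgp{G}$, and then read off that $\xi(A_{\Lgp{H}})$ lands in $Z(\dual{G})^{\Gamma_F,\circ}$. You merely spell out the two identities the paper invokes in compressed form, namely $Z(\dual{H})^{\Gamma_F}=C_{\dual{H}}(\Lgp{H})$ for the first inclusion and $C_{\dual{G}}(\Lgp{G})=Z(\dual{G})^{\Gamma_F}$ for the final step.
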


\begin{proof}
    Let $\Mcal=C_{\Lgp{G}}(\xi(A_{\Lgp{H}}))=C_{\Lgp{G}}(\xi(Z(\dual{H})^{\Gamma_F,\circ}))$. Since $Z(\dual{H})^{\Gamma_F}=C_{\dual{H}}(\Lgp{H})$, we have $\xi(\Lgp{H})\subseteq\Mcal$. Therefore $\Mcal$ maps onto $W_F$. Since $\xi(A_{\Lgp{H}})$ is a torus in $\dual{G}$, we have that $\Mcal$ is Levi subgroup of $\Lgp{G}$. Since $\xi$ is elliptic, we have 
    $\Mcal=\Lgp{G}$, or in other words
    $\xi(A_{\Lgp{H}})\subseteq(C_{\dual{G}}(\Lgp{G}))=Z(\dual{G})^{\Gamma_F}$. The claim follows.
\end{proof}

To simplify notation, for a Levi subgroup $M$ of $G$ we write the canonical equivalence class of $L$-embeddings $\iota_M^G:\Lgp{M}\to\Lgp{G}$ as an inclusion $\Lgp{M}\hookrightarrow\Lgp{G}$. Fix $\phi\in\Phi_2(M_H)$ and suppose that $\xi_*(\phi)\in\Phi_2(M_G)$. That is, $\phi$ is in the domain of the partially defined map $\xi_*^{M_H,M_G}:\Phi_2(M_H)\to\Phi_2(M_G)$.

Let $M_{H,\xi}$ be the Levi subgroup of $G$ determined up to $G(
F)$-conjugacy by the property that $\Lgp{M_{H,\xi}}$ belongs to the $\dual{G}$-conjugacy class Levi subgroups of $\Lgp{G}$ that contains $\xi(\Lgp{M_H})$ minimally. Then $\xi$ factors through $\Lgp{M_{H,\xi}}\hookrightarrow\Lgp{G}$ as an equivalence class of injective elliptic $L$-homomorphisms $\xi:\Lgp{M_H}\to\Lgp{M_{H,\xi}}$. The above lemma gives us an injection $\xi:A_{\Lgp{M_H}}\to A_{\Lgp{M_{H,\xi}}}$.

Recall that we have an identification $\Lie(A_{\Lgp{M_H}})=X_*(A_{\Lgp{M_H}})\otimes_\ZZ\CC$. Under this identification, we have that the injection $\Lie(\xi):\Lie(A_{\Lgp{M_H}})\to\Lie(A_{\Lgp{M_{H,\xi}}})$ is the map $\xi_*\otimes\id$, where $\xi_*:X_*(A_{\Lgp{M_H}})\to X_*(A_{\Lgp{M_{H,\xi}}})$. We thus write $\xi_*=\Lie(\xi)$. We have the commutative diagram
\[
\begin{tikzcd}
    \Lie(A_{\Lgp{M_H}}) \arrow[r,"\xi_*"] & \Lie(A_{\Lgp{M_{H,\xi}}}) \\
    X_*(A_{\Lgp{M_H}}) \arrow[u,hook] \arrow[r,"\xi_*"] & X_*(A_{\Lgp{M_{H,\xi}}}) \arrow[u,hook]
\end{tikzcd}
\]
Recall that we have a canonical isomorphism $X^*(M_H)\cong X_*(A_{\Lgp{M_{H}}})$ and that we write its extension $\ak_{M_{H},\CC}^*\cong\Lie(A_{\Lgp{M_H}})$ as $\lambda\mapsto\lambda^\vee$. Thus, we may view the above commutative diagram as the following commutative diagram
\[
\begin{tikzcd}
    \ak_{M_H,\CC}^* \arrow[r,"\xi_*"] & \ak_{M_{H,\xi},\CC}^* \\
    X^*(M_H) \arrow[u,hook] \arrow[r,"\xi_*"] & X^*(M_{H,\xi}) \arrow[u,hook]
\end{tikzcd}
\]
Note that the injection $\xi_*:\ak_{M_H,\CC}^*\to\ak_{M_{H,\xi},\CC}^*$ restricts to injections $\xi_*:\ak_{M_H}^*\to\ak_{M_{H,\xi}}^*$ and $\xi_*:i\ak_{M_H}^*\to i\ak_{M_{H,\xi}}^*$. 

Since $\xi_*(\phi)\in\Phi_2(M_G)$ it follows that $\Lgp{M_G}\hookrightarrow\Lgp{G}$ factors through $\Lgp{M_{H,\xi}}\hookrightarrow\Lgp{G}$. Thus, we may assume that $M_G\subseteq M_{H,\xi}$, and therefore that $M_G$ is a Levi subgroup of $M_{H,\xi}$. Then $\ak_{M_{H,\xi}}^*\subseteq\ak_{M_G}^*$ and we have $\xi_*:i\ak_{M_H}^*\to i\ak_{M_G}^*$. Now, for all $\lambda\in\ak_{M_H,\CC}^*$, we have 
\[
\xi_*(\phi_\lambda)=\xi_*(\phi)_{\xi_*(\lambda)}.
\]
This explains how $\xi_*$ behaves on the component of $\phi$. 

Suppose that $F$ is archimedean. We may assume that $F=\RR$ by restriction of scalars. We must show that $\|\xi_*\phi\|_{M_G}\gg\|\phi\|_{M_H}$ for all $\phi$ in the domain of the partially defined map $\xi_*^{M_H,M_G}:\Phi_2(M_H)\to\Phi_2(M_G)$. Since $\|\cdot\|_{M_G}\asymp\|\cdot\|_G$ and $\|\cdot\|_{M_H}\asymp\|\cdot\|_H$, it suffices to show that $\|\xi_*\phi\|_{G}\gg\|\phi\|_{H}$ for $\phi\in\Phi_\temp(H)$. 

Let $\Tc_H^0$ and $\Tc_G^0$ be $\Gamma_\RR$-stable maximal tori of $\dual{H}$ and $\dual{G}$, respectively. Choose a representative $L$-homomorphism $\xi$ such that $\xi_0(\Tc_H^0)\subseteq\Tc_G^0$. Note that $a_\xi(\CC^\times)\subseteq C_{\dual{G}}(\xi_0(\Tc_H^0))$. Since $\xi_0(\Tc_H^0)\subseteq\Tc_G^0$, we have $\Tc_G^0\subseteq C_{\dual{G}}(\xi_0(\Tc_H^0))$. Therefore $\Tc_G^0$ is a maximal torus of $C_{\dual{G}}(\xi_0(\Tc_H^0))$. By replacing $\xi$ by a $C_{\dual{G}}(\xi_0(\Tc_H^0))$-conjugate, we may assume that both $\xi(\Tc_H^0)$ and $a_\xi(\CC^\times)$ are contained in $\Tc_G^0$. Recall how the infinitesimal character attached to an $L$-parameter is defined. We may identify the infinitesimal character $\mu_\phi$ of $\phi$ with an element of $\Lie(\Tc_H^0)/W(\dual{H},\Tc_H)$. Let $\mu_\xi=\mu_{\xi|_{W_\RR}}$ denote the infinitesimal character attached to the $L$-parameter $\xi|_{W_\RR}$. We may identify $\mu_{\xi_*\phi}$ and $\mu_\xi$ with elements of $\Lie(\Tc_G^0)/W(\dual{G},\Tc_G)$. We have $\mu_{\xi_*\phi}=\Lie(\xi)\mu_\phi+\mu_\xi$. Fix a $W(\dual{H},\Tc_H)$-invariant inner product on $\Lie(\Tc_H^0)$ and a $W(\dual{G},\Tc_G)$-invariant inner product on $\Lie(\Tc_G^0)$, and denote the associated norms by $\|\cdot\|_{\dual{H}}$ and $\|\cdot\|_{\dual{G}}$, respectively. We have $\|\phi\|_H\asymp\|\mu_\phi\|_{\dual{H}}$ and $\|\xi_*\phi\|_G\asymp\|\mu_{\xi_*\phi}\|_{\dual{G}}$, so it suffices to prove that $\|\mu_{\xi_*\phi}\|_{\dual{G}}\gg\|\mu_\phi\|_{\dual{H}}$. We have
\[
\|\mu_{\xi_*\phi}\|_{\dual{G}}=\|\Lie(\xi)\mu_\phi+\mu_\xi\|_{\dual{G}}\asymp\|\Lie(\xi)\mu_\phi\|_{\dual{G}}.
\]
Since $\Lie(\xi):\Lie(\Tc_H^0)\to\Lie(\Tc_G^0)$ is injective, we have that $\|\Lie(\xi)(\cdot)\|_{\dual{G}}$ is a norm on $\Lie(\Tc_H^0)$, and is thus equivalent to $\|\cdot\|_{\dual{H}}$. Thus,
\[
\|\mu_{\xi_*\phi}\|_{\dual{G}}\asymp\|\mu_\phi\|_{\dual{H}}
\]
as required. 

In order to show that we can apply \Cref{pullback}, and thus conclude the proof of \Cref{thm:pullback}, all that remains is for us to show that the partially defined map $\xi_*^{M_H,M_G}:\Phi_2(M_H)\to\Phi_2(M_G)$ maps at most finitely many connected components to a given connected component. The next subsection is devoted to this.

\subsection{$\pi_0$-finiteness of functorial transfer}

In this subsection, we prove that functorial transfer is $\pi_0$-finite.

\begin{theorem}\label{thm:finiteness}
   Let $H$ and $G$ be connected reductive groups over $F$ with $G$ quasisplit. If $\xi:\Lgp{H}\to\Lgp{G}$ is an injective $L$-homomorphism, then the preimage of a connected component of $\Phi(G)$ under the map $\xi_*:\Phi(H)\to\Phi(G)$ is a finite union of connected components of $\Phi(H)$, that is, $\pi_0(\xi_*):\pi_0(\Phi(H))\to\pi_0(\Phi(G))$ has finite fibres. 
\end{theorem}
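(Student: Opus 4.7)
The plan is to reduce, via the Langlands classification of $L$-parameters, to a finiteness statement about an elliptic injective $L$-homomorphism between $L$-groups of Levi subgroups, and then to derive that statement from the ellipticity lemma proven above the theorem together with finiteness of certain centralisers in the dual group.

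First I would use the decomposition $\Phi(H)=\coprod_{[M_H]\in\Lc^H(M_0)/W_0^H}\Phi_2(M_H)_\CC/W^H(M_H)$ and the analogous one for $G$. Since the indexing set is finite, it suffices to show that for each fixed Levi $M_H$ of $H$, only finitely many connected components of $\Phi_2(M_H)_\CC$ map into a given connected component of $\Phi(G)$. For any such $M_H$, the composition $\xi\circ\iota_{M_H}^H$ factors through the $L$-group of a Levi $M_G$ of $G$---well-defined up to $\dual{G}$-conjugacy because $G$ is quasisplit---as an elliptic injective $L$-homomorphism $\xi':\Lgp{M_H}\to\Lgp{M_G}$ that depends only on $\xi$ and $M_H$, not on the particular parameter. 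The problem reduces to showing that the induced map $\pi_0(\xi'_*):\pi_0(\Phi_2(M_H)_\CC)\to\pi_0(\Phi_2(M_G)_\CC)$ has finite fibres, for each such elliptic $\xi'$.

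Next, I would exploit equivariance under unramified twists. By the lemma above the statement, $\xi'$ restricts to an injection $A_{\Lgp{M_H}}\hookrightarrow A_{\Lgp{M_G}}$, which by Langlands duality gives an injective linear map $\xi_*:\ak_{M_H,\CC}^*\hookrightarrow\ak_{M_G,\CC}^*$, and the pushforward is equivariant: $\xi'_*(\phi\cdot a_\mu)=(\xi'_*\phi)\cdot a_{\xi_*\mu}$. Since the connected components of $\Phi_2(M)_\CC$ are the $\ak_{M,\CC}^*$-orbits, if we fix $\phi_0\in\Phi_2(M_G)_\CC$ representing a target component, every $\phi\in\Phi_2(M_H)_\CC$ in the fibre satisfies $\xi'_*\phi=\phi_0\cdot a_\lambda$ for some $\lambda\in\ak_{M_G,\CC}^*$, and two such parameters $\phi_1,\phi_2$ lie in the same component of $\Phi_2(M_H)_\CC$ exactly when the corresponding $\lambda_1,\lambda_2$ differ by an element of $\xi_*(\ak_{M_H,\CC}^*)+\ak_{M_G,\phi_0}^\vee$.

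The heart of the argument, and the main obstacle, is to establish two finiteness statements. First, the set of $\lambda\in\ak_{M_G,\CC}^*$ for which $\phi_0\cdot a_\lambda$ admits a lift under $\xi'_*$ to $\Phi_2(M_H)_\CC$, taken modulo $\xi_*(\ak_{M_H,\CC}^*)+\ak_{M_G,\phi_0}^\vee$, is finite; this uses that the liftability condition forces the cocycle $a_{\phi_0}\cdot a_\lambda$ to be $\dual{M_G}$-cohomologous to $\xi_0\circ a$ for some cocycle $a$ valued in $\dual{M_H}$, and this constraint, combined with injectivity of $\xi_*$ on the cocharacter lattices $X^*(M_H)\hookrightarrow X^*(M_G)$, confines $[\lambda]$ to a finite subquotient of $H^1_c(W_F,Z(\dual{M_G}))$. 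Second, for each admissible $\lambda$, the set of lifts $\phi$ with $\xi'_*\phi=\phi_0\cdot a_\lambda$, taken modulo the $\ak_{M_H,\CC}^*$-action, is finite; this reduces to showing that the map on $\dual{M_H}$-conjugacy classes of essentially discrete parameters induced by $\xi'$ has finite fibres, which in turn follows from the ellipticity of $\xi'$: the centraliser $C_{\dual{M_G}}(\xi'(\Lgp{M_H}))$ is a finite extension of $\xi'(A_{\Lgp{M_H}})$, so its conjugation action on any given $\dual{M_H}$-conjugacy class of parameters factors through a finite group. The delicate cohomological bookkeeping in the first substep is what I expect to be the hardest to execute cleanly, and is where the quasisplit hypothesis on $G$ will be used explicitly to remove relevance issues.
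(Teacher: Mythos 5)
Your approach is genuinely different from the paper's and has a gap in the heart of the argument. The outer reduction is sound: decomposing $\Phi(H)$ via the Langlands classification and factoring $\xi\circ\iota_{M_H}^H$ through $\Lgp{M_G}$ as an elliptic injective $L$-homomorphism $\xi'$ is fine, as is the use of the ellipticity lemma to obtain the injection $\ak_{M_H,\CC}^*\hookrightarrow\ak_{M_G,\CC}^*$ and its equivariance.

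The problem is with your two finiteness claims. In step (b) you assert that $C_{\dual{M_G}}(\xi'(\Lgp{M_H}))$ is a finite extension of $\xi'(A_{\Lgp{M_H}})$. This is false in general: ellipticity of $\xi'$ only controls tori in the centraliser relative to $A_{\Lgp{M_G}}=Z(\dual{M_G})^{\Gamma_F,\circ}$, and nothing forces $\xi'(A_{\Lgp{M_H}})$ to have finite index in $A_{\Lgp{M_G}}$. Take $M_H=1$ and $M_G=\GL_2$ over a $p$-adic field, with $\xi'$ a supercuspidal parameter of $\GL_2$; then $\xi'$ is elliptic and injective, $\xi'(A_{\Lgp{M_H}})$ is trivial, but $C_{\dual{M_G}}(\xi'(\Lgp{M_H}))\supseteq Z(\GL_2(\CC))=\CC^\times$. (In this toy example the final finiteness is trivial because $\Phi_2(M_H)_\CC$ is a point, but the example shows your intermediate claim is wrong and cannot be the engine.) Even setting that aside, finiteness of the centraliser would not by itself bound the fibre of $\xi'_*$ on conjugacy classes: if $\xi'\circ\phi_1$ and $\xi'\circ\phi_2$ are $\dual{M_G}$-conjugate via $g$, what needs to be finite is the set of double cosets $\xi'(\dual{M_H})\,g\,C_{\dual{M_G}}(\xi'\circ\phi_1(L_F))$ subject to the constraint $g\,\xi'\circ\phi_1(L_F)\,g^{-1}\subseteq\xi'(\Lgp{M_H})$, and this is not governed by the pointwise centraliser of the image of $\xi'$. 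In step (a) your liftability argument is only sketched and conflates lifting an $L$-parameter through $\xi'$ (a constraint on the whole $\dual{M_G}$-conjugacy class) with a cohomological condition in $H^1_c(W_F,Z(\dual{M_G}))$; these are not the same and I do not see how to make the passage rigorous.

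For comparison, the paper sidesteps both difficulties by first restricting parameters to the inertial subgroup $L_F^1$ of $L_F$. It shows that the fibres of $\Phi(G)\to\Hom_{c,W_F^1}(L_F^1,\Lgp{G}^1)/\dual{G}$ are finite unions of connected components, reducing the theorem to finiteness of the induced map on inertial data, and it deduces that finiteness from a theorem of Vinberg asserting that for an embedding $\Hc\hookrightarrow\Gc$ of reductive groups over an algebraically closed field of characteristic zero, the morphism $\Hc^n//\Hc\to\Gc^n//\Gc$ is finite. The finiteness you are trying to prove by hand for $\dual{M_H}$-conjugacy classes mapping to a fixed $\dual{M_G}$-conjugacy class is exactly the kind of statement that Vinberg's theorem handles, and it is not elementary; your centraliser argument is the place where the missing input is hiding. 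If you want to salvage your approach, you would need to replace the centraliser claim by an appeal to a result of Vinberg type (or to properness of the moment map in a GIT formulation), at which point the overall architecture becomes close to the paper's.
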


Since the restriction of $\xi_*$ to a connected component of $\Phi(H)$ is injective in the archimedean case and a quotient by a finite group action in the non-archimedean case, we obtain the following corollary.

\begin{corollary}
    The map $\xi_*:\Phi(H)\to\Phi(G)$ has finite fibres.
\end{corollary}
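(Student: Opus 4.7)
The plan is to pass to infinitesimal characters and then exploit the injectivity of $\xi$. Recall that $\Phi(G)=\coprod_{M_G\in\Lc^G(M_0)/W_0^G}\Phi_2(M_G)_\CC/W^G(M_G)$, with an analogous decomposition for $H$. Since the sets of Levis up to Weyl conjugacy are finite, and a component of $\Phi_2(M_H)_\CC/W^H(M_H)$ can map into $\Phi_2(M_G)_\CC/W^G(M_G)$ only when $M_G$ is $W_0^G$-conjugate to a Levi of $M_{H,\xi}$ (as shown in the preceding subsection), it suffices to fix such a pair $(M_H,M_G)$ and show that the induced map
\[
\pi_0\bigl(\Phi_2(M_H)_\CC/W^H(M_H)\bigr)\longrightarrow\pi_0\bigl(\Phi_2(M_G)_\CC/W^G(M_G)\bigr)
\]
has finite fibres.

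The main tool is the infinitesimal character map $\phi\mapsto\mu_\phi$, defined in both the archimedean and $p$-adic settings. This map has finite fibres on $\Phi(M)$ and is equivariant for the $\ak_{M,\CC}^*$-action (via $a_\lambda$) and for the Weyl group. It therefore descends to a finite-to-one map from the source $\pi_0$ above to the set of infinitesimal characters of $M_H$ modulo $\ak_{M_H,\CC}^*$ and $W^H(M_H)$, and similarly for the target. Thus it suffices to show that the map induced by $\xi$ on these equivalence classes of infinitesimal characters has finite fibres.

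In the archimedean case, infinitesimal characters of $M$ are identified with $\Lie(\Tc^0)/W$ for a $\Gamma_F$-stable maximal torus $\Tc^0$ of $\dual{M}$; the $\ak_{M,\CC}^*$-action is translation through the injection $\ak_{M,\CC}^*\cong\Lie(Z(\dual{M})^{\Gamma_F,\circ})\hookrightarrow\Lie(\Tc^0)$, and the map induced by $\xi$ is $\mu_H\mapsto\Lie(\xi)(\mu_H)+\mu_\xi$ with $\Lie(\xi)$ injective. Finite fibres then follow from elementary linear algebra and the finiteness of the Weyl groups. In the non-archimedean case, an infinitesimal character is a $\dual{M}$-conjugacy class of continuous homomorphisms $\mu\colon W_F\to\Lgp{M}$. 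Since $I_F$ is profinite and $\dual{M}$ has no small subgroups, $\mu(I_F)$ is finite and $\mu|_{I_F}$ is unaffected by unramified twist. The Jordan--Serre-type finiteness of $\Hom(A,\Gc)/\Gc$-conjugacy for any finite group $A$ and any complex reductive $\Gc$, combined with the injectivity of $\xi$ (which forces $|\mu_H(I_F)|=|\xi(\mu_H(I_F))|$ to be bounded by the target), leaves finitely many choices for $\mu_H|_{I_F}$ up to $\dual{M_H}$-conjugacy. Once $\mu_H|_{I_F}$ is pinned down, $\mu_H(\Fr)$ lies in a coset of the centralizer of $\mu_H|_{I_F}$ determined by the cocycle compatibility with the Frobenius action on inertia, and modulo unramified twist by $Z(\dual{M_H})^{\Gamma_F,\circ}$ and residual centralizer conjugacy only finitely many classes remain, again by applying the injectivity of $\xi$ to the Frobenius coset.

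The main obstacle will be the non-archimedean step, where the inertia restriction, Frobenius value, unramified twist, and Levi-normalizer conjugacy must be combined carefully. The archimedean case reduces cleanly to linear algebra after the infinitesimal-character reduction; in the $p$-adic case, ensuring that the Frobenius coset analysis truly produces a finite quotient (rather than leaving a positive-dimensional ambiguity in the centralizer that injectivity of $\xi$ does not immediately cut down) is the delicate point.
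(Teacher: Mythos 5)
Your proposal bypasses the paper's intended argument entirely: the corollary is meant to follow in one line from \Cref{thm:finiteness}, because the preimage of a connected component of $\Phi(G)$ is a finite union of components of $\Phi(H)$, and on each component $\xi_*$ is injective (archimedean) or finite-to-one (non-archimedean, since it is a quotient by a finite group action). You instead attempt a self-contained proof via infinitesimal characters; this is a legitimate alternative route, and the reduction to showing that the induced map on $\ak_{M,\CC}^*$- and Weyl-orbits of infinitesimal characters is finite-to-one is sound in principle (the infinitesimal-character map has finite fibres on $\Phi(M)$ in both the archimedean and $p$-adic settings, so its fibres on $\pi_0(\Phi(M))$ modulo unramified twist are also finite). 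One small logical slip: you assert that it "suffices" to prove $\pi_0$-finiteness, but $\pi_0$-finiteness alone does not give point-wise finite fibres --- you also need the restriction of $\xi_*$ to each component to be finite-to-one. Your subsequent argument via infinitesimal characters, if completed, would actually deliver both, so the presentation is fixable, but as stated the first reduction is not justified.

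The genuine gap is exactly the one you flag yourself: the Frobenius step in the non-archimedean case. After pinning down $\mu_H|_{I_F}$ up to finitely many conjugacy classes (which is fine once injectivity of $\xi$ is used to bound $|\mu_H(I_F)|$), you must show that, for $\mu_H|_{I_F}$ fixed, only finitely many choices of $\mu_H(\Fr)$ survive modulo $Z(\dual{M_H})^{\Gamma_F,\circ}$-twist and conjugacy by the normaliser of $\mu_H|_{I_F}$. The difficulty is that $\mu_H(\Fr)$ ranges over a coset of the positive-dimensional centraliser $C_{\dual{M_H}}(\mu_H|_{I_F})$, the conjugacy is by another positive-dimensional group, and the target constraint $\xi(\mu_H(\Fr)) \sim \nu_0(\Fr)$ modulo conjugacy in $C_{\dual{M_G}}(\xi\mu_H|_{I_F})$ is a GIT-type condition for which injectivity of $\xi$ by itself does not produce finite fibres. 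This is precisely where the paper invokes Vinberg's theorem (\Cref{thm:Vinberg}) through \Cref{prop:finiteness}, applied to the finitely generated group $W_F/J$: the finiteness of the morphism $\Hc^n//\Hc\to\Gc^n//\Gc$ along an embedding of reductive groups is exactly the nontrivial tool needed, and nothing in your sketch is of comparable strength. Your appeal to "elementary linear algebra" handles the archimedean case, where the infinitesimal-character spaces are vector spaces modulo finite Weyl groups and the affine map $\mu\mapsto\Lie(\xi)\mu+\mu_\xi$ does close the argument; but the $p$-adic Frobenius step remains open without either Vinberg's theorem or a direct appeal to \Cref{thm:finiteness}, which in this paper is the intended source of the corollary.
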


We also obtain the following corollary in the tempered case, which gives what we need to conclude the proof of \Cref{thm:pullback}.

\begin{corollary}
    If $\xi:\Lgp{H}\to\Lgp{G}$ is a tempered injective $L$-homomorphism, then the preimage of a connected component of $\Phi_\temp(G)$ under the map $\xi_*:\Phi_\temp(H)\to\Phi_\temp(G)$ is a finite union of connected components of $\Phi_\temp(H)$, that is, $\pi_0(\xi_*):\pi_0(\Phi_\temp(H))\to\pi_0(\Phi_\temp(G))$ has finite fibres. 
\end{corollary}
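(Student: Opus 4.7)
The plan is as follows. First, I decompose both sides using the Harish-Chandra--Langlands factorization: $\Phi(H)=\coprod \Phi_2(M_H)_\CC/W^H(M_H)$, the disjoint union over $H(F)$-conjugacy classes of Levi subgroups of $H$, and similarly for $\Phi(G)$. Since there are finitely many Levi conjugacy classes on each side and each $W^H(M_H)$, $W^G(M_G)$ is finite, $\pi_0$-finiteness of $\xi_*$ reduces to $\pi_0$-finiteness of each partial map $\Phi_2(M_H)_\CC\to\Phi_2(M_G)_\CC$. Given $(M_H,M_G)$, recall that $\xi$ factors through the minimal Levi $\Lgp{M_{H,\xi}}$ containing $\xi(\Lgp{M_H})$, and that $\xi_*\phi$ for $\phi\in\Phi_2(M_H)$ is essentially discrete in $M_G$ only if $M_G$ is conjugate to $M_{H,\xi}$. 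I may therefore assume $\xi:\Lgp{M_H}\to\Lgp{M_G}$ is injective and elliptic, giving the injection $\xi_*:\ak_{M_H,\CC}^*\hookrightarrow\ak_{M_G,\CC}^*$ established above and the twist-equivariance $\xi_*(\phi_\lambda)=(\xi_*\phi)_{\xi_*\lambda}$.

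In the archimedean case, I use the infinitesimal character $\mu_\phi\in\tk_{M_H,\CC}/W(\dual{M_H},\Tc_H)$, which transforms as $\mu_{\xi_*\phi}=\Lie(\xi_0)(\mu_\phi)+\mu_{\xi|_{W_F}}$, and note that $\mu_\phi$ modulo $\ak_{M_H,\CC}^*$ is a $\pi_0$-invariant of $[\phi]\in\pi_0(\Phi_2(M_H)_\CC)$. The key algebraic fact is that $\Lie(\xi_0)^{-1}(\ak_{M_G,\CC}^*)=\ak_{M_H,\CC}^*$ inside $\tk_{M_H,\CC}$. Indeed, if $\Lie(\xi_0)(\mu)\in\Lie(Z(\dual{M_G})^{\Gamma_F,\circ})$, then centrality of $\Lie(\xi_0)(\mu)$ in $\Lie(\dual{M_G})$ together with injectivity of $\Lie(\xi_0)$ on Lie algebras forces $\mu$ to be central in $\Lie(\dual{M_H})$; and $\Gamma_F$-invariance of $\Lie(\xi_0)(\mu)$, combined with the fact that $\Gamma_F$ acts via inner automorphisms on $a_\xi(w)$ (which act trivially on the center) and with injectivity of $\Lie(\xi_0)$, forces $\mu$ itself to be $\Gamma_F$-invariant. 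Hence $\mu\in\Lie(Z(\dual{M_H})^{\Gamma_F,\circ})=\ak_{M_H,\CC}^*$. Consequently $\Lie(\xi_0)$ descends to an injection of quotient spaces $\tk_{M_H,\CC}/\ak_{M_H,\CC}^*\hookrightarrow\tk_{M_G,\CC}/\ak_{M_G,\CC}^*$, and $\pi_0(\xi_*)$ has fibres of size bounded by $|W(\dual{M_G},\Tc_G)|$.

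For non-archimedean $F$, I would run a parallel argument with the $p$-adic infinitesimal parameter $\mu_\phi:W_F\to\Lgp{M_H}$ defined by $\mu_\phi(w)=\phi(d_w,w)$, supplemented by the $\SL_2(\CC)$-restriction $\phi|_{\SL_2(\CC)}$. Both invariants are stable under unramified twist, hence descend to invariants of $[\phi]\in\pi_0(\Phi_2(M_H)_\CC)$, and both transform functorially under $\xi_*$. Combining the Lie-algebra lemma above, applied to the component of $\mu_\phi$ restricted to $W_F/I_F$, with the rigidity (discreteness of conjugacy classes) of algebraic $\SL_2(\CC)$-homomorphisms into a fixed complex reductive group and the finiteness of conjugacy classes of continuous $I_F$-representations with bounded image, one extracts the finite-fibre statement.

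The main obstacle lies in the non-archimedean analysis, where one must coordinate the $\SL_2(\CC)$ and $W_F$ factors of $L_F$ with the unramified twist action. The archimedean infinitesimal-character argument captures the essence of the result via the clean Lie-algebra identity $\Lie(\xi_0)^{-1}(\ak_{M_G,\CC}^*)=\ak_{M_H,\CC}^*$; the non-archimedean analog requires a more delicate bookkeeping to verify that the combined invariant distinguishes components of $\Phi_2(M_H)_\CC$ up to finite ambiguity when pushed forward along $\xi_*$.
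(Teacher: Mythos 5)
The proposal takes a genuinely different route from the paper's, so let me first compare, then flag two gaps.

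The paper proves $\pi_0$-finiteness for the full map $\Phi(H)\to\Phi(G)$ without passing through the Levi stratification. It factors through the restriction to inertial $L$-parameters $\phi\mapsto\phi|_{L_F^1}$, which is constant on connected components, shows the fibres of $\Phi(G)\to\Hom_{c,W_F^1}(L_F^1,\Lgp{G}^1)/\dual{G}$ are finite unions of components, and then deduces finiteness of the fibres of $\xi_*$ on inertial parameters from Vinberg's theorem on finiteness of the morphism $\Hc^n/\!/\Hc\to\Gc^n/\!/\Gc$ induced by an embedding of reductive groups. You instead work Levi-by-Levi and, in the archimedean case, replace the whole Vinberg mechanism by an infinitesimal-character calculation plus a Lie-algebra identity. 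That is appealing and, if it worked, would be more elementary than the paper's route for $F=\RR$; but it runs into the following problems.

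\emph{Gap in the Levi reduction.} You assert that for $\phi\in\Phi_2(M_H)$, $\xi_*\phi$ is essentially discrete in $M_G$ only if $M_G$ is conjugate to $M_{H,\xi}$, which is what licences your reduction to the case $\xi:\Lgp{M_H}\to\Lgp{M_G}$ elliptic. This is false: $M_{H,\xi}$ is determined by $\xi$ alone, namely as the Levi whose $L$-group is minimal among those containing $\xi(\Lgp{M_H})$, but the minimal Levi of $\Lgp{G}$ containing the \emph{image} $\xi(\phi(L_F))$ depends on $\phi$ and can be a proper Levi of $\Lgp{M_{H,\xi}}$ (the paper itself makes exactly this observation in the reduction step, writing ``we may assume that $M_G\subseteq M_{H,\xi}$''). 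A concrete instance: take $H$ an anisotropic maximal torus of $\GL_2$ over $\RR$, $G=\GL_2$, $\xi$ the natural elliptic $L$-embedding; then $M_{H,\xi}=\GL_2$, but $\xi_*\phi$ factors through the diagonal torus whenever the corresponding two-dimensional Weil representation is reducible. Once $M_G\subsetneq M_{H,\xi}$, the map $\xi_0$ no longer lands in $\dual{M_G}$, so your centrality argument (which needs $\Lie(\xi_0)(\mu)\in\Lie(Z(\dual{M_G})^{\Gamma_F,\circ})$ to be central \emph{in the ambient $\Lie(\dual{M_{H,\xi}})$}) does not apply, and you cannot conclude $\Lie(\xi_0)^{-1}(\ak_{M_G,\CC}^*)=\ak_{M_H,\CC}^*$. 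You would need a separate argument for strictly smaller $M_G$; I do not see one given, and I do not think the Lie-algebra identity survives unchanged.

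\emph{Incomplete non-archimedean case.} You openly defer the $p$-adic case to ``more delicate bookkeeping'' and only sketch invariants (the infinitesimal parameter on $W_F/I_F$, the $\SL_2$-restriction, bounded-image $I_F$-representations) without establishing the key finiteness. This is precisely where the real difficulty lies, and it is where the paper deploys Vinberg's theorem after reducing $I_F$ to a finite quotient $I_F/J$. Without a substitute for that step, the non-archimedean half of the corollary is unproved. Combined with the Levi-reduction gap, the proposal as written does not yet prove the statement, though the archimedean ideas are salvageable if you handle the $M_G\subsetneq M_{H,\xi}$ case correctly.
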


We now begin the proof of \Cref{thm:finiteness}. We write write $L_F^1$ for the preimage of $W_F^1$ in $L_F$. If $F$ is non-archimedean, we have $L_F^1=\SL_2\times W_F^1=\SL_2\times I_F$. If $F$ is archimedean, we have $L_F=W_F$, and thus $L_F^1=W_F^1=S^1\cup S^1j\subseteq\HH^\times$. We also write $\Lgp{G}^1=\dual{G}\rtimes W_F^1$.

For $\phi\in\Phi(G)$, we refer to $\phi|_{L_F^1}$ as its inertial $L$-parameter. (See \cite{Latham} for a discussion of inertial $L$-parameters for $p$-adic groups.) For $\lambda\in\ak_{G,\CC}^*$, its associated cohomology class $a_\lambda\in H_c^1(W_F,Z(\dual{G}))$ is trivial on $W_F^1$. Consequently, for $\phi\in\Phi(G)$ and $\lambda\in\ak_{G,\CC}^*$ we have $\phi_\lambda|_{L_F^1}=\phi|_{L_F^1}$, that is, $\phi_\lambda$ and $\phi$ have the same inertial $L$-parameter. Thus, the restriction map
\[
\Phi(G)\longrightarrow\Hom_{c,W_F^1}(L_F^1,\Lgp{G}^1)/\dual{G}
\]
is constant on connected components. Here, we have written $\Hom_{c,W_F^1}(L_F^1,\Lgp{G}^1)$ to denote the set of continuous homomorphisms $L_F^1\to\Lgp{G}^1$ over $W_F^1$. We will use similar notation below.

\begin{proposition}
    Let $G$ be a connected reductive group over $F$. The fibres of the restriction map
    \[
    \Phi(G)\longrightarrow\Hom_{c,W_F^1}(L_F^1,\Lgp{G}^1)/\dual{G}
    \]
    are finite unions of connected components. When $F$ is archimedean, the fibres are the connected components of $\Phi(G)$.
\end{proposition}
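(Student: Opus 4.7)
The plan is to parametrise the extensions of a fixed representative $\psi$ of $[\psi]$ to $L_F$. Since any cocentral twist by a class $a \in H_c^1(W_F, Z(\dual{G}))$ with $a|_{W_F^1} = 1$---and in particular the connected group of unramified twists $X^\nr(G)$---preserves the restriction to $L_F^1$, the restriction map is constant on connected components of $\Phi(G)$, so its fibres are automatically unions of connected components. What remains is to bound their number in general, and to sharpen to a single component in the archimedean case.

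After $\dual{G}$-conjugation, any element of the fibre over $[\psi]$ can be represented by an extension $\phi$ satisfying $\phi|_{L_F^1} = \psi$ on the nose, and two such extensions give conjugate parameters iff they are $H$-conjugate, where $H = C_{\dual{G}}(\psi(L_F^1))$. The first key step is to show that $H$ is a complex algebraic subgroup of $\dual{G}$ with $\pi_0(H)$ finite: in the non-archimedean case the image $\psi(\SL_2(\CC))$ is already algebraic and $\psi(I_F)$ is compact (hence has algebraic Zariski closure), so the centraliser of their union is algebraic; the archimedean case is similar with $W_F^1$ compact. Next I will exploit the fact that $L_F/L_F^1$ is $\langle \Fr\rangle\cong\ZZ$ in the non-archimedean case and $\RR_{>0}$ in the archimedean case. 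In the non-archimedean case, fixing one extension $\phi_0$, any other extension has $\phi(\Fr) = \phi_0(\Fr)\cdot h$ for some $h\in H$ (the centraliser condition is forced by compatibility with $\psi$ under $\Ad_{\Fr}$), and two such extensions are equivalent iff the corresponding $h$'s lie in the same orbit of the $\sigma$-twisted $H$-conjugation action $h\mapsto \sigma(g)hg^{-1}$, where $\sigma = \Ad_{\phi_0(\Fr)^{-1}}$ preserves $H$.

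Now the unramified twist group acts on $h$ by translation through its image in $H$, which is a connected complex subtorus, so its orbits remain within a single connected component of $H$. Consequently, modulo unramified twists and $\sigma$-twisted conjugation, the set of orbits injects into a finite quotient of $\pi_0(H)$; each orbit of $X^\nr(G)$ is contained in a single connected component of $\Phi(G)$, which gives the desired finite union of connected components. In the archimedean case I will use the direct product decomposition $W_F = W_F^1 \times \RR_{>0}$ (where $\RR_{>0}\subseteq\CC^\times$ is central in $W_F$ and acts trivially through $\Gamma_F$), so extensions of $\psi$ are parametrised by continuous homomorphisms $\RR_{>0}\to H$, necessarily of the form $r\mapsto \exp(\log r\cdot\mu)$ for a semisimple $\mu\in\Lie(H)$. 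The one-parameter family $\{\phi_{s\mu}\}_{s\in[0,1]}$ provides a continuous path within the fibre connecting $\phi_\mu$ to the fixed base extension, so the fibre is path-connected; combined with the fact that it is a union of connected components, it must be a single connected component.

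The main obstacle I anticipate is the bookkeeping around semisimplicity in the non-archimedean case: the locus of $h\in\phi_0(\Fr)\cdot H$ giving a semisimple $\phi(\Fr)$ is Zariski open in $\phi_0(\Fr)\cdot H$ but its decomposition into connected components need not coincide with that of $H$, and I must verify that both the $\sigma$-twisted conjugation and the unramified translation preserve this locus and act with only finitely many orbits on it. A related technical point will be checking that distinct orbits I identify do correspond to distinct connected components of $\Phi(G)$ under the decomposition $\Phi(G) = \coprod_L \Phi_2(L)_\CC/W^G(L)$---that is, that the parametrisation via $H$ is faithful to the Levi stratification---rather than collapsing or splitting further in an uncontrolled way.
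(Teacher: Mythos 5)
Your overall plan---restrict to extensions of a fixed $\psi$, work inside $H=C_{\dual{G}}(\psi(L_F^1))$, count $\sigma$-twisted conjugacy orbits---is the same as the paper's. The gap is in the orbit count, and it is exactly the issue your final paragraph flags as a ``technical point,'' but it is fatal as stated rather than a bookkeeping detail.

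The assertion that orbits under $\sigma$-twisted conjugation and unramified $G$-twists inject into a finite quotient of $\pi_0(H)$ is false. Take $G=\SL_2$ over $\QQ_p$ with $\psi$ trivial. Then $H=\dual{G}=\PGL_2(\CC)$ is connected with trivial centre, so $Z(\dual{G})^{\Gamma_F,\circ}=1$ and the unramified $G$-twist action is trivial, while $\sigma$-twisted conjugation is ordinary conjugation (since $\phi_0(\Fr)=1$). Semisimple conjugacy classes in $\PGL_2(\CC)$ form a one-parameter family, so your orbit count gives a continuum. Yet the fibre has finitely many components: the generic $\diag(z,1)$ all lie in the single component given by $\ak_{S,\CC}^*$-twists of the trivial parameter for the split torus $S\subseteq\SL_2$. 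The point is that connected components of $\Phi(G)$ are orbits under $A_{\Mcal_\phi}$, the unramified twist torus of the \emph{minimal Levi} $\Mcal_\phi$ of $\phi$, which varies with $\phi$ and is typically much larger than $Z(\dual{G})^{\Gamma_F,\circ}$. Quotienting by the (fixed, often trivial) central torus cannot bound the number of Levi-twist classes.

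The paper chooses the torus adaptively. It shows that $\theta=\Int(\phi(\Fr))$ and $\theta'=\Int(\phi'(\Fr))$ are semisimple automorphisms of $C_{\phi_0}$ preserving, after conjugation, a common Borel pair $(\Bc,\Tc)$ (Steinberg). Then $x=\phi'(\Fr)^{-1}\phi(\Fr)$ lies in $N_{C_{\phi_0}}(\Bc,\Tc)$, which surjects onto the finite group $\pi_0(C_{\phi_0})$ with kernel $\Tc$. The crucial additional step is the surjection $\Tc^{\theta',\circ}\times(1-\theta')\Tc\to\Tc$ (Smith normal form), which writes the $\Tc$-part of $x$ as an element of $\Tc^{\theta',\circ}$---this lies inside $A_{\Mcal'}$ for a minimal Levi $\Mcal'$ containing $\im\phi'$, so it is an unramified twist that keeps $\phi'$ in its component---times a $\theta'$-coboundary $t_1\theta'(t_1)^{-1}$ that is absorbed by conjugation. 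This decomposition is what lets the finite set $\pi_0(C_{\phi_0})$ genuinely control the number of components. Without it, the $\pi_0$-count and the component-count are simply different quantities.

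The archimedean path argument is also at risk of circularity. The topology on $\Phi(G)$ is a disjoint union over Levi strata, which are clopen, so a continuous path from $\phi_0$ to $\phi_\mu$ exists only if both already lie in the same stratum---and that is essentially what you need to prove. You would have to check that the minimal Levi of $\phi_{s\mu}$ is constant in $s$, which is where the real work is. The paper's archimedean argument avoids a path: it places $a_\phi(\RR_{>0})$ and $a_{\phi'}(\RR_{>0})$ in a common maximal torus $\Tc\subseteq C_{\phi_0}$, verifies $\Tc\subseteq A_{\Mcal_\phi}$ for a minimal Levi $\Mcal_\phi$ containing $\im\phi$, and concludes directly that $\phi'$ is an $A_{\Mcal_\phi}$-unramified twist of $\phi$.
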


When $F$ is non-archimedean, the fibres can indeed be unions of more than one connected component. Suppose that $G$ is anisotropic modulo its centre. Then $G(F)^1$ is the unique maximal compact subgroup of $G(F)$ and $G(F)_1$ is the unique Iwahori subgroup of $G(F)$ \cite[\S3.3.1]{Haines}. Suppose further that $G$ is a torus. Note 1 at the end of \cite{Rapoport} gives a description of $G(F)_1$ and $G(F)^1$. The discussion there initially concerns a complete discretely valued field $L$ with algebraically closed residue field, however at the end of Note 1, Rapoport explains the necessary changes when $L$ is replaced by a discretely valued field $F$ with perfect residue field. The group $G$ has an lft N\'eron model $\Gc$ over $\Spec\Oc_F$. Let $\Gc^1$ be the maximal subgroup scheme of finite type over $\Spec\Oc_F$. Let $\Gc^{1,\circ}$ be the identity component of $\Gc^1$. Then $\Gc^1(\Oc_F)=G(F)^1$, $\Gc^{1,\circ}(\Oc_F)=G(F)_1$, and $G(F)_1$ has finite index in $G(F)^1$. Identify $\Phi(G)$ with $\Pi(G)$ using the local Langlands correspondence for tori. The fibre in the proposition containing the trivial character $1:G(F)\to\CC^\times$ contains the group of weakly unramified characters $X^\wnr(G)=\Hom_c(G(F)/G(F)_1,\CC^\times)$, but the connected component of $1$ is the group of unramified characters $X^\nr(G)=\Hom_c(G(F)/G(F)^1,\CC^\times)$. If $G(F)_1\subsetneq G(F)^1$, then $X^\wnr(G)\supseteq X^\nr(G)$.

The following proof was inspired by the proof of \cite[Lemma 7.2.3]{varma}.

\begin{proof}
    The fibres are unions of connected components. We must show that the fibres contain only finitely many connected components. Fix $\phi\in\Phi(G)$ and let $\phi_0:L_F^1\to\Lgp{G}$ denote the restriction of $\phi$. Let $\phi'\in\Phi(G)$ be in the fibre containing $\phi$. By replacing $\phi'$ by a $\dual{G}$-conjugate if necessary, we may assume that $\phi'$ restricts to $\phi_0$. 
    
    Assume that $F$ is non-archimedean. Then $L_F^1=\SL_2\times I_F$. Let 
    \[
    C_{\phi_0}=C_{\dual{G}}(\phi_0(\SL_2\times I_F))=C_{\dual{G}}(\phi_0(I_F))\cap C_{\dual{G}}(\phi_0(\SL_2)).
    \]
    Both $C_{\dual{G}}(\phi_0(I_F))$ and $C_{\dual{G}}(\phi_0(\SL_2))$ are reductive groups by Lemma 10.2.2 and the proof of Lemma 10.1.1 in \cite{KotCuspidal}. Therefore $C_{\phi_0}$ is reductive.

    Both $\phi(\Fr)$ and $\phi'(\Fr)$ normalise $\phi_0(\SL_2\times I_F)$, and therefore also normalise $C_{\phi_0}$. Let $\theta,\theta'\in\Aut(C_{\phi_0})$ denote the automorphisms defined by $\Int(\phi(\Fr))$ and $\Int(\phi'(\Fr))$. Since $\phi(\Fr)$ and $\phi'(\Fr)$ are semisimple elements of $\Lgp{G}$, the automorphisms $\theta,\theta'$ are semisimple. Therefore they each preserve a Borel pair of $C_{\phi_0}$ by \cite[Theorem 7.5]{SteinbergEndom}. By replacing $\phi,\phi'$ by $C_{\phi_0}$-conjugates if necessary, we may assume that $\theta,\theta'$ preserve the same Borel pair $(\Bc,\Tc)$ of $C_{\phi_0}^\circ$.

    Let $x=\phi'(\Fr)^{-1}\phi(\Fr)\in\dual{G}$. Since $\phi(\Fr)$ and $\phi'(\Fr)$ act on $\phi_0(\SL_2\times I_F)$ in the same way, we have that $x$ acts trivially on $\phi_0(\SL_2\times I_F)$, and thus $x\in C_{\phi_0}$. Moreover, $x\in N_{C_{\phi_0}}(\Bc,\Tc)$. We have $N_{C_{\phi_0}}(\Bc,\Tc)/\Tc\simeq C_{\phi_0}/C_{\phi_0}^\circ$ as explained in \cite[\S1]{DigneMichel}. Fix representatives $c_1,\dots,c_N$ of $N_{C_{\phi_0}}(\Bc,\Tc)/\Tc$. For each $i=1,\dots,N$, let $\phi_i:\SL_2\times W_F\times\to\Lgp{G}$ be the extension of $\phi_0$ that satisfies $\phi_i(\Fr)=c_i\phi(\Fr)$. We will show that $\phi'$ lies in the component of $\phi_i$ for some $i=1,\dots,N$.
    
    Let $\Mcal=C_{\Lgp{G}}(\Tc^{\theta',\circ})$. Then $\Mcal$ is a Levi subgroup of $\Lgp{G}$ containing the image of $\phi'$. Let $\Mcal'\subseteq\Mcal$ be a minimal Levi of $\Lgp{G}$ containing the image of $\phi'$. Then we have $\Tc^{\theta',\circ}\subseteq A_{\Mcal'}$. Therefore, it suffices to show that there exists $t_0\in\Tc^{\theta',\circ}$ and $t_1\in\Tc$ such that $t_0t_1\phi'(\Fr)t_1^{-1}=c_i\phi(\Fr)$ for some $i=1,\dots,N$. Indeed, this says that the unramified twist of $\phi'$ by $t_1\in A_{\Mcal'}=H_c^1(W_F/I_F,A_{\Mcal'})$ is equivalent to $\phi_i$, so that $\phi'$ lies in the connected component of $\phi_i$.
    
    We may write $x=tc_i$ for some $t\in\Tc$ and some $i=1,\dots,N$. Then $t^{-1}\phi'(\Fr)=c_i\phi(\Fr)$. Let $(1-\theta')\Tc=\{t\theta'(t)^{-1} : t\in\Tc\}$. It follows from the Smith normal form that the homomorphism
    \[
    \Tc^{\theta',\circ}\times(1-\theta')\Tc\longrightarrow\Tc
    \]
    given by multiplication in $\Tc$ is surjective. Therefore, we may write $t^{-1}=t_0t_1\theta'(t_1)^{-1}$ for some $t_0\in\Tc^{\theta',\circ}$ and $t_1\in\Tc$. Then 
    \begin{align*}
        c_i\phi(\Fr)&=t_0t_1\theta'(t_1)^{-1}\phi'(\Fr) \\
        &=t_0t_1\phi'(\Fr)t_1^{-1}\phi'(\Fr)^{-1}\phi'(\Fr) \\
        &=t_0t_1\phi'(\Fr)t_1^{-1}.
    \end{align*}
    This concludes the proof in the non-archimedean case.

    Now assume that $F$ is archimedean. By restriction of scalars, we may assume that $F=\RR$. Recall that $L_\RR=W_\RR=W_\RR^1\times\RR_{>0}$. Since $\RR_>0$ lies in the centre of $W_\RR$ and acts trivially on $\dual{G}$, we obtain that both $a_\phi(\RR_{>0})$ and $a_{\phi'}(\RR_{>0})$ centralise $\phi_0(W_\RR^1)$. Therefore $a_\phi(\RR_{>0})$ and $a_{\phi'}(\RR_{>0})$ are contained in $C_{\phi_0}\defeq C_{\dual{G}}(\phi_0(W_\RR^1))$. Since $a_\phi(\RR_{>0})$ and $a_{\phi'}(\RR_{>0})$ are connected abelian subgroups of $C_{\phi_0}$ consisting of semisimple elements, they are contained in maximal tori of $C_{\phi_0}$. Let $\Tc$ be a maximal torus of $C_{\phi_0}$. By replacing $\phi,\phi'$ by $C_{\phi_0}$-conjugates if necessary, we may assume that $a_\phi(\RR_{>0})$ and $a_{\phi'}(\RR_{>0})$ are both contained in $\Tc$. For each $w\in W_\RR$, define $a(w)=\phi'(w)\phi(w)^{-1}$. For $w\in W_\RR^1$ we have $a(w)=1$, and for $r\in\RR_{>0}$ we have $a(r)=a_{\phi'}(r)a_\phi(r)^{-1}$. Therefore $a\in\Hom_c(W_\RR/W_\RR^1,\Tc)$. 
    
    Since $a_\phi(\RR_{>0})\subseteq\Tc$, it follows that that $\phi(\RR_{>0})$ centralises $\Tc$. Since $\phi(W_\RR^1)=\phi_0(W_\RR^1)$ also centralises $\Tc$, we have that $\phi(W_\RR)$ is contained in $\Mcal\defeq C_{\Lgp{G}}(\Tc)$. Now, $\Mcal$ is a Levi subgroup of $\Lgp{G}$ containing the image of $\phi$. Let $\Mcal_\phi\subseteq\Mcal$ be a minimal Levi of $\Lgp{G}$ containing the image of $\phi$. Then $\Tc\subseteq A_{\Mcal_\phi}$. Therefore $a\in\Hom_c(W_\RR/W_\RR^1,A_{\Mcal_\phi})$ and  $\phi'=a\cdot\phi$, an unramified twist of $\phi$. Thus, $\phi'$ lies in the same connected component as $\phi$.
\end{proof}

We have the following commutative diagram
\[
\begin{tikzcd}
    \Phi(H) \arrow[r,"\xi_*"] \arrow[d] & \Phi(G) \arrow[d] \\
    \Hom_{c,W_F^1}(L_F^1,\Lgp{H})/\dual{H} \arrow[r,"\xi_*"] & \Hom_{c,W_F^1}(L_F^1,\Lgp{G})/\dual{G}
\end{tikzcd}
\]
The right vertical arrow maps each connected component to a point. The fibres of the left vertical arrow are finite unions of connected components. Therefore to prove \Cref{thm:finiteness}, it suffices to show that the bottom map has finite fibres. To do so, we will make use of the following theorem of Vinberg \cite[Theorem 1]{vinberg}.

\begin{theorem} \label{thm:Vinberg}
    Let $k$ be an algebraically closed field of characteristic zero. Let $\xi:\Hc\to\Gc$ be an embedding of (not necessarily connected) reductive groups over $k$. For each $n\in\ZZ_{\geq0}$, the natural morphism
    \[
    \xi_*:\Hc^n//\Hc\longrightarrow\Gc^n//\Gc
    \]
    is finite. (The quotients are GIT quotients with respect to the conjugation actions.)
\end{theorem}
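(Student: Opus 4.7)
The plan is to prove finiteness of $\xi_* : \Hc^n /\!/ \Hc \to \Gc^n /\!/ \Gc$ in three stages: a reduction to connected reductive groups, a factorization through an intermediate GIT quotient, and a geometric analysis of fibres using Vinberg's characterization of closed orbits in $\Gc^n$.

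First, I reduce to the case that $\Hc$ and $\Gc$ are connected reductive. The identity components $\Hc^\circ \hookrightarrow \Gc^\circ$ are connected reductive, and the natural maps $\Hc^n /\!/ \Hc^\circ \to \Hc^n /\!/ \Hc$ and $\Gc^n /\!/ \Gc^\circ \to \Gc^n /\!/ \Gc$ are finite, being categorical quotients by the finite component groups acting on normal affine varieties. Granting finiteness in the connected case, the general case follows by a routine diagram chase: if $\pi: \Hc^n /\!/ \Hc^\circ \to \Hc^n /\!/ \Hc$ is finite surjective and $\xi_* \circ \pi$ is finite, then integrality of the ring extension descends from $\pi$-pullbacks (where it holds) to $\xi_*$-pullbacks.

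Assuming $\Hc, \Gc$ connected, I factor $\xi_*$ as
\[
\Hc^n /\!/ \Hc \xrightarrow{\iota} \Gc^n /\!/ \Hc \xrightarrow{\psi} \Gc^n /\!/ \Gc,
\]
where $\Hc$ acts on $\Gc^n$ by simultaneous conjugation through $\xi$. The morphism $\iota$ is a closed immersion (hence finite): $\Hc^n$ is a closed $\Hc$-invariant subscheme of $\Gc^n$, and since $\Hc$-invariants is exact on rational $\Hc$-modules, the surjection $k[\Gc^n] \twoheadrightarrow k[\Hc^n]$ induces a surjection $k[\Gc^n]^\Hc \twoheadrightarrow k[\Hc^n]^\Hc$. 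The problem thus reduces to proving that $\psi$ is finite.

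To prove $\psi$ is finite, I use that for morphisms of finite type between affine $k$-varieties, finite is equivalent to universally closed with finite fibres (the separatedness required for properness is automatic for affine morphisms). The fibre analysis is driven by Vinberg's characterization of closed orbits: the $\Gc$-orbit $\Gc \cdot \vec{g} \subseteq \Gc^n$ is closed iff the Zariski closure $M_{\vec{g}} = \overline{\langle g_1, \ldots, g_n \rangle}$ is a reductive subgroup of $\Gc$. Matsushima's theorem then forces the stabilizer $L = C_\Gc(M_{\vec{g}})$ to be reductive, and $\Gc \cdot \vec{g} = \Gc/L$ is affine. The fibre of $\psi$ over $[O_\Gc] = [\Gc/L]$ consists of closed $\Hc$-orbits in $\Gc^n$ whose $\Gc$-saturation has $O_\Gc$ as its closed part; I will show that any such closed $\Hc$-orbit actually lies in $O_\Gc$, and is then identified with a closed $\Hc$-orbit on the affine homogeneous space $\Gc/L$. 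A classical double-coset finiteness result for pairs of reductive subgroups of a reductive group (via the Luna slice theorem and Richardson) then yields finiteness of the fibres, while universal closedness follows by extending this analysis over a DVR base via the valuative criterion of properness.

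The main obstacle will be the containment statement in the third step: every closed $\Hc$-orbit in the GIT preimage of $[O_\Gc]$ lies in $O_\Gc$ itself. A Hilbert--Mumford one-parameter subgroup that degenerates a point $\vec{g}$ in the preimage into $O_\Gc$ is drawn from $\Gc$, not from $\Hc$, so there is no immediate $\Hc$-descent of the degeneration. The resolution must exploit the special conjugation-action structure of $X = \Gc^n$ (the theorem is genuinely false for arbitrary reductive $\Gc$-varieties): using Vinberg's reductivity criterion simultaneously at $\vec{g}$ and at the $\Gc$-limit of $\vec{g}$, one transports the failure of reductivity of $M_{\vec{g}}$ into an obstruction to closedness of $\Hc \cdot \vec{g}$, forcing $\Hc \cdot \vec{g}$ to admit a proper degeneration unless $\vec{g}$ already lies in $O_\Gc$. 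This interplay between the generated-subgroup criterion and Matsushima's criterion for stabilizers is the heart of the argument.
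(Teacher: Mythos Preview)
The paper does not prove this theorem. It is stated as a result of Vinberg, with a citation \cite[Theorem 1]{vinberg}, and used as a black box to deduce \Cref{prop:finiteness}. There is therefore no ``paper's own proof'' to compare against; your proposal is an attempt to supply an argument for a result the author simply imports from the literature.

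As for the content of your sketch: the reduction to the connected case and the factorization $\Hc^n/\!/\Hc \to \Gc^n/\!/\Hc \to \Gc^n/\!/\Gc$ with $\iota$ a closed immersion are fine and standard. The difficulty, as you yourself flag, is entirely in the finiteness of $\psi$. Your proposed route through Vinberg's closed-orbit criterion and Matsushima is plausible in spirit, but the ``containment statement'' you isolate is not obviously true as stated: a closed $\Hc$-orbit in the $\Gc$-saturation of $O_\Gc$ need not lie in $O_\Gc$ itself (think of $\Hc$ trivial, where every point is a closed $\Hc$-orbit). What one actually needs is that the set of closed $\Hc$-orbits mapping to a given point of $\Gc^n/\!/\Gc$ is finite, not that they all sit inside the closed $\Gc$-orbit. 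Vinberg's original argument proceeds differently, via a direct analysis of invariant rings and the structure of commuting varieties; if you want a self-contained proof you should consult \cite{vinberg} rather than try to reconstruct it from the orbit-closure heuristic you have sketched.
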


\begin{proposition}\label{prop:finiteness}
    Let $k$ be an algebraically closed field of characteristic zero. Let $\xi:\Hc\to\Gc$ be an embedding of (not necessarily connected) reductive groups over $k$. Equip $\Hc$ and $\Gc$ with a topology that is at least as fine as the Zariski topology. The pushforward map of conjugacy classes of continuous homomorphisms
    \[
    \xi_*:\Hom_c(\Gamma,\Hc)/\Hc\longrightarrow\Hom_c(\Gamma,\Gc)/\Gc
    \]
    has finite fibres.
\end{proposition}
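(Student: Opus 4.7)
The plan is to reduce to Vinberg's \Cref{thm:Vinberg} by evaluating homomorphisms at a well-chosen finite tuple of elements of $\Gamma$. Fix a representative $\phi_0 \in \Hom_c(\Gamma, \Hc)$ and let $F \subseteq \Hom_c(\Gamma, \Hc)/\Hc$ denote the fibre of $\xi_*$ over $[\xi \circ \phi_0]$; the goal is to show $|F| < \infty$. Since $\Hc$ is Noetherian in the Zariski topology, I would first choose $\gamma_1, \dots, \gamma_n \in \Gamma$ so that $\phi_0(\gamma_1), \dots, \phi_0(\gamma_n)$ generate a subgroup of $\Hc$ that is Zariski-dense in the Zariski closure $\overline{\phi_0(\Gamma)}^{\mathrm{Zar}}$, which exists because the chain of Zariski closures of the subgroups generated by increasing finite subsets stabilises.

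Next, I would define the evaluation map $\mathrm{ev} \colon F \to \Hc^n/\Hc$ by $[\phi] \mapsto [(\phi(\gamma_1), \dots, \phi(\gamma_n))]$ and show that it is injective. Suppose $[\phi_1], [\phi_2] \in F$ with $\phi_2(\gamma_i) = h \phi_1(\gamma_i) h^{-1}$ for some $h \in \Hc$ and all $i$. Write $\xi \circ \phi_j = \Int(g_j) \circ \xi \circ \phi_0$ for some $g_j \in \Gc$. Applying $\xi$ to the conjugacy relations, the element $c := g_2^{-1} \xi(h) g_1$ centralises each $\xi \phi_0(\gamma_i)$; since centralisers in $\Gc$ are Zariski closed and the $\xi\phi_0(\gamma_i)$ Zariski-generate $\xi(\overline{\phi_0(\Gamma)}^{\mathrm{Zar}})$, the element $c$ centralises $\xi \circ \phi_0$ on all of $\Gamma$. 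Substituting back yields $\xi \circ \phi_2 = \Int(\xi(h)) \circ \xi \circ \phi_1$, and injectivity of $\xi$ gives $\phi_2 = \Int(h) \circ \phi_1$, so $[\phi_1] = [\phi_2]$ in $F$.

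The image of $\mathrm{ev}$ lies in the set of $\Hc$-orbits of $\xi^{-1}(\Gc \cdot T_0) \subseteq \Hc^n$, where $T_0 := (\xi \phi_0(\gamma_i))_i \in \Gc^n$. Composing $\mathrm{ev}$ with the GIT quotient map $\Hc^n/\Hc \to \Hc^n//\Hc$ lands in the fibre of the morphism $\xi_* \colon \Hc^n//\Hc \to \Gc^n//\Gc$ over the point represented by $T_0$, which is finite by \Cref{thm:Vinberg}.

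The hard part is bootstrapping from finiteness at the GIT-quotient level to finiteness at the orbit level, since a single point of $\Hc^n//\Hc$ can be the image of many $\Hc$-orbits in its null cone. The crucial constraint is that tuples of the form $(\phi(\gamma_i))_i$ for $\phi \in F$ generate a Zariski-closed subgroup of $\Hc$ abstractly isomorphic, via $\xi$, to $\overline{\phi_0(\Gamma)}^{\mathrm{Zar}}$. When $\overline{\phi_0(\Gamma)}^{\mathrm{Zar}}$ is reductive --- as is automatic in the intended application to $L$-parameters, where the image of $L_F^1$ generates a reductive Zariski closure --- the stabiliser in $\Hc$ of each such tuple is reductive, the corresponding $\Hc$-orbit is closed, and Vinberg applies directly. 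In general, the fibre $F$ can be identified with the double-coset space $\xi(\Hc) \backslash Y / C_\Gc(\xi \circ \phi_0)$, where $Y = \{g \in \Gc : g\,\xi(\overline{\phi_0(\Gamma)}^{\mathrm{Zar}})\,g^{-1} \subseteq \xi(\Hc)\}$, and I expect to conclude by a Luna slice argument at each closed orbit in the Vinberg fibre to show that this double-coset space is finite.
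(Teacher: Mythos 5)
You are following the same basic strategy as the paper --- evaluate at finitely many elements of $\Gamma$ and reduce to Vinberg --- and you have correctly isolated the subtlety that the paper's one-line diagram does not spell out. The hooked vertical arrows in the paper's diagram require that the $\Hc$-orbit of each tuple $(\phi(\gamma_i))_i$ be closed in $\Hc^n$ (equivalently, by Richardson's criterion in characteristic zero, that $\overline{\phi(\Gamma)}^{\Zar}$ be reductive); otherwise the composite $\Hom_c(\Gamma,\Hc)/\Hc\to\Hc^n/\Hc\to\Hc^n//\Hc$ fails to be injective, because the unique closed $\Hc$-orbit in the closure of a non-closed orbit $\Hc\cdot(\phi(\gamma_i))_i$ is itself realised by a continuous homomorphism $\Gamma\to\Hc$ (specialise $\phi$ along a destabilising cocharacter) and hence lies in the image of the evaluation map. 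Your observation that this is harmless in the intended application, and your injectivity argument for $\mathrm{ev}\colon F\to\Hc^n/\Hc$, are both sound.

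However, your proposal does not actually prove the proposition as stated. The sentence ``I expect to conclude by a Luna slice argument'' is a plan, not a proof, and the finiteness of the double-coset space $\xi(\Hc)\backslash Y/C_\Gc(\xi\circ\phi_0)$ in the non-reductive case is left open. The cleanest fix, which covers all cases and entirely avoids descending to the GIT quotient, is Richardson's theorem on reductive pairs: for an inclusion $\Hc\hookrightarrow\Gc$ of reductive groups over an algebraically closed field of characteristic zero and any $\mathbf{x}\in\Hc^n$, the set $\Gc\cdot\mathbf{x}\cap\Hc^n$ is a finite union of $\Hc$-orbits. This says exactly that $\Hc^n/\Hc\to\Gc^n/\Gc$ has finite fibres at the level of orbit sets, and combined with your injectivity of $\mathrm{ev}$ this proves the proposition with no reductivity hypothesis on the image and no appeal to closedness of orbits.
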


\begin{proof}
    Let $\Hc_\Zar$ and $\Gc_\Zar$ denote $\Hc$ and $\Gc$ equipped with the Zariski topology. We have a commutative diagram
    \[
    \begin{tikzcd}
        \Hom_c(\Gamma,\Hc)/\Hc \arrow[r,"\xi_*"] \arrow[d,hook] & \Hom_c(\Gamma,\Gc)/\Gc \arrow[d,hook] \\
        \Hom_c(\Gamma,\Hc_\Zar)/\Hc \arrow[r,"\xi_*"] & \Hom_c(\Gamma,\Gc)_\Zar)/\Gc
    \end{tikzcd}
    \]
    where the vertical arrows are inclusions. It follows that it suffices to show that the bottom arrow has finite fibres. We may thus assume that $\Hc$ and $\Gc$ are equipped with the Zariski topology. 
    
    Let $\gamma_1,\dots,\gamma_n$ be a set of topological generators for $\Gamma$. This determines a commutative diagram
    \[
    \begin{tikzcd}
        \Hom_c(\Gamma,\Hc)/\Hc \arrow[r,"\xi_*"] \arrow[d,hook] & \Hom_c(\Gamma,\Gc)/\Gc \arrow[d,hook] \\
    \Hc^n//\Hc \arrow[r,"\xi_*"] & \Gc^n//\Gc
    \end{tikzcd}
    \]
    The bottom arrow has finite fibres by \Cref{thm:Vinberg}, and therefore so does the top arrow, as claimed.
\end{proof}

\begin{proposition}
    Let $H$ and $G$ be connected reductive groups over $F$. Let $\xi:\Lgp{H}\to\Lgp{G}$ be a continuous injective homomorphism over $W_F$ such that the restriction $\xi_0:\dual{H}\to\dual{G}$ is algebraic. The map
    \[
    \xi_*:\Hom_{c,W_F^1}(L_F^1,\Lgp{H}^1)/\dual{H}\longrightarrow \Hom_{c,W_F}(L_F^1,\Lgp{G}^1)/\dual{G}
    \]
    has finite fibres.
\end{proposition}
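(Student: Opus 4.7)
The plan is to reduce the statement to \Cref{prop:finiteness} by replacing the non-algebraic targets $\Lgp{H}^1$ and $\Lgp{G}^1$ with honest (disconnected) reductive algebraic groups over $\CC$. First, choose a finite Galois extension $K/F$ so that both $W_F$-actions on $\dual{H}$ and on $\dual{G}$ factor through $\Gal(K/F)$ and, in the non-archimedean case, so that the $1$-cocycle $a_\xi\colon W_F\to\dual{G}$ attached to $\xi$ vanishes on $W_K^1=I_K$. The second condition is achievable because $a_\xi|_{I_K}$ is a continuous homomorphism from the profinite group $I_K$ into the complex Lie group $\dual{G}$, hence has open kernel, which corresponds to a finite extension of $K$; enlarging $K$ accordingly suffices. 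Set $Q=I_{K/F}$ in the non-archimedean case (and $Q=\Gamma_\RR$, $K=\CC$ in the archimedean case). Then in the non-archimedean case $\xi$ descends to an embedding $\bar\xi\colon \dual{H}\rtimes Q\hookrightarrow \dual{G}\rtimes Q$ of disconnected reductive algebraic groups over $\CC$, injective because $\xi_0$ is.

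The second ingredient is the natural bijection
\[
\Hom_{c,W_F^1}(L_F^1,\Lgp{H}^1)\;\longleftrightarrow\;\Hom_{c,Q}(L_F^1,\dual{H}\rtimes Q),
\]
given in one direction by composition with the quotient $\Lgp{H}^1\twoheadrightarrow\dual{H}\rtimes Q$; the inverse sends $\bar\phi$ to the lift $l\mapsto(\bar\phi_1(l),w(l))$, which is a group homomorphism precisely because $W_F^1$ acts on $\dual{H}$ through $Q$. This bijection is $\dual{H}$-equivariant, and the analogous statement holds for $G$. Applying \Cref{prop:finiteness} with $\Gamma=L_F^1$ (in its natural topology) and the algebraic embedding $\bar\xi$ then yields finite fibres for the induced map of $(\dual{H}\rtimes Q)$- and $(\dual{G}\rtimes Q)$-conjugacy classes of continuous homomorphisms. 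Restricting to the conjugation-invariant subsets of homomorphisms ``over $Q$'' preserves this finiteness (inclusion of subsets of the fibre), and passing from $(\dual{H}\rtimes Q)$-conjugacy to $\dual{H}$-conjugacy enlarges each fibre by a factor of at most $|Q|$; the same refinement on the $G$-side still leaves the composite with finite fibres, which yields finite fibres for the original map.

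The main obstacle lies in the archimedean case, where $S^1\subseteq W_\RR^1$ is connected and acts trivially on $\dual{G}$; consequently $a_\xi|_{S^1}$ is a continuous cocharacter $S^1\to\dual{G}$ which need not be trivial, so $\xi$ does not in general descend to a map $\dual{H}\rtimes\Gamma_\RR\to \dual{G}\rtimes\Gamma_\RR$ and the algebraization step above breaks. One resolves this via the non-abelian inflation-restriction sequence for $1\to S^1\to W_\RR^1\to\Gamma_\RR\to 1$: the restriction map $H^1_c(W_\RR^1,\dual{H})\to H^1_c(S^1,\dual{H})$ has fibres controlled by the finite set $H^1(\Gamma_\RR,(\dual{H})^{S^1})$, the target $H^1_c(S^1,\dual{H})$ is the set $X_*(\dual{H})/W(\dual{H},\Tc_H^0)$ of cocharacter classes, and the induced map $X_*(\dual{H})/W(\dual{H})\to X_*(\dual{G})/W(\dual{G})$ has finite fibres because $\xi_0$ is injective on cocharacter lattices. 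Stitching these together gives the archimedean case.
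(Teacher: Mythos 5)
Your non-archimedean argument has a genuine gap. You propose to apply \Cref{prop:finiteness} with $\Gamma=L_F^1=\SL_2(\CC)\times I_F$, but the proof of \Cref{prop:finiteness} begins by choosing topological generators $\gamma_1,\dots,\gamma_n$ of $\Gamma$ in order to reduce to Vinberg's theorem on $\Hc^n//\Hc\to\Gc^n//\Gc$. This requires $\Gamma$ to be topologically finitely generated, and $I_F$ is not (its wild inertia subgroup is a free pro-$p$ group of countably infinite rank). Your reduction of the \emph{target} to the algebraic group $\dual{H}\rtimes Q$ with $Q=I_{K/F}$ finite is fine, but it leaves the source $L_F^1$ untouched, and choosing $K$ once-and-for-all (depending only on $\xi$) cannot fix this. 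The missing step—and the one the paper's proof is built around—is that for a fixed class $a$ in the domain, the whole fibre $\xi_*^{-1}(\xi_*a)$ consists of cocycles factoring through $L_F^1/J$ for a single compact open normal $J\leq I_F$ depending on $a$: take $J\subseteq W_K^1$ small enough that $a(J)=1$; then for $a'\in\xi_*^{-1}(\xi_*a)$ one computes $\xi_0(a'(J))=1$, and injectivity of $\xi_0$ forces $a'(J)=1$. With this in hand, \Cref{prop:finiteness} applies to $\Gamma=L_F^1/J=\SL_2\times(I_F/J)$, which \emph{is} finitely generated. The fix is short, but it is precisely the nontrivial observation, and it is absent from your write-up.

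Your archimedean argument, by contrast, is a genuinely different route. The paper handles the difficulty you identify (that $a_\xi|_{S^1}$ may be a nontrivial cocharacter, so $\xi$ does not descend algebraically to $\dual{H}\rtimes\Gamma_\RR\to\dual{G}\rtimes\Gamma_\RR$) by passing to the finite cover $S^1\rtimes\langle j\rangle\twoheadrightarrow W_\RR^1$ with kernel $\{\pm1\}$; inflating along this cover recasts everything as homomorphisms of $S^1\rtimes\langle j\rangle$ into the algebraic groups $\dual{H}\rtimes\langle j\rangle$ and $\dual{G}\rtimes\langle j\rangle$ over the finite group $\langle j\rangle$, and \Cref{prop:finiteness} then applies exactly as in the non-archimedean case, with no case distinction. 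Your restriction-to-$S^1$ plus inflation-restriction approach ought to work, but as written is not quite right: since $S^1$ acts trivially, $(\dual{H})^{S^1}=\dual{H}$, whereas the fibres of the restriction map $H^1_c(W_\RR^1,\dual{H})\to H^1_c(S^1,\dual{H})$ over the class of a cocharacter $\mu$ are in fact governed by $H^1(\Gamma_\RR,C_{\dual{H}}(\mu(S^1)))$ with a suitably twisted involution, not by $H^1(\Gamma_\RR,\dual{H})$. You would then need to supply finiteness of $H^1(\Gamma_\RR,M)$ for $M$ a complex reductive group with algebraic involution (a consequence, e.g., of the finiteness of conjugacy classes of order-two elements in a reductive group over $\CC$, itself essentially Vinberg again). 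So your version trades the paper's single uniform application of \Cref{prop:finiteness} for a two-step argument with an extra finiteness input; both routes are viable, but the covering trick is cleaner and avoids having to be careful about centralizers.
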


Recall that we write $\xi(h,w)=(\xi_0(h)a_\xi(w),w)$ for all $w\in W_F$. Note that we can identify the map $\xi_*$ with the map
\[
\xi_*:H_c^1(L_F^1,\dual{H})\longrightarrow H_c^1(L_F^1,\dual{G})
\]
defined by $\xi_*a=\xi_{0,*}a\cdot a_\xi$, that is, $\xi_*a(l)=\xi_0*(a(l))a_\xi(w(l))$. We will identify $a_\xi$ with its inflation to $L_F$ and thus simply write $a_\xi(l)=a_\xi(w(l))$. It will be convenient to view $\xi_*$ in this way in order to save space and use language from group cohomology.

\begin{proof}
    Let $a\in H_c^1(L_F^1,\dual{H})$. We will show that $\xi_*^{-1}(\xi_*a)$ is finite.
    
    Assume that $F$ is non-archimedean. Then $W_F^1=I_F$ and $L_F^1=\SL_2\times I_F$. There exists a compact open normal subgroup $J$ of $I_F$ such that $J$ acts trivially on $\dual{H}$ and $\dual{G}$, and $a(J)=a_\xi(J)=1$. Since $a_\xi(J)=1$, the homomorphism $\xi:\dual{H}\rtimes W_F^1\to\dual{G}\rtimes W_F^1$ descends to a homomorphism
    \[
    \xi:\dual{H}\rtimes W_F^1/J\to\dual{G}\rtimes W_F^1/J.
    \]
    We have a commutative diagram
    \[
    \begin{tikzcd}
        H_c^1(L_F^1/J,\dual{H}) \arrow[d,hook] \arrow[r,"\xi_*"] & H_c^1(L_F^1/J,\dual{H}) \arrow[d,hook] \\
        H_c^1(L_F^1,\dual{H}) \arrow[r,"\xi_*"] & H_c^1(L_F^1,\dual{H})
    \end{tikzcd}
    \]
    where the vertical arrows are the injective inflation maps.
    
    Since $\xi_*a=\xi_{0,*}a\cdot a_\xi$, we have $\xi_*a(J)=1$. Let $a'\in\xi_*^{-1}(\xi_*a)$. Then there exists $g\in\dual{G}$ such that 
    \[
    \xi_0(a'(l))a_\xi(l)=\xi_*a'(l)=g\xi_*a(l)\pre{l\!}{g^{-1}},
    \]
    for all $l\in L_F^1$. Since $\xi_*(J)=a_\xi(J)=1$, we have $\xi(a'(J))=1$. Since $\xi$ is injective, we obtain $a'(J)=1$. Thus, we have that $\xi_*^{-1}(\xi_*a)$ lies in $H_c^1(L_F^1/J,\dual{H})\hookrightarrow H_c^1(L_F^1,\dual{H})$. Consequently, we obtain the commutative diagram
    \[
    \begin{tikzcd}
        \xi_*^{-1}(\xi_*a) \arrow[r,"\xi_*"] \arrow[d,hook] & \{\xi_*a\} \arrow[d] \arrow[d,hook] \\
        H_c^1(L_F^1/J,\dual{H}) \arrow[r,"\xi_*"] & H_c^1(L_F^1/J,\dual{G})
    \end{tikzcd}
    \]
    and it suffices to show that the bottom map has finite fibres. This map can be viewed as the map
    \[
    \begin{tikzcd}
        \Hom_{c,I_F/J}(L_F^1/J,\dual{H}\rtimes I_F/J)/\dual{H} \arrow[r,"\xi_*"] & \Hom_{c,I_F/J}(L_F^1/J,\dual{G}\rtimes I_F/J)/\dual{G}
    \end{tikzcd}
    \]
    where we are writing $\Hom_{c,I_F/J}$ to indicate continuous homomorphisms over $I_F/J$. We have the commutative diagram
    \[
    \begin{tikzcd}[column sep=small]
        \Hom_{c,I_F/J}(L_F^1/J,\dual{H}\rtimes I_F/J)/\dual{H} \arrow[r,"\xi_*"] \arrow[d] & \Hom_{c,I_F/J}(L_F^1/J,\dual{G}\rtimes I_F/J)/\dual{G} \arrow[d] \\
        \Hom_{c,I_F/J}(L_F^1/J,\dual{H}\rtimes I_F/J)/\conj \arrow[r,"\xi_*"] \arrow[d,hook] & \Hom_{c,I_F/J}(L_F^1/J,\dual{G}\rtimes I_F/J)/\conj \arrow[d,hook] \\
        \Hom_c(\SL_2\times I_F/J,\dual{H}\rtimes I_F/J)/\conj \arrow[r,"\xi_*"] & \Hom_c(\SL_2\times I_F/J,\dual{G}\rtimes I_F/J)/\conj
    \end{tikzcd}
    \]
    where we have written ``$/\conj$'' to indicate quotients by conjugacy of the codomain. The fibres of the upper vertical arrow on the left are finite as they are the sets of $\dual{H}$-conjugacy classes that comprise a conjugacy class under $\dual{H}\rtimes I_F/J$ and $I_F/J$ is finite. Since the bottom arrow has finite fibres by \Cref{prop:finiteness}, it follows that the top arrow does and therefore $\xi_*^{-1}(\xi_*a)$ is finite.

    Now, assume that $F$ is archimedean. By restriction of scalars, we may assume that $F=\RR$. Recall that $L_\RR=W_\RR=\CC^\times\cup\CC^\times j$ and $L_\RR^1=W_\RR^1=S^1\cup S^1j$. We have $W_\RR=W_\RR^1\times\RR_{>0}$. We have an extension of topological groups
    \[
    \begin{tikzcd}
        1 \arrow[r] & \{\pm1\} \arrow[r] & S^1\rtimes\langle j\rangle \arrow[r] & W_\RR^1 \arrow[r] & 1
    \end{tikzcd}
    \]
    where the homomorphism $\{\pm1\}\to S^1\rtimes\langle j\rangle$ is the diagonal embedding and the homomorphism $S^1\rtimes\langle j\rangle\to W_\RR^1$ is given by multiplication in $W_\RR^1$. We have a commutative diagram
    \[
    \begin{tikzcd}
        H_c^1(W_\RR^1,\dual{H}) \arrow[r,"\xi_*"] \arrow[d,hook] &  H_c^1(W_\RR^1,\dual{G}) \arrow[d,hook] \\
        H_c^1(S^1\rtimes\langle j\rangle,\dual{H}) \arrow[r,"\xi_*"] & H_c^1(S^1\rtimes\langle j\rangle,\dual{G})
    \end{tikzcd}
    \]
    where the vertical arrows are the inflation maps. Thus, it suffices to show that the bottom map has finite fibres. This map can be viewed as the pushforward map
    \[
    \xi_*:\Hom_{c,\langle j\rangle}(S^1\rtimes\langle j\rangle,\dual{H}\rtimes\langle j\rangle)/\dual{H}\longrightarrow\Hom_{c,\langle j\rangle}(S^1,\rtimes\langle j\rangle,\dual{G}\rtimes\langle j\rangle)/\dual{G}
    \]
    where $\Hom_{c,\langle j\rangle}$ indicates continuous homomorphisms over $\langle j\rangle$. We have the commutative diagram
    \[
    \begin{tikzcd}
        \Hom_{c,\langle j\rangle}(S^1\rtimes\langle j\rangle,\dual{H}\rtimes\langle j\rangle)/\dual{H} \arrow[r,"\xi_*"] \arrow[d] & \Hom_{c,\langle j\rangle}(S^1,\rtimes\langle j\rangle,\dual{G}\rtimes\langle j\rangle)/\dual{G} \arrow[d] \\
        \Hom_{c,\langle j\rangle}(S^1\rtimes\langle j\rangle,\dual{H}\rtimes\langle j\rangle)/\conj \arrow[r,"\xi_*"] \arrow[d,hook] & \Hom_{c,\langle j\rangle}(S^1\rtimes\langle j\rangle,\dual{G}\rtimes\langle j\rangle)/\conj \arrow[d,hook] \\
        \Hom_{c}(S^1\rtimes\langle j\rangle,\dual{H}\rtimes\langle j\rangle)/\conj \arrow[r,"\xi_*"] & \Hom_{c}(S^1\rtimes\langle j\rangle,\dual{G}\rtimes\langle j\rangle)/\conj
    \end{tikzcd}
    \]
    As above, the left vertical map has finite fibres. By \Cref{prop:finiteness}, the bottom map has finite fibres. Consequently, the top map has finite fibres.
\end{proof}

This completes the proof of \Cref{thm:finiteness}, and thus of \Cref{thm:pullback} and its corollary \Cref{existence}. In particular, we have an affirmative answer to \cite[Questions A \& B]{LanST}.

\subsection{Examples}
Let $H$ and $G$ be connected reductive groups over a local field $F$ of characteristic zero with $G$ quasisplit, and let $\xi:\Lgp{H}\to\Lgp{G}$ be an equivalence class of injective tempered $L$-parameters. The stable transfer operator $\Tc_\xi$ is given by the distribution kernel $\Kc_\xi(\delta_H,\delta_G)$ defined by the formal integral
\[
\Kc_\xi(\delta_H,\delta_G)=\int_{\phi\in\Phi_\temp(H)}S^H(\delta_H,\phi)S^G(\xi_*(\phi),\delta_G)\dd{\phi},
\]
where $S^G(\xi_*(\phi),\delta_G)=|D^G(\delta)|^{1/2}\Theta_{\xi_*(\phi)}(\delta_G)$ is the normalised stable tempered character of $\xi_*(\phi)$ and $S^H(\delta_H,\phi)$ is the kernel of stable Fourier inversion in \Cref{thm:stableinversion}. One would like to describe $\Kc_\xi(\delta_H,\delta_G)$ as explicitly as possible. We conclude by describing $\Tc_\xi$ explicitly in some simple cases. First, we note some cases that go back to the work of Harish-Chandra. 

Suppose that $H=1$, in which case $\Lgp{H}=W_F$ and $\xi$ is just a tempered $L$-parameter of $G$. Let $f^G\in\Sc(G)$. Then the function $\Tc_\xi f^G$ on $H(F)=1$ is just the constant $f^G(\xi)=\Theta_\xi(f^G)$. That is, $\Tc_\xi$ is given by
\[
\Tc_\xi f^G=\int_{\Delta_\sr(G)}|D^G(\delta)|^{1/2}\Theta_\xi(\delta)f^G(\delta)\dd{\delta}.
\]
To obtain a formula for $\Theta_\xi(\delta)$, it suffices to treat the case when $\xi$ is discrete by the formula for parabolically induced characters \cite[Lemma 4.7.6]{Silberger}. Harish-Chandra obtained formulas for stable discrete series characters of real groups (see \cite{GKM}). For $p$-adic groups, formulas for stable discrete series characters are not known in general. See \cite{KalICM} for the current state of the art.

Suppose that $H$ is a Levi subgroup $M$ of $G$ and $\xi:\Lgp{M}\to\Lgp{G}$ is the canonical equivalence class of $L$-embeddings. Then $\Tc_\xi:\Sc(G)\to\Sc(M)$ is parabolic descent. Thus, stable transfer coincides with suitably normalised endoscopic transfer in this case \cite[\S1]{AdamsMBVogan}. If $f\in\Sc(G)$, then for $G$-regular semisimple elements $m\in M(F)$, we have $(\Tc_\xi f^G)(m)=f^G(m)$. 

Suppose $G=H^*$ is the quasisplit inner form of $H$ and $\xi:\Lgp{H}\to\Lgp{H^*}$ is the identity. Endoscopic transfer $\Tc_{\End}:\Ic(H)\to\Sc(H^*)$ descends to a continuous linear map $\Tc_H^{H^*}:\Sc(H)\to\Sc(H^*)$. Furthermore, it can be normalised so that $(\Tc_H^{H^*} f^H)(\phi)=f^H(\phi)$ for all $\phi\in\Phi(H)$. This is proved for real groups in \cite{She79} and is part of the refined local Langlands conjecture for $p$-adic groups \cite{KalethaRefined}. For $f^H\in\Sc(H)$ and $\phi\in\Phi(H)$ we have $(\Tc_\xi\Tc_H^{H^*} f^H)(\phi)=(\Tc_H^{H^*} f^H)(\phi)=f^H(\phi)$, and thus $\Tc_\xi\Tc_H^{H^*} f^H=f^H$ by spectral density. That is, endoscopic transfer gives a section for stable transfer along $\xi$.

\subsubsection{Tori.}
Let $S$ and $T$ be tori over $F$ and let $\xi:\Lgp{S}\to\Lgp{T}$ be an injective tempered $L$-embedding. Since $\dual{T}$ is abelian, the restriction $\xi_0:\dual{S}\to\dual{T}$ of $\xi$ is $W_F$-equivariant, or equivalently $\Gamma_F$-equivariant. Let $\xi_0^*:T\to S$ be the homomorphism with $(\xi_0^*)^\vee=\xi_0$. Then $\xi_{0,*}:H_c^1(W_F,\dual{S}^{(1)})\to H_c^1(W_F,\dual{T}^{(1)})$ and $\xi_0^*:T(F)\to S(F)$ are adjoint with respect to the Langlands pairings for $S$ and $T$. Let $a\in H_c^1(W_F,(\dual{S})^{1})=\Phi_\temp(S)$. We have $\xi_*(a)=a_\xi\cdot\xi_{0,*}(a)$, where $a_\xi\in Z_c^1(W_F,(\dual{T})^1)$ is the 1-cocycle determined by $\xi$. For $t\in T(F)$, we have
\[
\chi_{\xi_*(a)}(t)=\langle\xi_*(a),t \rangle=\langle a_\xi,t \rangle\langle (\xi_{0,*}a),t\rangle=\langle a_\xi,t \rangle\langle a,\xi_0^*(t) \rangle=\chi_{a_\xi}\chi_{a}(\xi_0^*(t)).
\]
We will write $\chi_\xi=\chi_{a_\xi}$ for brevity. 

Since $\xi_0$ is injective, $\xi_0^*:T\to S$ is a quotient homomorphism. The Haar measure $\dd{t}$ on $T(F)$ disintegrates into measures $\mu_s$ on the fibres $(\xi_0^*)^{-1}(s)$ such that 
\[
\int_{T(F)}\dd{t}=\int_{S(F)}\int_{(\xi_0^*)^{-1}(s)}\dd{\mu_s(t)}\dd{s}.
\]
We can describe the measures $\mu_s$ concretely as follows. Let $D=\ker(\xi_0^*:T(F)\to S(F))$. If $s=\xi_0^*(t)$ we have $(\xi_0^*)^{-1}(s)=tD$ and the measure $\mu_s$ on $(\xi_0^*)^{-1}(s)$ is obtained by transporting a suitably normalised Haar measure on $D$ along the map $D\to tD, d\mapsto td$. 

Let $f\in\Sc(T)=\Cc(T)$. For all $a\in H_c^1(W_F,(\dual{S})^1)=\Phi_\temp(S)$, we have 
\begin{align*}
    \int_{S(F)}\chi_a(s) (\Tc_\xi f)(s) \dd{s}&=(\Tc_\xi f)(a) \\
    &=f(\xi_*(a)) \\
    &=\int_{T(F)}\chi_{\xi_*(a)}(t)f(t)\dd{t} \\
    &=\int_{T(F)}\chi_\xi(t)\chi_a(\xi_0^*(t))(t)f(t)\dd{t} \\
    &=\int_{S(F)}\chi_a(s)\int_{(\xi_0^*)^{-1}(s)}\chi_\xi(t)f(t)\dd{\mu_s(t)}\dd{s}.
\end{align*}
It follows that for all $s\in S(F)$ we have
\begin{align*}
(\Tc_\xi f)(s)&=\int_{(\xi_0^*)^{-1}(s)}\chi_\xi(t)f(t)\dd{\mu_s(t)} \\
&=\int_{T(F)}\chi_\xi(t)f(t)\dd{\mu_s(t)}
\end{align*}
where we have used $\mu_s$ to also denote the pushforward of $\mu_s$ to $T(F)$.

\subsubsection{Complex groups.}
Let $H,G$ be connected reductive groups over $\CC$ and let $\xi:\Lgp{H}\to\Lgp{G}$ be an equivalence class of injective tempered $L$-homomorphism. Let $S$ be a minimal Levi subgroup (maximal torus) of $H$ and let $T$ be a minimal Levi subgroup (maximal torus) of $G$. We have a commutative diagram
\[
\begin{tikzcd}
    \Lgp{H} \arrow[r, "\xi"] & \Lgp{G} \\
    \Lgp{S} \arrow[u] \arrow[r, "\xi"] & \Lgp{T} \arrow[u] \\
\end{tikzcd}
\]
of equivalence classes of $L$-homomorphisms. We may apply the considerations of the preceding subsection to $\xi:\Lgp{S}\to\Lgp{T}$. 

Define $s\in S$ to be $\xi$-regular if
\[
\dim(T_{G\dash\sing}\cap(\xi_0^*)^{-1}(s))<\dim((\xi_0^*)^{-1}(s))
\]
and $\xi$-singular otherwise. Let $S_{\xi\dash\reg}$ (resp. $S_{\xi\dash\sing}$) denote the set of $\xi$-regular (resp. $\xi$-singular) elements of $S$. If $s\in S_{\xi\dash\reg}$, then $\mu_s(T_{G\dash\sing}\cap(\xi_0^*)^{-1}(s))=0$. Let $D=\ker(\xi_0^*)$. We have a direct product decomposition $D=AD^\circ$ for a finite subgroup $A\subseteq D$.

\begin{lemma}
    We have $S_{\xi\dash\sing}=\bigcup_{a\in A}\bigcup_{\alpha\in\Phi(G,T), \alpha(D^\circ)=1}\xi_0^*(a\ker\alpha)$. Furthermore, $S_{\xi\dash\sing}$ is a closed subvariety of $S$ of positive codimension.
\end{lemma}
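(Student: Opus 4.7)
The plan is to reduce everything to a component-by-component analysis of the fibres of $\xi_0^*:T\to S$. First I would recall that since $G$ is complex and $T$ is a maximal torus, $T_{G\dash\sing}=\bigcup_{\alpha\in\Phi(G,T)}\ker\alpha$ is a finite union of codimension-one closed subvarieties of $T$. Fix $s\in S$ and a lift $t_0\in(\xi_0^*)^{-1}(s)$; using $D=AD^\circ$, the fibre decomposes as the finite disjoint union $t_0D=\bigsqcup_{a\in A}t_0aD^\circ$ of irreducible components, each of dimension $\dim D^\circ=\dim D$.

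The key observation is a dichotomy on each component. For a fixed root $\alpha$ and $a\in A$: if $\alpha|_{D^\circ}$ is non-trivial, then $\ker\alpha\cap t_0aD^\circ$ has dimension at most $\dim D^\circ-1$; if $\alpha|_{D^\circ}$ is trivial, then $\alpha$ is constant on $t_0aD^\circ$ with value $\alpha(t_0a)$, so the intersection is either empty (when $\alpha(t_0a)\neq1$) or all of $t_0aD^\circ$ (when $\alpha(t_0a)=1$). Consequently, $\dim\bigl(T_{G\dash\sing}\cap(\xi_0^*)^{-1}(s)\bigr)=\dim D$ precisely when some $\alpha\in\Phi(G,T)$ with $\alpha(D^\circ)=1$ and some $a\in A$ satisfy $\alpha(t_0a)=1$, i.e., $t_0\in a^{-1}\ker\alpha$. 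Pushing forward by $\xi_0^*$ and using $A^{-1}=A$ (as $A$ is a finite group) gives the claimed description $S_{\xi\dash\sing}=\bigcup_{a,\alpha}\xi_0^*(a\ker\alpha)$.

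For the topological claim, $\xi_0^*:T\to S$ is a surjective homomorphism of algebraic groups, so $\xi_0^*(\ker\alpha)$ is a constructible subgroup of $S$, hence closed; each translate $\xi_0^*(a\ker\alpha)=\xi_0^*(a)\xi_0^*(\ker\alpha)$ is therefore closed, and $S_{\xi\dash\sing}$ is a finite union of these. For positive codimension, the hypothesis $\alpha(D^\circ)=1$ gives $D^\circ\subseteq\ker\alpha$, so $\ker\alpha\cap D$ contains $D^\circ$ with finite index, whence $\dim\xi_0^*(\ker\alpha)=\dim\ker\alpha-\dim D^\circ=(\dim T-1)-\dim D=\dim S-1$.

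The only real subtlety is tracking the disconnected structure of $D$ throughout, since both the dichotomy on a fibre component and the indexing set of the final union reflect the decomposition $D=AD^\circ$; once that splitting is fixed, the finiteness of $A$ and of $\Phi(G,T)$ makes the topological conclusions routine, and the dimension count is a one-line application of the orbit-stabiliser formula for $\xi_0^*|_{\ker\alpha}$.
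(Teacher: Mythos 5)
Your proof is correct and follows essentially the same route as the paper: decompose the fibre $(\xi_0^*)^{-1}(s)=t_0D=\bigsqcup_{a\in A}t_0aD^\circ$ into its irreducible components, observe that $T_{G\dash\sing}$ meets a component in full dimension exactly when some root $\alpha$ vanishing on $D^\circ$ also kills $t_0a$, then push forward and use that $\xi_0^*(\ker\alpha)$ is a closed subgroup of codimension $1$ in $S$. Your explicit dichotomy on $\alpha|_{D^\circ}$ spells out the step the paper leaves tacit, but the substance is identical (and note that since $A\subseteq\ker\xi_0^*$, the translates $\xi_0^*(a\ker\alpha)$ all equal $\xi_0^*(\ker\alpha)$, so the union over $a$ is cosmetic in both arguments).
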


\begin{proof}
    Suppose that $s\in S_{\xi\dash\sing}$, that is $s\in S$ with
    \[
    \dim(T_{G\dash\sing}\cap(\xi_0^*)^{-1}(s))=\dim((\xi_0^*)^{-1}(s)).
    \]
    Then $T_{G\dash\sing}$ contains an irreducible component of $(\xi_0^*)^{-1}(s)$ of maximum dimension.  Write $(\xi_0^*)^{-1}(s)=tD$. Then there exists $a\in A$ such that $taD^\circ\subseteq T_{G\dash\sing}$. Let $\alpha\in\Phi(G,T)$ be a root such that $\alpha(ta D^\circ)=1$. Then $\alpha(ta)=1$ and $\alpha(D^\circ)=1$. It follows that 
    \[
    s\in\bigcup_{a\in A}\bigcup_{\alpha\in\Phi(G,T), \alpha(D^\circ)=1}\xi_0^*(a\ker\alpha).
    \]

    Conversely, suppose that $s\in\bigcup_{a\in A}\bigcup_{\alpha\in\Phi(G,T), \alpha(D^\circ)=1}\xi_0^*(a\ker\alpha)$. There exist $a\in A$, $\alpha\in\Phi(G,T)$ with $\alpha(D^\circ)=1$, and $c\in\ker\alpha$ such that $s=\xi_0^*(ak)$. Then $(\xi_0^*)^{-1}(s)=akD$. Now, $kD^\circ$ is an irreducible component of $(\xi_0^*)^{-1}(s)$ of maximum dimension that is contained in $\ker\alpha$, and thus $T_{G\dash\sing}$. Therefore $s\in S_{\xi\dash\sing}$.

    We have shown that $S_{\xi\dash\sing}=\bigcup_{a\in A}\bigcup_{\alpha\in\Phi(G,T), \alpha(D^\circ)=1}\xi_0^*(a\ker\alpha)$. Now, we have $\xi_0^*(a\ker\alpha)=\xi_0^*(a)\xi_0^*(\ker\alpha)$, and $\xi_0^*(\ker\alpha)$ is a closed subgroup of $S$. Therefore $S_{\xi\dash\sing}$ is a closed subvariety of $S$. Moreover, we have
    \begin{align*}
    \dim \xi_0^*(a\ker\alpha)&=\dim\ker\alpha - \dim\ker\xi_0^* \\
    &\leq \dim T-1-\dim\ker\xi_0^* \\
    &=\dim S -1.
    \end{align*}
    Thus, $S_{\xi\dash\sing}$ has positive codimension in $S$.
\end{proof}

Let $f^G\in\Sc(G)$ and write $f^H$, $f^T$, $f^S$ for its associated stable transfers to $H$, $T$, and $S$, respectively. Let $s\in S_{H\dash\reg}$. We have $f^H(s)=f^S(s)$ by parabolic descent. Using the notation of the preceding subsection, we obtain
\[
f^H(s)=\int_{T}\chi_\xi(t)f^T(t)\dd{\mu_s(t)}.
\]
Suppose further that $s\in S_{\xi\dash\reg}$. Then $\mu_s(T_{G\dash\sing}\cap(\xi_0^*)^{-1}(s))=0$, and  
\[
f^H(s)=\int_{T_{G\dash\reg}}\chi_\xi(t)f^T(t)\dd{\mu_s(t)}.
\]
Parabolic descent gives us
\[    f^H(s)=\int_{T_{G\dash\reg}}\chi_\xi(t)f^G(t)\dd{\mu_s(t)}
\]
Let $c_G:T_{G\dash\reg}\to T_{G\dash\reg}/W(G,T)=\Delta_\rs(G)$ be the natural map. Then
\[
f^H(s)=\int_{\Delta_\rs(G)}f^G(\delta)\dd{(c_G)_*(\chi_\xi\mu_s)(\delta)}
\]

Since $S_{H\dash\sing}\cup S_{\xi\dash\sing}$ is a closed subvariety of $S$ of positive codimension, the set of $H$-regular and $\xi$-regular elements of $S$ is an open dense subset. Its image $\Delta_{\rs,\xi\dash\reg}(H)$ in $\Delta_\rs(H)$ is an open dense subset. Thus, the above formula determines $f^H$.

\bibliographystyle{amsalpha}
\bibliography{refs.bib}

\end{document}